\definecolor{ForestGreen}{rgb}{0.1,0.6,0.05}
\definecolor{EgyptBlue}{rgb}{0.063,0.1,0.6}
\definecolor{RipeOlive}{HTML}{556B2F}
\newcounter{dummy}
\newcommand\myitem[1][]{\item[#1]\refstepcounter{dummy}\def\@currentlabel{#1}}
\DeclareRobustCommand\Plef{\mathcal{P}}
\newtheorem{theorem}{Theorem}
\newtheorem{proposition}[theorem]{Proposition}
\newtheorem{lemma}[theorem]{Lemma}
\newtheorem{corollary}[theorem]{Corollary}
\theoremstyle{definition}
\newtheorem{remark}[theorem]{Remark}
\numberwithin{equation}{section}
\numberwithin{theorem}{section}
\numberwithin{equation}{section}
\numberwithin{theorem}{section}
\newenvironment{proof*}[1]{\begin{trivlist}\item[\hskip%
		\labelsep{{\bf Proof of \/{\rm\bf #1.}}~}]\rm}%
	{\hfill\qed\rm\end{trivlist}}
\newcommand{\W}{W_0^{1,p}}
\newcommand{\intO}{\int_\Omega}
\newcommand{\E}{E_{\lambda,\eta}} 
\newcommand{\En}{E_{\lambda_n,\eta}}
\title{
	\vspace*{-1cm}
	On the antimaximum principle for the $p$-Laplacian\\ and its sublinear perturbations
} 
\author{Vladimir Bobkov ~\&~ Mieko Tanaka \\}
\date{}
\begin{document}
	\maketitle
	
	\begin{abstract}
		We investigate qualitative properties of weak solutions of the Dirichlet problem for the equation $-\Delta_p u = \lambda \,m(x)|u|^{p-2}u+ \eta \,a(x)|u|^{q-2}u+ f(x)$ in a bounded domain $\Omega \subset \mathbb{R}^N$, where $q<p$.
		Under certain regularity and qualitative assumptions on the weights $m, a$ and the source function $f$, we identify ranges of parameters $\lambda$ and $\eta$ for which solutions satisfy maximum and antimaximum principles in weak and strong forms.
		Some of our results, especially on the validity of the antimaximum principle under low regularity assumptions, are new for the unperturbed problem with $\eta=0$, and among them there are results providing new information even in the linear case $p=2$.  
		In particular, we show that for \textit{any} $p>1$ solutions of the unperturbed problem satisfy the antimaximum principle in a right neighborhood of the first eigenvalue of the $p$-Laplacian provided $m,f \in L^\gamma(\Omega)$ with $\gamma>N$. 
		For completeness, we also investigate the existence of solutions.
		
		\par
		\smallskip
		\noindent {\bf  Keywords}: 
		$p$-Laplacian; sublinear perturbation; indefinite weight; antimaximum principle; maximum principle; Harnack inequality; Picone inequality; existence; linking method.
		
		\smallskip
		\noindent {\bf MSC2010}: 
		35J92,	
		35B50,	
		35B65,	
		35B09,	
		35B30,	
		35A01,   
		35B38,	
	\end{abstract}
	
	\begin{quote}	
		\tableofcontents	
		\addtocontents{toc}{\vspace*{-2ex}}
	\end{quote}

	\section{Introduction}\label{sec:intro}
	
	In the present work, we study how the inclusion of a model indefinite subhomogeneous perturbation into the Fredholm problem for the $p$-Laplacian affects qualitative properties of (weak) solutions such as their obedience to the maximum and antimaximum principles. 
	More precisely, we investigate
	the boundary value problem
	\begin{equation}\label{eq:Psub}
		\tag{$\Plef$}
		\left\{
		\begin{aligned}
			-\Delta_p u &= \lambda \,m(x) |u|^{p-2}u+ \eta \,a(x)|u|^{q-2}u+ f(x) 
			&&\text{in}\ \Omega, \\
			u&=0 &&\text{on}\ \partial \Omega,
		\end{aligned}
		\right.
	\end{equation}
	where the exponents $p,q$ satisfy $1<q<p<\infty$, $\lambda \in\mathbb{R}$ plays a role of the spectral parameter, and the parameter $\eta\in\mathbb{R}$ controls the influence of the subhomogeneous perturbation $a|u|^{q-2}u$. 
	Occasionally, when no ambiguity occurs, we will refer to \eqref{eq:Psub} as  {\renewcommand{\Plef}{\mathcal{P};\lambda,\eta}\eqref{eq:Psub}} or {\renewcommand{\Plef}{\mathcal{P};\lambda,\eta,f}\eqref{eq:Psub}}, in order to reflect the dependence of the problem on the corresponding quantities.
	
	We always assume, by default, that $\Omega \subset \mathbb{R}^N$ is a bounded domain in $\mathbb{R}^N$, $N \geq 1$. For some results, the following more restrictive assumption will be additionally required:
	\begin{enumerate}[label={($\mathcal{O}$)}]\addtolength{\itemindent}{0em}
		\item\label{O} If $N \geq 2$, then $\Omega$ is of class $C^{1,1}$.
	\end{enumerate} 
	
	Throughout the work, we decompose functions into their positive and negative parts as $w = w_+ - w_-$, where $w_\pm := \max\{\pm w, 0\}$, and we denote by $\|\cdot\|_r$ the standard $L^r(\Omega)$-norm, $r \in [1,\infty]$.
	Depending on the context, the weight $m$ will be asked to satisfy one of the following two regularity assumptions:
	\begin{enumerate}
		\addtolength{\itemindent}{0em}
		\myitem[$(\widetilde{\mathcal{M}})$]\label{WM}
		$m_+\not\equiv 0$ and 
		$m \in L^\gamma(\Omega)$ for some $\gamma>N/p$ if $N\ge p$ and 
		$\gamma=1$ if $N<p$. 
		\myitem[$(\mathcal{M})$]\label{M} 
		$m_+\not\equiv 0$ and 
		$m \in L^\gamma(\Omega)$ for some $\gamma>N$. 
	\end{enumerate} 
	These assumptions are motivated by the following facts. 
	Consider the weighted eigenvalue problem for the $p$-Laplacian 
	\begin{equation}\label{eq:EP}
		\left\{
		\begin{aligned}
			-\Delta_p u &= \lambda \,m(x)|u|^{p-2}u
			&&\text{in}\ \Omega, \\
			u&=0 &&\text{on}\ \partial \Omega,
		\end{aligned}
		\right.
	\end{equation}
	and define its first (or principal) positive eigenvalue $\lambda_1(m)$ as
	\begin{equation}\label{eq:lambda1}
		\lambda_1(m)
		:=
		\inf
		\left\{
		\frac{\intO |\nabla u|^p\,dx}{\intO m|u|^p\,dx}:\, u\in\W(\Omega),\ 
		\intO m |u|^p\,dx>0
		\right\}.
	\end{equation}
	Assuming \ref{WM}, it is not hard to see that $\lambda_1(m)$ is attained. 
	Its nonnegative minimizer, which we denote by $\varphi_1$ and which will be naturally referred to as the first eigenfunction, is known to be bounded, locally H\"older continuous, positive in $\Omega$, and unique modulo scaling, see \cite{Cuesta}. 
	Hereinafter, we assume that $\|\varphi_1\|_\infty=1$, for convenience.  
	Under the stronger assumptions \ref{O} and \ref{M}, we have $\varphi_1 \in C^{1,\beta}_0(\overline{\Omega})$ for some $\beta \in (0,1)$ by \cite[Proposition~2.1]{KZ}, see also Proposition~\ref{prop:C1-reg} below for details. 
	For some of our results, it will be important to have the boundary point lemma for $\varphi_{1}$. That is, periodically, we will impose the following assumption in addition to \ref{O} and \ref{M}:
	\begin{enumerate}
		\myitem[$(\varPhi)$]\label{BM}
		$\partial \varphi_1/\partial \nu<0$ on $\partial \Omega$, where 
		$\nu$ is the outer unit normal vector to $\partial \Omega$.
	\end{enumerate} 
	We do not know whether \ref{O} and \ref{M} imply \ref{BM}, and we refer to Remark~\ref{rem:reg-m} for a discussion on sufficient conditions guaranteeing the validity of \ref{BM}. 
	Moreover, we refer to Section~\ref{sec:eigen} for some other properties of the weighted eigenvalue problem~\eqref{eq:EP} needed for the present work.
	
	As for the weight $a$, we will impose one of the following two regularity assumptions:
	\begin{enumerate}
		\addtolength{\itemindent}{0em}
		\myitem[$(\widetilde{\mathcal{A}})$]\label{WA} 
		$a \in L^\gamma(\Omega)\setminus \{0\}$ for some $\gamma>N/p$ if $N\ge p$ and 
		$\gamma=1$ if $N<p$. 
		\myitem[$(\mathcal{A})$]\label{A} 
		$a \in L^\gamma(\Omega) \setminus \{0\}$ for some $\gamma>N$.
	\end{enumerate} 
	The assumption that $a$ is nontrivial is presented in \ref{WA} and \ref{A} without loss of generality, since the case of the identically zero $a$ is covered by taking $\eta=0$.
	If no global restriction on the sign of $m$ or $a$ is imposed, the weight is usually called indefinite.
	
	Finally, the source function $f$ will be required to satisfy either one or few of the following assumptions concerning its regularity and qualitative properties:
	\begin{enumerate}
		\addtolength{\itemindent}{0em}
		\myitem[$(\widetilde{\mathcal{F}})$]\label{WF1} 
		$f \in L^\gamma(\Omega)\setminus \{0\}$ for some $\gamma>N/p$ if $N\ge p$ and 
		$\gamma=1$ if $N<p$. 
		\myitem[$(\mathcal{F})$]\label{F1} 
		$f \in L^\gamma(\Omega) \setminus \{0\}$ for some $\gamma>N$.
		\myitem[$(\mathcal{F}_{\lambda_1})$]\label{F2}
		$\intO f \varphi_1 \,dx > 0$ and the boundary value problem 
		\begin{equation}\label{eq:EPinhom}
			\left\{
			\begin{aligned}
				-\Delta_p u &= \lambda_1(m) \,m(x)|u|^{p-2}u + f(x)
				&&\text{in}\ \Omega, \\
				u&=0 &&\text{on}\ \partial \Omega,
			\end{aligned}
			\right.
		\end{equation}
		does not possess solutions.
	\end{enumerate} 
	Let us observe that if $f$ is nonnegative, then \ref{F2} holds, see, e.g., \cite[Theorem~2.4]{Alleg}, \cite[Corollaire]{FGTT},  \cite[Proposition~4.3]{GGP}, and Corollary~\ref{cor:nonex} below.
	Moreover, in the case $p=2$, the assumption $\intO f \varphi_1 \,dx > 0$ alone guarantees \ref{F2}, as it follows from the Fredholm alternative. 
	In contrast, in the nonlinear case $p \neq 2$, there are examples of $f$ for which $\intO f \varphi_1 \,dx > 0$ and \eqref{eq:EPinhom} \textit{has} a solution, see, e.g., \cite{drabek,takac-lec2,takac-lec1} for an overview.

	The regularity assumptions \ref{WM}, \ref{WA}, \ref{WF1} will be imposed to guarantee that any solution of \eqref{eq:Psub} is bounded and continuous in $\Omega$, see Propositions~\ref{prop:bdd} and~\ref{prop:C0-reg}.
	The stronger regularity assumptions \ref{M}, \ref{A}, \ref{F1}, together with \ref{O}, further guarantee that any solution of \eqref{eq:Psub} belongs to $C^{1,\beta}(\overline{\Omega})$, see \cite{APO,KZ} and Proposition~\ref{prop:C1-reg} below. 
	Clearly, the existence of solutions of \eqref{eq:Psub} can be established under less restrictive assumptions, see Remark~\ref{rem:existence}.

	\subsection{Unperturbed case \texorpdfstring{$\eta = 0$}{eta=0}. AMP}\label{sec:eta=0}
	In the unperturbed case $\eta = 0$, the problem \eqref{eq:Psub} reads as
	\begin{equation}\label{eq:Pfred}
		\left\{
		\begin{aligned}
			-\Delta_p u &= \lambda \,m(x)|u|^{p-2}u + f(x) 
			&&\text{in}\ \Omega, \\
			u&=0 &&\text{on}\ \partial \Omega,
		\end{aligned}
		\right.
	\end{equation}
	and the existence and qualitative properties of solutions of this problem have been subjects of intensive study.
	When $m = 1$ a.e.\ in $\Omega$, 
	the existence theory is fully covered by the classical Fredholm alternative in the linear case $p=2$,
	and we refer to the surveys \cite{drabek,takac-lec2,takac-lec1} and extensive bibliographies therein for a number of involved results on the generalized Fredholm alternative in the nonlinear case $p \neq 2$. 
	Let us explicitly mention that, in contrast to the linear settings, the problem \eqref{eq:Pfred} with $p \neq 2$ might possess several distinct solutions even for nonresonant values of $\lambda$. 
	We also refer to the classical monograph \cite{FNSS} which discusses the Fredholm alternative for general nonlinear operators.
	
	As for the qualitative properties of solutions of the problem \eqref{eq:Pfred}, the value of the parameter $\lambda$ and the sign of the source function $f$ play a crucial role.
	In the case of nonnegative $f$ (and under certain assumptions on $m$), it is well known that any solution of \eqref{eq:Pfred} is \textit{positive} in $\Omega$ for every $\lambda < \lambda_1(m)$, see, e.g., \cite{hess-kato,takac-lec2}. 
	This scenario is called \textit{maximum principle} (MP, for brevity).
	On the other hand, in the case $m = 1$ a.e.\ in $\Omega$, 
	it was first proved in \cite{CP} for $p=2$, and in \cite{FGTT} for $p>1$
	that there exists $\delta>0$ such that any solution of \eqref{eq:Pfred} is \textit{negative} in $\Omega$ when $\lambda \in (\lambda_1(m),\lambda_1(m)+\delta)$.
	This scenario is called \textit{antimaximum principle} (AMP, for brevity).
	The case of indefinite weight $m$ was covered in \cite{AG,IR,GGP,GGP2,hess,pinch}.
	It is also known that the AMP for \eqref{eq:Pfred} is not uniform with respect to $f$, i.e., the maximal value of $\delta$ depends on $f$, see, e.g., \cite[Section~4]{ACG}.
	Estimates on the maximal interval of validity of the AMP have been studied in \cite{BDI,FHT2014}.
	Notice that the MP (locally with respect to $\lambda$) and the AMP are preserved assuming the weaker assumption \ref{F2} instead of $f \geq 0$ a.e.\ in $\Omega$, see \cite[Theorem~17]{AG} for $p=2$ and \cite[Theorem~27]{AG} for $p >1$. 
	We also refer to \cite{M} for a survey.
	
	Let us emphasize that, to the best of our knowledge, in all references on the AMP for the general (nonlinear) $p$-Laplacian, the regularity assumption $m, f \in L^\infty(\Omega)$ is imposed. 
	In view of the regularity result \cite{Lieberman} (and further imposing \ref{O}), this assumption guarantees that any solution of \eqref{eq:Pfred} belongs to the H\"older space $C^{1,\beta}(\overline{\Omega})$ for some $\beta \in (0,1)$ and its $C^{1,\beta}(\overline{\Omega})$-norm is bounded by the $L^\infty(\Omega)$-norm of the right-hand side of \eqref{eq:Pfred}, which is a crucial ingredient for the arguments.
	In contrast, in the linear case $p=2$, it is sufficient to assume that $m, f \in L^\gamma(\Omega)$ with $\gamma>N$, i.e., \ref{M} and \ref{F1},  since by the classical existence and regularity theory (see, e.g., \cite[Theorem~9.15]{GT} for the existence) solutions of \eqref{eq:Pfred} belong to $W^{2,\gamma}(\Omega)$ which is embedded in $C^{1,\beta}(\overline{\Omega})$ continuously for such $\gamma$. 
	(We also refer to Remark~\ref{rem:reg-m} below for a discussion on the necessity of the assumption~\ref{BM}.)
	If $f \in L^N(\Omega)$ and its support is compactly contained in $\Omega$, then the AMP remains valid, see \cite[Therem~1.3]{birin} for the case $m = 1$ a.e.\ in $\Omega$ and compare with Corollary~\ref{cor:AMP-w} below.
	However, in general, the assumption $\gamma > N$ is optimal in the scale of Lebesgue spaces since the AMP is violated for some $f \in L^N(\Omega)$, as shown in \cite{sweers}. 
	An interesting problem in this direction is to investigate optimal regularity assumptions on $f$ in the scale of finer (e.g., Lorentz) spaces, but we do not pursue this question here.
	Instead, one of the main aims of the present work is to justify the fact that the regularity assumptions \ref{M} and \ref{F1} are sufficient for the validity of the AMP for \eqref{eq:Pfred} \textit{in the general nonlinear case} $p>1$. See Corollary~\ref{cor:AMP} below.
	
	Finally, we remark that, in contrast to the maximum principle, the AMP is sensitive to the regularity of the boundary, which is intrinsically connected with the applicability of the boundary point lemma to the first eigenfunction $\varphi_1$.
	A counterexample to the AMP for nonsmooth domains was delivered in \cite[Proposition~3.2]{birin} already in the simplest case $\Omega = (0,\pi)^2$, $p=2$, and $m,f = 1$ a.e.\ in $\Omega$, see also \cite[Section~6]{BS} for a development.
	At the same time, a negativity of solutions continues to persist on compact subsets of $\Omega$, see Corollary~\ref{cor:AMP-loc} below.

\subsection{Zero-source case \texorpdfstring{$f = 0$}{f=0}
}\label{sec:hom}

Although main results of the present work do not apply to the problem \eqref{eq:Psub} with a trivial source function, we briefly review several facts about this case. 
More precisely, when $f = 0$ a.e.\ in $\Omega$, the problem \eqref{eq:Psub} takes the form
\begin{equation}\label{eq:Pfzero}
	\left\{
	\begin{aligned}
		-\Delta_p u &= \lambda \,m(x)|u|^{p-2}u+ \eta \,a(x)|u|^{q-2}u
		&&\text{in}\ \Omega, \\
		u&=0 &&\text{on}\ \partial \Omega.
	\end{aligned}
	\right.
\end{equation}
Without global restrictions on the sign of the weight $a$, the problem \eqref{eq:Pfzero} is called indefinite.
In view of the subhomogeneous nature of the perturbation ($q<p$), sign-constant solutions of \eqref{eq:Pfzero} do not obligatory satisfy the strong maximum principle. 
In particular, there might occur sign-constant (as well as sign-changing) solutions of \eqref{eq:Pfzero} which are zero on open subsets of $\Omega$ called dead cores. 
We refer to \cite{diaz,KQU1} for a detailed discussion and further references on this subject.
In general, the solution set of the problem \eqref{eq:Pfzero} is rich. 
Since the equation in \eqref{eq:Pfzero} is odd with respect to $u$, it is natural to anticipate the existence of infinitely many solutions by minimax arguments, see, e.g., \cite{kaji1}. 
Although information on the sign of solutions obtained by such abstract methods is usually limited, it is expected that most of high-energy solutions (i.e., bound states) of \eqref{eq:Pfzero} are sign-changing. 
Qualitative properties of sign-constant solutions, such as the strict sign vs.\ dead core formation, continuity with respect to parameters, weights, and exponents, uniqueness issues, etc., are of considerable interest and
have been studied, for instance, in \cite{BPT1,DHI1,KQU2,KQU}.
The existence and multiplicity of solutions of \eqref{eq:Pfzero} with respect to parameters $\lambda$ and $\eta$ have been investigated, e.g., in \cite{BobkovTanaka2021,brown,KQU,moroz,QS}.

Our main results on the problem \eqref{eq:Psub} indicate that the presence of the nontrivial source function $f$ significantly changes properties of the solution set of \eqref{eq:Psub} in comparison with that of \eqref{eq:Pfzero}. In particular, in contrast to \eqref{eq:Pfzero}, information on the sign can be deduced for any member of the solution set of \eqref{eq:Psub} in appropriate ranges of the parameters $\lambda$ and $\eta$.

	\section{Main results}\label{sec:main}
	
	The main aim of the present work is the investigation of sign properties of solutions of the problem \eqref{eq:Psub}, in particular, the validity of the maximum principle (MP) and the antimaximum principle (AMP).
	We collect our main results in this direction in Sections~\ref{subsec:mpamp}, \ref{subsec:nonuniform}, \ref{subsec:additional}, and also in Section~\ref{sec:mpampsubsets}.
	Most of the obtained results are valid for the unperturbed problem \eqref{eq:Pfred} and provide new information in this case, see, e.g., Corollaries~\ref{cor:AMP}, \ref{cor:AMP-w}, and~\ref{cor:AMP-loc}.
	
	In addition, in order to justify the existence of solutions of the problem~\eqref{eq:Psub} whose qualitative properties we study, we develop the corresponding existence theory. 
	It is seen from the discussion in Sections~\ref{sec:eta=0} and~\ref{sec:hom} that the structure of the solution set of the problems~\eqref{eq:Pfred} and~\eqref{eq:Pfzero} might be complicated. 
	Naturally, the solution set of \eqref{eq:Psub} may have even more intricate structure and it would be hard to describe it in detail. 
	Therefore, we restrict ourselves only to a general existence result sufficient for our main purposes. 
	The corresponding theorem is given in Section~\ref{subsec:existence}.

	\subsection{MP and AMP}\label{subsec:mpamp}
	
	We start with the following general results on the MP and AMP for the problem \eqref{eq:Psub} which are local with respect to $\lambda$ and $\eta$, see Figure~\ref{fig:1}.
	
	\begin{theorem}\label{thm0}
		\marginnote{{\scriptsize $\qed$ \pageref{page:thm0:proof}}}
		Let \ref{O}, \ref{M}, \ref{BM}, \ref{A}, \ref{F1}, \ref{F2} be satisfied. 
		Assume that $\intO a \varphi_1^q \,dx > 0$.
		Then there exists $\delta>0$ such that any solution $u$ of \eqref{eq:Psub} satisfies $u>0$ in $\Omega$ and $\partial u/\partial \nu < 0$ on $\partial \Omega$ provided $\lambda \in (\lambda_1(m)-\delta,\lambda_1(m))$ and $\eta \in (-\delta,0]$.
	\end{theorem}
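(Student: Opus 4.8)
The plan is to prove this as a "maximum principle" type result (the $\lambda < \lambda_1(m)$ regime), treating the subhomogeneous term as a controlled perturbation of the Fredholm problem \eqref{eq:Pfred}. The natural strategy is a contradiction/compactness argument. Suppose the conclusion fails: then there are sequences $\lambda_n \uparrow \lambda_1(m)$, $\eta_n \to 0^-$ (or $\eta_n = 0$), and solutions $u_n$ of $(\mathcal{P};\lambda_n,\eta_n)$ that violate either $u_n > 0$ in $\Omega$ or $\partial u_n/\partial\nu < 0$ on $\partial\Omega$. First I would establish a uniform a priori bound: testing the equation with $u_n$ and using \ref{F2} together with the characterization \eqref{eq:lambda1} of $\lambda_1(m)$, one shows $\|u_n\|_{W_0^{1,p}}$ stays bounded — here the key point is that \ref{F2} rules out the degenerate scenario where $u_n$ blows up along the direction $\varphi_1$, since \eqref{eq:EPinhom} has no solution; the sublinear term $\eta_n a |u_n|^{q-2}u_n$ is lower order and absorbed because $q < p$ and $\eta_n \to 0$. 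Given the $W_0^{1,p}$ bound, the regularity theory (Propositions~\ref{prop:bdd}, \ref{prop:C0-reg}, \ref{prop:C1-reg}, using \ref{O}, \ref{M}, \ref{A}, \ref{F1}) yields a uniform bound in $C^{1,\beta}(\overline{\Omega})$, and by Arzelà–Ascoli a subsequence converges in $C^1_0(\overline{\Omega})$ to some $u_\ast$, which (passing to the limit in the weak formulation) solves \eqref{eq:EPinhom}. By \ref{F2} this is impossible. This contradiction yields the existence of $\delta > 0$.

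The second, more delicate half is upgrading "no sequence violates the conclusion" into the strict inequalities $u > 0$ in $\Omega$ and $\partial u/\partial\nu < 0$ on $\partial\Omega$ for each individual solution in the range, rather than merely "eventually positive along sequences." The cleaner route is: (i) for $\lambda < \lambda_1(m)$ and $\eta \le 0$ close to $0$, first show any solution $u$ is $\ge 0$ (or even just that $u_-$ is small/zero) — test with $-u_-$, use \eqref{eq:lambda1} applied to $u_-$, the condition $\intO a\varphi_1^q\,dx > 0$ only enters for the positivity of the limit so it is the sign hypothesis ensuring the limiting behaviour is toward $+\varphi_1$ not $-\varphi_1$; (ii) once $u \ge 0$, rewrite the equation as $-\Delta_p u + c(x)\, u^{p-1} = g(x) \ge 0$ with suitable bounded $c$ (moving the $\lambda m$ and $\eta a$ terms appropriately) and apply the strong maximum principle and boundary point lemma for the $p$-Laplacian (e.g. Vázquez) to conclude $u > 0$ in $\Omega$ and $\partial u/\partial\nu < 0$ on $\partial\Omega$. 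The hypothesis $\intO a\varphi_1^q\,dx > 0$ is what pins down the sign: it guarantees that the perturbation does not push solutions toward the negative cone, so that the compactness argument's limit is forced to be $u_\ast = t\varphi_1$ with $t > 0$ (then contradicting \ref{F2}), rather than $t < 0$ which would be compatible with the failure of the MP.

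The main obstacle I anticipate is the a priori bound step under the minimal regularity \ref{M}, \ref{F1} (rather than $L^\infty$): one must make sure the bootstrap from $W_0^{1,p}$-boundedness to $C^{1,\beta}$-boundedness is uniform in $n$, i.e. that the constants in the cited regularity results (\cite{KZ,APO} and Propositions~\ref{prop:bdd}--\ref{prop:C1-reg}) depend only on $\|m\|_\gamma, \|a\|_\gamma, \|f\|_\gamma$ and the $W_0^{1,p}$-norm, not on finer quantities — this is precisely the technical improvement over the classical $L^\infty$-based arguments that the paper advertises. A secondary subtlety is handling the borderline case $\eta = 0$ versus $\eta < 0$ uniformly, and ensuring the sign condition $\intO a\varphi_1^q\,dx > 0$ is genuinely used (for $\eta < 0$ it makes the term $\eta \intO a \varphi_1^q$ have a definite sign that cooperates with, rather than opposes, the Fredholm obstruction). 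I would also double-check that \ref{BM}, i.e.\ $\partial\varphi_1/\partial\nu < 0$, is what makes the $C^1$-convergence $u_n \to u_\ast$ transfer information about $\partial u_n/\partial\nu$ near $\partial\Omega$, closing the boundary-derivative part of the conclusion.
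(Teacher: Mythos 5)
Your proposal has the right outer shape (contradiction, compactness, regularity bootstrap, boundary point lemma), but the central mechanism is backwards, and this is a genuine gap. You claim to establish a uniform $W_0^{1,p}$ a priori bound for the contradiction sequence $\{u_n\}$ and then pass to a limit solving \eqref{eq:EPinhom}. In fact the opposite is true, and \ref{F2} is used precisely to force \emph{blow-up}, not boundedness. The paper's Lemma~\ref{lem:conver0} shows: if $\lambda_n \to \lambda_1(m)$, $\eta_n \to 0$, and $u_n$ solves $(\mathcal{P};\lambda_n,\eta_n)$, then $\|\nabla u_n\|_p \to \infty$, because a bounded subsequence would converge (via Lemma~\ref{lem:convsol}) to a solution of \eqref{eq:EPinhom}, contradicting \ref{F2}. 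So the picture is ``bifurcation from infinity,'' and the contradiction argument proceeds by normalizing $v_n = u_n/\|u_n\|_\infty$, using the uniform $C^{1,\beta}$ bound (Proposition~\ref{prop:C1-reg}) on the \emph{normalized} problem, obtaining $v_n \to \pm\varphi_1$ in $C^1(\overline\Omega)$ (simplicity of $\lambda_1$, Lemma~\ref{lem:conver}), using \ref{BM} to transfer the sign and normal-derivative information. If $v_n \to -\varphi_1$ (the ``bad'' branch), one tests with $-u_n/\|u_n\|_\infty^p$: the $f$-term gives $\intO f v_n \to \intO f\varphi_1 > 0$, the $a$-term with $\eta_n \le 0$ and $\intO a\varphi_1^q > 0$ cooperates, and the Rayleigh quotient then forces $\lambda_n \ge \lambda_1(m)$, contradicting $\lambda_n < \lambda_1(m)$. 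Your ``show $u_n$ is bounded and take the limit'' step therefore cannot be executed, and if it could, your argument would not use the contradictory assumption at all (it would prove nonexistence of any solution near $(\lambda_1(m),0)$, which is false).

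Your step~(i) also does not close: testing with $-u_-$ for $\lambda < \lambda_1(m)$ and $\eta \le 0$ gives $\intO |\nabla u_-|^p - \lambda\intO m u_-^p - \eta\intO a u_-^q + \intO f u_- = 0$, but neither $-\eta\intO a u_-^q$ nor $\intO f u_-$ has a definite sign under \ref{F1}--\ref{F2} alone ($f$ need not be nonnegative, $a$ is indefinite). That kind of argument is reserved, in the paper, for the weaker Proposition~\ref{prop:non-exists-01} under the extra hypothesis $f\ge 0$ a.e. The correct role of $\intO a\varphi_1^q\,dx>0$ is not to ``push the limit toward $+\varphi_1$''; it is to give a sign to the $a$-integral \emph{after} the limit has been identified as $-\varphi_1$ by the blow-up analysis, so that the final Rayleigh-quotient inequality goes the wrong way and yields the contradiction.
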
 
	
	\begin{theorem}\label{thm1}
	\marginnote{{\scriptsize $\qed$ \pageref{page:thm1:proof}}}
		Let \ref{O}, \ref{M}, \ref{BM}, \ref{A}, \ref{F1}, \ref{F2} be satisfied.
		Let one of the following assumptions hold:
		\begin{enumerate}[label={\rm(\Roman*)}]
			\item\label{thm1:I}
			$\intO a \varphi_1^q \,dx > 0$.
			\item\label{thm1:II}
			$\intO a \varphi_1^q \,dx = 0$ and, in addition to $1<q<p$, 
			\begin{equation}\label{eq:Picone-0} 
				(q-1) s^p + q s^{p-1} - (p-q) s + (q-p+1) \geq 0  
				~~\text{for all}~~  s \geq 0.
			\end{equation}
		\end{enumerate}
		Then there exists $\delta>0$ such that any solution $u$ of \eqref{eq:Psub} satisfies $u<0$ in $\Omega$ and $\partial u/\partial \nu > 0$ on $\partial \Omega$ provided $\lambda \in (\lambda_1(m),\lambda_1(m)+\delta)$ and $\eta \in [0, \delta)$.
	\end{theorem}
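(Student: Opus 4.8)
The plan is to argue by contradiction, following the classical scheme for the antimaximum principle but replacing, in the nonlinear case, the explicit Fredholm‑alternative computations available for $p=2$ by Picone‑type inequalities. Introduce the open positive cone $\mathcal{C}:=\{w\in C^1_0(\overline{\Omega}):w>0\ \text{in}\ \Omega,\ \partial w/\partial\nu<0\ \text{on}\ \partial\Omega\}$, so that the desired conclusion reads $u\in-\mathcal{C}$. Assuming it fails, I would obtain sequences $\lambda_n\downarrow\lambda_1(m)$ and $\eta_n\downarrow 0$ with $\lambda_n>\lambda_1(m)$, $\eta_n\ge 0$, and solutions $u_n$ of \eqref{eq:Psub} with parameters $(\lambda_n,\eta_n)$ such that $u_n\notin-\mathcal{C}$. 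By Propositions~\ref{prop:bdd}--\ref{prop:C1-reg} (using \ref{O}, \ref{M}, \ref{A}, \ref{F1}) each $u_n\in C^{1,\beta}_0(\overline{\Omega})$, and these results provide $L^\infty$- and $C^{1,\beta}$-bounds depending only on $\|u_n\|_{\W}$ and on the data, which is what makes the compactness arguments below go through. Set $t_n:=\|u_n\|_{\W}$. If $\{t_n\}$ is bounded, I would pass to a subsequence with $u_n\rightharpoonup u$ in $\W(\Omega)$; the uniform $L^\infty$-bound gives $u_n\to u$ a.e.\ and in every $L^r(\Omega)$, $r<\infty$, so the right‑hand sides stay bounded in $L^\gamma(\Omega)$ and converge, say in $L^1(\Omega)$, to $\lambda_1(m)\,m|u|^{p-2}u+f$ (using $\gamma>N$ and $\eta_n\to 0$). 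Testing the equation for $u_n$ with $u_n-u$ and invoking the $(S_+)$-property of $-\Delta_p$ upgrades this to strong convergence in $\W(\Omega)$, so $u$ solves \eqref{eq:EPinhom}, contradicting \ref{F2}. Hence it suffices to treat $t_n\to\infty$.

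In that case set $v_n:=u_n/t_n$ and divide the equation by $t_n^{p-1}$ to get $-\Delta_p v_n=\lambda_n\,m|v_n|^{p-2}v_n+\eta_n t_n^{q-p}\,a|v_n|^{q-2}v_n+t_n^{1-p}f$ with $\|v_n\|_{\W}=1$; since $q<p$ the last two terms vanish in the limit, and the propositions first bound $\|v_n\|_\infty$ and then $\|v_n\|_{C^{1,\beta}}$, so along a subsequence (compact embedding $C^{1,\beta}(\overline{\Omega})\hookrightarrow C^1(\overline{\Omega})$) $v_n\to v$ in $C^1(\overline{\Omega})$ with $\|v\|_{\W}=1$ and $-\Delta_p v=\lambda_1(m)\,m|v|^{p-2}v$. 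By simplicity of $\lambda_1(m)$ (Section~\ref{sec:eigen}), $v=\pm\varphi_1/\|\varphi_1\|_{\W}$. If $v=-\varphi_1/\|\varphi_1\|_{\W}$, then $v\in-\mathcal{C}$ (here \ref{BM} is needed, to place $\varphi_1$ in $\mathcal{C}$), and since $\mathcal{C}$ is open in $C^1_0(\overline{\Omega})$ we get $v_n\in-\mathcal{C}$, hence $u_n=t_n v_n\in-\mathcal{C}$, for $n$ large, contradicting $u_n\notin-\mathcal{C}$. So it remains to rule out $v=+\varphi_1/\|\varphi_1\|_{\W}$; then $u_n\in\mathcal{C}$ for large $n$, so $u_n>0$ in $\Omega$, $\partial u_n/\partial\nu<0$ on $\partial\Omega$, and $\varphi_1^p/u_n^{p-1}\in\W(\Omega)$ is admissible. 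Picone's inequality for $(u_n,\varphi_1)$, the identity $-\Delta_p\varphi_1=\lambda_1(m)\,m\varphi_1^{p-1}$, and the equation for $u_n$ tested with $\varphi_1^p/u_n^{p-1}$ together give
\begin{equation*}
	0\ \ge\ (\lambda_n-\lambda_1(m))\intO m\varphi_1^p\,dx\ +\ \eta_n\intO a\,u_n^{q-p}\varphi_1^p\,dx\ +\ \intO f\,\frac{\varphi_1^p}{u_n^{p-1}}\,dx .
\end{equation*}
Writing $u_n=t_n v_n$ with $v_n\to c\varphi_1$ in $C^1(\overline{\Omega})$, $c>0$, a dominated‑convergence computation exploiting the boundary behaviour of $\varphi_1$ and $v_n$ gives $\intO a\,u_n^{q-p}\varphi_1^p\,dx=t_n^{q-p}\bigl(c^{q-p}\intO a\varphi_1^q\,dx+o(1)\bigr)$ and $\intO f\,\varphi_1^p u_n^{1-p}\,dx=t_n^{1-p}\bigl(c^{1-p}\intO f\varphi_1\,dx+o(1)\bigr)$. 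The first term of the display is $\ge 0$ since $\intO m\varphi_1^p\,dx>0$; under \ref{thm1:I}, $\intO a\varphi_1^q\,dx>0$ and $\eta_n\ge 0$ force the second term to be $\ge 0$ for $n$ large, while the third is $\sim t_n^{1-p}c^{1-p}\intO f\varphi_1\,dx>0$ by \ref{F2}. So the right‑hand side is strictly positive for large $n$, a contradiction, and the theorem is proved under \ref{thm1:I}.

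I expect the essential difficulty to lie in case \ref{thm1:II}. There $\intO a\varphi_1^q\,dx=0$, so the $\eta_n$-term in the display is merely $o(t_n^{q-p})\cdot\eta_n$ with no sign control, and, since $\eta_n t_n^{q-1}\to\infty$ is not excluded, it may overwhelm the favourable term $\sim t_n^{1-p}\intO f\varphi_1\,dx$. The way around is to replace the plain Picone inequality for $-\Delta_p$ by a refined Picone‑type inequality tailored to the $(p,q)$-structure, whose pointwise validity is exactly the nonnegativity of the polynomial in \eqref{eq:Picone-0} evaluated along the relevant ratio built from $u_n$ and $\varphi_1$; the additional nonnegative ``Picone defect'' it produces should absorb the uncontrolled $\eta_n$-contribution, turning the display into a sharper inequality whose right‑hand side is still bounded below by a positive multiple of $t_n^{1-p}\intO f\varphi_1\,dx$ for $n$ large, contradicting \ref{F2} once more. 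Apart from this, the only genuinely technical bookkeeping is in the uniform $L^\infty$- and $C^{1,\beta}$-bounds for $\{u_n\}$ and $\{v_n\}$ and in the boundary estimates feeding the dominated‑convergence steps.
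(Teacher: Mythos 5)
Your treatment of case \ref{thm1:I} follows the paper's proof essentially verbatim: contradiction, bifurcation from infinity (boundedness of solutions ruled out by \ref{F2} via the $(S_+)$-property and Lemma~\ref{lem:convsol}), $C^1(\overline{\Omega})$-convergence of the normalized sequence to $\pm\varphi_1$, openness of the cone to dispose of the $-\varphi_1$ branch, and the Picone inequality with test function $\varphi_1^p/u_n^{p-1}$ to rule out the $+\varphi_1$ branch. Normalizing by $\|u_n\|_{\W}$ rather than $\|u_n\|_\infty$ is an inessential variation.

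Your sketch of case \ref{thm1:II} does not, however, identify the mechanism that actually closes the argument, and as stated it would not work. You propose keeping the same display and invoking a refined Picone inequality whose nonnegative ``defect'' absorbs the sign-uncontrolled term $\eta_n\intO a\,u_n^{q-p}\varphi_1^p\,dx$. But any Picone-type bound on $\intO |\nabla u_n|^{p-2}\nabla u_n\nabla(\cdot)\,dx$ compares $u_n$ only to $\varphi_1$ and carries no information about $\eta_n$ or $a$; there is no reason for its defect to dominate a term whose magnitude $\eta_n\|u_n\|_\infty^{q-p}$ you explicitly note could be arbitrarily large relative to $\|u_n\|_\infty^{1-p}$. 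The paper's actual device is to \emph{change the test function} to $\varphi_1^q/u_n^{q-1}$. With this choice, the $\eta$-term in the weak formulation becomes $\eta_n\intO a\,u_n^{q-1}\cdot(\varphi_1^q/u_n^{q-1})\,dx=\eta_n\intO a\varphi_1^q\,dx=0$ \emph{identically} under assumption~\ref{thm1:II} — nothing needs to be absorbed. Condition \eqref{eq:Picone-0} then enters as the pointwise hypothesis of the generalized Picone inequality of \cite[Theorem~1.8]{BT_Picone}, which bounds $\intO|\nabla u_n|^{p-2}\nabla u_n\nabla(\varphi_1^q/u_n^{q-1})\,dx$ by $\intO|\nabla\varphi_1|^{p-2}\nabla\varphi_1\nabla(\varphi_1^{q-p+1}u_n^{p-q})\,dx=\lambda_1(m)\intO m\,u_n^{p-q}\varphi_1^q\,dx$; one also needs $q-p+1>0$ (forced by \eqref{eq:Picone-0}) and the Hopf lemma for both $u_n$ and $\varphi_1$ to justify that $\varphi_1^{q-p+1}u_n^{p-q}\in\W(\Omega)$. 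You correctly guessed that \eqref{eq:Picone-0} corresponds to a generalized $(p,q)$-Picone inequality, but without replacing the test function the display never loses its problematic $\eta_n$-term, so this part of the proof is a genuine gap.
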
 
	
	\begin{remark}
		We do not know whether Theorem~\ref{thm0} remains valid under the assumption $\intO a \varphi_1^q \,dx = 0$.
		Observe that the cases $\intO a \varphi_1^q \,dx = 0$ and $\intO a \varphi_1^q \,dx > 0$ are of principal importance for Theorems~\ref{thm0}, \ref{thm1} and the results presented below, while the case $\intO a \varphi_1^q \,dx <0$ is reduced to the latter one by considering $(-\eta) \intO (-a) \varphi_1^q \,dx$.   
	\end{remark} 
	
	\begin{remark}\label{rem:m-negative}
		In the case of a nontrivial negative part $m_-$ of $m$, 
		Theorems~\ref{thm0} and~\ref{thm1}, as well as most of the results formulated below, have counterparts for negative values of $\lambda$ when the eigenvalue $\lambda_1(m)$ is replaced by the eigenvalue $-\lambda_1(-m)$ and the first eigenfunction $\varphi_{1}$ is replaced by the first (positive) eigenfunction $\psi_1$ corresponding to $\lambda_1(-m)$.
		In particular, assuming \ref{BM} for $\psi_1$, 
		Theorem~\ref{thm1} is valid for any $\lambda \in (-\lambda_1(-m)-\delta,-\lambda_1(-m))$ with some $\delta>0$, if either the assumption \ref{thm1:I} or \ref{thm1:II} with $\psi_{1}$ instead of $\varphi_{1}$ holds.
	\end{remark}
	
	The case $\eta=0$ in Theorems~\ref{thm0} and~\ref{thm1} corresponds to the MP and AMP for the problem~\eqref{eq:Pfred}, respectively.
	For convenience, we formulate it explicitly.
	\begin{corollary}\label{cor:AMP}
		Let \ref{O}, \ref{M}, \ref{BM}, \ref{F1}, \ref{F2} be satisfied. 
		Then there exists $\delta>0$ such that 
		the following assertions hold:
		\begin{enumerate}[label={\rm(\roman*)}]
			\item\label{cor:AMP:1}
			Any solution $u$ of \eqref{eq:Pfred} satisfies $u>0$ in $\Omega$ and $\partial u/\partial \nu < 0$ on $\partial \Omega$ provided  $\lambda \in (\lambda_1(m)-\delta,\lambda_1(m))$.
			\item\label{cor:AMP:2}
			Any solution $u$ of \eqref{eq:Pfred} satisfies $u<0$ in $\Omega$ and $\partial u/\partial \nu > 0$ on $\partial \Omega$ provided $\lambda \in (\lambda_1(m),\lambda_1(m)+\delta)$.
		\end{enumerate}
	\end{corollary}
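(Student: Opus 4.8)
The plan is to obtain Corollary~\ref{cor:AMP} as the special case $\eta = 0$ of Theorems~\ref{thm0} and~\ref{thm1}. Observe first that when $\eta = 0$ the subhomogeneous term $\eta\,a(x)|u|^{q-2}u$ disappears from the equation in \eqref{eq:Psub}, so that \eqref{eq:Psub} becomes precisely \eqref{eq:Pfred}, irrespective of the weight $a$. Since Corollary~\ref{cor:AMP} does not postulate any assumption on $a$, the only preliminary step is to fix an auxiliary admissible weight: take $a \equiv 1$, which trivially satisfies \ref{A} because it belongs to $L^\gamma(\Omega)$ for every $\gamma$, and which satisfies $\intO a\,\varphi_1^q\,dx = \intO \varphi_1^q\,dx > 0$ since $\varphi_1 > 0$ in $\Omega$. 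With this choice the solution sets of \eqref{eq:Pfred} and of \eqref{eq:Psub} with $\eta = 0$ coincide.

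First I would invoke Theorem~\ref{thm0}: under \ref{O}, \ref{M}, \ref{BM}, \ref{A} (with $a \equiv 1$), \ref{F1}, \ref{F2}, together with $\intO a\,\varphi_1^q\,dx > 0$, it provides some $\delta_0 > 0$ such that every solution $u$ of \eqref{eq:Psub} with $\lambda \in (\lambda_1(m) - \delta_0, \lambda_1(m))$ and $\eta \in (-\delta_0, 0]$ obeys $u > 0$ in $\Omega$ and $\partial u/\partial \nu < 0$ on $\partial \Omega$. Restricting to $\eta = 0$ yields assertion~\ref{cor:AMP:1}. Next I would invoke Theorem~\ref{thm1}: since $\intO a\,\varphi_1^q\,dx > 0$, alternative~\ref{thm1:I} is in force, so the theorem produces some $\delta_1 > 0$ such that every solution $u$ of \eqref{eq:Psub} with $\lambda \in (\lambda_1(m), \lambda_1(m) + \delta_1)$ and $\eta \in [0, \delta_1)$ obeys $u < 0$ in $\Omega$ and $\partial u/\partial \nu > 0$ on $\partial \Omega$; restricting to $\eta = 0$ yields assertion~\ref{cor:AMP:2}.

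To conclude, put $\delta := \min\{\delta_0, \delta_1\} > 0$; then both \ref{cor:AMP:1} and \ref{cor:AMP:2} hold with this single $\delta$, and the uniformity over the solution set is inherited from Theorems~\ref{thm0} and~\ref{thm1}. There is no genuine analytical difficulty in this argument; the whole content sits in Theorems~\ref{thm0} and~\ref{thm1}. The only point requiring a moment's care is that Corollary~\ref{cor:AMP} omits the weight hypothesis \ref{A}, so one cannot literally quote the theorems without first introducing an auxiliary $a$; an equally valid alternative is to repeat the proofs of Theorems~\ref{thm0} and~\ref{thm1} with the term $\eta\,a(x)|u|^{q-2}u$ removed throughout, whereupon every step involving $a$ or $\eta$ becomes vacuous.
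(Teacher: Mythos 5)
Your proof is correct and coincides with the paper's intended argument: the paper itself states that Corollary~\ref{cor:AMP} is precisely the case $\eta=0$ of Theorems~\ref{thm0} and~\ref{thm1}, and your observation that one may pick an auxiliary $a\equiv 1$ (or, equivalently, drop the $\eta$-term throughout the proofs) to supply the vacuous hypothesis \ref{A} is exactly the right way to make the specialization rigorous.
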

	Corollary~\ref{cor:AMP} generalizes the results of \cite[Theorem 27]{AG}, \cite[Th\'eor\`eme~2]{FGTT}, and \cite[Theorem~5.1]{GGP} on the MP and AMP for the problem~\eqref{eq:Pfred} by weakening regularity assumptions on $m$ and $f$. 
		
	\begin{remark}\label{rem:reg-m}
		The statement of \cite[Theorem~17]{AG} on the MP and AMP in the linear case $p=2$ does not explicitly contain the assumption \ref{BM}, but the necessity of \ref{BM} is discussed in \cite[Remark~18]{AG}. 
		We do not know whether \ref{O} and \ref{M} imply \ref{BM} (for both $p=2$ and $p \neq 2$), although we believe that the answer is affirmative. 
		One simple sufficient condition for the validity of \ref{BM} is the following: there exists $\rho>0$ such that $m_-\in L^\infty(\Omega_\rho)$, where 	$\Omega_\rho:=\{x\in\Omega:\,\mathrm{dist}(x,\partial\Omega)<\rho\}$, as it follows from \cite[Theorem~A]{MMT}.		
	\end{remark}

	Based on Corollary~\ref{cor:AMP}, 
	continuity arguments allow to extend the ranges of $\eta$ in Theorems~\ref{thm0} and \ref{thm1} to some $\eta>0$ and $\eta<0$, respectively, even without any sign assumptions on $\intO a \varphi_{1}^q \, dx$.
	\begin{theorem}\label{thm-1}
	\marginnote{{\scriptsize $\qed$ \pageref{page:thm-1:proof}}}
		Let \ref{O}, \ref{M}, \ref{BM}, \ref{A}, \ref{F1}, \ref{F2} be satisfied.
		Then there exists $\delta>0$ such that the following assertions hold:
		\begin{enumerate}[label={\rm(\roman*)}]
			\item\label{thm-1:1}
			For any $\lambda \in (\lambda_1(m)-\delta,\lambda_1(m))$ there exists $\overline{\eta}_\lambda>0$ such that any solution $u$ of \eqref{eq:Psub} satisfies $u>0$ in $\Omega$ and $\partial u/\partial \nu < 0$ on $\partial \Omega$ provided $\eta \in (-\overline{\eta}_\lambda, \overline{\eta}_\lambda)$.
			\item\label{thm-1:2}
			For any $\lambda \in (\lambda_1(m),\lambda_1(m)+\delta)$ there exists $\underline{\eta}_\lambda<0$ such that any solution $u$ of \eqref{eq:Psub} satisfies $u<0$ in $\Omega$ and $\partial u/\partial \nu > 0$ on $\partial \Omega$ provided $\eta \in (\underline{\eta}_\lambda, -\underline{\eta}_\lambda)$.
		\end{enumerate}
	\end{theorem}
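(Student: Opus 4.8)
The plan is to deduce Theorem~\ref{thm-1} from its particular case $\eta=0$, which is precisely Corollary~\ref{cor:AMP}, by a $\C$-compactness argument. First fix a $\delta>0$ for which Corollary~\ref{cor:AMP} holds and, shrinking it if necessary, for which $\lambda_1(m)$ is the \emph{only} eigenvalue of \eqref{eq:EP} lying in $(\lambda_1(m)-\delta,\lambda_1(m)+\delta)$; this is possible because $\lambda_1(m)$ is isolated in the spectrum of \eqref{eq:EP} (see Section~\ref{sec:eigen}). I would prove assertion~\ref{thm-1:1} in detail; assertion~\ref{thm-1:2} follows by the same scheme, interchanging the roles of positivity and negativity and invoking item~\ref{cor:AMP:2} of Corollary~\ref{cor:AMP} in place of item~\ref{cor:AMP:1}. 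Fix $\lambda\in(\lambda_1(m)-\delta,\lambda_1(m))$ and argue by contradiction: if no admissible $\overline{\eta}_\lambda>0$ existed, there would be a sequence $\eta_n\to 0$ and solutions $u_n$ of \eqref{eq:Psub} with parameters $\lambda$, $\eta_n$, $f$, none of which satisfies simultaneously $u_n>0$ in $\Omega$ and $\partial u_n/\partial\nu<0$ on $\partial\Omega$.

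The first step is a uniform bound for $\{u_n\}$ in $\W(\Omega)$. Since here $\lambda<\lambda_1(m)$, I would test the equation with $u_n$ and use the coercivity estimate $\intO|\nabla w|^p\,dx-\lambda\intO m|w|^p\,dx\ge c_\lambda\intO|\nabla w|^p\,dx$ with some $c_\lambda>0$ (elementary for $0\le\lambda<\lambda_1(m)$, see Section~\ref{sec:eigen}), absorbing the lower-order terms $\eta_n\intO a|u_n|^q\,dx$ and $\intO f u_n\,dx$ by the Sobolev embedding and Young's inequality, using $q<p$ and the boundedness of $\{\eta_n\}$; this gives $\sup_n\|u_n\|_{\W(\Omega)}<\infty$. (In the proof of assertion~\ref{thm-1:2}, where $\lambda>\lambda_1(m)$ and the functional $w\mapsto\intO|\nabla w|^p\,dx-\lambda\intO m|w|^p\,dx$ is indefinite, this step is instead carried out by a blow-up argument: if $\|u_n\|_{\W(\Omega)}\to\infty$, the normalizations $v_n:=u_n/\|u_n\|_{\W(\Omega)}$ converge strongly along a subsequence, by the $(S_+)$-property of $-\Delta_p$, to a solution $v$ of \eqref{eq:EP} with parameter $\lambda$ and $\|v\|_{\W(\Omega)}=1$; the terms coming from $f$ and from the subhomogeneous perturbation vanish in the limit because $q<p$; since $\lambda$ is not an eigenvalue of \eqref{eq:EP} by the choice of $\delta$, this forces $v\equiv 0$, a contradiction.) Once $\{u_n\}$ is bounded in $\W(\Omega)$, Proposition~\ref{prop:bdd} bounds it in $L^\infty(\Omega)$, hence the right-hand sides $\lambda m|u_n|^{p-2}u_n+\eta_n a|u_n|^{q-2}u_n+f$ are bounded in $L^\gamma(\Omega)$ for the $\gamma>N$ of assumptions \ref{M}, \ref{A}, \ref{F1}, and Proposition~\ref{prop:C1-reg} yields a uniform bound for $\{u_n\}$ in $C^{1,\beta}(\overline{\Omega})$ for some $\beta\in(0,1)$.

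By the compact embedding $C^{1,\beta}(\overline{\Omega})\hookrightarrow\C$, a subsequence of $\{u_n\}$ converges in $\C$ to some $u_0$. Passing to the limit in the weak formulation and using that $\eta_n\to 0$ while $\{u_n\}$ is bounded in $L^\infty(\Omega)$ (so the perturbation term disappears in the limit), one sees that $u_0$ is a weak solution of \eqref{eq:Pfred}. By item~\ref{cor:AMP:1} of Corollary~\ref{cor:AMP}, $u_0>0$ in $\Omega$ and $\partial u_0/\partial\nu<0$ on $\partial\Omega$, i.e., $u_0$ lies in the interior of the positive cone of $\C$. Since that interior is open in $\C$ and $u_n\to u_0$ in $\C$, it follows that $u_n>0$ in $\Omega$ and $\partial u_n/\partial\nu<0$ on $\partial\Omega$ for all large $n$, contradicting the choice of $\{u_n\}$. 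This proves~\ref{thm-1:1}, and~\ref{thm-1:2} follows by the analogous argument applied to $-u_0$ together with item~\ref{cor:AMP:2} of Corollary~\ref{cor:AMP}.

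The step I expect to be the main obstacle is the uniform $\W(\Omega)$-bound in the super-principal regime $\lambda>\lambda_1(m)$ relevant to assertion~\ref{thm-1:2}: there no coercivity is available, so the bound has to be extracted from the blow-up/normalization scheme, which relies on the isolation of $\lambda_1(m)$ in the spectrum of \eqref{eq:EP} and on the $(S_+)$-property of the $p$-Laplacian. The remaining ingredients---regularity up to $C^{1,\beta}(\overline{\Omega})$, passage to the limit, and the description of the interior of the positive cone of $\C$---are routine given the results already established in the paper.
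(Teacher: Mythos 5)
Your proof is correct and follows essentially the same route as the paper's: a contradiction argument that produces a sequence of solutions with $\eta_n\to 0$, extracts uniform $\W(\Omega)$-, $L^\infty(\Omega)$- and $C^{1,\beta}(\overline{\Omega})$-bounds (via Propositions~\ref{prop:bdd} and~\ref{prop:C1-reg}), passes to a $C^1(\overline{\Omega})$-limit solving the unperturbed problem, and invokes Corollary~\ref{cor:AMP} together with the openness of the strict sign conditions in $C^1(\overline{\Omega})$. The only cosmetic differences are that the paper spells out assertion~\ref{thm-1:2} (handling the $\W(\Omega)$-bound for both subcases via the blow-up Lemma~\ref{lem:auxil1}, normalized by $\|u_n\|_\infty$), whereas you detail~\ref{thm-1:1} and additionally observe that coercivity gives the bound more directly when $\lambda<\lambda_1(m)$.
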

	
	\begin{remark}\label{rem:sufficient}
		If, in addition to \ref{M}, \ref{A}, \ref{F1}, we assume that $m,a,f \in L^\infty(\Omega)$, then Theorems~\ref{thm0}, \ref{thm1}, \ref{thm-1}, and Corollary~\ref{cor:AMP} remain valid under a slightly weaker assumption on $\Omega$ than \ref{O}. 
		Namely, it is sufficient to assume that, in the case $N \geq 2$, $\Omega$ is of class $C^{1,\alpha}$ for some $\alpha \in (0,1)$. 
		Moreover, under these requirements, it is not necessary to impose \ref{BM} in advance.		
		Indeed, the boundedness of $m,a,f$ and the $C^{1,\alpha}$-regularity of $\Omega$ guarantee that any solution of \eqref{eq:Psub}, as well as $\varphi_{1}$, belong to $C^{1,\beta}(\overline{\Omega})$ with the same estimate for the $C^{1,\beta}(\overline{\Omega})$-norm as in the key Proposition~\ref{prop:C1-reg}, see \cite{Lieberman}, and $\varphi_1$ satisfies the boundary point lemma, i.e., the assumption \ref{BM}, see \cite{melkshah}. 
		These facts are main ingredients for the proofs of Theorems~\ref{thm0}, \ref{thm1}, and \ref{thm-1}.
	\end{remark}
	
	The MP and AMP without information on the behavior of solutions on (or near) the boundary $\partial\Omega$ can be obtained under weaker regularity assumptions on the parameters of \eqref{eq:Psub} and additional assumptions on the behavior of $a$ and $f$ near the boundary of $\Omega$. 
	Recall the notation
	$$
	\Omega_\rho 
	= 
	\{x \in \Omega:\, \mathrm{dist}(x,\partial \Omega) < \rho\}.
	$$
	\begin{theorem}\label{thm1-w}
		Let
		\ref{WM}, \ref{WA}, \ref{WF1}, \ref{F2} be satisfied. 
		Assume that $\intO a \varphi_1^q \,dx > 0$ and there exists $\rho>0$ such that $a = 0$ a.e.\ in $\Omega_\rho$.
		Then the following 
		assertions hold:
		\begin{enumerate}[label={\rm(\roman*)}]
			\item\label{thm1-w:1}
			\marginnote{{\scriptsize $\qed$ \pageref{page:thm-1w:1:proof}}}
			Assume that $f \geq 0$ a.e.\ in $\Omega_\rho$.
			Then there exists $\delta>0$ such that 
			any solution $u$ of \eqref{eq:Psub} satisfies $u>0$ in $\Omega$ provided
			$\lambda \in (\lambda_1(m)-\delta,\lambda_1(m))$  and $\eta \in (-\delta,0]$.
			\item\label{thm1-w:2}
			\marginnote{{\scriptsize $\qed$ \pageref{page:thm-1w:2:proof}}}
			Assume that $f = 0$ a.e.\ in $\Omega_\rho$. 
			Then there exists $\delta>0$ such that any solution $u$ of \eqref{eq:Psub} satisfies $u<0$ in $\Omega$ provided 
			$\lambda \in (\lambda_1(m),\lambda_1(m)+\delta)$  and $\eta \in [0,\delta)$.
		\end{enumerate}
	\end{theorem}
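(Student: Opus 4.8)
The plan is to prove parts~\ref{thm1-w:1} and~\ref{thm1-w:2} together by contradiction, writing $\sigma:=+1$ in case~\ref{thm1-w:1} and $\sigma:=-1$ in case~\ref{thm1-w:2}. If the conclusion failed, taking $\delta=1/n$ would produce $\lambda_n\to\lambda_1(m)$ with $\sigma(\lambda_1(m)-\lambda_n)>0$, parameters $\eta_n\to0$ with $\sigma\eta_n\le0$, and solutions $u_n$ of \eqref{eq:Psub} admitting points $x_n\in\Omega$ with $\sigma u_n(x_n)\le0$; recall that $u_n$ is continuous in $\Omega$ by Proposition~\ref{prop:C0-reg}. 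First I would rule out the possibility that $\{u_n\}$ is bounded in $\W(\Omega)$: in that case, along a subsequence $u_n\rightharpoonup u$, the right-hand side of \eqref{eq:Psub} converges strongly in $W^{-1,p'}(\Omega)$ (using $\lambda_n\to\lambda_1(m)$, $\eta_n\to0$, and compactness of the lower-order terms guaranteed by \ref{WM} and \ref{WA}), so the $(S_+)$-property of $-\Delta_p$ forces $u_n\to u$ strongly in $\W(\Omega)$ and, in the limit, $u$ solves \eqref{eq:EPinhom} --- contradicting \ref{F2}.

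Hence $\|u_n\|\to\infty$, and we set $v_n:=u_n/\|u_n\|$. Dividing \eqref{eq:Psub} by $\|u_n\|^{p-1}$ and using $q<p$, the perturbation and source terms tend to $0$ in $W^{-1,p'}(\Omega)$, so $(S_+)$ again gives $v_n\to v$ strongly in $\W(\Omega)$ with $-\Delta_p v=\lambda_1(m)\,m\,|v|^{p-2}v$ and $\|v\|=1$; by simplicity of $\lambda_1(m)$, $v=\pm\varphi_1$. Moreover $\{v_n\}$ is bounded in $L^\infty(\Omega)$ by Proposition~\ref{prop:bdd}, hence relatively compact in $C_{\mathrm{loc}}(\Omega)$ by interior regularity, so $v_n\to v$ in $C_{\mathrm{loc}}(\Omega)$; since $\varphi_1>0$ in $\Omega$, this shows that, once we know $v=\sigma\varphi_1$, the functions $\sigma u_n$ are positive on the compact set $\Omega\setminus\Omega_\rho$ for all large $n$. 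To pin down $v$ in case~\ref{thm1-w:1} ($\sigma=+1$), I would test \eqref{eq:Psub} with $u_n$ and use the elementary bound $\intO|\nabla u_n|^p\,dx\ge\lambda_1(m)\intO m|u_n|^p\,dx$ from \eqref{eq:lambda1}:
\[
(\lambda_n-\lambda_1(m))\intO m|u_n|^p\,dx+\eta_n\intO a|u_n|^q\,dx+\intO fu_n\,dx\ \ge\ 0 .
\]
Dividing by $\|u_n\|$, the first two terms become $(\lambda_n-\lambda_1(m))\|u_n\|^{p-1}\intO m|v_n|^p\,dx$ and $\eta_n\|u_n\|^{q-1}\intO a|v_n|^q\,dx$, which are eventually $\le 0$ because $\lambda_n<\lambda_1(m)$, $\eta_n\le 0$, $\intO m\varphi_1^p\,dx>0$ and $\intO a\varphi_1^q\,dx>0$; hence $\intO fv_n\,dx\ge 0$ for $n$ large, and letting $n\to\infty$ gives $\intO fv\,dx\ge 0$, so $\intO f\varphi_1\,dx>0$ forces $v=+\varphi_1=\sigma\varphi_1$.

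Identifying $v$ in case~\ref{thm1-w:2} ($\sigma=-1$) is the main obstacle. Suppose, for contradiction, $v=+\varphi_1$. On $\Omega_\rho$, where $a\equiv f\equiv 0$, the equation reads $-\Delta_p u_n=\lambda_n m|u_n|^{p-2}u_n$; since $v_n\to\varphi_1>0$ uniformly on $\Omega\setminus\Omega_\rho$ we get $u_n>0$ there, hence $(u_n)_-\in\W(\Omega_\rho)$, and testing with $-(u_n)_-$ together with the strict inequality $\lambda_1(m,\Omega_\rho)>\lambda_1(m)$ for the principal eigenvalue of \eqref{eq:EP} on $\Omega_\rho$ (strict domain monotonicity, cf.\ Section~\ref{sec:eigen}) forces $(u_n)_-=0$ in $\Omega_\rho$ for $n$ large; therefore $u_n\ge 0$ in $\Omega$, and then $u_n>0$ in $\Omega$ by the strong maximum principle. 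Picone's inequality applied to $\varphi_1$ and $u_n$ then yields
\[
(\lambda_1(m)-\lambda_n)\intO m\varphi_1^p\,dx\ \ge\ \eta_n\intO a\,u_n^{q-p}\varphi_1^p\,dx+\intO f\,u_n^{1-p}\varphi_1^p\,dx .
\]
Its left-hand side is negative; on the right-hand side, since $a$ and $f$ vanish on $\Omega_\rho$ and $v_n\to\varphi_1>0$ uniformly on $\Omega\setminus\Omega_\rho$, we have $\intO a\,u_n^{q-p}\varphi_1^p\,dx=\|u_n\|^{q-p}\intO a\,v_n^{q-p}\varphi_1^p\,dx$ with $\intO a\,v_n^{q-p}\varphi_1^p\,dx\to\intO a\varphi_1^q\,dx>0$, and similarly $\intO f\,u_n^{1-p}\varphi_1^p\,dx=\|u_n\|^{1-p}\intO f\,v_n^{1-p}\varphi_1^p\,dx$ with the last integral tending to $\intO f\varphi_1\,dx>0$; as $\eta_n\ge 0$, the right-hand side is positive for $n$ large --- a contradiction. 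Hence $v=-\varphi_1=\sigma\varphi_1$.

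It remains to control $u_n$ near $\partial\Omega$. With $v=\sigma\varphi_1$ we have $\sigma u_n>0$ on $\Omega\setminus\Omega_\rho$ for large $n$. On $\Omega_\rho$ the equation becomes $-\Delta_p u_n=\lambda_n m|u_n|^{p-2}u_n+f$ with $f\ge 0$ in case~\ref{thm1-w:1} and $f\equiv 0$ in case~\ref{thm1-w:2}; testing the equation satisfied by $\sigma u_n$ with $-(\sigma u_n)_-\in\W(\Omega_\rho)$ (admissible, as $\sigma u_n>0$ near the inner boundary of $\Omega_\rho$ and $u_n=0$ on $\partial\Omega$) and discarding the nonnegative term involving $f$ gives $\int_{\Omega_\rho}|\nabla(\sigma u_n)_-|^p\,dx\le\lambda_n\int_{\Omega_\rho}m\,(\sigma u_n)_-^p\,dx$, which, since $\lambda_n<\lambda_1(m,\Omega_\rho)$ for large $n$, forces $(\sigma u_n)_-=0$ in $\Omega_\rho$. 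Thus $\sigma u_n\ge 0$ in $\Omega$ and $\sigma u_n\not\equiv 0$; near any interior zero --- which must lie in $\Omega_\rho$, where $a$ vanishes and $f\ge 0$ --- the right-hand side of the equation for $\sigma u_n$ dominates $-\lambda_n m_-(\sigma u_n)^{p-1}$ with $\lambda_n m_-\in L^\gamma(\Omega_\rho)$, $\gamma>N/p$, so the strong maximum principle yields $\sigma u_n>0$ in $\Omega$, contradicting $\sigma u_n(x_n)\le 0$. The genuinely hard part is the exclusion of $v=+\varphi_1$ in the antimaximum case~\ref{thm1-w:2}: it requires a form of Picone's inequality valid for $\W(\Omega)$-functions without any boundary regularity (so that a suitable regularization of $\varphi_1^p u_n^{1-p}$ serves as an admissible test function), and it is precisely the hypothesis that $a$ and $f$ vanish near $\partial\Omega$ that prevents the unknown boundary decay of $u_n$ from spoiling the resulting integral identity.
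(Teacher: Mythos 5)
Your proposal follows essentially the same strategy as the paper: after ruling out boundedness via \ref{F2}, you normalize, pass to $v=\pm\varphi_1$ through the $(S_+)$-property and $C^0_{\mathrm{loc}}$ compactness, pin down the sign of $v$ (in case \ref{thm1-w:1} by testing with $u_n$ and invoking \eqref{eq:lambda1}, in case \ref{thm1-w:2} by the $\varepsilon$-regularized Picone argument), and then use a cut-off test on $\Omega_\rho$ together with $\lambda_1(m;\Omega_\rho)>\lambda_1(m)$ and a weak Harnack inequality to force $\sigma u_n>0$ in $\Omega$. This is precisely what the paper does, except that it factors the sign-of-$v$ steps out as Propositions~\ref{prop:thm0-w} and~\ref{prop:thm1-w} and the Harnack step as Lemma~\ref{lem:eigenpos}, and you correctly identify the Picone regularization (Lemmas~\ref{lem:testfunc} and~\ref{lem:picone-weak}) as the hard technical point.

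One step needs tightening. You only establish $\sigma u_n>0$ on $\Omega\setminus\Omega_\rho$, and then want to propagate positivity by the strong maximum principle inside $\Omega_\rho$; but $\Omega\setminus\Omega_\rho$ and $\Omega_\rho$ are disjoint, so a connected component of $\Omega_\rho$ need not contain any point where you yet know $\sigma u_n>0$, and the weak Harnack inequality cannot get started there. The paper avoids this by fixing $r\in(0,\rho)$ first, using the $C^0_{\mathrm{loc}}$ convergence to get $\sigma u_n>0$ on the larger compact $\Omega\setminus\Omega_r$, and invoking Lemma~\ref{lem:topology} to guarantee every component of $\Omega_\rho$ meets the region of known positivity; the differential inequality $-\Delta_p(\sigma u_n)\ge -\lambda_n m_-(\sigma u_n)^{p-1}$ still holds on all of $\Omega_\rho$ because $a=0$ and $f$ has a favorable sign there. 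Your proof needs this $r<\rho$ buffer; once it is inserted, the argument is complete and matches the paper's.
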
 

	In the unperturbed linear case (i.e., $\eta=0$ and $p=2$), the AMP under the assumptions $f \in L^N(\Omega)$ 
	and $\mathrm{supp}\, f \subset \Omega$ was obtained in \cite[Theorem~1.3]{birin}.
	The regularity assumption on $f$ in Theorem~\ref{thm1-w}  is weaker, which gives, therefore,  new information on the AMP already  for $p=2$.
	We believe that our arguments for Theorem~\ref{thm1-w} can be generalized to cover even less regular weights and the source function.
	Let us also observe that the general nonnegativity of $f$ in $\Omega_\rho$ is not enough to guarantee that the AMP in  Theorem~\ref{thm1-w}~\ref{thm1-w:2} is satisfied, see a counterexample given by  \cite[Proposition~3.2]{birin}. 
	However, an appropriate control of the growth or decay of $f$ near irregular parts of $\partial\Omega$ might result in the validity of the AMP, see \cite[Theorem~11]{BS}.
	
	For reader's convenience, we provide the explicit formulation of Theorem~\ref{thm1-w} for the unperturbed case $\eta=0$, i.e., for the problem \eqref{eq:Pfred}.
	\begin{corollary}\label{cor:AMP-w}
		Let
		\ref{WM}, \ref{WF1}, \ref{F2} be satisfied and $\rho>0$.
		Then the following 
		assertions hold:
		\begin{enumerate}[label={\rm(\roman*)}]
			\item\label{cor:AMP-w:1}
			Assume that $f \geq 0$ a.e.\ in $\Omega_\rho$.
			Then there exists $\delta>0$ such that any solution $u$ of \eqref{eq:Pfred} satisfies $u>0$ in $\Omega$ provided 			 
			$\lambda \in (\lambda_1(m)-\delta,\lambda_1(m))$.
			\item\label{cor:AMP-w:2}
			Assume that $f = 0$ a.e.\ in $\Omega_\rho$. 
			Then there exists $\delta>0$ such that any solution $u$ of \eqref{eq:Pfred} satisfies $u<0$ in $\Omega$ provided $\lambda \in (\lambda_1(m),\lambda_1(m)+\delta)$.
		\end{enumerate}
	\end{corollary}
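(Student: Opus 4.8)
The plan is to obtain Corollary~\ref{cor:AMP-w} directly from Theorem~\ref{thm1-w} by specializing to $\eta=0$. The one point requiring care is that Theorem~\ref{thm1-w} formally demands $\intO a\varphi_1^q\,dx>0$, which the trivial weight cannot satisfy; but when $\eta=0$ the perturbation term $\eta\,a(x)|u|^{q-2}u$ vanishes identically, so \eqref{eq:Pfred} coincides with \eqref{eq:Psub} at $\eta=0$ for \emph{any} admissible weight $a$, and it therefore suffices to exhibit one. (Alternatively, one checks that the proof of Theorem~\ref{thm1-w} never invokes the hypotheses on $a$ once $\eta=0$; but treating Theorem~\ref{thm1-w} as a black box, the auxiliary-weight route is cleaner.)

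First I would shrink $\rho$ if needed so that $|\Omega\setminus\Omega_\rho|>0$. This is harmless: $|\Omega_\rho|\to 0$ as $\rho\to 0^+$ (since $\Omega$ is open and bounded), so such a $\rho$ exists, and the hypotheses ``$f\ge 0$ a.e.\ in $\Omega_\rho$'' in~\ref{cor:AMP-w:1} and ``$f=0$ a.e.\ in $\Omega_\rho$'' in~\ref{cor:AMP-w:2} only weaken under this replacement, because $\Omega_{\rho'}\subset\Omega_\rho$ whenever $\rho'<\rho$. Then I would put $a:=\mathbf{1}_{\Omega\setminus\Omega_\rho}$, the indicator of $\Omega\setminus\Omega_\rho$. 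Since $a\in L^\infty(\Omega)\subset L^\gamma(\Omega)$ for every $\gamma\ge 1$ and $a\not\equiv 0$, the assumption \ref{WA} holds; by construction $a=0$ a.e.\ in $\Omega_\rho$; and, as $\varphi_1>0$ in $\Omega$ under \ref{WM}, we get $\intO a\varphi_1^q\,dx=\int_{\Omega\setminus\Omega_\rho}\varphi_1^q\,dx>0$. Together with \ref{WM}, \ref{WF1}, \ref{F2} and the relevant $f$-condition, all hypotheses of Theorem~\ref{thm1-w} are then verified for this choice of $a$.

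Finally, Theorem~\ref{thm1-w}~\ref{thm1-w:1} (resp.\ \ref{thm1-w:2}) yields $\delta>0$ such that every solution $u$ of \eqref{eq:Psub} satisfies $u>0$ in $\Omega$ for all $\lambda\in(\lambda_1(m)-\delta,\lambda_1(m))$ and $\eta\in(-\delta,0]$ (resp.\ $u<0$ in $\Omega$ for all $\lambda\in(\lambda_1(m),\lambda_1(m)+\delta)$ and $\eta\in[0,\delta)$). Taking $\eta=0$, which lies in both $(-\delta,0]$ and $[0,\delta)$, and recalling that \eqref{eq:Psub} with $\eta=0$ is precisely \eqref{eq:Pfred}, gives the two assertions of Corollary~\ref{cor:AMP-w}. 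I expect no genuine obstacle here: all the analytic content is already contained in Theorem~\ref{thm1-w}, and the argument above merely unwinds the degenerate case $\eta=0$.
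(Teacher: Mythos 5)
Your proposal is correct and follows the paper's own route: Corollary~\ref{cor:AMP-w} is exactly the specialization of Theorem~\ref{thm1-w} to $\eta=0$, which the paper states without further argument. Your construction of the auxiliary weight $a=\mathbf{1}_{\Omega\setminus\Omega_\rho}$ (after shrinking $\rho$ so that $|\Omega\setminus\Omega_\rho|>0$) is a clean, airtight way of making the hypotheses on $a$ in Theorem~\ref{thm1-w} formally satisfied while the term $\eta\,a|u|^{q-2}u$ vanishes identically, so the problems \eqref{eq:Psub} and \eqref{eq:Pfred} coincide.
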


	Finally, in analogy with Theorem~\ref{thm-1}, we provide the following result on a certain extension of the ranges of $\eta$ in Theorem~\ref{thm1-w}, which does not require sign assumptions on $\intO a \varphi_{1}^q \, dx$.
	\begin{theorem}\label{thm-1ww}
	\marginnote{{\scriptsize $\qed$ \pageref{page:thm-1ww:proof}}}
		Let
		\ref{WM}, \ref{WA}, \ref{WF1}, \ref{F2} be satisfied. 
		Assume that there exists $\rho>0$ such that $a = 0$ a.e.\ in $\Omega_\rho$.
		Then the following assertions hold:
		\begin{enumerate}[label={\rm(\roman*)}]
			\item\label{thm-1ww:1}
			Assume that $f \geq 0$ a.e.\ in $\Omega_\rho$. 
			Then there exists $\delta>0$ such that for any $\lambda \in (\lambda_1(m)-\delta,\lambda_1(m))$ there exists $\overline{\eta}_\lambda>0$ such that any solution $u$ of \eqref{eq:Psub} satisfies $u>0$ in $\Omega$ provided $\eta \in (-\overline{\eta}_\lambda, \overline{\eta}_\lambda)$.
			\item\label{thm-1ww:2}
			Assume that $f = 0$ a.e.\ in $\Omega_\rho$. 
			Then there exists $\delta>0$ such that for any $\lambda \in (\lambda_1(m),\lambda_1(m)+\delta)$ there exists $\underline{\eta}_\lambda<0$ such that any solution $u$ of \eqref{eq:Psub} satisfies $u<0$ in $\Omega$ provided $\eta \in (\underline{\eta}_\lambda, -\underline{\eta}_\lambda)$.
		\end{enumerate}
	\end{theorem}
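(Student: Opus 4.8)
The plan is to reproduce, at the level of the low-regularity statements, the continuity argument that derives Theorem~\ref{thm-1} from Corollary~\ref{cor:AMP}: here the role of Corollary~\ref{cor:AMP} is played by Corollary~\ref{cor:AMP-w}, and the missing $C^1$-control up to $\partial\Omega$ is compensated by the structural hypothesis that $a$ (and, near $\partial\Omega$, also $f$) are trivial in $\Omega_\rho$. I will set up assertion~\ref{thm-1ww:1} and indicate the changes for~\ref{thm-1ww:2} afterwards. Take $\delta>0$ as in Corollary~\ref{cor:AMP-w}~\ref{cor:AMP-w:1}, shrunk so that $\lambda_1(m)-\delta>0$, fix $\lambda\in(\lambda_1(m)-\delta,\lambda_1(m))$, and argue by contradiction: if no admissible $\overline\eta_\lambda$ exists, there are $\eta_n\to0$ and solutions $u_n$ of \eqref{eq:Psub} with $\eta=\eta_n$ such that $u_n\not>0$ in $\Omega$.

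First I would produce a uniform bound and pass to a limit. Since $0<\lambda<\lambda_1(m)$, \eqref{eq:lambda1} gives $\lambda\intO m|u|^p\,dx\le(\lambda/\lambda_1(m))\|\nabla u\|_p^p$ for all $u\in\W(\Omega)$, so $u\mapsto\|\nabla u\|_p^p-\lambda\intO m|u|^p\,dx$ is coercive; testing the equation for $u_n$ with $u_n$ and combining this coercivity with the Sobolev and Hölder inequalities (the relevant exponents being subcritical by \ref{WA}, \ref{WF1}, and $q<p$), the boundedness of $(\eta_n)$, and $q<p$, yields $\sup_n\|\nabla u_n\|_p<\infty$. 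By Proposition~\ref{prop:bdd} the $u_n$ are then bounded in $L^\infty(\Omega)$, hence the right-hand sides of the equations are bounded in a fixed $L^\gamma(\Omega)$ with $\gamma>N/p$; the standard $(S_+)$-property of $-\Delta_p$ gives, along a subsequence, $u_n\to u_0$ strongly in $\W(\Omega)$, and the interior estimates behind Proposition~\ref{prop:C0-reg} give $u_n\to u_0$ uniformly on compact subsets of $\Omega$. Letting $n\to\infty$ in the weak formulation (the perturbation term vanishes as $\eta_n\to0$), $u_0$ solves \eqref{eq:Pfred}; since $f\not\equiv0$ we get $u_0\not\equiv0$, so $u_0>0$ in $\Omega$ by Corollary~\ref{cor:AMP-w}~\ref{cor:AMP-w:1}.

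Next I would split $\Omega$ at distance $\rho/2$ from the boundary. On $K:=\{x\in\Omega:\mathrm{dist}(x,\partial\Omega)\ge\rho/2\}$, a compact subset of $\Omega$ with $\Omega\setminus K=\Omega_{\rho/2}$, we have $\min_K u_0>0$ (as $u_0$ is continuous by Proposition~\ref{prop:C0-reg}) and $u_n\to u_0$ uniformly, hence $u_n>0$ on $K$, i.e.\ $\{u_n<0\}\subset\Omega_{\rho/2}\subset\Omega_\rho$ up to a null set, for all large $n$. For such $n$, test the equation for $u_n$ with $(u_n)_-\in\W(\Omega)$: on $\{(u_n)_->0\}\subset\Omega_\rho$ one has $a=0$ a.e., so the perturbation drops out, and since $f\ge0$ a.e.\ in $\Omega_\rho$ the source term has a favourable sign, leaving $\|\nabla(u_n)_-\|_p^p\le\lambda\intO m\,(u_n)_-^p\,dx$. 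If $(u_n)_-\not\equiv0$ the left-hand side is strictly positive, forcing $\intO m(u_n)_-^p\,dx>0$, and then \eqref{eq:lambda1} gives $\lambda_1(m)\le\|\nabla(u_n)_-\|_p^p/\intO m(u_n)_-^p\,dx\le\lambda<\lambda_1(m)$, a contradiction. Hence $u_n\ge0$; being a nontrivial nonnegative solution whose equation reduces in $\Omega_\rho$ to $-\Delta_pu_n=\lambda m|u_n|^{p-2}u_n+f$ with $f\ge0$, it satisfies $u_n>0$ in $\Omega$ by a strong maximum principle / weak Harnack argument, contradicting $u_n\not>0$. This proves~\ref{thm-1ww:1}.

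For assertion~\ref{thm-1ww:2} the same scheme applies with $(u_n)_+$, $u_n<0$, Corollary~\ref{cor:AMP-w}~\ref{cor:AMP-w:2}, and $f=0$ a.e.\ in $\Omega_\rho$, but two points change. Because $\lambda>\lambda_1(m)$, coercivity fails, so the a priori bound must instead come from a normalisation argument: if $\|u_n\|\to\infty$, the normalised functions would converge to a nontrivial solution of the eigenvalue problem \eqref{eq:EP} at the parameter $\lambda$, which is excluded once $\delta$ is so small that $(\lambda_1(m),\lambda_1(m)+\delta]$ contains no eigenvalue — this uses the isolatedness of $\lambda_1(m)$ recorded in Section~\ref{sec:eigen}. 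Also, testing with $(u_n)_+$ now gives $\|\nabla(u_n)_+\|_p^p=\lambda\intO m(u_n)_+^p\,dx$ with $(u_n)_+\in\W(\Omega_{\rho/2})$, which no longer contradicts \eqref{eq:lambda1}; instead one invokes the strict domain monotonicity of the principal eigenvalue, $\lambda_1(m;\Omega_{\rho/2})>\lambda_1(m)$ (with $\lambda_1(m;\Omega_{\rho/2})=+\infty$ if $m_+\equiv0$ a.e.\ in $\Omega_{\rho/2}$), and further shrinks $\delta$ so that $\lambda<\lambda_1(m)+\delta\le\lambda_1(m;\Omega_{\rho/2})$, forcing $(u_n)_+\equiv0$. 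I expect the main obstacles to be exactly these near-boundary points — localising the test-function identity to $\Omega_{\rho/2}$ and closing the contradiction for $\lambda>\lambda_1(m)$ via domain monotonicity — together with obtaining the a priori bound when $\lambda>\lambda_1(m)$ and upgrading the sign of $u_n$ from nonstrict to strict under the merely $L^\gamma$, $\gamma>N/p$, regularity of $m$, $a$, $f$.
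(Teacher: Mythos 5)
Your proof is correct and follows the same structural approach as the paper: argue by contradiction with $\eta_n\to 0$, obtain an a priori bound so that (via Lemma~\ref{lem:convsol} and Proposition~\ref{prop:C0-reg}) $u_n$ converges in $\W(\Omega)$ and $C^0_{\mathrm{loc}}(\Omega)$ to a solution of \eqref{eq:Pfred} with the sign given by Corollary~\ref{cor:AMP-w}, kill the wrong-signed part of $u_n$ near $\partial\Omega$ by testing with $(u_n)_\mp$ and invoking strict domain monotonicity $\lambda_1(m)<\lambda_1(m;\Omega_r)$, and upgrade to a strict sign by the weak Harnack argument of Lemma~\ref{lem:eigenpos}. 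The only deviation is that for part~\ref{thm-1ww:1} you derive the a priori bound by a direct coercivity estimate, whereas the paper treats both parts uniformly via the bifurcation-from-infinity Lemma~\ref{lem:auxil0} (ruling out $\lambda$ being an eigenvalue); both routes are valid and yield the same conclusion.
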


	\subsection{Nonuniformity of AMP}\label{subsec:nonuniform}
	In the unperturbed case $\eta=0$, it is well known that the AMP is not uniform with respect to $f$. 
	That is, the maximal value of $\delta>0$ defining the interval $(\lambda_1(m),\lambda_1(m)+\delta)$ of validity of the AMP for the problem \eqref{eq:Pfred} depends on $f$ and can be made as small as desired. 
	We refer to \cite[Theorems~5.1 (ii) and 5.2 (i)]{GGP}
	for explicit statements. 
	In the following theorem, we generalize this fact to the case of the problem \eqref{eq:Psub} and improve it by weakening regularity assumptions.
	\begin{theorem}\label{thm3} 
	\marginnote{{\scriptsize $\qed$ \pageref{page:thm3:proof}}}
		Let \ref{WM}, \ref{WA} be satisfied. Assume that $a\ge 0$ a.e.\ in $\Omega$.
		Then for any $\varepsilon>0$ there exists $f \in C_0^\infty(\Omega)\setminus \{0\}$ satisfying $f \geq 0$ in $\Omega$ such that 
		\eqref{eq:Psub} has no nonnegative solution  and no negative solution
		provided $\lambda\ge \lambda_1(m)+\varepsilon$ and $\eta\ge 0$. 
	\end{theorem}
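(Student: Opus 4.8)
The idea is to exhibit a single forcing term $f\in C_0^\infty(\Omega)\setminus\{0\}$, $f\ge 0$, whose positivity set is so large that the first eigenvalue of $-\Delta_p$ relative to $m$ over \emph{every} subdomain containing $\{f>0\}$ still lies below $\lambda_1(m)+\varepsilon$, and then to rule out one‑signed solutions by Picone‑type comparison with first eigenfunctions of suitable subdomains. Throughout I use two standard facts about $D\mapsto\lambda_1(m,D)$: it is nonincreasing under domain inclusion (extend an admissible function for $D_1\subseteq D_2$ by zero to obtain an admissible function for $D_2$), and $\lambda_1\bigl(m,\{x\in\Omega:\operatorname{dist}(x,\partial\Omega)>\rho\}\bigr)\downarrow\lambda_1(m)$ as $\rho\downarrow0$. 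Accordingly, fix $\rho>0$ so small that the open set $D:=\{x\in\Omega:\operatorname{dist}(x,\partial\Omega)>\rho\}$ satisfies $m_+\not\equiv0$ on $D$ and $\lambda_1(m,D)<\lambda_1(m)+\varepsilon$, and choose $f\in C_0^\infty(\Omega)\setminus\{0\}$ with $f\ge0$ and $f>0$ on $D$ (possible since $\overline D\Subset\Omega$). Then $\lambda_1(m,\{f>0\})\le\lambda_1(m,D)<\lambda_1(m)+\varepsilon$.

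\textbf{No nonnegative solution.} Suppose $u\ge0$ solves \eqref{eq:Psub} for some $\lambda\ge\lambda_1(m)+\varepsilon$ and $\eta\ge0$. Then $u\not\equiv0$ (since $f\not\equiv0$), $u$ is continuous by Proposition~\ref{prop:C0-reg}, and, as $\eta\,a|u|^{q-2}u\ge0$ and $f\ge0$, $u$ is a nonnegative weak supersolution of $-\Delta_p u=\lambda\,m|u|^{p-2}u$ in $\Omega$; more precisely $-\Delta_p u+\lambda\,m_-u^{p-1}\ge f$. By the strong maximum principle (in the $L^\gamma$‑version available in the paper), $u>0$ wherever $f>0$: near any point where $f$ stays above a positive constant, once $u$ is small there the term $\lambda\,m_-u^{p-1}$ is small in $L^\gamma$, so $-\Delta_p u$ is bounded below by a positive constant in the relevant sense, which forbids a zero of $u$. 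Hence $\Omega^+:=\{u>0\}$ is open with $\Omega^+\supseteq\{f>0\}\supseteq D$ (so $m_+\not\equiv0$ on $\Omega^+$), $u>0$ in $\Omega^+$, and $u=0$ on $\partial\Omega^+$. Testing the supersolution inequality over $\Omega^+$ with $(\varphi_1^{\Omega^+})^p/(u+\delta)^{p-1}$ (admissible: zero trace on $\partial\Omega^+$ and $u+\delta\ge\delta>0$), applying Picone's inequality, and letting $\delta\to0^+$ yields $\lambda\le\lambda_1(m,\Omega^+)$; by domain monotonicity $\lambda_1(m,\Omega^+)\le\lambda_1(m,\{f>0\})<\lambda_1(m)+\varepsilon$, contradicting $\lambda\ge\lambda_1(m)+\varepsilon$.

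\textbf{No negative solution.} Suppose $u<0$ in $\Omega$ solves \eqref{eq:Psub}, and put $z:=-u>0$ in $\Omega$, so that $z$ is a weak solution of $-\Delta_p z=\lambda\,m\,z^{p-1}+\eta\,a\,z^{q-1}-f$ in $\Omega$, $z=0$ on $\partial\Omega$. Testing with $\varphi_1^p/(z+\delta)^{p-1}\ge0$ (admissible since $z+\delta\ge\delta$ and $\varphi_1$ has zero trace), using Picone's inequality in the form $\int|\nabla z|^{p-2}\nabla z\cdot\nabla\bigl(\varphi_1^p/(z+\delta)^{p-1}\bigr)\le\int|\nabla\varphi_1|^p=\lambda_1(m)\intO m\varphi_1^p$, discarding the nonnegative term $\eta\,a\,z^{q-1}\varphi_1^p/(z+\delta)^{p-1}$, and letting $\delta\to0^+$ (dominated/monotone convergence, justified because $z>0$ a.e.\ and $f$ is compactly supported in $\Omega$, where $z$ is bounded away from $0$) one obtains
\[
\intO\frac{f\,\varphi_1^{\,p}}{z^{\,p-1}}\,dx\ \ge\ \bigl(\lambda-\lambda_1(m)\bigr)\intO m\,\varphi_1^{\,p}\,dx\ \ge\ \varepsilon\intO m\,\varphi_1^{\,p}\,dx\ >\ 0 .
\]
To turn this into a contradiction one bounds the left‑hand side from above; since $f$ is compactly supported in $\Omega$, this amounts to a lower bound for $z=-u$ on $\operatorname{supp}f$ that is uniform in $\lambda\ge\lambda_1(m)+\varepsilon$ and $\eta\ge0$. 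Such a uniform local lower bound, combined with the displayed estimate, forces $\lambda<\lambda_1(m)+\varepsilon$, the desired contradiction.

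\textbf{Main obstacle.} The delicate point is precisely the uniform‑in‑$(\lambda,\eta)$ local lower bound for $z=-u$ on $\operatorname{supp}f$ in the negative case; I would attempt to extract it from the interior regularity and Harnack‑type estimates for the positive solution $z$, fed back through the displayed inequality (and, if needed, by a further refinement of the choice of $f$). All the remaining ingredients — Picone's inequality, the strong maximum principle and the dead‑core analysis, and the monotone dependence of $\lambda_1(m,\cdot)$ on the domain — are standard and already available in the paper.
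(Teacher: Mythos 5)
Your proof of the ``no nonnegative solution'' half is essentially sound. In fact, as the paper's own proof of Proposition~\ref{prop:non-exists-0} shows, any nonnegative solution is positive in all of $\Omega$ by the weak Harnack inequality, so $\Omega^+=\Omega$ and the Picone argument gives $\lambda\le\lambda_1(m)$ directly, without any special choice of $f$ or any appeal to $\lambda_1(m,\Omega^+)$. The only part of Theorem~\ref{thm3} that actually requires care is ruling out negative solutions, and that is exactly where your argument has a genuine gap, which you yourself flag: to conclude from
\begin{equation}
\intO\frac{f\,\varphi_1^{\,p}}{z^{\,p-1}}\,dx\ \ge\ \varepsilon\intO m\,\varphi_1^{\,p}\,dx
\end{equation}
you would need a lower bound for $z=-u$ on $\operatorname{supp}f$ that is \emph{uniform} over all $\lambda\ge\lambda_1(m)+\varepsilon$ and all $\eta\ge 0$ and over \emph{all} negative solutions $u$ for those parameters. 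Nothing in the hypotheses furnishes such a bound --- the solution set is not bounded a priori, the paper's only ``bifurcation from infinity'' information (Lemmas~\ref{lem:conver0}--\ref{lem:conver}) kicks in as $\lambda\to\lambda_1(m)$, not for $\lambda\ge\lambda_1(m)+\varepsilon$, and $\eta$ is free to range over $[0,\infty)$. So the proposal does not close the argument.

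The paper avoids this issue altogether by a different set-up that you should compare with yours. It argues by contradiction: assume for every nonnegative $f\in C_0^\infty(\Omega)\setminus\{0\}$ there exist $\lambda_f\ge\lambda_1(m)+\varepsilon$, $\eta_f\ge 0$ and a sign-constant solution. Take $\phi_n\in C_0^\infty(\Omega)$ with $\phi_n\to\varphi_1$ in $W_0^{1,p}(\Omega)$, and then \emph{choose $f=f_n$ with $\operatorname{supp}f_n\cap\operatorname{supp}\phi_n=\emptyset$}. With this choice, after verifying via the weak Harnack inequality that $v_n:=|u_n|>0$ on $\operatorname{supp}\phi_n$, the Picone test function $\pm|\phi_n|^p/v_n^{\,p-1}$ is legitimate (Lemma~\ref{lem:testfunc2}), and the $f_n$--term in the resulting inequality \emph{vanishes identically} because the supports are disjoint. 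One is then left with
\begin{equation}
\intO|\nabla\phi_n|^p\,dx\ \ge\ \lambda_n\intO m|\phi_n|^p\,dx,
\end{equation}
and letting $n\to\infty$ gives $\lambda_1(m)\ge\liminf_n\lambda_n\ge\lambda_1(m)+\varepsilon$, a contradiction, with no estimate on $|u_n|$ over $\operatorname{supp}f_n$ needed at all. Note that this is precisely opposed to your choice of $f$: you took $f>0$ on a large interior subdomain so as to control the positivity set of $u$, but that very choice makes it impossible to pick a good Picone test function disjoint from $\operatorname{supp}f$, so the $f$--term in the Picone inequality survives and has to be estimated. The disjoint-support trick is the key idea your proposal is missing, and without it the negative case does not go through.
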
 
Theorem~\ref{thm3} implies that for any solution $u$ of \eqref{eq:Psub} (with corresponding parameters) there exists $x_0 \in \Omega$ such that $u(x_0) = 0$. That is, $u$ is either nonpositive (but not negative) or sign-changing.  
We also refer to Proposition~\ref{prop:non-exists-0} below for a more general result on the nonexistence of nonnegative solutions.

\begin{figure}[h!]
	\center{
		\includegraphics[width=0.7\linewidth]{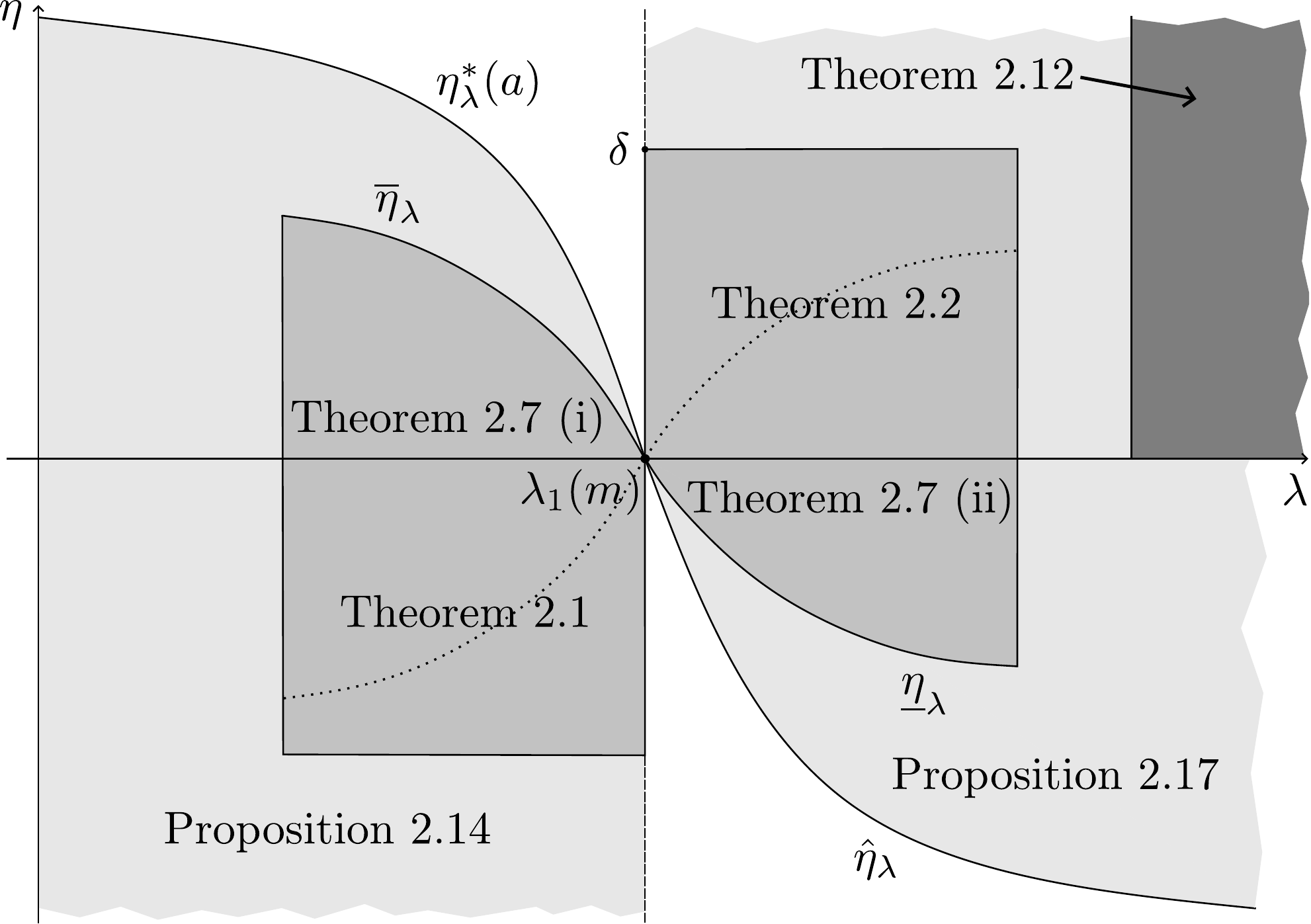}
		\caption{A schematic plot of the main results under the assumptions \ref{O}, \ref{M}, \ref{BM}, \ref{A}, \ref{F1} and $a,f \geq 0$ a.e.\ in $\Omega$.}
		\label{fig:1}
	}
\end{figure}

The nonnegativity of the weight $a$ required in Theorem~\ref{thm3} can be weakened by imposing additional assumptions on other parameters of \eqref{eq:Psub}.
	
	\begin{theorem}\label{thm5} 
	\marginnote{{\scriptsize $\qed$ \pageref{page:thm5:proof}}}
	Let \ref{O}, \ref{M}, \ref{A} be satisfied.
	Assume that  
	$m \ge 0$ a.e.\ in $\Omega$ 
	and $\intO a\varphi_1^q\,dx>0$. 
	Assume also, in addition to
	$1<q<p$, that
	\begin{equation}\label{eq:Picone-01} 
		(q-1) s^p + q s^{p-1} - (p-q) s + (q-p+1) \geq 0  
		~~\text{for all}~~  s \geq 0.
	\end{equation}
	Then for any $\lambda>\lambda_1(p)$ 
	there exists $f \in C_0^\infty(\Omega)\setminus \{0\}$ satisfying $f \geq 0$ in $\Omega$ 
	such that 
	\eqref{eq:Psub} has no positive solution and no negative solution 
	provided $\eta\ge 0$. 
\end{theorem}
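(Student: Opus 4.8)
The plan is to follow the scheme of the proof of Theorem~\ref{thm3}, with the condition $a\ge 0$ there replaced by the joint use of $m\ge 0$, the hypothesis $\intO a\varphi_1^q\,dx>0$, and the inequality~\eqref{eq:Picone-01}. As in Theorem~\ref{thm3}, it suffices to rule out positive and negative solutions separately, and the negative case reduces to a positive-solution statement for a sign-reversed source: if $u<0$ solves \eqref{eq:Psub}, then $v:=-u>0$ solves \eqref{eq:Psub} with $f$ replaced by $-f$, since $-\Delta_p$ and the principal terms are odd.

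The role of \eqref{eq:Picone-01} is to supply a generalized Picone inequality: for admissible $u,\varphi>0$,
\[
\intO\Big(|\nabla u|^{p-2}\nabla u\cdot\nabla(\varphi^q u^{1-q})-|\nabla\varphi|^{p-2}\nabla\varphi\cdot\nabla(u^{p-q}\varphi^{q-p+1})\Big)\,dx\le 0,
\]
which follows pointwise from \eqref{eq:Picone-01} after Cauchy--Schwarz (with equality when $\varphi$ is proportional to $u$). Taking $\varphi=\varphi_1$, testing \eqref{eq:Psub} with $\varphi_1^q u^{1-q}$ and \eqref{eq:EP} at $\lambda_1(m)$ with $u^{p-q}\varphi_1^{q-p+1}$, the left-hand side rewrites as
\[
(\lambda-\lambda_1(m))\intO m\,u^{p-q}\varphi_1^q\,dx+\eta\intO a\varphi_1^q\,dx+\intO f\,u^{1-q}\varphi_1^q\,dx .
\]
For a positive solution with $\lambda>\lambda_1(m)$, $\eta\ge 0$, $f\ge 0$, $f\not\equiv 0$, all three terms are nonnegative and the last is strictly positive --- this is where $m\ge 0$ and $\intO a\varphi_1^q\,dx>0$ are used --- which contradicts the inequality. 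Thus positive solutions are excluded for \emph{every} admissible $f$, uniformly in $\eta\ge 0$, and the construction of $f$ is needed only for the other half. (Admissibility of $\varphi_1^q u^{1-q}$ follows from the $C^{1,\beta}$-regularity of Proposition~\ref{prop:C1-reg}, with a truncation near $\partial\Omega$ if necessary, since \ref{BM} is not assumed here.)

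For negative solutions, the same identity applied to $v=-u>0$ gives
\[
(\lambda-\lambda_1(m))\intO m\,v^{p-q}\varphi_1^q\,dx+\eta\intO a\varphi_1^q\,dx\le\intO f\,v^{1-q}\varphi_1^q\,dx,
\]
where now the source lies on the favourable side, so the contradiction must be engineered through $f$. I would take $f\in C_0^\infty(\Omega)\setminus\{0\}$, $f\ge 0$, supported in a small fixed compact set $K\Subset\Omega$ and with $\intO f\,dx$ as small as needed, and rely on two a priori facts. First, positive solutions $v$ of \eqref{eq:Psub} with source $-f$ are bounded in $L^\infty(\Omega)$ by a constant independent of such $f$ and of $\eta\ge 0$: writing $v=tw$ with $t=\|v\|$, the subhomogeneous and source terms carry factors $t^{q-p}\to 0$ and $t^{1-p}\to 0$, so $t\to\infty$ would yield a nonnegative solution of \eqref{eq:EP} with eigenvalue $\lambda\neq\lambda_1(m)$, which is impossible; the regime of large $\eta$ is treated by an analogous rescaling in $\eta$. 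Second, $v$ is bounded below on $K$: if $\|v\|_\infty$ were small, on $K$ one has $-\Delta_p v\approx-f<0$ up to terms of order $\|v\|_\infty^{p-1}$ (and $\eta\|v\|_\infty^{q-1}$, controlled in the relevant regime), and comparison with the function $\phi$ given by $-\Delta_p\phi=f$ in $\Omega$, $\phi=0$ on $\partial\Omega$, forces $v$ to be negative somewhere in $K$, contradicting $v>0$. With $\delta_0\le v\le C$ on $K$, the right-hand side above is at most $\delta_0^{1-q}\|\varphi_1\|_\infty^q\intO f\,dx$, while the left-hand side is at least $(\lambda-\lambda_1(m))\int_{\Omega\setminus K}m\,v^{p-q}\varphi_1^q\,dx$, which exceeds a fixed positive constant once $|K|$ is small; taking $\intO f\,dx$ small enough closes the contradiction, uniformly in $\eta\ge 0$.

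The step I expect to be the main obstacle is exactly this uniformity in $\eta\ge 0$: the subhomogeneous term $\eta a\,v^{q-1}$ is the only contribution that need not degenerate as $\eta\to\infty$, and it must be either absorbed into the nonnegative quantity $\eta\intO a\varphi_1^q\,dx$ (which is precisely what the generalized Picone inequality together with $m\ge 0$ accomplishes) or neutralized by a separate scaling in $\eta$; making the $L^\infty$ bound and the pointwise lower bound for $v$ on $K$ hold uniformly in $\eta$, and matching the small-$\eta$ and large-$\eta$ regimes, is the technical heart of the argument.
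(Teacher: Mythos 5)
Your proposal diverges significantly from the paper's proof, and the divergence creates a genuine gap. The paper's argument is deliberately symmetric in the sign of $u$: it passes to a sequence of smooth subdomains $O_\rho\Subset\Omega$ with positive principal eigenfunction $\phi_\rho$ (whose support is $\overline{O_\rho}$), takes $\lambda_1(m;\Omega)<\lambda_1(m;O_\rho)<\lambda$ for small $\rho$, and then chooses $f$ supported in $\Omega\setminus O_\rho$. With $v:=|u|$, testing $\eqref{eq:Psub}$ against $\pm\phi_\rho^q/v^{q-1}$ makes the $f$-term vanish outright because the supports of $f$ and $\phi_\rho$ are disjoint, and the generalized Picone inequality (with the compactly supported $\phi_\rho$ as the dominant function) yields
\[
0>(\lambda_1(m;O_\rho)-\lambda)\int_{O_\rho}mv^{p-q}\phi_\rho^q\,dx\ge\eta\int_{O_\rho}a\phi_\rho^q\,dx\ge0,
\]
which covers positive and negative $u$ in one stroke and needs no a priori bounds on $v$ whatsoever. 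Your proposal instead keeps $\varphi_1$ as the Picone partner, so the $f$-term survives; it lands on the favorable side for positive $u$ but on the unfavorable side for negative $u$, and the whole burden of the theorem then falls on the negative case.

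The negative case is exactly where your argument does not close. You need simultaneously a uniform $L^\infty$ bound on the positive solution $v$ of the reflected problem, a uniform-in-$\eta$ lower bound for $v$ on the support $K$ of $f$, and a lower bound for the quantity $(\lambda-\lambda_1(m))\int_{\Omega\setminus K}mv^{p-q}\varphi_1^q\,dx$ that is bounded away from zero independently of the solution, $f$, and $\eta$. None of these is actually established: the last one in particular requires a pointwise lower bound for $v$ on a subset of $\Omega\setminus K$ of positive $m\varphi_1^q$-measure, and since $v$ vanishes on $\partial\Omega$ and nothing a priori prevents it from being small on the interior, such a bound is precisely what the proof would have to produce, not assume. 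You flag the uniformity in $\eta$ as the "technical heart", but the proposal offers only a sketch of a rescaling that is not carried out and whose limit analysis (for $\eta\to\infty$) is not identified; the comparison-function heuristic for the lower bound on $K$ is likewise not a proof. In addition, the positive-case argument with $\varphi_1$ has an admissibility issue that is not merely cosmetic: the paper's weak Picone lemma, Lemma~\ref{lem:picone-weak}, covers only the quotient $\varphi^p/(u+\varepsilon)^{p-1}$, not the generalized quotients $\varphi_1^q/u^{q-1}$ and $\varphi_1^{q-p+1}u^{p-q}$, and without a Hopf-type lower bound $u\gtrsim\operatorname{dist}(\cdot,\partial\Omega)$ (which is not available here, as $a$ may be unbounded and of indefinite sign) the membership of these functions in $W_0^{1,p}(\Omega)$ is not guaranteed. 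The paper's use of the compactly supported $\phi_\rho$ is precisely what makes both test functions legitimate (via $\min_{\overline{O_\rho}}v>0$ and $\partial\phi_\rho/\partial\nu<0$ on $\partial O_\rho$) without any boundary point lemma for the solution itself. In short: the construction of $f$ with $\operatorname{supp}f\cap\operatorname{supp}\phi_\rho=\emptyset$ is the key missing idea, and without it the negative case of your approach does not go through.
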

	
	It is clear that in the unperturbed case $\eta=0$ the result of Theorem~\ref{thm3} is stronger than that of Theorem~\ref{thm5}.

	\subsection{Additional properties}\label{subsec:additional}
	Let us collect a few additional qualitative properties of solutions of the problem \eqref{eq:Psub}.
	They provide less precise results compared to Theorems~\ref{thm0}, \ref{thm1}, and \ref{thm-1}, but ask for lower regularity assumptions and cover larger regions of $\lambda$ and $\eta$, see Figure~\ref{fig:1}.
	
	\begin{proposition}\label{prop:non-exists-01}\marginnote{{\scriptsize $\qed$ \pageref{page:prop:non-exists-01:proof}}} 
		Let \ref{WM}, \ref{WA}, \ref{WF1} be satisfied. 
		Assume that $f \geq 0$ a.e.\ in $\Omega$. 
		Then any solution of \eqref{eq:Psub} is nonnegative provided $0 \leq \lambda \leq \lambda_1(m)$ and either 
		$-\eta^*_\lambda(-a) < \eta \leq 0$ 
		or 
		$0 \leq \eta < \eta^*_\lambda(a)$, 
		where the critical value $\eta^*_\lambda(a) \geq 0$ is defined as follows:
		\begin{align}
			\eta^*_\lambda(a)
			&=
			\frac{p-1}{(p-q)^\frac{p-q}{p-1}(q-1)^\frac{q-1}{p-1}}\\
			\label{eq:eta*1}
			&\times
			\inf
			\left\{
			\frac{\left(\intO |\nabla u|^p \,dx - \lambda \intO m u^p \,dx\right)^\frac{q-1}{p-1} \left(\intO f u \,dx\right)^\frac{p-q}{p-1}}{\intO a u^q \,dx}:\, u \in \Theta(a)
			\right\},\\
			\label{eq:Theta}
			\Theta(a) 
			&= 
			\left\{
			u \in \W(\Omega):\, u \geq 0, ~ \intO a u^q \,dx > 0
			\right\},
		\end{align}
		and we set $\eta^*_\lambda(a) = +\infty$ if $\Theta(a) = \emptyset$.
	\end{proposition}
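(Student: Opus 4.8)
The plan is to argue by contradiction, using the negative part $u_-$ of a putative solution as a test function. First I would suppose $u$ solves \eqref{eq:Psub} with $u_-\not\equiv0$ and insert $v=u_-\in\W(\Omega)$ into the weak formulation; since $\nabla u_-=-\nabla u$ on $\{u<0\}$ and $u_-\equiv0$ on $\{u\ge0\}$, this gives the identity
\[
\intO|\nabla u_-|^p\,dx-\lambda\intO m u_-^p\,dx=\eta\intO a u_-^q\,dx-\intO f u_-\,dx,
\]
all integrals being finite because, under \ref{WM}, \ref{WA}, \ref{WF1}, solutions are bounded (Propositions~\ref{prop:bdd} and~\ref{prop:C0-reg}). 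Writing $A:=\intO|\nabla u_-|^p\,dx-\lambda\intO m u_-^p\,dx$, $B:=\intO a u_-^q\,dx$, $C:=\intO f u_-\,dx$, this reads $A=\eta B-C$ with $C\ge0$ since $f\ge0$ and $u_-\ge0$. Next I would record two consequences of $0\le\lambda\le\lambda_1(m)$ and the variational characterization \eqref{eq:lambda1}, obtained by distinguishing whether $\intO m u_-^p\,dx\le0$ or $>0$: first, $A\ge0$; second, $A=0$ forces either $u_-\equiv0$, or $\lambda=\lambda_1(m)$ together with $\intO m u_-^p\,dx>0$ and $u_-=t\varphi_1$ for some $t>0$ (the latter by uniqueness of the first eigenfunction under \ref{WM}).

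Then I would reduce to the range $0\le\eta<\eta^*_\lambda(a)$. Since $\eta\,a=(-\eta)(-a)$ and $-a$ again fulfils \ref{WA}, the range $-\eta^*_\lambda(-a)<\eta\le0$ follows at once by applying the conclusion to \eqref{eq:Psub} with $a,\eta$ replaced by $-a,-\eta$. So assume $0\le\eta<\eta^*_\lambda(a)$; in particular $\eta^*_\lambda(a)>0$, hence $\Theta(a)\ne\emptyset$. I would split on the sign of $B$. If $B\le0$, then $A+C=\eta B\le0$, which forces $A=C=0$; the dichotomy above then yields $u_-\equiv0$ — the alternative $u_-=t\varphi_1$, $t>0$, is impossible, since it would give $C=t\intO f\varphi_1\,dx>0$ using $f\ge0$, $f\not\equiv0$, and $\varphi_1>0$ in $\Omega$ — contradicting $u_-\not\equiv0$. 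If instead $B>0$, then $u_-\in\Theta(a)$, and inserting $u_-$ as a trial function in the infimum \eqref{eq:eta*1} gives
\[
\eta^*_\lambda(a)\le\frac{p-1}{(p-q)^{\frac{p-q}{p-1}}(q-1)^{\frac{q-1}{p-1}}}\cdot\frac{A^{\frac{q-1}{p-1}}\,C^{\frac{p-q}{p-1}}}{B}.
\]

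The decisive ingredient is then the elementary inequality
\[
\frac{p-1}{(p-q)^{\frac{p-q}{p-1}}(q-1)^{\frac{q-1}{p-1}}}\,A^{\frac{q-1}{p-1}}\,C^{\frac{p-q}{p-1}}\le A+C\qquad\text{for all }A,C\ge0,
\]
which is nothing but the weighted Young inequality $x^{\theta}y^{1-\theta}\le\theta x+(1-\theta)y$ with $\theta:=\frac{q-1}{p-1}\in(0,1)$, applied to $x=A/\theta$ and $y=C/(1-\theta)$: the constant equals exactly $\big(\theta^{\theta}(1-\theta)^{1-\theta}\big)^{-1}$, and the definition of $\eta^*_\lambda(a)$ has plainly been calibrated so that this matches. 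Combining the last two displays with $A+C=\eta B$ gives $\eta^*_\lambda(a)\le(A+C)/B=\eta$, contradicting $\eta<\eta^*_\lambda(a)$. Hence $u_-\equiv0$, i.e.\ $u\ge0$.

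I do not anticipate a serious obstacle: once the identity for $u_-$ is in hand, the definition of $\eta^*_\lambda(a)$ is engineered so that testing its infimum with $u_-$ and invoking Young's inequality closes everything, with the constants matching automatically. The only points needing a little care are routine — the admissibility of $u_-$ as a test function and the finiteness of the right-hand side integrals (both following from boundedness of solutions under \ref{WM}, \ref{WA}, \ref{WF1}), and the borderline situation $\lambda=\lambda_1(m)$, where a pure eigenfunction component of $u_-$ must be excluded by means of $\intO f\varphi_1\,dx>0$.
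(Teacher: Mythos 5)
Your proof is correct, and it follows the same backbone as the paper's argument --- test \eqref{eq:Psub} with $u_-$, obtain the identity $A+C=\eta B$ with $A,C\ge0$, and invoke the definition of $\eta^*_\lambda(a)$ --- but the decisive inequality is established by a different route. The paper first isolates Lemma~\ref{lem:eta}, which it proves by minimising the auxiliary one-parameter function $F(t)=t^{p-1}A-\eta t^{q-1}B+C$ over $t\ge0$; that optimisation is, in effect, the textbook derivation of the weighted Young inequality from scratch. You short-circuit this by invoking Young's inequality directly for $A^{\theta}C^{1-\theta}$ with $\theta=(q-1)/(p-1)$, which is shorter and makes transparent at a glance why the normalising constant in \eqref{eq:eta*1} has precisely the form it does. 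A further simplification of your version is that the borderline scenario $\lambda=\lambda_1(m)$, $u_-=t_0\varphi_1$ --- which is excluded from case (iii) of Lemma~\ref{lem:eta} and therefore requires a separate sub-argument at the end of the paper's proof of Proposition~\ref{prop:non-exists-01} (the identification $f=\eta a(t_0\varphi_1)^{q-1}$ and the sign analysis of $a$) --- is absorbed automatically, since then $A=0$, the Young bound is trivially $0\le A+C$, and the trial-function estimate already forces $\eta^*_{\lambda_1(m)}(a)\le\eta$. One small inaccuracy should be removed: the clause ``in particular $\eta^*_\lambda(a)>0$, hence $\Theta(a)\ne\emptyset$'' is not a valid implication, since by the stated convention $\Theta(a)=\emptyset$ gives $\eta^*_\lambda(a)=+\infty>0$. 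The claim plays no role in your argument anyway --- whenever you need $\Theta(a)\ne\emptyset$, namely in the case $B>0$, it follows directly from $u_-\in\Theta(a)$ --- so it is a harmless slip, not a gap.
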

	
	\begin{remark}\label{rem:eta}
		It is not hard to see that the functional on the right-hand side of \eqref{eq:eta*1} is $0$-homogeneous with respect to $u$. 
		Clearly, if there exists $u \in \Theta(a)$ such that $\intO f u \,dx = 0$, then $\eta^*_\lambda(a)=0$.
		The same is true if $\lambda=\lambda_1(m)$ and $\intO a \varphi_{1}^q \,dx > 0$, by taking $u=\varphi_1$.
		In Lemma~\ref{lem:eta-est}, we provide sufficient assumptions guaranteeing that $\eta^*_\lambda(a) > 0$ for $0 \leq \lambda<\lambda_1(m)$.
	\end{remark}

	\begin{remark}
		If $m \geq 0$ a.e.\ in $\Omega$, then the weak maximum principle of Proposition~\ref{prop:non-exists-01} remains valid for any $\lambda < 0$. 
		On the other hand, if $m_-$ is nontrivial, then Proposition~\ref{prop:non-exists-01} holds true when $-\lambda(-m) \leq \lambda < 0$, see also Remark~\ref{rem:m-negative}.
		The existence of sign-changing solutions of a particular case of \eqref{eq:Psub} with $p=2$, $\lambda=0$, continuous positive $a$, and continuous $f$ given by \cite[Theorem~1.3]{LLZ} suggests that the assertion of Proposition~\ref{prop:non-exists-01} cannot be extended for sufficiently large $\eta>0$.
		Finally,  Proposition~\ref{prop:non-exists-01} is not generally true if the subhomogeneous assumption $q<p$ is replaced by $q>p$, which is indicated by \cite[Theorem~A]{NC}.
	\end{remark}
	
	\begin{proposition}\label{prop:non-exists-0} 
	\marginnote{{\scriptsize $\qed$ \pageref{page:prop:non-exists-0:proof}}} 
		Let \ref{WM}, \ref{WA}, \ref{WF1} be satisfied.
		Assume that $a, f \geq 0$ a.e.\ in $\Omega$.
		Then for any $\lambda\ge \lambda_1(m)$ there exists $\hat{\eta}_\lambda \leq 0$ such that 	
		\eqref{eq:Psub} has no nonnegative solutions provided $\eta \geq \hat{\eta}_\lambda$. 
		Moreover, if $\lambda > \lambda_1(m)$, then $\hat{\eta}_\lambda < 0$.
	\end{proposition}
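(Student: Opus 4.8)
The plan is to argue by contradiction: suppose $u\geq 0$ solves \eqref{eq:Psub} for some $\lambda\geq\lambda_1(m)$ and some $\eta$. Note first that $u\not\equiv 0$, since otherwise the equation forces $f\equiv 0$, contradicting \ref{WF1}.

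\textbf{The case $\eta\geq 0$, giving $\hat\eta_\lambda=0$.} Since $\lambda>0$ and $\eta\,a|u|^{q-2}u,\,f\geq 0$, rewriting $\lambda m=\lambda m_+-\lambda m_-$ shows that $u$ is a nonnegative nontrivial weak supersolution of $-\Delta_p w+\lambda m_-\,w^{p-1}=0$, whose zeroth-order coefficient $\lambda m_-$ lies in $L^\gamma(\Omega)$ with $\gamma>N/p$; the strong maximum principle (weak Harnack inequality for supersolutions) then gives $u>0$ in $\Omega$. Hence, for every $\varepsilon>0$, $\varphi_1^p/(u+\varepsilon)^{p-1}$ is an admissible nonnegative test function in $\W(\Omega)$. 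Testing the equation for $u$ with it, invoking Picone's inequality together with $\intO|\nabla\varphi_1|^p\,dx=\lambda_1(m)\intO m\varphi_1^p\,dx$, and letting $\varepsilon\to0^+$ — dominated convergence for the $m$-term (dominated by $|m|\varphi_1^p\in L^1(\Omega)$) and monotone convergence for the $a$- and $f$-terms, using $a,f\geq 0$ and $u>0$ a.e.\ — yields
\[
\lambda_1(m)\intO m\varphi_1^p\,dx\ \geq\ \lambda\intO m\varphi_1^p\,dx+\eta\intO a\,u^{q-p}\varphi_1^p\,dx+\intO f\,\frac{\varphi_1^p}{u^{p-1}}\,dx .
\]
As $\intO m\varphi_1^p\,dx>0$, $\lambda\geq\lambda_1(m)$, $\eta\geq 0$, $a\geq 0$ and the last integral is strictly positive (because $f\not\equiv0$ and $\varphi_1>0$ in $\Omega$), the right-hand side strictly exceeds the left-hand side — a contradiction. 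So $\hat\eta_\lambda=0$ works for every $\lambda\geq\lambda_1(m)$, proving the first assertion. (When $\lambda=\lambda_1(m)$ this subcase may alternatively be read off from Corollary~\ref{cor:nonex} applied to the nonnegative nontrivial source $\eta\,a|u|^{q-2}u+f$.)

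\textbf{Pushing $\eta$ below $0$ when $\lambda>\lambda_1(m)$.} Here a dead core $\{u=0\}$ can appear (for $\eta<0$ the equation carries a subhomogeneous absorption), so the argument must be refined. One first records that $f=0$ a.e.\ on $\{u=0\}$: testing the equation for $u$ with $\varphi_1(\delta-u)_+/\delta\in\W(\Omega)$, discarding a favourable nonnegative term, letting $\delta\to0^+$, and using $\nabla u=0$ a.e.\ on $\{u=0\}$ with $f,\varphi_1\geq 0$ gives $\int_{\{u=0\}}f\varphi_1\,dx\leq 0$, whence the claim. Rerunning the Picone computation, this time retaining the nonnegative Picone remainder $R_\varepsilon$ (which equals $|\nabla\varphi_1|^p$ on $\{u=0\}$) and splitting $\Omega$ into $\{u>0\}$ and $\{u=0\}$, the limit $\varepsilon\to0^+$ produces an inequality of the form
\[
|\eta|\int_{\{u>0\}}a\,u^{q-p}\varphi_1^p\,dx\ \geq\ (\lambda-\lambda_1(m))\intO m\varphi_1^p\,dx+\intO f\,\frac{\varphi_1^p}{u^{p-1}}\,dx+\int_{\{u=0\}}\bigl(|\nabla\varphi_1|^p-\lambda m\varphi_1^p\bigr)\,dx
\]
(the case of an infinite left-hand side being disposed of at once). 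Since $q<p$, the left-hand side grows only sublinearly in the size of $u$, which is controlled through the a priori bound of Proposition~\ref{prop:bdd} (uniformly when $\eta$ ranges over a bounded set), whereas the right-hand side is bounded below in terms of the spectral gap $\lambda-\lambda_1(m)>0$ and the strictly positive quantity $\intO f\varphi_1^p/u^{p-1}\,dx$. A concavity/Young balancing of the same homogeneity type as the one behind $\eta^*_\lambda(a)$ in Proposition~\ref{prop:non-exists-01} and Remark~\ref{rem:eta} then shows that this inequality cannot hold once $|\eta|$ lies below an explicit threshold; defining $\hat\eta_\lambda$ to be minus that threshold gives the statement, with $\hat\eta_\lambda<0$ precisely because the gap term is strictly positive for $\lambda>\lambda_1(m)$.

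\textbf{Expected main obstacle.} The delicate point is the last step when $m$ is \emph{indefinite} and a genuine dead core is present: neither $\int_{\{u=0\}}(|\nabla\varphi_1|^p-\lambda m\varphi_1^p)\,dx$ nor $\int_{\{u>0\}}m\varphi_1^p\,dx$ carries a definite sign a priori, so extracting a usable lower bound on the right-hand side of the last display — together with the summability of $\int_{\{u>0\}}a\,u^{q-p}\varphi_1^p\,dx$ — requires exploiting the Picone remainder term, the vanishing of $f$ on the dead core, and the a priori bounds simultaneously. The remaining ingredients (admissibility of the test functions, the convergence steps, and Picone's inequality itself) are routine.
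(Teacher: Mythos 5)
Your first step — the case $\eta\ge 0$, which establishes $\hat\eta_\lambda\le 0$ for every $\lambda\ge\lambda_1(m)$ — matches the paper's argument essentially verbatim: the weak Harnack inequality makes $u>0$ in $\Omega$, the test function $\varphi_1^p/(u+\varepsilon)^{p-1}\in\W(\Omega)$ together with the Picone inequality yields the key estimate, and letting $\varepsilon\searrow 0$ produces a contradiction with $\lambda\ge\lambda_1(m)$. (The paper does not even need the full limit of the $f$-term; it simply bounds it below by $\frac{1}{(\|u\|_\infty+\varepsilon)^{p-1}}\intO f\varphi_1^p\,dx>0$, which is slightly more economical, but the two computations are equivalent.)

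The second step — upgrading to $\hat\eta_\lambda<0$ when $\lambda>\lambda_1(m)$ — is where you deviate from the paper and where there is a genuine gap. The paper does not attempt a direct Picone-with-remainder estimate at a fixed negative $\eta$; it runs a compactness argument instead. Concretely: suppose $\eta_n\nearrow 0$ with $\eta_n<0$ and nonnegative solutions $u_n$ of $(\mathcal{P};\lambda,\eta_n)$. If $\|\nabla u_n\|_p\to\infty$, Lemma~\ref{lem:auxil0} forces $\lambda$ to be an eigenvalue of \eqref{eq:EP} with a sign-constant eigenfunction, hence $\lambda=\lambda_1(m)$ (only the principal eigenvalue admits a sign-constant eigenfunction, \cite[Theorem~3.2]{Cuesta}), contradicting $\lambda>\lambda_1(m)$. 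So $\{u_n\}$ is bounded, Lemma~\ref{lem:convsol} gives a subsequence converging in $\W(\Omega)$ to a nonnegative solution of $(\mathcal{P};\lambda,0)=$\eqref{eq:Pfred}, and that contradicts the first step. This argument is short and sidesteps entirely the dead core, the indefiniteness of $m$, and the singularity $u^{q-p}$ near $\{u=0\}$.

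Your alternative route has several unresolved holes, which you yourself begin to acknowledge in the last paragraph. First, the integral $\int_{\{u>0\}} a\,u^{q-p}\varphi_1^p\,dx$ on the left-hand side of your display is not controlled: since $q<p$, the exponent $q-p<0$, so the integrand blows up as $u\to 0^+$, which is exactly what happens near the boundary of a dead core where $a$ may be positive; there is no a priori reason for this integral to be finite, let alone bounded uniformly over solutions. Your claim that this side ``grows only sublinearly in the size of $u$'' has the monotonicity backwards. Second, $\int_{\{u=0\}}(|\nabla\varphi_1|^p-\lambda m\varphi_1^p)\,dx$ has no sign when $m$ is indefinite — you note this, but it is not a footnote: it defeats the asserted lower bound on the right-hand side. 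Third, Proposition~\ref{prop:bdd} yields $\|u\|_\infty\le C(1+\|u\|_r)$, which is not a uniform $L^\infty$ bound until one has independently bounded $\|u\|_r$; in the paper this is supplied precisely by the compactness step you are trying to avoid. Finally, the concluding ``concavity/Young balancing'' is asserted but never carried out, and given the three issues above it is not clear it can be. The correct fix is simply to replace your entire second step with the limit argument described above.
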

	
	\begin{remark}
		In the context of Proposition~\ref{prop:non-exists-0}, it is natural to ask for assumptions on $m$, $a$, $f$ which provide more precise information on the sign of solutions of \eqref{eq:Psub}. 
		In \cite[Theorem~17 (3)]{AG},  concerning the linear case $p=2$, it is stated that the assumptions $m \geq 0$ a.e.\ in $\Omega$ and $\intO f \varphi_{1} \, dx = 0$ imply that any solution of \eqref{eq:Pfred} with  $\lambda \neq \lambda_1(m)$ is sign-changing, which should follow from the equality $(\lambda_1(m)-\lambda) \intO m u \varphi_{1} \,dx =0$, according to the proof.
		However, the nonnegativity of $m$ is not enough to make such a conclusion, since one could imagine a sign-constant solution $u$ whose support is located in the zero set of $m$ (assuming that the latter one has a nonempty interior).	
		If the stronger assumption $m > 0$ a.e.\ in $\Omega$ is imposed, then the result is indeed correct, and it is interesting to know if it is true under the original assumption that $m$ is nonnegative. 
	\end{remark}

	Finally, we discuss a sufficient assumption on $f$ guaranteeing the validity of \ref{F2}.
	The following result is a corollary of Propositions~\ref{prop:non-exists-01} and~\ref{prop:non-exists-0} with $\lambda=\lambda_1(m)$ and $\eta=0$, and it provides an improvement of \cite[Theorem~2.4]{Alleg}, \cite[Corollaire]{FGTT}, and \cite[Proposition~4.3]{GGP}.
	\begin{corollary}\label{cor:nonex}
		Let \ref{WM}, \ref{WF1} be satisfied. 
		Assume that $f \geq 0$ a.e.\ in $\Omega$.
		Then \ref{F2} holds.
		In particular, \eqref{eq:EPinhom} does not possess solutions.
	\end{corollary}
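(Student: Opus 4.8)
The idea is to obtain the statement as a direct specialization of Propositions~\ref{prop:non-exists-01} and~\ref{prop:non-exists-0} to the values $\lambda=\lambda_1(m)$ and $\eta=0$, for which problem~\eqref{eq:Psub} reduces exactly to~\eqref{eq:EPinhom}. Since both propositions are stated with a weight $a$ satisfying~\ref{WA}, whereas the present corollary involves no $a$, the first step is to fix an auxiliary weight, say $a\equiv 1$; it satisfies~\ref{WA}, it is nonnegative, and it plays no role once $\eta=0$.

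I would then dispatch the first half of~\ref{F2} at once: by~\ref{WF1} the source $f$ is nonnegative and nontrivial, by the properties of the first eigenfunction recalled after~\eqref{eq:lambda1} one has $\varphi_1>0$ in $\Omega$, and $f\varphi_1\in L^1(\Omega)$; hence $f\varphi_1\ge 0$ a.e.\ with $f\varphi_1>0$ on a set of positive measure, so $\intO f\varphi_1\,dx>0$.

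It remains to show that~\eqref{eq:EPinhom} has no solution. Assume $u$ is one. Applying Proposition~\ref{prop:non-exists-01} with $\lambda=\lambda_1(m)$, $\eta=0$ and $a\equiv 1$ forces $u\ge 0$ in $\Omega$; the only point to check is that $\eta=0$ belongs to the admissible range, and it does, because $\Theta(-a)=\emptyset$ for $a\equiv 1$ (a nonnegative test function cannot make $\intO(-1)|u|^q\,dx$ positive), so that $\eta^*_{\lambda_1(m)}(-a)=+\infty$ and $0\in(-\eta^*_{\lambda_1(m)}(-a),0]$. Applying Proposition~\ref{prop:non-exists-0} with $\lambda=\lambda_1(m)$, $a\equiv 1\ge 0$, $f\ge 0$ gives a threshold $\hat\eta_{\lambda_1(m)}\le 0$ beyond which~\eqref{eq:Psub} has no nonnegative solution; since $0\ge\hat\eta_{\lambda_1(m)}$, this says that~\eqref{eq:EPinhom} has no nonnegative solution. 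The two conclusions contradict each other, so~\eqref{eq:EPinhom} has no solution at all, and~\ref{F2} holds. I do not foresee any genuine difficulty here, as both ingredients are in hand; the only delicate bookkeeping is to make sure the endpoint $\eta=0$ lies in the admissible $\eta$-interval of Proposition~\ref{prop:non-exists-01}, which is precisely why choosing $a$ with $\Theta(-a)=\emptyset$, hence with critical value $+\infty$ rather than possibly $0$, is the convenient choice.
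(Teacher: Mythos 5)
Your proof is correct and follows exactly the route the paper intends: the paper states the corollary as an immediate consequence of Propositions~\ref{prop:non-exists-01} and~\ref{prop:non-exists-0} evaluated at $\lambda=\lambda_1(m)$, $\eta=0$, without spelling out the details. Your choice of auxiliary weight $a\equiv 1$ (so that $\Theta(-a)=\emptyset$ and $\eta^*_{\lambda_1(m)}(-a)=+\infty$, making $\eta=0$ admissible for Proposition~\ref{prop:non-exists-01}) and your direct verification of $\intO f\varphi_1\,dx>0$ are exactly the bookkeeping the paper leaves to the reader.
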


\subsection{Existence of solutions}\label{subsec:existence}
In order to justify that solutions of the problem \eqref{eq:Psub} whose properties we discussed in the previous subsections do exist, we provide one general result in this direction.
Observe that 
the problem \eqref{eq:Psub} is variational in the sense that it has an associated 
energy functional $\E \in C^1(\W(\Omega), \mathbb{R})$ whose critical points are  solutions of \eqref{eq:Psub}.
The functional $\E$ is defined as
\begin{equation*}
	\E(u) = \frac{1}{p}\, H_\lambda (u)
	-\frac{\eta}{q}\intO a|u|^q\,dx 
	- \intO f u \,dx, 
	\quad u \in \W(\Omega),
\end{equation*}
where 
$$
H_\lambda(u)
:=
\intO |\nabla u|^p\,dx-\lambda\intO m|u|^p\,dx.
$$
More precisely, under a solution of \eqref{eq:Psub} we mean a function $u \in \W(\Omega)$ which satisfies
\begin{equation*}
	\langle \E'(u),\xi\rangle 
	=
	\intO |\nabla u|^{p-2} \nabla u \nabla \xi \,dx
	-
	\lambda\intO m|u|^{p-2} u \xi\,dx
	-\eta\intO a|u|^{q-2}u \xi\,dx 
	- \intO f \xi \,dx = 0
\end{equation*}
for all $\xi \in \W(\Omega)$.

Let $\sigma(-\Delta_p\,;m)$ stand for the set of all eigenvalues of the problem \eqref{eq:EP}, i.e., its spectrum.
Let us denote the eigenspace corresponding to $\lambda \in \sigma(-\Delta_p\,;m)$ as $ES(\lambda;m)$, that is, 
\begin{equation}\label{eq:ES}
	ES(\lambda;m)
	:=
	\{
	u\in \W(\Omega):\, u\ {\rm is\ a\ solution\ of}\ 
	\eqref{eq:EP}
	\}.
\end{equation}
Now we are ready to formulate the existence result.
\begin{theorem}\label{thm:existence} 
	\marginnote{{\scriptsize $\qed$ \pageref{page:thm:existence:proof}}} 
	Let \ref{WM}, \ref{WA}, \ref{WF1} be satisfied. 	
	Let either of the following assumptions hold: 
	\begin{enumerate}[label={{\rm {(\roman*)}}}]	
		\addtolength{\itemindent}{0em}
		\item\label{thm:existence:1} $\lambda\not\in\sigma(-\Delta_p\,;m)$; 
		\item\label{thm:existence:2}
		$\lambda\in\sigma(-\Delta_p\,;m)$ and 
		$\eta\intO a|u|^q\,dx>0$ 
		for all $u\in ES(\lambda;m)\setminus\{0\}$; 
		\item\label{thm:existence:3}
		$\lambda\in\sigma(-\Delta_p\,;m)$ and 
		$\eta\intO a|u|^q\,dx<0$ 
		for all $u\in ES(\lambda;m)\setminus\{0\}$. 
	\end{enumerate}
	Then $\E$ has at least one critical point, i.e., the problem \eqref{eq:Psub} has at least one solution. 
\end{theorem}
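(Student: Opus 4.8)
\begin{proof*}{Theorem~\ref{thm:existence} (plan)}
Since the critical points of $\E\in C^1(\W(\Omega),\mathbb{R})$ are exactly the solutions of \eqref{eq:Psub}, it suffices to produce one critical point, and the plan is to do so by the linking (minimax) method once the Palais--Smale/Cerami condition has been verified. The structural fact underlying all three cases is that, because the perturbation is strictly subhomogeneous ($q<p$) and the source term is affine (of order $1<p$), the functional $\E$ is an \emph{asymptotically $p$-homogeneous} perturbation of $u\mapsto\frac1p H_\lambda(u)$: for each fixed $u\ne 0$ one has $\E(tu)/t^p\to\frac1p H_\lambda(u)$ as $t\to+\infty$, while $\E(tu)\to-\infty$ whenever $H_\lambda(u)<0$, and also whenever $H_\lambda(u)=0$ together with $\eta\intO a|u|^q>0$ (or $\eta\intO a|u|^q=0$ and $\intO fu\ne0$). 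In particular, on the cone $\mathbb{R}^+v$ with $v\in ES(\lambda;m)$ we have $H_\lambda(v)=0$, so $\E(tv)=-\frac{\eta t^q}{q}\intO av^q-t\intO fv$, which is precisely the quantity controlled by the sign assumptions in \ref{thm:existence:2}, \ref{thm:existence:3}.

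First I would verify that $\E$ satisfies the Cerami condition. Let $(u_n)$ be a Cerami sequence. To see that it is bounded, suppose $\|\nabla u_n\|_p\to\infty$ and put $v_n:=u_n/\|\nabla u_n\|_p$. Testing $\E'(u_n)$ with $u_n$ and dividing by $\|\nabla u_n\|_p^p$, the subhomogeneous and affine terms become negligible, so $H_\lambda(v_n)\to0$; hence $1=\|\nabla v_n\|_p^p=H_\lambda(v_n)+\lambda\intO m|v_n|^p$ forces, along a subsequence, $v_n\rightharpoonup v_0$ with $\lambda\intO m|v_0|^p=1$, so $v_0\ne0$. Testing $\E'(u_n)$ with $v_0$ and with arbitrary $\xi$, rescaling, and invoking the $(S_+)$ property of $-\Delta_p$ together with the compactness of the lower-order terms (guaranteed by \ref{WM}, \ref{WA}) gives $v_n\to v_0$ strongly and $v_0\in ES(\lambda;m)\setminus\{0\}$. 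In case \ref{thm:existence:1} this contradicts $\lambda\notin\sigma(-\Delta_p\,;m)$. In cases \ref{thm:existence:2}, \ref{thm:existence:3}, combining $\E(u_n)=O(1)$ with $\langle\E'(u_n),u_n\rangle=o(1)$ eliminates the $p$-homogeneous part and yields $\eta\intO a|u_n|^q=O(\|\nabla u_n\|_p)$; but $\intO a|u_n|^q=\|\nabla u_n\|_p^q\intO a|v_n|^q$ with $\eta\intO a|v_n|^q\to\eta\intO a|v_0|^q\ne0$, so the left-hand side is of order $\|\nabla u_n\|_p^q$ with a fixed sign, which is impossible since $q>1$. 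Thus $(u_n)$ is bounded, and then the $(S_+)$ property of $-\Delta_p$ and the compactness of the remaining terms produce a strongly convergent subsequence.

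Next I would produce a critical point. If $\lambda<\lambda_1(m)$ (and, when $m_-\not\equiv0$, also $\lambda>-\lambda_1(-m)$), then $H_\lambda(u)\ge c\|\nabla u\|_p^p$, so $\E$ is coercive and weakly lower semicontinuous (the terms $\intO a|u|^q$ and $\intO fu$ being of lower order), and it attains its infimum. In the remaining range $H_\lambda$ is indefinite --- in \ref{thm:existence:2}, \ref{thm:existence:3} even degenerate along $ES(\lambda;m)$ --- and I would set up a linking geometry inherited from $\frac1p H_\lambda$: $\E$ is coercive and bounded below on the ``$H_\lambda\ge0$'' part of $\W(\Omega)$ and tends to $-\infty$ on a suitable finite-dimensional ``$H_\lambda\le0$'' part, which in \ref{thm:existence:2}, \ref{thm:existence:3} is enlarged to include the relevant eigenfunction cone, as noted above, thanks to the sign assumption. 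For $p=2$ these are the linear spectral subspaces and the classical saddle point / linking theorem of Rabinowitz applies at once; for $p\ne2$, where $\W(\Omega)$ does not split linearly along the spectrum, the linking has to be realized topologically, via the Krasnoselskii genus or a cohomological index of the relevant sub- and super-level sets of $H_\lambda$. Combined with the Cerami condition, this yields a critical value, hence a critical point of $\E$.

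The main obstacle is the geometry for $p\ne2$: the set $\{H_\lambda\le0\}$ is only a cone, $\W(\Omega)$ has no linear spectral decomposition, and in the resonant cases \ref{thm:existence:2}, \ref{thm:existence:3} the degeneracy of $H_\lambda$ on $ES(\lambda;m)$ must be compensated exactly by the sign of $\eta\intO a|u|^q$, both in the linking construction and in the boundedness step above. For case \ref{thm:existence:1} one may bypass minimax altogether: realizing the solutions of \eqref{eq:Psub} as fixed points of the compact map $(-\Delta_p)^{-1}\circ N$ with $N(u)=\lambda m|u|^{p-2}u+\eta a|u|^{q-2}u+f$, the normalization argument above gives an a priori bound, and homotoping away the two lower-order terms of $N$ without moving $\lambda$ leads to the odd map $(-\Delta_p)^{-1}(\lambda m|\cdot|^{p-2}\cdot)$, whose only fixed point is $0$ since $\lambda\notin\sigma(-\Delta_p\,;m)$; by Borsuk's theorem the Leray--Schauder degree on a large ball is then odd, hence nonzero, and a solution exists.
\end{proof*}
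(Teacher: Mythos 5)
Your high-level plan (Palais--Smale/Cerami compactness plus a linking construction over the ``cones'' $\{H_\lambda \gtrless 0\}$) is indeed the paper's strategy, your compactness argument matches Lemma~\ref{lem:PS}, and the Borsuk/Leray--Schauder degree alternative you offer for case~\ref{thm:existence:1} is a perfectly valid shortcut that the paper does not use. But the resonant cases \ref{thm:existence:2} and \ref{thm:existence:3} are where the real content lies, and there your proposal has two genuine gaps. First, the geometry is misstated: along a cone $\mathbb{R}^+v$ with $v\in ES(\lambda;m)$ you have $\E(tv)=-\tfrac{\eta t^q}{q}\intO a|v|^q-t\intO fv$, so this tends to $-\infty$ only under~\ref{thm:existence:2}; under~\ref{thm:existence:3} it tends to $+\infty$, so the kernel cone must be absorbed into the \emph{coercive} side, not the negative set as you write. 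Second, and more seriously, a direct linking at the resonant value $\lambda=\lambda_k(m)$ cannot be built from the standard Dr\'abek--Robinson data alone: the optimal odd map $h_0$ over $S^{k-1}$ only guarantees $\max_{z}\|\nabla h_0(z)\|_p^p<\lambda_k(m)+\varepsilon$, hence $H_{\lambda_k(m)}(h_0(z))$ can be \emph{positive} for some $z$, and then $\E(Th_0(z))\to+\infty$. To fix this you would need a uniform estimate saying that whenever $H_{\lambda_k(m)}(h_0(z))$ is small, the point $h_0(z)$ is close to $ES(\lambda_k(m);m)$ and the sign condition on $\eta\intO a|\cdot|^q$ still dominates --- a nontrivial continuity-compactness argument that your proposal does not supply. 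The paper avoids this entirely: it approximates $\lambda$ by a nonresonant sequence $\lambda_n$, builds the linking (via the variational eigenvalues $\lambda_k(m)$ and the odd-map intersection Lemma~\ref{lem:link}, not genus or a cohomological index) for each $\lambda_n$, shows the resulting critical levels $c_n$ are uniformly bounded, and then extracts an approximate critical sequence which the sign condition keeps bounded, so Lemma~\ref{lem:PS0} produces a critical point of $\E$. That approximation step is the key idea missing from your plan.
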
 

The proof of Theorem \ref{thm:existence} is based on the linking method, and we refer to \cite{BobkovTanaka2019,Tanaka,TM} for related results.

\begin{remark}\label{rem:existence}
	In Theorem~\ref{thm:existence}, as well as in auxiliary Lemmas~\ref{lem:PS0}, \ref{lem:PS}, \ref{lem:convsol} below, the assumptions \ref{WA} and \ref{WF1} are not optimal,  but we keep them for simplicity and uniformity with other results. 
	In fact, Theorem~\ref{thm:existence} and Lemmas~\ref{lem:PS0}, \ref{lem:PS}, \ref{lem:convsol} remain valid if, instead of \ref{WA}, we assume $a \in L^\gamma(\Omega) \setminus\{0\}$ for some $\gamma > Np/(N(p-q)+pq)$ if $N \geq p$ and $\gamma=1$ if $N <p$, and, instead of \ref{WF1}, we assume $f \in (\W(\Omega))^*\setminus \{0\}$. 
\end{remark}

\medskip
The rest of the article has the following structure.
In Section~\ref{sec:qual}, we provide several auxiliary assertions needed to prove our main results.
Section~\ref{sec:mpampsubsets} contains two propositions about ``local'' versions of the MP and AMP on compact subsets of $\Omega$. These results are used to prove Theorem~\ref{thm1-w} but also have an independent interest.
In Section~\ref{sec:proofs}, we give the proofs of all our main results regarding qualitative properties of solutions.
Theorem~\ref{thm:existence} on the existence of solutions is proved in Section~\ref{sec:existence}.
Appendix~\ref{sec:regularity} contains two regularity results which we often employ in the arguments.
Finally, in Appendix~\ref{sec:picone}, we provide a version of the Picone inequality which is convenient to apply in the weak settings.

\section{Few auxiliary results}\label{sec:qual}
In this section, we collect several auxiliary assertions 
needed to prove our main results stated in Section~\ref{sec:main} and also Section~\ref{sec:mpampsubsets}.

\subsection{Convergences}\label{subsec:conv}

We start with compactness-type results.
Recall the notation \eqref{eq:ES} for the eigenspace $ES(\lambda;m)$ corresponding to an eigenvalue $\lambda \in \sigma(-\Delta_p\,;m)$.
We will use the notation
\begin{equation}\label{eq:pstar}
p^* := \frac{Np}{N-p} 
~~\text{if}~~ N>p
\quad \text{and} \quad 
p^* := \infty 
~~\text{if}~~ N \leq p,
\end{equation}
and denote by $\|\cdot\|_*$ the operator norm.
\begin{lemma}\label{lem:PS0}
	Let \ref{WM}, \ref{WA}, \ref{WF1} be satisfied. 
	Let $\{u_n\}$ be a bounded Palais--Smale sequence for $\E$. 
	Then $\{u_n\}$ converges in $\W(\Omega)$ to a critical point of $\E$, up to a subsequence.
\end{lemma}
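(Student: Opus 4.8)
The plan is to follow the standard $(S_+)$-type argument for the $p$-Laplacian. Let $\{u_n\}$ be a bounded Palais--Smale sequence for $\E$. Since $\W(\Omega)$ is reflexive, after passing to a subsequence we may assume $u_n \rightharpoonup u$ weakly in $\W(\Omega)$ for some $u \in \W(\Omega)$. By the compactness of the Sobolev embeddings $\W(\Omega) \hookrightarrow L^{p}(\Omega, |m|\,dx)$ and $\W(\Omega) \hookrightarrow L^{q}(\Omega, |a|\,dx)$ --- which hold under \ref{WM} and \ref{WA} by the choice of the integrability exponent $\gamma$ (this is the reason those assumptions are imposed; cf.\ the discussion of \eqref{eq:lambda1} and Propositions~\ref{prop:bdd}, \ref{prop:C0-reg}) --- we get, up to a further subsequence, $u_n \to u$ strongly in $L^{p}(\Omega, |m|\,dx)$ and in $L^{q}(\Omega, |a|\,dx)$, and $u_n \to u$ a.e.\ in $\Omega$.

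Next I would test the derivative against $u_n - u$. Since $\{u_n\}$ is bounded and $\E'(u_n) \to 0$ in $(\W(\Omega))^*$, we have $\langle \E'(u_n), u_n - u\rangle \to 0$. Writing this out,
\begin{equation*}
	\intO |\nabla u_n|^{p-2}\nabla u_n \cdot \nabla(u_n - u)\,dx
	=
	\langle \E'(u_n), u_n - u\rangle
	+ \lambda \intO m |u_n|^{p-2} u_n (u_n - u)\,dx
	+ \eta \intO a |u_n|^{q-2} u_n (u_n - u)\,dx
	+ \intO f (u_n - u)\,dx.
\end{equation*}
The first term on the right tends to $0$ by Palais--Smale; the second and third tend to $0$ by the strong $L^p(|m|\,dx)$ and $L^q(|a|\,dx)$ convergence together with H\"older's inequality (the sequences $\{|u_n|^{p-2}u_n\}$, $\{|u_n|^{q-2}u_n\}$ being bounded in the appropriate dual Lebesgue spaces); the last term tends to $0$ since $f \in (\W(\Omega))^*$ (by \ref{WF1} and Sobolev embedding) and $u_n - u \rightharpoonup 0$. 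Hence $\intO |\nabla u_n|^{p-2}\nabla u_n \cdot \nabla(u_n - u)\,dx \to 0$. Combining this with the trivially valid $\intO |\nabla u|^{p-2}\nabla u \cdot \nabla(u_n - u)\,dx \to 0$, the $(S_+)$ property of the negative $p$-Laplacian on $\W(\Omega)$ --- namely that $u_n \rightharpoonup u$ together with $\limsup_n \langle -\Delta_p u_n, u_n - u\rangle \le 0$ forces $u_n \to u$ strongly --- yields $u_n \to u$ in $\W(\Omega)$.

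Finally, strong convergence $u_n \to u$ and continuity of $\E' \colon \W(\Omega) \to (\W(\Omega))^*$ give $\E'(u_n) \to \E'(u)$, while $\E'(u_n) \to 0$ by hypothesis; therefore $\E'(u) = 0$, i.e., $u$ is a critical point of $\E$. I do not expect any of these steps to present a serious obstacle --- the only point requiring care is the verification that the compact embeddings into the weighted Lebesgue spaces hold under the precise exponent ranges in \ref{WM} and \ref{WA} (and the passage to the limit in the lower-order terms via H\"older with the conjugate exponents), which is routine given $\gamma > N/p$ for $N \ge p$. The $(S_+)$ property is classical and may be quoted.
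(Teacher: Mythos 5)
Your proposal is correct and follows essentially the same route as the paper: pass to a weakly convergent subsequence, test $\E'(u_n)\to 0$ against $u_n-u$, kill the lower-order terms (the paper does this via strong $L^r(\Omega)$ convergence with $q\gamma/(\gamma-1)<p\gamma/(\gamma-1)<p^*$ and H\"older, which is the same content as your weighted compact embeddings), and conclude by the $(S_+)$-property of the $p$-Laplacian.
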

\begin{proof}
	The proof is standard, but since similar arguments are used in several subsequent lemmas, we provide a few details here in order to skip them later.
	In view of the boundedness in $\W(\Omega)$, the sequence $\{u_n\}$ 
	converges (along a subsequence) to some $u_0 \in \W(\Omega)$ weakly in $\W(\Omega)$ and strongly in $L^r(\Omega)$ whenever $1 \leq r < p^*$ if $N \geq p$ and $1 \leq r \leq p^*$ if $N < p$.
	The convergence $\|\E'(u_n)\|_* \to 0$ implies that
	\begin{align*}
		\intO |\nabla u_n|^{p-2}\nabla u_n\nabla (u_n-u_0)\,dx
		&-
		\lambda \intO m |u_n|^{p-2} u_n (u_n-u_0)\,dx\\
		&-\eta \intO a |u_n|^{q-2} u_n (u_n-u_0)\,dx 
		- \intO f (u_n-u_0) \,dx 
		\to 0 
	\end{align*}
	as $n\to \infty$. 
	Since the assumptions \ref{WM}, \ref{WA}, \ref{WF1} give the inequalities
	$q\gamma/(\gamma-1)<p\gamma/(\gamma-1)<p^*$ when $N \geq p$, we deduce that
	\begin{equation}\label{eq:lem:converg1}
	\intO m |u_n|^{p-2}u_n(u_n-u_0)\,dx \to 0,
	\quad
	\intO a |u_n|^{q-2}u_n(u_n-u_0)\,dx \to 0,
	\quad
	\intO f (u_n-u_0) \, dx \to 0
	\end{equation}
as $n \to \infty$, which yields
	$$
	\lim_{n \to \infty} \intO |\nabla u_n|^{p-2} \nabla u_n \nabla(u_n-u_0) \,dx = 0.
	$$
	Therefore, the $(S_+)$-property of the $p$-Laplacian (see, e.g., \cite[Theorem 10]{dinica}) guarantees that $u_n \to u_0$ strongly in $\W(\Omega)$, and hence we easily conclude that $u_0$ is a critical point of $\E$.
\end{proof}

\begin{lemma}\label{lem:PS} 
	Let \ref{WM}, \ref{WA}, \ref{WF1} be satisfied. 
	Let either of the assumptions \ref{thm:existence:1}, \ref{thm:existence:2}, \ref{thm:existence:3} of Theorem~\ref{thm:existence} holds. 
	Then $\E$ satisfies the Palais--Smale condition. 
\end{lemma}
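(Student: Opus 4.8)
The plan is to reduce the assertion to the boundedness of Palais--Smale sequences for $\E$: once that is known, Lemma~\ref{lem:PS0} immediately supplies the Palais--Smale condition, since it guarantees that a bounded Palais--Smale sequence has a subsequence converging strongly in $\W(\Omega)$ to a critical point. Arguing by contradiction, I would assume that there is a Palais--Smale sequence $\{u_n\}$ with $\|\nabla u_n\|_p \to \infty$ (after passing to a subsequence) and normalize it: put $v_n := u_n/\|\nabla u_n\|_p$, so that $\|\nabla v_n\|_p = 1$, and pass to a further subsequence along which $v_n \rightharpoonup v_0$ weakly in $\W(\Omega)$ and $v_n \to v_0$ strongly in $L^r(\Omega)$ for the exponents $r$ used in the proof of Lemma~\ref{lem:PS0} (i.e.\ $1 \leq r < p^*$, with $r = p^*$ admissible when $N < p$). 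As there, the assumptions \ref{WM}, \ref{WA}, \ref{WF1} give $p\gamma/(\gamma-1) < p^*$ when $N \geq p$, so the quantities $\intO a |v_n|^q \,dx$, $\intO f v_n \,dx$, $\intO m |v_n|^{p-2} v_n (v_n - v_0)\,dx$ and $\intO a |v_n|^{q-2} v_n (v_n - v_0)\,dx$ are bounded, the last two tending to $0$.

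First I would read off the ``linear part'' of $v_n$. Using the $p$-homogeneity of the two terms of $H_\lambda$ and dividing $\E(u_n) = O(1)$ by $\|\nabla u_n\|_p^p$ yields
\[
\frac1p\, H_\lambda(v_n) - \frac{\eta}{q}\, \|\nabla u_n\|_p^{q-p} \intO a |v_n|^q \,dx - \|\nabla u_n\|_p^{1-p} \intO f v_n \,dx \longrightarrow 0,
\]
and since $q < p$ and $p > 1$ the last two terms vanish, whence $H_\lambda(v_n) \to 0$. Next I would test $\E'(u_n)$ against $v_n - v_0$ (which is bounded in $\W(\Omega)$), divide by $\|\nabla u_n\|_p^{p-1}$, and again use $q < p$, $p > 1$ to discard the terms carrying the negative powers $\|\nabla u_n\|_p^{q-p}$, $\|\nabla u_n\|_p^{1-p}$, together with $\intO m |v_n|^{p-2} v_n (v_n - v_0)\,dx \to 0$; what survives is $\intO |\nabla v_n|^{p-2} \nabla v_n \cdot \nabla(v_n - v_0)\,dx \to 0$. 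The $(S_+)$-property of $-\Delta_p$ then forces $v_n \to v_0$ strongly in $\W(\Omega)$, so $\|\nabla v_0\|_p = 1$ and $v_0 \neq 0$; passing to the limit in $\langle \E'(u_n), \xi\rangle / \|\nabla u_n\|_p^{p-1} \to 0$ for a fixed $\xi \in \W(\Omega)$ shows that $v_0$ is a weak solution of \eqref{eq:EP}, i.e.\ $\lambda \in \sigma(-\Delta_p\,;m)$ and $v_0 \in ES(\lambda;m) \setminus \{0\}$. This already contradicts assumption \ref{thm:existence:1}.

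To rule out the resonant cases \ref{thm:existence:2} and \ref{thm:existence:3}, I would use the additional identity
\[
p\, \E(u_n) - \langle \E'(u_n), u_n\rangle = \eta\left(1 - \tfrac{p}{q}\right) \intO a |u_n|^q \,dx - (p-1)\intO f u_n \,dx.
\]
Its left-hand side is $O(1) + o(\|\nabla u_n\|_p)$, hence tends to $0$ after division by $\|\nabla u_n\|_p^q$ because $q > 1$; on the right-hand side, $\intO a |u_n|^q\,dx = \|\nabla u_n\|_p^q \intO a |v_n|^q\,dx$ while $\|\nabla u_n\|_p^{-q}\intO f u_n\,dx = \|\nabla u_n\|_p^{1-q} \intO f v_n\,dx \to 0$. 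Since $1 - p/q \neq 0$, this gives $\eta \intO a |v_n|^q\,dx \to 0$, hence $\eta \intO a |v_0|^q\,dx = 0$ by the strong convergence $v_n \to v_0$. As $v_0 \in ES(\lambda;m) \setminus \{0\}$, this contradicts both \ref{thm:existence:2} and \ref{thm:existence:3}. In all three cases the contradiction proves that every Palais--Smale sequence is bounded, and the lemma follows from Lemma~\ref{lem:PS0}. The normalization step is routine; the part requiring care is the resonant cases, where one must combine $\E(u_n)$ and $\langle \E'(u_n), u_n\rangle$ in exactly the right proportion to cancel $H_\lambda$ and then rescale by $\|\nabla u_n\|_p^q$ rather than by $\|\nabla u_n\|_p^p$, which is what keeps the subhomogeneous term alive in the limit while everything else is washed out.
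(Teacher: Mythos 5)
Your proof is correct and follows essentially the same route as the paper: reduce to boundedness via Lemma~\ref{lem:PS0}, normalize a divergent Palais--Smale sequence, show the normalized sequence converges strongly (via the $(S_+)$-property) to a nontrivial eigenfunction $v_0 \in ES(\lambda;m)$, and then — for the resonant cases — divide $p\E(u_n) - \langle\E'(u_n),u_n\rangle$ by $\|\nabla u_n\|_p^q$ to conclude $\eta\intO a|v_0|^q\,dx = 0$, contradicting \ref{thm:existence:2} or \ref{thm:existence:3}. The only difference is cosmetic: the paper outsources the strong-convergence-to-an-eigenfunction step to a citation of \cite[Lemma~2.24]{BobkovTanaka2021}, whereas you spell it out inline, and your initial computation that $H_\lambda(v_n)\to 0$ is a harmless detour that is not actually used downstream.
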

\begin{proof} 
	Let $\{u_n\} \subset \W(\Omega)$ be a Palais--Smale sequence for $\E$. 
	In view of Lemma~\ref{lem:PS0}, it is sufficient to show that $\{u_n\}$ is bounded. 
	Suppose, by contradiction, that $\|\nabla u_n\|_p \to \infty$  along a subsequence. Then, arguing in the same way as in \protect{\cite[Lemma 2.24]{BobkovTanaka2021}}, 
	we see that the sequence consisted of normalized functions
	$v_n = u_n/\|\nabla u_n\|_p$ converges strongly in $\W(\Omega)$ to an eigenfunction $v_0\in ES(\lambda;m)\setminus \{0\}$, up to a subsequence.  
	Hence, we obtain a contradiction whenever 
	$\lambda\not\in\sigma(-\Delta_p;m)$. 
	Assume now that either the assumption \ref{thm:existence:2} or \ref{thm:existence:3} of Theorem~\ref{thm:existence} holds. 
	Letting $n\to\infty$ in 
	\begin{align*} 
		o(1) 
		&=\frac{1}{\|\nabla u_n\|_p^q}
		\left(p\E(u_n) -\left< \E^\prime (u_n),u_n\right>\right) \\
		&=\left(1-\frac{p}{q}\right)\eta \intO a |v_n|^q\,dx 
		-\frac{p-1}{\|\nabla u_n\|_p^{q-1}}\,
		\intO f v_n \, dx,
	\end{align*}
	we get 
	$\eta \intO a|v_0|^q\,dx=0$, which is impossible. 
	Thus, $\{u_n\}$ is bounded in $\W(\Omega)$, which completes the proof in view of Lemma~\ref{lem:PS0}.
\end{proof}

\begin{lemma}\label{lem:convsol}
	Let \ref{WM}, \ref{WA}, \ref{WF1} be satisfied.
	Let $\{\lambda_n\}, \{\eta_n\} \subset \mathbb{R}$ be arbitrary convergent sequences. 
	Denote $\lambda := \lim_{n \to \infty} \lambda_n$ and $\eta := \lim_{n \to \infty} \eta_n$. 
	Let $u_n \in \W(\Omega)$ be a solution of {\renewcommand{\Plef}{\mathcal{P};\lambda_n,\eta_n}\eqref{eq:Psub}}. 
	If $\{u_n\}$ is bounded in $\W(\Omega)$, then it converges in $\W(\Omega)$ to a solution of {\renewcommand{\Plef}{\mathcal{P};\lambda,\eta}\eqref{eq:Psub}}, up to a subsequence. 
\end{lemma}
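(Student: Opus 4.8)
The plan is to repeat the compactness argument of Lemma~\ref{lem:PS0}, the only new feature being that the parameters depend on $n$; since $\{\lambda_n\}$ and $\{\eta_n\}$ converge, and in particular are bounded, this causes no essential difficulty. First I would use the boundedness of $\{u_n\}$ in $\W(\Omega)$ to pass to a subsequence (not relabeled) along which $u_n$ converges to some $u_0 \in \W(\Omega)$ weakly in $\W(\Omega)$ and strongly in $L^r(\Omega)$ for every $r$ with $1 \le r < p^*$ if $N \ge p$ and $1 \le r \le p^*$ if $N < p$, by the Rellich--Kondrachov theorem.

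Next, I would insert the admissible test function $u_n - u_0 \in \W(\Omega)$ into the weak formulation of {\renewcommand{\Plef}{\mathcal{P};\lambda_n,\eta_n}\eqref{eq:Psub}}, which gives
\[
\intO |\nabla u_n|^{p-2}\nabla u_n \nabla (u_n-u_0)\,dx
= \lambda_n \intO m |u_n|^{p-2}u_n(u_n-u_0)\,dx
+ \eta_n \intO a |u_n|^{q-2}u_n(u_n-u_0)\,dx
+ \intO f(u_n-u_0)\,dx .
\]
Exactly as in the proof of Lemma~\ref{lem:PS0}, the H\"older inequality together with the integrability exponents supplied by \ref{WM}, \ref{WA}, \ref{WF1} (which yield $q\gamma/(\gamma-1) < p\gamma/(\gamma-1) < p^*$ when $N \ge p$) show that each of the three integrals $\intO m |u_n|^{p-2}u_n(u_n-u_0)\,dx$, $\intO a |u_n|^{q-2}u_n(u_n-u_0)\,dx$, $\intO f(u_n-u_0)\,dx$ tends to $0$; combined with the boundedness of $\{\lambda_n\}$ and $\{\eta_n\}$, this forces the right-hand side above to $0$. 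Hence $\lim_{n\to\infty}\intO |\nabla u_n|^{p-2}\nabla u_n \nabla(u_n-u_0)\,dx = 0$, and the $(S_+)$-property of the $p$-Laplacian (see, e.g., \cite[Theorem~10]{dinica}) yields $u_n \to u_0$ strongly in $\W(\Omega)$.

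It remains to pass to the limit in the weak formulation. For a fixed $\xi \in \W(\Omega)$, the strong convergence $u_n \to u_0$ in $\W(\Omega)$ gives $\intO |\nabla u_n|^{p-2}\nabla u_n \nabla\xi\,dx \to \intO |\nabla u_0|^{p-2}\nabla u_0 \nabla\xi\,dx$ by continuity of the operator $-\Delta_p \colon \W(\Omega) \to (\W(\Omega))^*$, while, splitting $\lambda_n\intO m |u_n|^{p-2}u_n\xi\,dx - \lambda\intO m|u_0|^{p-2}u_0\xi\,dx$ as $(\lambda_n-\lambda)\intO m|u_n|^{p-2}u_n\xi\,dx + \lambda\intO m\bigl(|u_n|^{p-2}u_n-|u_0|^{p-2}u_0\bigr)\xi\,dx$ and estimating each summand via H\"older's inequality with the exponents from \ref{WM}, one gets $\lambda_n\intO m|u_n|^{p-2}u_n\xi\,dx \to \lambda\intO m|u_0|^{p-2}u_0\xi\,dx$; the $\eta_n$-term is handled identically using \ref{WA}, and the $f$-term does not depend on $n$. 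Therefore $\langle \E'(u_0),\xi\rangle = 0$ for all $\xi \in \W(\Omega)$, i.e., $u_0$ is a solution of {\renewcommand{\Plef}{\mathcal{P};\lambda,\eta}\eqref{eq:Psub}}. I expect the only mildly delicate point to be precisely this simultaneous passage to the limit in the lower-order terms, where both the coefficient and the nonlinear integrand vary with $n$; the splitting just indicated, together with the uniform integrability bounds already exploited in Lemma~\ref{lem:PS0}, settles it.
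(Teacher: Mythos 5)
Your proof is correct, and it rests on exactly the same technical ingredients as the paper's: the Rellich--Kondrachov compactness, the $(S_+)$-property of $-\Delta_p$, and the H\"older-type estimates permitted by \ref{WM}, \ref{WA}, \ref{WF1}. The paper, however, packages the argument differently. Instead of re-running the $(S_+)$ computation with $n$-dependent coefficients and then passing to the limit in the weak formulation term by term, the paper observes that since $u_n$ is an \emph{exact} critical point of $E_{\lambda_n,\eta_n}$, one has $\|\E'(u_n)\|_* = \|\E'(u_n) - E_{\lambda_n,\eta_n}'(u_n)\|_*$, which is bounded by $C|\lambda-\lambda_n|\,\|m\|_\gamma\|u_n\|_{p\gamma/(\gamma-1)}^{p-1} + C|\eta-\eta_n|\,\|a\|_\gamma\|u_n\|_{q\gamma/(\gamma-1)}^{q-1} \to 0$. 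Thus $\{u_n\}$ is a bounded Palais--Smale sequence for the \emph{limiting} functional $\E$, and Lemma~\ref{lem:PS0} applies verbatim. This is more economical: the entire weak-limit-passage step (your last paragraph) is absorbed into the phrase ``critical point of $\E$'' in Lemma~\ref{lem:PS0}, so nothing needs to be rechecked. Your direct route is perfectly valid, but notice it essentially reproves Lemma~\ref{lem:PS0} from scratch; recognizing that $u_n$ is an approximate critical point of $\E$ is what lets the paper invoke it instead.
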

\begin{proof}
	Since $\{\lambda_n\}$ and $\{\eta_n\}$ are convergent and $\{u_n\}$ is bounded in $\W(\Omega)$, we see that $\{E_{\lambda,\eta}(u_n)\}$ is bounded.
	Noting that $u_n$ is a critical point of $E_{\lambda_n,\eta_n}$, we have
	\begin{align}
		\|\E^\prime(u_n)\|_*
		&= 
		\|\E^\prime(u_n)-E_{\lambda_n,\eta_n}^\prime(u_n)\|_*
		\\
		&=
		\sup
		\left\{
		\frac{-(\lambda-\lambda_n)\intO m u_n^{p-1} v\,dx - (\eta-\eta_n)\intO a u_n^{q-1} v\,dx}{\|\nabla v\|_p}:\, v \in \W(\Omega) \setminus \{0\}
		\right\}
		\\
		\label{eq:Enorm}
		&\leq 
		C |\lambda-\lambda_n| \|m\|_\gamma \|u_n\|_{p\gamma/(\gamma-1)}^{p-1} 
		+
		C |\eta-\eta_n| \|a\|_\gamma \|u_n\|_{q\gamma/(\gamma-1)}^{q-1},
	\end{align}
	where $C>0$ does not depend on $u_n$.
	Thanks to the assumptions \ref{WM} and \ref{WA}, we see that $\{u_n\}$ is a bounded Palais--Smale sequence for $\E$.
	Hence, Lemma~\ref{lem:PS0} guarantees that $\{u_n\}$ converges in $\W(\Omega)$ to a critical point of $\E$, up to a subsequence.
	This critical point is a solution of \eqref{eq:Psub}.
\end{proof}

\begin{lemma}\label{lem:auxil0}
	Let \ref{WM}, \ref{WA}, \ref{WF1} be satisfied.
	Let $\{\lambda_n\}, \{\eta_n\} \subset \mathbb{R}$ be arbitrary convergent sequences. 
	Let $u_n \in \W(\Omega)$ be a solution of {\renewcommand{\Plef}{\mathcal{P};\lambda_n,\eta_n}\eqref{eq:Psub}}.
	If $\|\nabla u_n\|_p \to \infty$, then  $\|u_n\|_\infty \to \infty$, 
	the sequence $\{\lambda_n\}$ converges to an eigenvalue $\lambda$ of the problem \eqref{eq:EP}, 
	and the normalized sequence $\{u_n/\|u_n\|_\infty\}$ converges in $\W(\Omega)$ and $C^0_{\mathrm{loc}}(\Omega)$ to an eigenfunction associated with the eigenvalue $\lambda$, up to a subsequence. 
\end{lemma}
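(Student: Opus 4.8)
The plan is to run a rescaling argument: extract an eigenfunction from the $\W(\Omega)$-normalized sequence $v_n:=u_n/\|\nabla u_n\|_p$, and then transport the conclusion to the $L^\infty$-normalization using the identity $w_n:=u_n/\|u_n\|_\infty=v_n/\|v_n\|_\infty$ (which follows from $\|u_n\|_\infty=\|\nabla u_n\|_p\,\|v_n\|_\infty$). First I would dispose of the claim $\|u_n\|_\infty\to\infty$ by contraposition: testing the equation for $u_n$ with $\xi=u_n$ gives
\[
\|\nabla u_n\|_p^p=\lambda_n\intO m|u_n|^p\,dx+\eta_n\intO a|u_n|^q\,dx+\intO f u_n\,dx ,
\]
and if $\|u_n\|_\infty$ stayed bounded along a subsequence, the right-hand side would be bounded by $C\bigl(|\lambda_n|\,\|m\|_1+|\eta_n|\,\|a\|_1+\|f\|_1\bigr)$, using $L^\gamma(\Omega)\hookrightarrow L^1(\Omega)$ and the boundedness of $\{\lambda_n\},\{\eta_n\}$, which contradicts $\|\nabla u_n\|_p\to\infty$.

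Next I would carry out the rescaling. Since $\|\nabla v_n\|_p=1$, the sequence $\{v_n\}$ is bounded in $\W(\Omega)$, so, up to a subsequence, $v_n\rightharpoonup v_0$ in $\W(\Omega)$ and strongly in $L^r(\Omega)$ in the range admitted by \ref{WM}, \ref{WA}, \ref{WF1}. Dividing the equation for $u_n$ by $\|\nabla u_n\|_p^{p-1}$, one sees that $v_n$ solves a problem of type \eqref{eq:Psub} with spectral parameter $\lambda_n$, perturbation parameter $\eta_n\|\nabla u_n\|_p^{q-p}$, and source $\|\nabla u_n\|_p^{1-p}f$; since $q<p$ and $\|\nabla u_n\|_p\to\infty$, the latter two quantities tend to $0$. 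Testing this identity with $v_n-v_0$ and using the strong $L^r$-convergence (exactly as in Lemma~\ref{lem:PS0}) annihilates all the lower-order terms, so $\intO|\nabla v_n|^{p-2}\nabla v_n\nabla(v_n-v_0)\,dx\to 0$, and the $(S_+)$-property of $-\Delta_p$ yields $v_n\to v_0$ strongly in $\W(\Omega)$. Hence $\|\nabla v_0\|_p=1$, so $v_0\neq 0$, and passing to the limit in the weak formulation gives $v_0\in ES(\lambda;m)$ with $\lambda:=\lim_n\lambda_n$; in particular $\lambda\in\sigma(-\Delta_p;m)$, which is the second assertion.

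Finally I would upgrade the convergence and transfer it to $w_n=v_n/\|v_n\|_\infty$. Since $v_n$ solves a problem of type \eqref{eq:Psub} with $\|\nabla v_n\|_p=1$ and with all remaining data uniformly bounded in the relevant norms, the quantitative form of the boundedness estimate in Proposition~\ref{prop:bdd} gives $\|v_n\|_\infty\le C$ uniformly in $n$; consequently the right-hand sides of these equations are uniformly bounded in $L^\gamma(\Omega)$, and the interior H\"older estimate of Proposition~\ref{prop:C0-reg} makes $\{v_n\}$ bounded in $C^{0,\alpha}_{\mathrm{loc}}(\Omega)$ for some $\alpha\in(0,1)$. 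By Arzel\`a--Ascoli together with the a.e.\ convergence $v_n\to v_0$, we get $v_n\to v_0$ in $C^0_{\mathrm{loc}}(\Omega)$ along a further subsequence. Exhausting $\Omega$ by compact sets shows $\liminf_n\|v_n\|_\infty\ge\|v_0\|_\infty>0$, so, passing once more to a subsequence, $\|v_n\|_\infty\to c_0$ for some $c_0>0$; therefore $w_n\to v_0/c_0$ both in $\W(\Omega)$ and in $C^0_{\mathrm{loc}}(\Omega)$, and $v_0/c_0$, being a nonzero scalar multiple of $v_0$, is an eigenfunction associated with $\lambda$.

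I expect the only delicate point to be the uniformity of the regularity bounds: one must verify that the constants produced by Propositions~\ref{prop:bdd} and~\ref{prop:C0-reg} depend on the data only through $\|m\|_\gamma,\|a\|_\gamma,\|f\|_\gamma$, $\sup_n|\lambda_n|$, $\sup_n|\eta_n|$ and $\|\nabla v_n\|_p=1$, hence are $n$-independent. A related (but harmless) subtlety is that $\|v_n\|_\infty$ need not converge to $\|v_0\|_\infty$, since the supremum of $v_n$ could a priori concentrate near $\partial\Omega$; this is exactly why I keep the auxiliary constant $c_0$ and pass to an extra subsequence rather than claim $\|v_n\|_\infty\to\|v_0\|_\infty$. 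Everything else parallels the Palais--Smale-type arguments already used in Lemmas~\ref{lem:PS0}--\ref{lem:convsol}.
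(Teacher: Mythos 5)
Your argument is correct, but it normalizes differently from the paper and this produces a genuinely alternative route to the nontriviality of the limit. The paper sets $v_n := u_n/\|u_n\|_\infty$ from the outset (so $\|v_n\|_\infty = 1$ is automatic), proves strong $\W(\Omega)$ and $C^0_{\mathrm{loc}}(\Omega)$ convergence of $v_n$ exactly as you do, and then establishes $v_0 \neq 0$ by dividing the quantitative $L^\infty$-bound of Proposition~\ref{prop:bdd}, namely $\|u_n\|_\infty \le C(1+\|u_n\|_r)$, by $\|u_n\|_\infty$ to get $1 \le C(1/\|u_n\|_\infty + \|v_n\|_r)$, which forces $\|v_0\|_r>0$. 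You instead normalize by $\|\nabla u_n\|_p$, which makes nontriviality immediate from the preserved unit Sobolev norm under strong convergence, but it obliges you to add a transfer step: show $\|v_n\|_\infty$ converges (along a subsequence) to some $c_0 > 0$ and then pass to $w_n = v_n/\|v_n\|_\infty$. That step is handled correctly; in fact you only need the simpler observation that $C^0_{\mathrm{loc}}$-convergence at a single interior point where $v_0 \neq 0$ already bounds $\|v_n\|_\infty$ away from zero, which together with the uniform bound from Proposition~\ref{prop:bdd} gives the subsequential limit $c_0 \in (0,\infty)$. Net effect: you trade the quantitative use of Proposition~\ref{prop:bdd} for nontriviality against an extra renormalization at the end. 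Both approaches rely on the same ingredients ($(S_+)$-property, Propositions~\ref{prop:bdd} and~\ref{prop:C0-reg}, Arzel\`a--Ascoli) and have comparable length; the paper's is marginally more direct because it never has to re-express the sequence in the $L^\infty$-normalized form. Your remark about the constants in Propositions~\ref{prop:bdd} and~\ref{prop:C0-reg} depending only on the indicated norm bounds is indeed the one point that must be checked, and it holds as stated in those propositions.
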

\begin{proof}
	Taking $u_n$ as a test function for {\renewcommand{\Plef}{\mathcal{P};\lambda_n,\eta_n}\eqref{eq:Psub}}, we obtain 
	\begin{align}
		\notag 
		\|\nabla u_n\|_p^p 
		&=\lambda_n\intO m|u_n|^p\,dx+\eta_n \intO a|u_n|^q\,dx+\intO fu_n\,dx \\
		\label{eq:lem:conv1}
		&\le |\lambda_n| \|m\|_1\,\|u_n\|_\infty^p 
		+|\eta_n| \|a\|_1\,\|u_n\|_\infty^q+\|f\|_1\|u_n\|_\infty.
	\end{align}
	This shows that the divergence $\|\nabla u_n\|_p \to \infty$ implies $\|u_n\|_\infty \to \infty$.	
	Consider a sequence of normalized  functions $v_n = u_n/\|u_n\|_\infty$. 
	The estimate \eqref{eq:lem:conv1} gives the boundedness of $\{v_n\}$ in $\W(\Omega)$.
	In particular, there exists $v_0 \in \W(\Omega)$ such that $v_n \to v_0$ weakly in $\W(\Omega)$. 
	Similarly to the proof of Lemma~\ref{lem:PS0}, the $(S_+)$-property of the $p$-Laplacian guarantees that $v_n \to v_0$ strongly in $\W(\Omega)$.
	If $N \geq p$, we apply Proposition~\ref{prop:bdd} to the solutions $u_n$ and, dividing the inequality \eqref{eq:linfty} by $\|u_n\|_\infty$, we get
	$$
	1 
	\leq 
	C
	\left(
	\frac{1}{\|u_n\|_\infty} + \|v_n\|_r
	\right)
	$$
	for an appropriate $r$,
	which implies that $v_0$ is nontrivial.
	The same is true if $N<p$ by applying the Morrey lemma.
	
	Let us prove the convergence $v_n \to v_0$ in $C^0_{\mathrm{loc}}(\Omega)$. 
	Denote
	\begin{equation}\label{eq:tildegnx}
		\tilde{g}_n(x) = \lambda_n \,m(x)|v_n(x)|^{p-2}v_n(x)
		+ 
		\frac{\eta_n \,a(x)|v_n(x)|^{q-2}v_n(x)}{\|u_n\|_\infty^{p-q}} + \frac{f(x)}{\|u_n\|_\infty^{p-1}},
		\quad x \in \Omega.
	\end{equation}
	That is, each $v_n$ weakly solves the problem
	\begin{equation}\label{eq:lem:v}
		-\Delta_p v_n = \tilde{g}_n(x) 
		\quad \text{in}\ \Omega, \quad 
		v_n=0 \quad \text{on}\ \partial \Omega.
	\end{equation}
	The uniform boundedness of $\|v_n\|_\infty$, the convergence of $\{\lambda_n\}$, $\{\eta_n\}$, and the assumptions \ref{WM}, \ref{WA}, \ref{WF1} guarantee the existence of $M>0$ such that $\|\tilde{g}_n\|_\gamma \leq M$ for all $n$, where $\gamma>N/p$ if $N\ge p$ and $\gamma=1$ if $N<p$.
	Consequently, 
	we infer from  Proposition~\ref{prop:C0-reg} the existence of $\beta \in (0,1)$ such that for any compact set $K \subset \Omega$ there is $C>0$ such that  $\|v_n\|_{C^{0,\beta}({K})} \leq C$ for all $n$.
	By the Arzel\`a-Ascoli theorem, $\{v_n\}$ converges to $v_0$ in $C({K})$, up to a subsequence. 
	This is the desired $C^0_{\mathrm{loc}}(\Omega)$-convergence.
	Recalling that $v_0$ is nontrivial, we conclude from \eqref{eq:tildegnx} and the strong convergence $v_n \to v_0$ in $\W(\Omega)$ that $v_0$ is an eigenfunction of the problem \eqref{eq:EP} associated with the eigenvalue $\lambda$.
\end{proof}

Under stronger regularity assumptions on the parameters of \eqref{eq:Psub}, we get the following improvement of Lemma~\ref{lem:auxil0}. 
\begin{lemma}\label{lem:auxil1}
	Let \ref{O}, \ref{M}, \ref{A}, \ref{F1} be satisfied.
	Then, in addition to the assertions of Lemma~\ref{lem:auxil0}, the normalized sequence $\{u_n/\|u_n\|_\infty\}$ converges in $C^1(\overline{\Omega})$ to an eigenfunction associated with the eigenvalue $\lambda$, up to a subsequence. 
\end{lemma}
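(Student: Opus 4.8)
The plan is to upgrade the $C^0_{\mathrm{loc}}(\Omega)$-convergence already supplied by Lemma~\ref{lem:auxil0} to convergence in $C^1(\overline{\Omega})$ by invoking the global $C^{1,\beta}(\overline{\Omega})$-regularity estimate of Proposition~\ref{prop:C1-reg}, which becomes available precisely because we now impose \ref{O}, \ref{M}, \ref{A}, \ref{F1} (so $\gamma>N$ and $\Omega$ is of class $C^{1,1}$). Since \ref{M}, \ref{A}, \ref{F1} imply \ref{WM}, \ref{WA}, \ref{WF1}, all conclusions of Lemma~\ref{lem:auxil0} remain at our disposal; in particular $\|u_n\|_\infty\to\infty$, $\lambda_n\to\lambda\in\sigma(-\Delta_p\,;m)$, and the normalized functions $v_n:=u_n/\|u_n\|_\infty$ converge, along a subsequence, in $\W(\Omega)$ and in $C^0_{\mathrm{loc}}(\Omega)$ to an eigenfunction $v_0\in ES(\lambda;m)\setminus\{0\}$.

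First I would recall from the proof of Lemma~\ref{lem:auxil0} that each $v_n$ weakly solves \eqref{eq:lem:v} with right-hand side $\tilde g_n$ given by \eqref{eq:tildegnx}, and note that $\|v_n\|_\infty=1$ for every $n$. Since $\{\lambda_n\},\{\eta_n\}$ are bounded, $\|u_n\|_\infty\to\infty$ so that the denominators $\|u_n\|_\infty^{p-q}$ and $\|u_n\|_\infty^{p-1}$ diverge, and $m,a,f\in L^\gamma(\Omega)$ with $\gamma>N$, there is $M>0$ independent of $n$ with $\|\tilde g_n\|_\gamma\le M$ for all $n$. Applying Proposition~\ref{prop:C1-reg} to the solutions $v_n$ of \eqref{eq:lem:v} then yields $\beta\in(0,1)$ and $C>0$, both independent of $n$, such that $\|v_n\|_{C^{1,\beta}(\overline{\Omega})}\le C$ for all $n$.

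Next I would invoke the compactness of the embedding $C^{1,\beta}(\overline{\Omega})$ into $C^1(\overline{\Omega})$ (Arzel\`a--Ascoli applied to $\{v_n\}$ and $\{\nabla v_n\}$) to extract a further subsequence converging in $C^1(\overline{\Omega})$ to some $w\in C^1(\overline{\Omega})$. Convergence in $C^1(\overline{\Omega})$ entails convergence in $C^0_{\mathrm{loc}}(\Omega)$, so $w=v_0$ on $\Omega$, hence on $\overline{\Omega}$ by continuity. Therefore $v_n\to v_0$ in $C^1(\overline{\Omega})$ along this subsequence, and $v_0$ is the eigenfunction associated with $\lambda$ already identified in Lemma~\ref{lem:auxil0}, which is exactly the assertion.

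There is no essential obstacle here: the only point that genuinely requires the stronger hypotheses \ref{O}, \ref{M}, \ref{A}, \ref{F1} rather than those of Lemma~\ref{lem:auxil0} is the availability of the \emph{global, up-to-the-boundary} $C^{1,\beta}$-estimate of Proposition~\ref{prop:C1-reg} with a constant that depends on the data of \eqref{eq:lem:v} only through the bounded quantity $\|\tilde g_n\|_\gamma$; such an estimate fails in general for merely $L^\gamma$, $\gamma>N/p$, data or for nonsmooth $\Omega$. The identification of the $C^1(\overline{\Omega})$-limit with $v_0$ is immediate from the compatibility of the two modes of convergence, and requires no additional argument.
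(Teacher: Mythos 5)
Your proof is correct and takes essentially the same route as the paper's: a uniform $L^\gamma$-bound on $\tilde g_n$ with $\gamma>N$, then Proposition~\ref{prop:C1-reg} to get uniform $C^{1,\tilde\beta}(\overline{\Omega})$-bounds on $v_n$, then Arzel\`a--Ascoli. The only added touch is your explicit reconciliation of the $C^1(\overline{\Omega})$-limit with the $v_0$ obtained in Lemma~\ref{lem:auxil0}, a detail the paper leaves implicit.
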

\begin{proof}
	The argument is built upon the proof of Lemma~\ref{lem:auxil0} and complements it. 
	Recall that each function $v_n = u_n/\|u_n\|_\infty$ satisfies the problem
	\eqref{eq:lem:v} with $\tilde{g}_n$ given by \eqref{eq:tildegnx}.
	Thanks to the regularity assumptions \ref{M}, \ref{A}, \ref{F1} imposed on $m$, $a$, $f$, there exist $\gamma>N$ and ${M}_1>0$ such that $\|\tilde{g}_n\|_\gamma \leq M$ for all $n$.
	Consequently, recalling \ref{O},  we infer from Proposition~\ref{prop:C1-reg} the existence of $\tilde{\beta} \in (0,1)$ and $C>0$ such that  $\|v_n\|_{C^{1,\tilde{\beta}}(\overline{\Omega})} \leq C$ for all $n$. 
	By the Arzel\`a-Ascoli theorem, $\{v_n\}$ converges in $C^1(\overline{\Omega})$, up to a subsequence, to an eigenfunction of \eqref{eq:EP} associated with the eigenvalue $\lambda=\lim_{n \to \infty} \lambda_n$.	
\end{proof}

The following two ``bifurcation from infinity''-type lemmas are crucial for the proofs of our main results on the MP and AMP.
They show that the assumption of Lemma~\ref{lem:auxil0} (and hence of Lemma~\ref{lem:auxil1}) on the divergence of solutions in $\W(\Omega)$ is satisfied if $\lambda$ approaches $\lambda_1(m)$ and $\eta$ approaches $0$.
\begin{lemma}\label{lem:conver0}
	Let \ref{WM}, \ref{WA}, \ref{WF1}, \ref{F2} be satisfied. 
	Let $\{\lambda_n\}, \{\eta_n\} \subset \mathbb{R}$ be arbitrary sequences such that
	\begin{equation}\label{eq:sec:le0}
		\lim_{n \to\infty}\lambda_n=\lambda_1(m) 
		\quad \text{and} \quad 
		\lim_{n\to\infty}\eta_n=0. 
	\end{equation}
	Let $u_n \in \W(\Omega)$ be a solution of {\renewcommand{\Plef}{\mathcal{P};\lambda_n,\eta_n}\eqref{eq:Psub}}.
	Then $\|\nabla u_n\|_p \to \infty$, $\|u_n\|_\infty \to \infty$, and there exists $t \neq 0$ such that $\{u_n/\|u_n\|_\infty\}$ converges to $t\varphi_1$ in $\W(\Omega)$ and $C^0_{\mathrm{loc}}(\Omega)$, up to a subsequence.
\end{lemma}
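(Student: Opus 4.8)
The plan is to combine the non-resonance hypothesis \ref{F2} with the auxiliary Lemmas~\ref{lem:convsol} and~\ref{lem:auxil0} and the simplicity of the principal eigenvalue $\lambda_1(m)$.

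\textit{Step 1: divergence in $\W(\Omega)$.} I would argue by contradiction. If $\|\nabla u_n\|_p \not\to\infty$, then there is a subsequence, not relabeled, with $\sup_n \|\nabla u_n\|_p < \infty$. Since $\{\lambda_n\}$ and $\{\eta_n\}$ are convergent with limits $\lambda_1(m)$ and $0$, and $u_n$ solves $(\mathcal{P};\lambda_n,\eta_n)$, Lemma~\ref{lem:convsol} applies: along a further subsequence $\{u_n\}$ converges in $\W(\Omega)$ to a solution $u_0$ of \eqref{eq:Psub} with $\lambda=\lambda_1(m)$ and $\eta=0$. But the latter problem is exactly \eqref{eq:EPinhom}, which has no solution by \ref{F2}, a contradiction. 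Hence $\|\nabla u_n\|_p \to \infty$.

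\textit{Step 2: passing to the normalized limit.} Now that $\|\nabla u_n\|_p \to \infty$, Lemma~\ref{lem:auxil0} yields $\|u_n\|_\infty \to \infty$, confirms that $\lambda_1(m)=\lim_n\lambda_n$ is an eigenvalue of \eqref{eq:EP}, and provides a subsequence along which $v_n:=u_n/\|u_n\|_\infty$ converges in $\W(\Omega)$ and in $C^0_{\mathrm{loc}}(\Omega)$ to some $v_0\in\W(\Omega)$ which is a \emph{nontrivial} eigenfunction associated with $\lambda_1(m)$, i.e.\ $v_0\in ES(\lambda_1(m);m)\setminus\{0\}$.

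\textit{Step 3: identification of the limit.} Since $\lambda_1(m)$ is the principal eigenvalue, it is simple, so every eigenfunction associated with it is a scalar multiple of $\varphi_1$ (see \cite{Cuesta} and Section~\ref{sec:eigen}). Therefore $v_0 = t\varphi_1$ for some $t\in\mathbb{R}$, and $v_0\neq 0$ forces $t\neq 0$; in fact, since $\|v_n\|_\infty = 1$ and $\|\varphi_1\|_\infty = 1$ one even gets $0<|t|\le 1$, though this is not needed. Combining Steps~1--3 gives all the assertions. The only place where the hypotheses really bite is Step~1: it is precisely the non-resonance condition \ref{F2} that rules out bounded families of solutions as $(\lambda,\eta)\to(\lambda_1(m),0)$ and thereby forces the ``bifurcation from infinity''; in the linear case $p=2$ this is the Fredholm alternative, whereas for $p\neq 2$ it is a genuine structural assumption. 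Steps~2--3 are then routine bookkeeping on top of Lemma~\ref{lem:auxil0} and the known spectral properties of $\lambda_1(m)$, the mild care being the handling of nested subsequences and the observation that the limit of $\{\lambda_n\}$ is forced to be $\lambda_1(m)$.
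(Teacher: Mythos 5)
Your proposal is correct and follows essentially the same route as the paper: a contradiction argument via Lemma~\ref{lem:convsol} and the non-resonance assumption \ref{F2} to force $\|\nabla u_n\|_p\to\infty$, followed by Lemma~\ref{lem:auxil0} and the simplicity of $\lambda_1(m)$ to identify the normalized limit as $t\varphi_1$ with $t\neq 0$.
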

\begin{proof}
	Let us show that $\|\nabla u_n\|_p \to \infty$. 
	Suppose, by contradiction, that the sequence $\{\|\nabla u_n\|_p\}$ is bounded. 
	We see from Lemma~\ref{lem:convsol} that $\{u_n\}$ converges in $\W(\Omega)$ to a solution of the problem~\eqref{eq:EPinhom}, up to a subsequence.
	However, this is impossible in view of the assumption~\ref{F2}, and hence $\|\nabla u_n\|_p \to \infty$.
	The remaining results follow from Lemma~\ref{lem:auxil0} together with the simplicity of $\lambda_1(m)$. 
\end{proof}

\begin{lemma}\label{lem:conver}
	Let \ref{O}, \ref{M}, \ref{BM}, \ref{A}, \ref{F1}, \ref{F2} be satisfied.
	Then, in addition to the assertions of Lemma~\ref{lem:conver0}, the normalized sequence $\{u_n/\|u_n\|_\infty\}$ converges either to $\varphi_1$ or to $-\varphi_1$ in $C^1(\overline{\Omega})$, up to a subsequence.
	In particular, for all sufficiently large $n$, we have either $u_n>0$ in $\Omega$ and $\partial u_n/\partial \nu<0$ on $\partial\Omega$, or $u_n<0$ in $\Omega$ and $\partial u_n/\partial \nu>0$ on $\partial\Omega$, up to a subsequence.
\end{lemma}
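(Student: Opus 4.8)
The plan is to upgrade the $C^0_{\mathrm{loc}}(\Omega)$-convergence of Lemma~\ref{lem:conver0} to $C^1(\overline{\Omega})$-convergence by feeding its conclusion into Lemma~\ref{lem:auxil1}, and then to read off the sign information from a quantitative form of the boundary point lemma. First I would observe that the hypotheses \ref{O}, \ref{M}, \ref{BM}, \ref{A}, \ref{F1}, \ref{F2} imply, in particular, \ref{WM}, \ref{WA}, \ref{WF1}, \ref{F2}, so Lemma~\ref{lem:conver0} applies: it gives $\|\nabla u_n\|_p \to \infty$ and $\|u_n\|_\infty \to \infty$, together with a subsequence (not relabelled) along which $v_n := u_n/\|u_n\|_\infty$ converges in $\W(\Omega)$ and in $C^0_{\mathrm{loc}}(\Omega)$ to $t\varphi_1$ for some $t \neq 0$.

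Next, since $\|\nabla u_n\|_p \to \infty$ and \ref{O}, \ref{M}, \ref{A}, \ref{F1} hold, Lemma~\ref{lem:auxil1} applies along this subsequence and yields a further subsequence along which $v_n$ converges in $C^1(\overline{\Omega})$ to some function $\phi$ that is an eigenfunction of \eqref{eq:EP} associated with the eigenvalue $\lambda = \lim_n \lambda_n = \lambda_1(m)$. Because convergence in $C^1(\overline{\Omega})$ entails uniform convergence on $\overline{\Omega}$, we get $\|\phi\|_\infty = \lim_n \|v_n\|_\infty = 1$; and, restricting the two convergences to an arbitrary compact subset of $\Omega$, the $C^1(\overline\Omega)$-limit $\phi$ must agree there with the $C^0_{\mathrm{loc}}(\Omega)$-limit $t\varphi_1$, so $\phi = t\varphi_1$. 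Since $\lambda_1(m)$ is simple and we have normalized $\|\varphi_1\|_\infty = 1$, this forces $t = \pm 1$ and hence $\phi = \varphi_1$ or $\phi = -\varphi_1$; thus $v_n \to \varphi_1$ or $v_n \to -\varphi_1$ in $C^1(\overline\Omega)$ along a subsequence, which is the first assertion.

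For the ``in particular'' part I would use the openness, in the $C^1_0(\overline\Omega)$-topology, of the set $\mathcal{C} := \{w \in C^1_0(\overline\Omega):\, w>0 \text{ in } \Omega,\ \partial w/\partial\nu < 0 \text{ on } \partial\Omega\}$. Assumption \ref{BM} together with positivity of $\varphi_1$ in $\Omega$ means precisely $\varphi_1 \in \mathcal{C}$; hence, if $v_n \to \varphi_1$ in $C^1(\overline\Omega)$, then $v_n \in \mathcal{C}$ for all sufficiently large $n$, and multiplying by the positive scalar $\|u_n\|_\infty$ gives $u_n > 0$ in $\Omega$ and $\partial u_n/\partial\nu < 0$ on $\partial\Omega$. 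The case $v_n \to -\varphi_1$ is symmetric, since $-\varphi_1$ lies in the interior of the negative cone.

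The only genuinely nontrivial ingredient here is the openness of $\mathcal{C}$, i.e. the quantitative version of the boundary point lemma for $\varphi_1$: using the $C^{1,1}$-regularity of $\Omega$ from \ref{O} one shows $\varphi_1(x) \geq c\,\mathrm{dist}(x,\partial\Omega)$ near $\partial\Omega$ for some $c>0$ (a consequence of $\partial\varphi_1/\partial\nu < 0$ and continuity of $\nabla\varphi_1$ up to $\partial\Omega$), which together with a uniform bound on $\nabla\varphi_1$ ensures that a sufficiently small $C^1$-perturbation can destroy neither interior positivity nor the strict sign of the normal derivative. Everything else reduces to bookkeeping with nested subsequences and the simplicity of $\lambda_1(m)$.
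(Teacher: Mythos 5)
Your proof is correct and follows essentially the same route as the paper's: feed the divergence $\|\nabla u_n\|_p \to \infty$ from Lemma~\ref{lem:conver0} into Lemma~\ref{lem:auxil1} to get $C^1(\overline{\Omega})$-convergence of $v_n = u_n/\|u_n\|_\infty$ to an eigenfunction of \eqref{eq:EP} at $\lambda_1(m)$, pin it down as $\pm\varphi_1$ via simplicity and the normalization $\|\varphi_1\|_\infty=1$, and then use that $\pm\varphi_1$ lies in the interior of the positive/negative cone of $C^1_0(\overline{\Omega})$ (which is exactly what \ref{BM} and interior positivity supply) so that $C^1$-convergence forces the same strict sign and normal-derivative sign for $v_n$, hence for $u_n$, for large $n$. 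You merely make explicit two points the paper compresses — the matching of the $C^1$-limit with the $C^0_{\mathrm{loc}}$-limit from Lemma~\ref{lem:conver0}, and the openness of the positivity cone in the $C^1$ topology — but the argument is the same.
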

\begin{proof}
	Since we know that $\|\nabla u_n\|_p \to \infty$ from Lemma~\ref{lem:conver0}, we apply Lemma~\ref{lem:auxil1} and recall the normalization assumption $\|\varphi_{1}\|_\infty=1$ to conclude that the sequence consisting of normalized functions $v_n = u_n/\|u_n\|_\infty$ converges either to $\varphi_1$ or to $-\varphi_1$ in $C^1(\overline{\Omega})$, up to a subsequence.
	Since $\varphi_1>0$ in $\Omega$ and, thanks to the assumption \ref{BM}, we have $\partial \varphi_1/\partial \nu<0$ on $\partial\Omega$,  the convergence $v_n \to \varphi_1$ in $C^1(\overline{\Omega})$ implies $u_n>0$ in $\Omega$ and $\partial u_n/\partial \nu<0$ on $\partial \Omega$ for all sufficiently large $n$, 
	and the converse inequalities hold true in the case of the convergence $v_n \to -\varphi_1$ in $C^1(\overline{\Omega})$.
\end{proof}

\subsection{Properties of \texorpdfstring{$\eta^*_\lambda(a)$}{eta*-lambda(a)}}

Let us discuss some properties of the critical value $\eta^*_\lambda(a)$ defined in \eqref{eq:eta*1}.
\begin{lemma}\label{lem:eta}
	Let \ref{WM}, \ref{WA}, \ref{WF1} be satisfied, and $u \in \W(\Omega) \setminus \{0\}$.
	Assume that $u, f \geq 0$ a.e.\ in $\Omega$.
	Let either of the following assumptions hold:
	\begin{enumerate}[label={{\rm {(\roman*)}}}]	
	\addtolength{\itemindent}{0em}
	\item\label{lem:eta:1} 
	$0 \leq \lambda \leq \lambda_1(m)$ and 
	$\eta \intO a u^q \,dx \leq 0$;
	\item\label{lem:eta:2}
	$0 \leq \lambda < \lambda_1(m)$ and either 
	$-\eta^*_\lambda(-a) < \eta \leq 0$ 
	or 
	$0 \leq \eta < \eta^*_\lambda(a)$;
	\item\label{lem:eta:3}
	$\lambda = \lambda_1(m)$, either 
	$-\eta^*_\lambda(-a) < \eta \leq 0$ 
	or 
	$0 \leq \eta < \eta^*_\lambda(a)$, and $u \neq t\varphi_{1}$ for any $t>0$.
	\end{enumerate}	
	Then $u$ satisfies
	\begin{equation}\label{eq:lem-eta1}
		\intO |\nabla u|^p \,dx 
		-
		\lambda \intO mu^p\,dx 
		- 
		\eta \intO a u^q\,dx 		
		+
		\intO fu\,dx > 0.
	\end{equation}
\end{lemma}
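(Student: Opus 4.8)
Throughout I abbreviate $H_\lambda(u):=\intO|\nabla u|^p\,dx-\lambda\intO mu^p\,dx$ (which is the form $H_\lambda$ from the introduction with $|u|$ replaced by $u$, since $u\ge 0$). The plan is to combine elementary sign information on $H_\lambda(u)$ and on $\intO fu\,dx$ with a single Young-type inequality which, together with the definition of $\eta^*_\lambda(a)$ as an infimum, converts the constraint $\eta<\eta^*_\lambda(a)$ into \eqref{eq:lem-eta1}. First I would record three facts. From \eqref{eq:lambda1}: for $0\le\lambda\le\lambda_1(m)$ one has $H_\lambda(u)\ge 0$ for all $u\in\W(\Omega)$ (split according to the sign of $\intO mu^p\,dx$), and $H_\lambda(u)>0$ whenever $u\ne 0$ and $\lambda<\lambda_1(m)$; moreover, using the uniqueness of $\varphi_1$ modulo scaling recalled above, if $\lambda=\lambda_1(m)$ then $H_{\lambda_1(m)}(u)=0$ together with $u\ge 0$, $u\ne 0$ forces $u=t\varphi_1$ for some $t>0$ (indeed $\intO mu^p\,dx\le 0$ would give $\intO|\nabla u|^p\,dx\le 0$, so necessarily $\intO mu^p\,dx>0$ and $u$ realizes the infimum in \eqref{eq:lambda1}). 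Next, $\intO fu\,dx\ge 0$ because $f,u\ge 0$, and if $u=t\varphi_1$ with $t>0$ then $\intO fu\,dx=t\intO f\varphi_1\,dx>0$, since $\varphi_1>0$ in $\Omega$ and $f\not\equiv 0$ by \ref{WF1}. Finally, under \ref{WM}, \ref{WA}, \ref{WF1} the integrals $\intO mu^p\,dx$, $\intO au^q\,dx$, $\intO fu\,dx$ are finite on $\W(\Omega)$ by the Sobolev embedding, so all quantities below are meaningful.

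The scalar inequality I would isolate is: for all $A,B\ge 0$,
\[
\frac{p-1}{(p-q)^{\frac{p-q}{p-1}}(q-1)^{\frac{q-1}{p-1}}}\,A^{\frac{q-1}{p-1}}B^{\frac{p-q}{p-1}}\le A+B.
\]
Writing $\theta=\frac{q-1}{p-1}\in(0,1)$, so that $1-\theta=\frac{p-q}{p-1}$, $q-1=\theta(p-1)$ and $p-q=(1-\theta)(p-1)$, the coefficient collapses to $\theta^{-\theta}(1-\theta)^{-(1-\theta)}$ and the left-hand side becomes $(A/\theta)^{\theta}(B/(1-\theta))^{1-\theta}$; the claim is then the weighted arithmetic--geometric mean inequality $x^{\theta}y^{1-\theta}\le\theta x+(1-\theta)y$ applied with $x=A/\theta$, $y=B/(1-\theta)$.

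With these in hand, I would reduce to $\eta\ge 0$: the substitution $(\eta,a)\mapsto(-\eta,-a)$ leaves $\eta\intO au^q\,dx$, and hence the left-hand side of \eqref{eq:lem-eta1}, unchanged; it merely interchanges the two sub-alternatives $-\eta^*_\lambda(-a)<\eta\le 0$ and $0\le\eta<\eta^*_\lambda(a)$, so the disjunction in \ref{lem:eta:2}--\ref{lem:eta:3} is preserved, and likewise \ref{lem:eta:1} and the condition $u\ne t\varphi_1$ are preserved. Assuming now $\eta\ge 0$, I distinguish two cases. If $\eta\intO au^q\,dx\le 0$ — which in particular always holds under \ref{lem:eta:1} — then the left-hand side of \eqref{eq:lem-eta1} is $\ge H_\lambda(u)+\intO fu\,dx$, which is positive: for $\lambda<\lambda_1(m)$ already $H_\lambda(u)>0$; for $\lambda=\lambda_1(m)$ either $H_{\lambda_1(m)}(u)>0$, or $u=t\varphi_1$ with $t>0$ and then $\intO fu\,dx>0$ (the latter alternative being excluded a priori under \ref{lem:eta:3}). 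Otherwise $\eta>0$ and $\intO au^q\,dx>0$, i.e.\ $u\in\Theta(a)$, which can occur only under \ref{lem:eta:2} or \ref{lem:eta:3}; then $\Theta(a)\ne\emptyset$, so $\eta^*_\lambda(a)$ is the infimum in \eqref{eq:eta*1}, and bounding it above by the value of that quotient at our $u$ gives
\[
\eta<\eta^*_\lambda(a)\le\frac{p-1}{(p-q)^{\frac{p-q}{p-1}}(q-1)^{\frac{q-1}{p-1}}}\cdot\frac{\big(H_\lambda(u)\big)^{\frac{q-1}{p-1}}\big(\intO fu\,dx\big)^{\frac{p-q}{p-1}}}{\intO au^q\,dx}.
\]
Multiplying through by $\intO au^q\,dx>0$ and applying the scalar inequality with $A=H_\lambda(u)$, $B=\intO fu\,dx$ yields $\eta\intO au^q\,dx<H_\lambda(u)+\intO fu\,dx$, which is exactly \eqref{eq:lem-eta1}.

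I do not anticipate a real obstacle: the argument is short once the two ingredients are in place, and the only points needing attention are bookkeeping — checking that the symmetry $(\eta,a)\mapsto(-\eta,-a)$ genuinely preserves all three hypotheses, and noticing that the potentially degenerate configurations are automatically ruled out (for instance, if $u\in\Theta(a)$ with $\intO fu\,dx=0$, the quotient in \eqref{eq:eta*1} vanishes at $u$, whence $\eta^*_\lambda(a)=0$, so the requirement $0\le\eta<\eta^*_\lambda(a)$ of \ref{lem:eta:2}--\ref{lem:eta:3} cannot be met and there is nothing to prove).
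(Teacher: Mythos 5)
Your proof is correct; it reaches the same conclusion as the paper by a more streamlined argument. For the case $\eta\intO a u^q\,dx>0$, the paper studies the auxiliary function $F(t)=t^{p-1}\big(\intO|\nabla u|^p\,dx-\lambda\intO mu^p\,dx\big)-\eta t^{q-1}\intO a u^q\,dx+\intO f u\,dx$, notes that $F>0$ for small and large $t$, locates a global minimizer $t_0>0$, and derives from $F'(t_0)=0$ together with the assumed $F(t_0)\le 0$ a contradiction with $\eta<\eta^*_\lambda(a)$; the constant $\tfrac{p-1}{(p-q)^{(p-q)/(p-1)}(q-1)^{(q-1)/(p-1)}}$ emerges there as the outcome of that one-variable optimization. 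You instead package the same optimization as a standalone weighted AM--GM inequality and apply it directly to $A=\intO|\nabla u|^p\,dx-\lambda\intO mu^p\,dx$ and $B=\intO f u\,dx$, after bounding $\eta^*_\lambda(a)$ from above by the quotient in \eqref{eq:eta*1} evaluated at the given $u\in\Theta(a)$. This yields $F(1)>0$ in one step, avoids the contradiction argument, and makes transparent where the constant comes from. You also replace the paper's ``repeat the analysis with $-a$'' for the case $\eta<0$ by the involution $(\eta,a)\mapsto(-\eta,-a)$, and you check carefully that it permutes the two sub-alternatives in \ref{lem:eta:2}--\ref{lem:eta:3} and leaves everything else invariant, a point the paper leaves implicit. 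Your closing observation about the degenerate configurations (that $\intO fu\,dx=0$, or $H_{\lambda_1(m)}(u)=0$, forces $\eta^*_\lambda(a)=0$ and empties the hypothesis) is correct and matches Remark~\ref{rem:eta}.
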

\begin{proof}
	Let $u \in \W(\Omega) \setminus \{0\}$ be any nonnegative function.
	Observe that, regardless the sign of $\intO mu^p\,dx$, we have
	\begin{equation}\label{eq:lem:ine1}
		\intO |\nabla u|^p \,dx -	\lambda \intO mu^p\,dx \geq 0
	\end{equation}
	provided $0 \leq \lambda \leq \lambda_1(m)$, and the
	equality holds in \eqref{eq:lem:ine1} if and only if $\lambda=\lambda_1(m)$ and $u=t\varphi_1$ for some $t>0$. 
	Thus, 
	recalling that $f \geq 0$ a.e.\ in $\Omega$ and $\varphi_{1}>0$ in $\Omega$, we see that 
	\eqref{eq:lem-eta1} is satisfied under the assumption~\ref{lem:eta:1}.
	
	Consider the case $\eta \intO a u^q \,dx > 0$ and either the assumption~\ref{lem:eta:2} or \ref{lem:eta:3}. 
	We see that the strict inequality holds in \eqref{eq:lem:ine1}.	
	Assume first that $\eta > 0$ and $\intO a u^q \,dx > 0$. 
	Hence, by the assumptions of the lemma on $\eta$, we have $0<\eta < \eta^*_\lambda(a)$,
	while the inequality $\intO a u^q \,dx > 0$ implies that $u \in \Theta(a)$, where $\Theta(a)$ is defined in \eqref{eq:Theta}.	
	Consequently, we get $\intO fu\,dx > 0$, see Remark~\ref{rem:eta}.
	Let us investigate a function $F:[0,\infty) \to \mathbb{R}$ defined as
	$$
	F(t) = 
	t^{p-1} \left(\intO |\nabla u|^p \,dx 
	-
	\lambda \intO mu^p\,dx \right)
	- 
	\eta t^{q-1} \intO a u^q\,dx 		
	+
	\intO fu\,dx.
	$$
	The desired inequality \eqref{eq:lem-eta1} is equivalent to $F(1)>0$. 
	Let us prove that, in fact, $F(t)>0$ for \textit{all} $t \geq 0$. 
	Recalling that $1<q<p$ and that the strict inequality in \eqref{eq:lem:ine1} holds, we see that $F(t)>0$ for any sufficiently small and any sufficiently large $t \geq 0$.
	In particular, $F$ possesses a global minimum point $t_0 \geq 0$.
	Suppose, contrary to our claim, that $F(t_0) \leq 0$. 
	Since $F(0)>0$, we have $t_0>0$ and $F'(t_0) = 0$, and hence
	\begin{equation}\label{eq:lemF0}
		t_0 F'(t_0) = 
		(p-1) t_0^{p-1} \left(\intO |\nabla u|^p \,dx 
		-
		\lambda \intO mu^p\,dx \right)
		- 
		\eta (q-1) t_0^{q-1} \intO a u^q\,dx = 0.
	\end{equation}
	Taking into account the second equality in \eqref{eq:lemF0}, we get
	$$
	F(t_0) =
	-\frac{p-q}{q-1} \, t_0^{p-1} \left(\intO |\nabla u|^p \,dx 
	-
	\lambda \intO mu^p\,dx \right)
	+ \intO fu\,dx
	\leq 0, 
	$$
	and hence
	\begin{equation}\label{eq:t0lem}
		t_0 \geq \left(\frac{q-1}{p-q}\right)^\frac{1}{p-1}
		\left(\frac{\intO fu\,dx}{\intO |\nabla u|^p \,dx 
			-
			\lambda \intO mu^p\,dx}\right)^\frac{1}{p-1}.
	\end{equation}
	Expressing now $\eta$ from \eqref{eq:lemF0} and estimating it from below using \eqref{eq:t0lem}, we obtain the following contradiction:
	\begin{align*}
		\eta^*_\lambda(a) > \eta 
		&= 
		\frac{p-1}{q-1} \frac{\intO |\nabla u|^p \,dx 
			-
			\lambda \intO mu^p\,dx}{\intO a u^q\,dx}
		t_0^{p-q} 
		\\
		&\geq
		\frac{p-1}{(p-q)^\frac{p-q}{p-1}(q-1)^\frac{q-1}{p-1}}
		\frac{\left(\intO |\nabla u|^p \,dx - \lambda \intO m u^p \,dx\right)^\frac{q-1}{p-1} \left(\intO f u\right)^\frac{p-q}{p-1}}{\intO a u^q \,dx}
		\geq \eta^*_\lambda(a).
	\end{align*}
	Assume now that $\eta < 0$ and $\intO a u^q \,dx < 0$. 
	The latter inequality reads as $\intO (-a) u^q \,dx > 0$, and hence $u \in \Theta(-a)$. 
	Repeating the analysis of the function $F$ as above, we derive a contradiction to the assumption $-\eta^*_\lambda(-a)<\eta$. 
	This completes the proof of the inequality~\eqref{eq:lem-eta1}.	
\end{proof}

\begin{lemma}\label{lem:eta-est}
	Let \ref{WM}, \ref{WF1} be satisfied. 
	Assume that $f \geq c$ a.e.\ in $\Omega$ for some $c>0$.
	Assume that $a_+ \in L^r(\Omega) \setminus \{0\}$, where $r>\frac{(p-1)N}{(q-1)p}$ if $N \geq p$ and $r=\frac{p-1}{q-1}$ if $N<p$.
	Then $\eta^*_\lambda(a) > 0$ whenever 
	$0 \leq \lambda < \lambda_1(m)$.	
\end{lemma}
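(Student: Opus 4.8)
The plan is to bound the quotient inside the infimum in \eqref{eq:eta*1} from below by a strictly positive constant, uniformly over $u\in\Theta(a)$; then the infimum is positive, and since the prefactor in \eqref{eq:eta*1} is positive, so is $\eta^*_\lambda(a)$. (If $\Theta(a)=\emptyset$ the conclusion is immediate from the convention $\eta^*_\lambda(a)=+\infty$.) Three elementary estimates feed into this lower bound. First, since $0\le\lambda<\lambda_1(m)$, the variational characterization \eqref{eq:lambda1} of $\lambda_1(m)$ gives, for every $u\in\W(\Omega)$,
\begin{equation*}
\intO|\nabla u|^p\,dx-\lambda\intO m u^p\,dx\ \ge\ \Big(1-\frac{\lambda}{\lambda_1(m)}\Big)\intO|\nabla u|^p\,dx\ =:\ C_\lambda\,\|\nabla u\|_p^p,
\end{equation*}
with $C_\lambda>0$ (one checks this by considering separately the cases $\intO m u^p\,dx\le 0$ and $\intO m u^p\,dx>0$). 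Second, for $u\in\Theta(a)$ we have $u\ge 0$, so $f\ge c$ a.e.\ in $\Omega$ yields $\intO f u\,dx\ge c\,\|u\|_1$. Third, by H\"older's inequality with exponents $r$ and $r'=r/(r-1)$,
\begin{equation*}
0<\intO a u^q\,dx\ \le\ \intO a_+ u^q\,dx\ \le\ \|a_+\|_r\,\|u\|_{qr'}^{q}.
\end{equation*}
Inserting these into \eqref{eq:eta*1}, it suffices to prove that
\begin{equation*}
m_0:=\inf\left\{\frac{\|\nabla u\|_p^{\alpha}\,\|u\|_1^{\beta}}{\|u\|_{qr'}^{q}}:\ u\in\W(\Omega)\setminus\{0\}\right\}>0,
\qquad\text{where}\quad \alpha:=\frac{p(q-1)}{p-1},\ \ \beta:=\frac{p-q}{p-1},
\end{equation*}
noting that $\alpha+\beta=q$ and that the displayed quotient is $0$-homogeneous in $u$.

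To estimate $m_0$ I would use a Gagliardo--Nirenberg-type interpolation inequality $\|u\|_{qr'}\le C\,\|\nabla u\|_p^{\theta}\,\|u\|_1^{1-\theta}$ for $u\in\W(\Omega)$, obtained by combining the H\"older interpolation between $L^1(\Omega)$ and $L^{p^*}(\Omega)$ with the Sobolev inequality $\|u\|_{p^*}\le C_S\|\nabla u\|_p$ when $N>p$, between $L^1(\Omega)$ and $L^\infty(\Omega)$ with the Morrey inequality $\|u\|_\infty\le C_M\|\nabla u\|_p$ when $N<p$, and between $L^1(\Omega)$ and $L^{s_0}(\Omega)$ for a large finite $s_0$ when $N=p$. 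The key point is that the precise hypothesis on $r$ --- namely $r>\frac{(p-1)N}{(q-1)p}$ when $N\ge p$, resp.\ $r=\frac{p-1}{q-1}$ when $N<p$ --- is exactly what ensures, on one hand, that $1<qr'\le p^*$ (so that $\W(\Omega)\hookrightarrow L^{qr'}(\Omega)$ and the above interpolation applies), and, on the other hand, that the resulting exponent $\theta$ satisfies $\theta\le\alpha/q$, with equality when $N<p$. Granting this, raising the interpolation inequality to the $q$-th power and dividing yields
\begin{equation*}
\frac{\|\nabla u\|_p^{\alpha}\,\|u\|_1^{\beta}}{\|u\|_{qr'}^{q}}\ \ge\ C^{-q}\,\|\nabla u\|_p^{\alpha-q\theta}\,\|u\|_1^{\beta-q(1-\theta)}\ =\ C^{-q}\left(\frac{\|\nabla u\|_p}{\|u\|_1}\right)^{s},\qquad s:=\alpha-q\theta=q(1-\theta)-\beta\ \ge\ 0,
\end{equation*}
where I used $\alpha+\beta=q$. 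If $s=0$ this is already $C^{-q}>0$; if $s>0$, I would bound $\|\nabla u\|_p/\|u\|_1$ from below using H\"older and the Poincar\'e inequality, $\|u\|_1\le|\Omega|^{1-1/p}\|u\|_p\le|\Omega|^{1-1/p}C_P\|\nabla u\|_p$, so that the right-hand side is at least $C^{-q}\big(|\Omega|^{1-1/p}C_P\big)^{-s}>0$. Either way $m_0>0$, and the lemma follows.

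The conceptual content is light --- three crude pointwise/H\"older bounds plus one interpolation inequality --- so I expect the only real work to be the exponent bookkeeping: checking that the stated threshold on $r$ is precisely the condition under which the Gagliardo--Nirenberg exponent satisfies $\theta\le\alpha/q$ while $qr'$ still does not exceed $p^*$. Some care is also needed at the borderline $N=p$, where $p^*=\infty$ forces one to interpolate against $L^{s_0}(\Omega)$ with $s_0<\infty$ large rather than $L^{p^*}(\Omega)$; this is harmless because $\W(\Omega)\hookrightarrow L^{s_0}(\Omega)$ for every $s_0<\infty$ when $N=p$, and the strict inequality $r>\frac{p-1}{q-1}$ leaves the necessary room for $\theta\le\alpha/q$ to persist.
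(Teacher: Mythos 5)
Your proof is correct, and it takes a genuinely different route from the paper's. Both arguments share the first two ingredients (the bound $H_\lambda(u)\ge(1-\lambda/\lambda_1(m))\|\nabla u\|_p^p$ for $0\le\lambda<\lambda_1(m)$, and $\intO fu\,dx\ge c\intO u\,dx$), and the difference lies entirely in how $\intO a_+u^q\,dx$ is estimated. The paper applies H\"older with exponents $\tfrac{p-1}{q-1}$ and $\tfrac{p-1}{p-q}$ in the particular split $a_+u^q=(a_+u^{p(q-1)/(p-1)})\cdot u^{(p-q)/(p-1)}$, which produces the factor $(\intO a_+^{(p-1)/(q-1)}u^p\,dx)^{(q-1)/(p-1)}$; the integrability hypothesis on $a_+$ is then exactly what is needed for $a_+^{(p-1)/(q-1)}$ to satisfy \ref{WM}, so that $\intO a_+^{(p-1)/(q-1)}u^p\,dx\le\lambda_1(a_+^{(p-1)/(q-1)})^{-1}\|\nabla u\|_p^p$. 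This handles all cases of $N$ versus $p$ at once and yields the explicit lower bound $\eta^*_\lambda(a)\ge\frac{p-1}{(p-q)^{(p-q)/(p-1)}(q-1)^{(q-1)/(p-1)}}\,c^{(p-q)/(p-1)}\lambda_1(a_+^{(p-1)/(q-1)})^{(q-1)/(p-1)}(1-\lambda/\lambda_1(m))^{(q-1)/(p-1)}$. Your approach instead separates $a_+$ from $u$ entirely via $\intO a_+u^q\,dx\le\|a_+\|_r\|u\|_{qr'}^q$ and recovers positivity of the resulting ratio through a Gagliardo--Nirenberg interpolation plus Poincar\'e. Your exponent bookkeeping is right: with $\alpha=p(q-1)/(p-1)$ and $\beta=(p-q)/(p-1)$, the condition $\theta\le\alpha/q$ is equivalent to $r\ge\frac{(p-1)N}{(q-1)p}$ for $N>p$ (with equality at $N<p$ and $r=(p-1)/(q-1)$), and the constraint $qr'\le p^*$ is implied a fortiori by this threshold. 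One minor imprecision: the hypothesis on $r$ does not simultaneously pin down both conditions ``exactly'' --- only $\theta\le\alpha/q$ is tight, while $qr'\le p^*$ holds with room to spare --- but this does not affect the validity of the argument. The paper's proof is shorter and produces a cleaner closed-form constant via the auxiliary eigenvalue; yours is more elementary in the tools used but requires splitting into the three regimes $N>p$, $N=p$, $N<p$ and handling the borderline $N=p$ by interpolating against $L^{s_0}$ with $s_0$ large but finite, which you identified correctly.
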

\begin{proof}
	Since $a_+$ is nontrivial, it is not hard to see that $\Theta(a) \neq \emptyset$, where $\Theta(a)$ is defined in \eqref{eq:Theta}.
	Take any $u \in \Theta(a)$.
	Under the imposed assumptions, we
	use the H\"older inequality and the definition \eqref{eq:lambda1} of $\lambda_1\Big(a_+^\frac{p-1}{q-1}\Big)$ (i.e., $\lambda_1(m)$ with $m = a_+^\frac{p-1}{q-1}$ which satisfies \ref{WM} in view of the imposed integrability assumptions on $a_+$) to get
	\begin{align}
		\notag
		0<\intO a u^q \,dx 
		&\leq 
		\intO a_+ u^q \,dx 
		\leq
		\left(
		\intO a_+^\frac{p-1}{q-1} u^p \,dx
		\right)^\frac{q-1}{p-1}
		\left(
		\intO u \,dx
		\right)^\frac{p-q}{p-1}
		\\
		\label{eq:lem-eta-1}
		&\leq
		c^{-\frac{p-q}{p-1}}
		\lambda_1\Big(a_+^\frac{p-1}{q-1}\Big)^{-\frac{q-1}{p-1}}
		\left(
		\intO |\nabla u|^p \,dx
		\right)^\frac{q-1}{p-1}
		\left(
		\intO f u \,dx
		\right)^\frac{p-q}{p-1}.
	\end{align}
	On the other hand, since $0 \leq \lambda < \lambda_1(m)$, we obtain
	\begin{equation}\label{eq:lem-eta-2}
		\intO |\nabla u|^p \,dx - \lambda \intO m u^p \,dx
		\geq
		\left(1-\frac{\lambda}{\lambda_1(m)}\right)
		\intO |\nabla u|^p \,dx,
	\end{equation}
	where we employed the definition \eqref{eq:lambda1} of $\lambda_1(m)$ in the case $\intO m u^p \,dx > 0$.
	Thus, using \eqref{eq:lem-eta-1} and \eqref{eq:lem-eta-2}, we derive the following explicit lower bound for $\eta_\lambda^*(a)$:
	\begin{equation*}
		\eta_\lambda^* \geq 
		\frac{p-1}{(p-q)^\frac{p-q}{p-1}(q-1)^\frac{q-1}{p-1}}
		\, 
		c^\frac{p-q}{p-1}
		\lambda_1\Big(a_+^\frac{p-1}{q-1}\Big)^\frac{q-1}{p-1}
		\left(1-\frac{\lambda}{\lambda_1(m)}\right)^\frac{q-1}{p-1} > 0.
		\qedhere
	\end{equation*}
\end{proof}

\subsection{Weighted eigenvalue problem}\label{sec:eigen}
In addition to the information on the weighted eigenvalue problem \eqref{eq:EP} provided in Section~\ref{sec:intro}, let us discuss a few other properties of \eqref{eq:EP} which will be used in the proofs of our main results.

Recall that $\sigma(-\Delta_p\,;m)$ stands for the spectrum of \eqref{eq:EP}.
If, in addition to \ref{WM}, $m_-$ is nontrivial, i.e., we are in the so-called indefinite weight case, then
$\lambda\in\sigma(-\Delta_p;m)$ if and only if 
$-\lambda\in\sigma(-\Delta_p;-m)$. 
In particular, $-\lambda_1(-m)$ is also a principal (but negative) eigenvalue of the problem \eqref{eq:EP}.

Let $O$ be an open subset of $\Omega$.
Define
\begin{equation}\label{eq:lambda1O}
	\lambda_1(m;O)
	:=
	\inf
	\left\{
	\frac{\int_{O} |\nabla u|^p\,dx}{\int_{O} m|u|^p\,dx}:\, u\in\W(O),\ 
	\int_{O} m|u|^p\,dx>0
	\right\}
\end{equation}
and put $\lambda_1(m;O) = \infty$ if the admissible set $\{u\in\W(O),\ 
\int_{O} m|u|^p\,dx>0\}$ is empty. 
By definition, we have $\lambda_1(m) \equiv \lambda_1(m;\Omega)$.
There is the following domain monotonicity type property: if $O$ is a proper subset of $\Omega$, then 
$\lambda_1(m;\Omega)<\lambda_1(m;O)$, see \cite[Proposition~4.4]{Cuesta}. 

Recall the notation
$$
\Omega_\rho 
= 
\{
x \in \Omega:\, \mathrm{dist}(x,\partial\Omega) < \rho
\}.
$$
In particular, we have $\lambda_1(m;\Omega)<\lambda_1(m;\Omega_{\rho})$ for any $\rho>0$ such that $\Omega_\rho$ is a proper subset of $\Omega$.
The following simple topological lemma takes place.
\begin{lemma}\label{lem:topology}
	Let $\rho>r>0$ be such that $\Omega \setminus \Omega_{r}$ is nonempty.
	Then any connected component of $\Omega_{\rho}$ intersects with $\Omega \setminus \Omega_{r}$.
\end{lemma}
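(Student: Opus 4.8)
The plan is to argue purely topologically, using the connectedness of $\Omega$ together with the continuity of the distance function $d(x) := \mathrm{dist}(x,\partial\Omega)$ and, crucially, the strict inequality $r<\rho$. Note that $\Omega_r\subseteq\Omega_\rho$ and $\Omega\setminus\Omega_r=\{x\in\Omega:\,d(x)\geq r\}$.

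First I would fix a connected component $C$ of the open set $\Omega_\rho$; being a component of an open subset of $\mathbb{R}^N$, $C$ is itself open. Suppose, for contradiction, that $C\cap(\Omega\setminus\Omega_r)=\emptyset$. This means precisely that $d(x)<r$ for every $x\in C$, and in particular $C\subseteq\Omega_r$.

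Next I would show that $C$ is relatively closed in $\Omega$. Let $x\in\Omega$ lie in the closure of $C$; choosing a sequence in $C$ converging to $x$ and using continuity of $d$, we obtain $d(x)\leq r$, hence $d(x)\leq r<\rho$, so $x\in\Omega_\rho$. Since $\Omega_\rho$ is open, $x$ belongs to some connected component $C'$ of $\Omega_\rho$; as $C'$ is an open neighbourhood of the limit point $x$, it must meet $C$, whence $C'=C$ and therefore $x\in C$. Thus the closure of $C$ in $\Omega$ equals $C$, i.e., $C$ is closed in $\Omega$. Consequently $C$ is a nonempty subset of $\Omega$ that is both open and closed in $\Omega$; since $\Omega$ is connected, $C=\Omega$. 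Combined with $C\subseteq\Omega_r$ this yields $\Omega\subseteq\Omega_r$, i.e., $\Omega\setminus\Omega_r=\emptyset$, contradicting the hypothesis. Hence every connected component of $\Omega_\rho$ intersects $\Omega\setminus\Omega_r$.

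The only delicate point — and the single place where both the hypothesis $r<\rho$ and the fact that $\Omega_r,\Omega_\rho$ are sublevel sets of the \emph{same} continuous function $d$ are genuinely used — is the step upgrading ``$C$ is closed in $\Omega_\rho$'' (automatic for a component) to ``$C$ is closed in $\Omega$'': we need the closure of $\{d<r\}\cap\Omega$ to remain inside $\{d<\rho\}=\Omega_\rho$, which is exactly what $r<\rho$ guarantees. Everything else is routine point-set topology, and I do not anticipate any serious obstacle.
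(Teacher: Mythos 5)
Your proof is correct, but it takes a genuinely different route from the paper's. The paper argues directly and constructively: it picks a point $z$ in the component $O$ and a point $y\in\Omega\setminus\Omega_r$, joins them by a continuous path in $\Omega$ (using path-connectedness of open connected subsets of $\mathbb{R}^N$), and locates, via continuity of the distance function, the first parameter $t_0$ at which the path reaches distance exactly $r$ from $\partial\Omega$; the strict inequality $r<\rho$ ensures $\gamma([0,t_0])$ stays in $\Omega_\rho$, hence in $O$, and $\gamma(t_0)\in O\cap(\Omega\setminus\Omega_r)$. You instead argue by contradiction with a clopen-set argument: assuming the component $C$ misses $\Omega\setminus\Omega_r$, you show $C$ is not only open but also closed in $\Omega$ — the crucial upgrade from ``closed in $\Omega_\rho$'' to ``closed in $\Omega$'' being exactly where $r<\rho$ and the continuity of $d$ enter — and then connectedness of $\Omega$ forces $C=\Omega\subseteq\Omega_r$, a contradiction. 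Your approach is slightly more austere (it invokes only connectedness, not path-connectedness, so in principle it generalizes to settings where the two notions diverge), at the price of being indirect; the paper's approach is direct and produces the intersection point explicitly. Both correctly isolate the same essential observation about where $\{d=r\}$ sits relative to $\Omega_\rho$.
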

\begin{proof}
	Let $O$ be any connected component of $\Omega_{\rho}$. 
	Take any $y \in \Omega \setminus \Omega_{r}$ and $z \in O$.
	If $\mathrm{dist}(z,\partial\Omega) \geq r$, then $z \in \Omega \setminus \Omega_{r}$ and we are done.
	Assume that $\mathrm{dist}(z,\partial\Omega) < r \leq \mathrm{dist}(y,\partial\Omega)$.
	Since $\Omega$ is connected, there is a continuous path $\gamma:[0,1] \to \Omega$ such that $\gamma(0) = z$ and $\gamma(1)=y$. 
	It is well known that the distance function is continuous, and so there exists $t_0 \in (0,1]$ such that 
	$\mathrm{dist}(\gamma(t),\partial\Omega) < r$ for any $t \in [0,t_0)$ and 	$\mathrm{dist}(\gamma(t_0),\partial\Omega) = r$.
	Therefore, $\gamma([0,t_0]) \subset O$ and
	$\gamma(t_0) \in \Omega \setminus \Omega_{r}$.
	Since the choice of $O$ is arbitrary, our assertion is proved.
\end{proof}

The previous lemma allows to obtain the following result which will be used in the proofs of 
Theorems~\ref{thm1-w}, \ref{thm-1ww}, and Proposition~\ref{prop:thm1-w}.
\begin{lemma}\label{lem:eigenpos}
	Let \ref{WM} be satisfied and $\lambda \in \mathbb{R}$.
	Assume that there exist $\rho>r>0$ and $u \in \W(\Omega) \cap C(\Omega)$ such that $u \geq 0$ in $\Omega$, $u>0$ in $\Omega \setminus {\Omega_{r}}$, and $u$ satisfies the inequality
	$$
	-\Delta_p u \geq \lambda \,m(x)|u|^{p-2}u 
	\quad \text{in} ~ \Omega_{\rho}
	$$
	in the weak sense. 
	Then $u>0$ in $\Omega$.
\end{lemma}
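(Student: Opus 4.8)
The plan is to rewrite the assumed inequality as a differential inequality with a \emph{nonnegative} zeroth-order coefficient, apply a strong maximum principle on each connected component of $\Omega_\rho$, and use Lemma~\ref{lem:topology} to propagate the positivity of $u$ from the interior region $\Omega \setminus \Omega_r$ into the boundary layer $\Omega_\rho$.

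First, since $u \geq 0$ we may write $|u|^{p-2}u = u^{p-1}$, and decomposing $\lambda m = (\lambda m)_+ - (\lambda m)_-$ gives $\lambda m\, u^{p-1} \geq -(\lambda m)_- u^{p-1}$. Hence the hypothesis yields, in the weak sense,
$$
-\Delta_p u + (\lambda m)_-(x)\, u^{p-1} \geq 0 \qquad \text{in } \Omega_\rho,
$$
where the potential $(\lambda m)_-$ lies in $L^\gamma(\Omega)$ with $\gamma > N/p$ by \ref{WM}. Thus on $\Omega_\rho$ the function $u$ is a nonnegative continuous weak supersolution of a $p$-Laplacian equation with an $L^\gamma$-potential, $\gamma>N/p$, to which the strong maximum principle applies (equivalently, the weak Harnack inequality for such supersolutions): on any connected open subset of $\Omega_\rho$, the function $u$ is either strictly positive or identically zero.

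Now suppose, for contradiction, that $u(x_0)=0$ for some $x_0 \in \Omega$. Since $u>0$ in $\Omega\setminus\Omega_r$, the point $x_0$ must lie in $\Omega_r \subset \Omega_\rho$ (recall $r<\rho$). Let $O$ be the connected component of $\Omega_\rho$ containing $x_0$. Restricting the displayed inequality to $O$, the above dichotomy forces $u\equiv 0$ on $O$. On the other hand, since $\Omega\setminus\Omega_r$ is nonempty (which is implicitly part of the hypotheses, as otherwise the assumption on $u$ would be vacuous), Lemma~\ref{lem:topology} guarantees that $O$ meets $\Omega\setminus\Omega_r$, where $u>0$---a contradiction. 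Therefore $u>0$ in $\Omega_\rho$, and since $\rho>r$ implies $\Omega=\Omega_\rho\cup(\Omega\setminus\Omega_r)$ with $u>0$ on $\Omega\setminus\Omega_r$ by assumption, we conclude $u>0$ throughout $\Omega$. The only nontrivial ingredient is the strong maximum principle for $-\Delta_p u + V u^{p-1}\geq 0$ with $0\le V\in L^\gamma$, $\gamma>N/p$; the component-by-component argument (and hence Lemma~\ref{lem:topology}) is needed precisely because the inequality is assumed only on the possibly disconnected set $\Omega_\rho$.
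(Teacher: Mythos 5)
Your proof is correct and follows essentially the same route as the paper: apply the weak Harnack inequality (equivalently, the strong maximum principle for supersolutions) on each connected component of $\Omega_\rho$, and use Lemma~\ref{lem:topology} to exclude the ``$u\equiv 0$'' alternative on any component. The only cosmetic difference is that you explicitly discard the positive part of $\lambda m$ so as to work with a nonnegative zeroth-order coefficient $(\lambda m)_-$, whereas the paper invokes the Pucci--Serrin Harnack theorems directly, whose structural hypotheses already accommodate the sign-indefinite lower-order term; either reduction is valid.
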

\begin{proof}
	Applying the weak Harnack inequality given by \cite[Theorem~7.1.2 and a subsequent remark]{PS} when $p \leq N$ (see also \cite[Corollary~7.1.3]{PS}) and \cite[Theorem~7.4.1]{PS} when $p > N$ on every connected component of $\Omega_{\rho}$ and using Lemma~\ref{lem:topology}, we conclude that $u>0$ in $\Omega_{\rho}$ and hence in the whole $\Omega$. 
\end{proof}

The following result will be needed for the proof of Theorem~\ref{thm5}.
Let $\{O_\rho\}$ be a sequence of domains such that each $O_\rho$ is compactly contained in $\Omega$ and $\Omega \setminus O_\rho \subset \Omega_{\rho}$. 
Denote
by $\phi_\rho \in \W(O_\rho)$ a positive eigenfunction corresponding to $\lambda_1(m;O_\rho)$, whenever $\lambda_1(m;O_\rho) \in (0, \infty)$.
We may assume that $\phi_\rho\in \W(\Omega)$ by the zero extension. 
The behavior of $\lambda_1(m;O_\rho)$ and $\phi_\rho$ as $\rho \to 0$ is described in the following lemma.
\begin{lemma}\label{lem:eigenconv}
	Let \ref{WM} be satisfied.
	Then $\lambda_1(m;O_\rho)  \in (\lambda_1(m;\Omega),\infty)$ for any sufficiently small $\rho>0$.
	Moreover,
	$\lambda_1(m;O_\rho) \to \lambda_1(m;\Omega)$ and  
	$\phi_\rho/(\intO m\phi_{\rho}^p\,dx)^{1/p} \to \varphi_1/(\intO m\varphi_1^p\,dx)^{1/p}$ in $\W(\Omega)$ as 
	$\rho\to 0$. 
\end{lemma}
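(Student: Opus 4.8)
The plan is to combine a monotonicity argument, a diagonal-type selection of competitors, and the $(S_+)$-property of the $p$-Laplacian, in the spirit of the standard proof of continuity of the first eigenvalue under domain perturbations. First, since each $O_\rho$ is a proper subset of $\Omega$ (as $\Omega\setminus O_\rho\subset\Omega_\rho$ is nonempty for small $\rho$, by \ref{WM} which forces $m_+\not\equiv 0$ and hence $\varphi_1$ to be positive on a set of positive measure meeting $\Omega\setminus\overline{O_\rho}$ once $\rho$ is small), the strict domain monotonicity \cite[Proposition~4.4]{Cuesta} recalled above gives $\lambda_1(m;O_\rho)\in(\lambda_1(m;\Omega),\infty)$, once we check the admissible set for $O_\rho$ is nonempty. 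The latter holds for all small $\rho$ because $m_+\not\equiv 0$ means there is a ball $B\Subset\Omega$ with $\int_B m\,dx>0$, and $B\subset O_\rho$ eventually; testing \eqref{eq:lambda1O} with a bump supported in $B$ shows $\lambda_1(m;O_\rho)<\infty$. Monotonicity in $\rho$ (smaller $O_\rho$ gives larger eigenvalue, so $\rho\mapsto\lambda_1(m;O_\rho)$ is nonincreasing as $\rho\to0$ along the nested-in-limit family) together with the lower bound $\lambda_1(m;\Omega)$ shows the limit $L:=\lim_{\rho\to0}\lambda_1(m;O_\rho)$ exists and $L\ge\lambda_1(m;\Omega)$.

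Next I would show $L\le\lambda_1(m;\Omega)$, which is the place where one has to produce good competitors. Fix $\varepsilon>0$ and pick $\psi\in C_0^\infty(\Omega)$ with $\int_\Omega m|\psi|^p\,dx>0$ and
$$
\frac{\int_\Omega|\nabla\psi|^p\,dx}{\int_\Omega m|\psi|^p\,dx}<\lambda_1(m;\Omega)+\varepsilon;
$$
such $\psi$ exists by density of $C_0^\infty(\Omega)$ in $\W(\Omega)$ together with the continuity of both integrals in the $\W$-norm (using \ref{WM} and the Hölder/Sobolev bound $\|m|u|^p\|_1\le\|m\|_\gamma\|u\|_{p\gamma/(\gamma-1)}^p$ with $p\gamma/(\gamma-1)<p^*$). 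Since $\mathrm{supp}\,\psi$ is a compact subset of $\Omega$ and $\Omega\setminus O_\rho\subset\Omega_\rho$, for all sufficiently small $\rho$ we have $\mathrm{supp}\,\psi\subset O_\rho$, so $\psi$ is admissible in \eqref{eq:lambda1O} for $O_\rho$ and gives $\lambda_1(m;O_\rho)\le\int_\Omega|\nabla\psi|^p/\int_\Omega m|\psi|^p<\lambda_1(m;\Omega)+\varepsilon$. Letting $\rho\to0$ and then $\varepsilon\to0$ yields $L\le\lambda_1(m;\Omega)$, hence $\lambda_1(m;O_\rho)\to\lambda_1(m;\Omega)$.

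For the convergence of the eigenfunctions, normalize $\tilde\phi_\rho:=\phi_\rho/(\int_\Omega m\phi_\rho^p\,dx)^{1/p}$, so $\int_\Omega m\tilde\phi_\rho^p\,dx=1$ and $\int_\Omega|\nabla\tilde\phi_\rho|^p\,dx=\lambda_1(m;O_\rho)\to\lambda_1(m;\Omega)$; in particular $\{\tilde\phi_\rho\}$ is bounded in $\W(\Omega)$, so along a subsequence $\tilde\phi_\rho\rightharpoonup w$ weakly in $\W(\Omega)$ and strongly in $L^{p\gamma/(\gamma-1)}(\Omega)$. The strong $L^r$-convergence passes to the weight term, giving $\int_\Omega m|w|^p\,dx=1$, so $w\not\equiv0$ and $w\ge0$ (as each $\phi_\rho\ge0$). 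Weak lower semicontinuity of $\int_\Omega|\nabla\cdot|^p$ gives $\int_\Omega|\nabla w|^p\,dx\le\lambda_1(m;\Omega)$, and combined with the variational characterization \eqref{eq:lambda1} this forces equality, so $w$ is a nonnegative minimizer of $\lambda_1(m)$, hence $w=c\varphi_1$ with $c>0$ determined by $\int_\Omega m w^p\,dx=1$, i.e.\ $w=\varphi_1/(\int_\Omega m\varphi_1^p\,dx)^{1/p}$. Finally, the identical limits of the Rayleigh quotients force $\|\nabla\tilde\phi_\rho\|_p\to\|\nabla w\|_p$, which upgrades weak to strong convergence in $\W(\Omega)$ via the uniform convexity of $\W(\Omega)$ (equivalently, the $(S_+)$-property of $-\Delta_p$ applied to the minimizing sequence, as in the proof of Lemma~\ref{lem:PS0}). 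Since the limit is the same for every subsequence, the full family converges. The main obstacle is purely bookkeeping: ensuring that the weight integrals behave continuously under the weak-to-strong passage, which is exactly what \ref{WM} and the embedding $p\gamma/(\gamma-1)<p^*$ are designed to guarantee; once that is in place, the rest is the classical concentration-compactness-free argument for simple first eigenvalues.
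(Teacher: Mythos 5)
Your proposal is correct and follows essentially the same route as the paper: approximate $\varphi_1$ by smooth functions compactly supported in $O_\rho$ to squeeze $\lambda_1(m;O_\rho)$ between $\lambda_1(m;\Omega)$ (via strict domain monotonicity) and a Rayleigh quotient converging to $\lambda_1(m;\Omega)$, then extract a weak limit of the normalized eigenfunctions and use simplicity plus norm convergence to upgrade to strong $\W(\Omega)$-convergence. You actually spell out the eigenfunction step in more detail than the paper (which compresses it to ``the simplicity of $\lambda_1(m;\Omega)$ leads to the convergence''), which is a useful expansion.

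One small caveat: the parenthetical claim that ``$\rho\mapsto\lambda_1(m;O_\rho)$ is nonincreasing as $\rho\to0$'' is not justified, since the family $\{O_\rho\}$ is only required to satisfy $O_\rho\Subset\Omega$ and $\Omega\setminus O_\rho\subset\Omega_\rho$ and need not be nested; so monotonicity cannot be invoked to obtain the existence of the limit $L$. This is harmless here, because your competitor argument in the next paragraph gives $\limsup_{\rho\to 0}\lambda_1(m;O_\rho)\le\lambda_1(m;\Omega)$ directly, and strict domain monotonicity gives $\lambda_1(m;O_\rho)>\lambda_1(m;\Omega)$ for each small $\rho$, so the limit exists and equals $\lambda_1(m;\Omega)$ without any monotonicity in $\rho$. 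It would be cleaner to drop the monotonicity remark and phrase the conclusion in terms of $\limsup$/$\liminf$, which is in effect what the paper does by working along an arbitrary sequence $\rho_n\to 0$.
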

\begin{proof}
	Let $\{\rho_n\}$ be any sequence which converges to $0$.
	By standard methods, using the molifiers and cut-off functions, we can construct
	a sequence $\{\varphi_{1,n}\} \subset C_0^\infty(\Omega)$ such that
	$\mathrm{supp}\,\varphi_{1,n} \subset O_{\rho_n}$ and  $\varphi_{1,n}\to \varphi_1$ in $\W(\Omega)$. 
	Since, by \ref{WM},
	$$
	\int_{O_{\rho_n}} m\varphi_{1,n}^p\,dx
	=\intO m\varphi_{1,n}^p\,dx\to \intO m\varphi_{1}^p\,dx>0
	\quad \text{as}~ n \to \infty,
	$$ 
	the admissible set for the definition \eqref{eq:lambda1O} of $\lambda_1(m; O_{\rho_n})$ is nonempty for any sufficiently large $n$, and hence
	\begin{align*}
		\lambda_1(m;\Omega)
		< 
		\frac{\int_{O_{\rho_n}} |\nabla \phi_{\rho_n}|^p\,dx}
		{\int_{O_{\rho_n}} m \phi_{\rho_n}^p\,dx}
		&=
		\lambda_1(m;O_{\rho_n})
		\le 
		\frac{\int_{O_{\rho_n}} |\nabla \varphi_{1,n}|^p\,dx}{\int_{O_{\rho_n}} m\varphi_{1,n}^p\,dx} 
		\to 
		\frac{\intO |\nabla \varphi_{1}|^p\,dx}
		{\intO m\varphi_{1}^p\,dx}=\lambda_1(m;\Omega)
	\end{align*}
	as $n \to \infty$.
	We see that $\lambda_1(m;O_{\rho_n}) \to \lambda_1(m;\Omega)$ and the simplicity of $\lambda_1(m;\Omega)$ leads to the convergence of the normalized sequence $\{\phi_{\rho_n}/(\intO m\phi_{\rho_n}^p\,dx)^{1/p}\}$ to $\varphi_1/(\intO m\varphi_1^p\,dx)^{1/p}$ in $\W(\Omega)$.
\end{proof}

In the proof of Theorem~\ref{thm:existence} on the existence of solutions of the problem \eqref{eq:Psub}, we will work with the sequence of variational eigenvalues $\{\lambda_k(m)\}$ of \eqref{eq:EP} defined, using the construction from \cite{drabrob1999}, as
\begin{equation}\label{lambda_n}
	\lambda_k(m) = \inf_{h\in\mathscr{F}_k(m)} \max_{z \in S^{k-1}} 
	\|\nabla h(z)\|_p^p, 
	\quad k \in \mathbb{N},
\end{equation}
where $S^{k-1}$ denotes the unit sphere in $\mathbb{R}^k$ and
\begin{align} 
	\label{F_n} 
	\mathscr{F}_k(m)
	&:=\left\{ h\in C(S^{k-1},S(m)):\, h \text{ is odd}\right\},
	\\
	\label{S_m} 
	S(m) 
	&:=\left\{u\in \W(\Omega):\, \intO m|u|^p\,dx=1\,\right\}.
\end{align} 
It is known that each $\lambda_k(m)$ is indeed an eigenvalue of \eqref{eq:EP} and $\lambda_k(m) \to \infty$ as $k \to \infty$, see \cite[Remark 2.1]{Cuesta}, but it is not known whether $\{\lambda_k(m)\}$ exhausts the positive part of $\sigma(-\Delta_p;m)$, except in the cases $N=1$ or $p=2$.

Let us explicitly mention that $\lambda_1(m)<\lambda_2(m)$ and there is no eigenvalue of \eqref{eq:EP} in between them, see~\cite[Proposition~4.2 and Corollary~5.1]{Cuesta}.

Finally, we refer to \cite[Chapter~3]{pyat} for an overview on the weighted eigenvalue problem \eqref{eq:EP} in the linear case $p=2$.

\section{MP and AMP on subsets of \texorpdfstring{$\Omega$}{Omega}}\label{sec:mpampsubsets}
In order to prove Theorem~\ref{thm1-w}, we prepare two results about ``local'' versions of the MP and AMP on compact subsets of $\Omega$, which might be of independent interest. 
We refer to \cite[Theorem~4.2]{pinch} for a related version of the AMP.
\begin{proposition}\label{prop:thm0-w}
	Let
	\ref{WM}, \ref{WA}, \ref{WF1}, \ref{F2} be satisfied. 
	Assume that $\intO a \varphi_1^q \,dx > 0$.
	Then for any compact subset 
	$K \subset \Omega$ there exists $\delta>0$ such that any solution $u$ of \eqref{eq:Psub} satisfies $u>0$ in $K$  provided $\lambda \in (\lambda_1(m)-\delta,\lambda_1(m))$ and $\eta \in (-\delta,0]$.
\end{proposition}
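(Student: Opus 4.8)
The plan is to argue by contradiction, using the ``bifurcation from infinity'' description supplied by Lemma~\ref{lem:conver0}. Fix a compact set $K \subset \Omega$ and suppose the conclusion fails. Then one can find sequences $\lambda_n \to \lambda_1(m)$ with $\lambda_n < \lambda_1(m)$, $\eta_n \to 0$ with $\eta_n \le 0$, solutions $u_n$ of \eqref{eq:Psub} with $\lambda = \lambda_n$ and $\eta = \eta_n$, and points $x_n \in K$ with $u_n(x_n) \le 0$; by Proposition~\ref{prop:C0-reg} each $u_n$ is continuous in $\Omega$, so this is meaningful. Lemma~\ref{lem:conver0} then applies and yields, along a subsequence, $\|u_n\|_\infty \to \infty$ and $v_n := u_n/\|u_n\|_\infty \to t\varphi_1$ in $\W(\Omega)$ and in $C^0_{\mathrm{loc}}(\Omega)$ for some $t \neq 0$. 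The whole argument reduces to proving that $t > 0$: once this is known, $v_n \to t\varphi_1$ uniformly on $K$ and $\min_K t\varphi_1 > 0$, so $v_n > 0$ on $K$, hence $u_n > 0$ on $K$, for all large $n$, contradicting $u_n(x_n) \le 0$.

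To determine the sign of $t$, I would assume $t < 0$ and seek a contradiction. Since $w \mapsto w^-$ is continuous on $\W(\Omega)$ and $t\varphi_1 < 0$ in $\Omega$, one gets $v_n^- \to |t|\,\varphi_1$ in $\W(\Omega)$, and, via the Sobolev embeddings together with \ref{WM}, \ref{WA}, \ref{WF1}, also in the Lebesgue spaces needed to pass to the limit in the weighted integrals; in particular $\intO m(v_n^-)^p\,dx \to |t|^p\intO m\varphi_1^p\,dx > 0$, $\intO a(v_n^-)^q\,dx \to |t|^q\intO a\varphi_1^q\,dx > 0$, and $\intO f v_n^-\,dx \to |t|\intO f\varphi_1\,dx > 0$, where the positivity of the last two limits comes from the standing hypothesis $\intO a\varphi_1^q\,dx > 0$ and from the first half of \ref{F2}, respectively. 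Testing the equation for $u_n$ with the test function $-u_n^-$ yields the identity
\begin{equation*}
	\intO|\nabla u_n^-|^p\,dx = \lambda_n\intO m(u_n^-)^p\,dx + \eta_n\intO a(u_n^-)^q\,dx - \intO f u_n^-\,dx .
\end{equation*}
Dividing by $\|u_n\|_\infty^p$, using the variational inequality $\intO|\nabla v_n^-|^p\,dx \ge \lambda_1(m)\intO m(v_n^-)^p\,dx$ (legitimate for large $n$, since then $\intO m(v_n^-)^p\,dx > 0$), and rearranging, one arrives at
\begin{equation*}
	\bigl(\lambda_1(m) - \lambda_n\bigr)\intO m(v_n^-)^p\,dx \le \frac{\eta_n}{\|u_n\|_\infty^{p-q}}\intO a(v_n^-)^q\,dx - \frac{1}{\|u_n\|_\infty^{p-1}}\intO f v_n^-\,dx .
\end{equation*}
For all large $n$ the left-hand side is strictly positive, because $\lambda_n < \lambda_1(m)$; on the right-hand side the first term is $\le 0$, because $\eta_n \le 0$ and $\intO a(v_n^-)^q\,dx > 0$, and the second term is $< 0$, because $\intO f v_n^-\,dx > 0$. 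This contradiction forces $t > 0$, which, as explained above, finishes the proof.

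I expect this sign determination to be the only real obstacle. The point is that to leading order the normalized equation for $v_n$ just reproduces the Rayleigh-quotient identity characterizing $\varphi_1$ and carries no information on the sign of $t$; the sign must instead be extracted from the genuine inequality $\intO|\nabla v_n^-|^p\,dx \ge \lambda_1(m)\intO m(v_n^-)^p\,dx$ together with the precise sign conditions $\lambda_n < \lambda_1(m)$, $\eta_n \le 0$, $\intO a\varphi_1^q\,dx > 0$, and $\intO f\varphi_1\,dx > 0$, so that all four hypotheses of the statement enter at this single step. The remaining ingredients --- continuity of the solutions $u_n$, the strong $\W(\Omega)$-convergence of $v_n^-$ and the ensuing convergence of the weighted integrals, and the set-up of the contradiction sequence --- are routine given Lemma~\ref{lem:conver0} and Proposition~\ref{prop:C0-reg}.
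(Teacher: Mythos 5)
The proposal is correct and follows essentially the same route as the paper's proof: a contradiction argument via Lemma~\ref{lem:conver0} followed by a Rayleigh-quotient comparison against $\lambda_1(m)$ that exploits $\lambda_n<\lambda_1(m)$, $\eta_n\le 0$, $\intO a\varphi_1^q\,dx>0$, and $\intO f\varphi_1\,dx>0$. The only superficial difference is that you test with the negative part $-u_n^-$ and determine the sign of the limit $t$ as a corollary, whereas the paper first fixes the sign from the $C(K)$-convergence together with $\min_K u_n\le 0$ and then tests with $-u_n/\|u_n\|_\infty^p$; both variants produce the same contradictory inequality.
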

\begin{proof}
	Suppose, by contradiction, that there exist a compact subset $K$ of $\Omega$, sequences $\lambda_n \nearrow \lambda_1(m)$ and  $\eta_n \nearrow 0$ (the case $\eta_n = 0$ is permitted), and a sequence $\{u_n\}$ of solutions of {\renewcommand{\Plef}{\mathcal{P};\lambda_n,\eta_n}\eqref{eq:Psub}} such that $\min_K u_n \leq 0$ for all $n$.
	Recall that each $u_n \in C(\Omega)$ by Proposition~\ref{prop:C0-reg}, which implies that the minimum is attained.
	We deduce from Lemma~\ref{lem:conver0} that $\|\nabla u_n\|_p \to \infty$, $\|u_n\|_\infty \to \infty$, and, since $\varphi_{1}>0$ in $\Omega$, $\{u_n/\|u_n\|_\infty\}$ converges to $-t\varphi_1$ in $\W(\Omega)$ and $C(K)$ for some $t>0$, up to a subsequence.
	
	Taking $-u_n/\|u_n\|_\infty^p$ as a test function for {\renewcommand{\Plef}{\mathcal{P};\lambda_n,\eta_n}\eqref{eq:Psub}} and denoting $v_n = -u_n/\|u_n\|_\infty$, we get
	\begin{equation}\label{eq:xi10}
		\intO |\nabla v_n|^{p} \,dx
		=
		\lambda_n \intO m 
		|v_n|^p \, dx 
		+ 
		\frac{\eta_n}{\|u_n\|_\infty^{p-q}} \intO a |v_n|^q \,dx
		-
		\frac{1}{\|u_n\|_\infty^{p-1}}\intO f v_n\,dx.
	\end{equation}
	The convergence $v_n \to t\varphi_{1}$ in $\W(\Omega)$ and the regularity assumptions \ref{WM}, \ref{WA}, \ref{WF1} give the convergences
	\begin{align*}
		&\intO m |v_n|^p \, dx 
		\to
		t^p \intO m	\varphi_1^p \, dx > 0,
		\quad
		\intO a |v_n|^q \, dx
		\to
		t^q \intO a \varphi_1^q \, dx > 0,
		\\
		&\intO f v_n\,dx
		\to
		t \intO f \varphi_1 \,dx > 0
		\quad \text{as}~ n \to \infty.
	\end{align*}
	Thus, using the definition \eqref{eq:lambda1} of $\lambda_1(m)$ and recalling that $\eta_n\leq 0$, we obtain from \eqref{eq:xi10} that
	$$
	0 < \lambda_1(m)
	\intO m 
	|v_n|^p \, dx
	\leq 
	\intO |\nabla v_n|^{p} \,dx
	<
	\lambda_n \intO m 
	|v_n|^p \, dx
	$$
	for any sufficiently large $n$, which is impossible since $\lambda_n < \lambda_1(m)$.
\end{proof}

Recall the notation
	$$
	\Omega_{\rho}
	=
	\{x\in\Omega:\,\mathrm{dist}(x,\partial\Omega)<\rho\}.
	$$
\begin{proposition}\label{prop:thm1-w}
	Let
	\ref{WM}, \ref{WA}, \ref{WF1}, \ref{F2} be satisfied. 
	Assume that $\intO a \varphi_1^q \,dx > 0$ and there exists $\rho>0$ such that 
	$a = 0$ a.e.\ in $\Omega_{\rho}$ and $f \geq 0$ a.e.\ in $\Omega_{\rho}$.
	Then for any compact 
	subset $K \subset \Omega$ there exists $\delta>0$ such that any solution $u$ of \eqref{eq:Psub} satisfies $u<0$ in $K$  provided $\lambda \in (\lambda_1(m),\lambda_1(m)+\delta)$ and $\eta \in [0,\delta)$.
\end{proposition}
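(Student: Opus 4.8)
The plan is to argue by contradiction, mimicking the proof of Proposition~\ref{prop:thm0-w} and then facing the extra difficulty created by $\lambda>\lambda_1(m)$. Suppose the assertion fails for some compact $K\subset\Omega$. Then there exist sequences $\lambda_n\searrow\lambda_1(m)$, $0\le\eta_n\to 0$, solutions $u_n$ of {\renewcommand{\Plef}{\mathcal{P};\lambda_n,\eta_n}\eqref{eq:Psub}}, and points $x_n\in K$ with $u_n(x_n)\ge 0$ (each $u_n$ is continuous by Proposition~\ref{prop:C0-reg}, so the failure of ``$u<0$ in $K$'' is witnessed by such a point). Since \ref{WM}, \ref{WA}, \ref{WF1}, \ref{F2} hold, Lemma~\ref{lem:conver0} gives $\|\nabla u_n\|_p\to\infty$, $\|u_n\|_\infty\to\infty$, and, along a subsequence, $w_n:=u_n/\|u_n\|_\infty\to t\varphi_1$ in $\W(\Omega)$ and in $C^0_{\mathrm{loc}}(\Omega)$ for some $t\ne 0$. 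As $\varphi_1\ge c_K>0$ on $K$, $w_n(x_n)\ge 0$, and $w_n\to t\varphi_1$ uniformly on $K$, the sign $t<0$ is excluded at once (it would force $u_n<0$ on $K$ for large $n$). Hence $t>0$, and the whole difficulty reduces to ruling this out.

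My first move would be to upgrade to $u_n>0$ in $\Omega$ for all large $n$. Since $w_n\to t\varphi_1>0$ uniformly on the compact set $\Omega\setminus\Omega_{\rho/2}$, we have $u_n>0$ there for large $n$, so $(u_n)_-\in\W(\Omega)$ is supported in $\overline{\Omega_{\rho/2}}$ and, after the zero extension, belongs to $\W(\Omega_{\rho})$. Using $-(u_n)_-$ as a test function in {\renewcommand{\Plef}{\mathcal{P};\lambda_n,\eta_n}\eqref{eq:Psub}} and recalling that $a=0$ and $f\ge 0$ a.e.\ in $\Omega_{\rho}$, one gets
\[
\intO|\nabla (u_n)_-|^p\,dx = \lambda_n\intO m(u_n)_-^p\,dx+\int_{\{u_n<0\}} f\,u_n\,dx \le \lambda_n\intO m(u_n)_-^p\,dx .
\]
If $(u_n)_-\not\equiv 0$ then necessarily $\intO m(u_n)_-^p\,dx>0$, whence the variational characterization \eqref{eq:lambda1O} of $\lambda_1(m;\Omega_{\rho})$ yields $\lambda_1(m;\Omega_{\rho})\le\lambda_n$; but $\lambda_1(m;\Omega_{\rho})>\lambda_1(m;\Omega)$ by the strict domain monotonicity, while $\lambda_n\to\lambda_1(m;\Omega)$, a contradiction for large $n$. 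Therefore $u_n\ge 0$ in $\Omega$, and since $-\Delta_p u_n=\lambda_n m|u_n|^{p-2}u_n+f\ge\lambda_n m|u_n|^{p-2}u_n$ weakly in $\Omega_{\rho}$ and $u_n>0$ in $\Omega\setminus\Omega_{\rho/2}$, Lemma~\ref{lem:eigenpos} gives $u_n>0$ in $\Omega$.

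To close the argument I would test {\renewcommand{\Plef}{\mathcal{P};\lambda_n,\eta_n}\eqref{eq:Psub}} with $\varphi_1$ (which produces the fixed positive number $\intO f\varphi_1\,dx>0$, cf.\ \ref{F2}), and estimate $\intO|\nabla u_n|^{p-2}\nabla u_n\cdot\nabla\varphi_1\,dx$ from above by $\lambda_1(m)\intO m\,u_n^{p-1}\varphi_1\,dx$ via a Picone-type inequality from Appendix~\ref{sec:picone}, applied to $u_n>0$ and $\varphi_1$ (this is exactly where strict positivity and the weak form of Picone are used). The resulting estimate is
\[
(\lambda_n-\lambda_1(m))\intO m\,u_n^{p-1}\varphi_1\,dx + \eta_n\intO a\,u_n^{q-1}\varphi_1\,dx + \intO f\varphi_1\,dx \le 0 ,
\]
and since $t>0$ gives $\intO m\,u_n^{p-1}\varphi_1\,dx\to+\infty$ (first term positive for large $n$), $\eta_n\ge 0$ together with $\intO a\varphi_1^q\,dx>0$ gives $\eta_n\intO a\,u_n^{q-1}\varphi_1\,dx\ge 0$ for large $n$, and $\intO f\varphi_1\,dx$ is a fixed positive constant, we obtain the desired contradiction. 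The genuine obstacle is this last Picone step: for $p\le 2$ the inequality $\intO|\nabla u_n|^{p-2}\nabla u_n\cdot\nabla\varphi_1\,dx\le\lambda_1(m)\intO m\,u_n^{p-1}\varphi_1\,dx$ is available (and reduces to the identity $\int\nabla u_n\cdot\nabla\varphi_1=\lambda_1(m)\int m\,u_n\varphi_1$ when $p=2$, recovering the classical AMP argument), whereas for $p>2$ it fails in this form and one must replace $\varphi_1$ by a suitable power-type test function and invoke the corresponding weak Picone inequality, controlling the ensuing boundary contributions with the help of $a=0$, $f\ge 0$ in $\Omega_\rho$ and the blow-up $u_n\to\infty$ on $\Omega\setminus\Omega_\rho$; everything else (the blow-up alternative, the upgrade to positivity, and the sign bookkeeping) is routine.
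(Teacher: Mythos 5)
Your setup (contradiction via Lemma~\ref{lem:conver0}, exclusion of the sign $t<0$, upgrade to $u_n\ge 0$ by testing with $-(u_n)_-$ and domain monotonicity $\lambda_1(m;\Omega_\rho)>\lambda_1(m;\Omega)$, and then strict positivity via Lemma~\ref{lem:eigenpos}) is correct and matches the paper's argument. The gap is in the final step, and it is not a technicality you can defer: it is the core of the proof.

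You test \eqref{eq:Psub} with $\varphi_1$ and claim that a ``Picone-type inequality from Appendix~\ref{sec:picone}'' gives
\[
\intO|\nabla u_n|^{p-2}\nabla u_n\cdot\nabla\varphi_1\,dx\le\lambda_1(m)\intO m\,u_n^{p-1}\varphi_1\,dx .
\]
But no such inequality appears in Appendix~\ref{sec:picone}. Lemma~\ref{lem:picone-weak} estimates $\intO|\nabla u|^{p-2}\nabla u\cdot\nabla\bigl(\varphi^p/(u+\varepsilon)^{p-1}\bigr)\,dx$ by $\intO|\nabla\varphi|^p\,dx$; the quantity you need to bound does not have that form. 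As you yourself note, the inequality you write is false for $p>2$; in fact it is not available for general $p\ne 2$ at all (it only collapses to the correct identity when $p=2$). You then say ``one must replace $\varphi_1$ by a suitable power-type test function \dots; everything else is routine,'' but that replacement \emph{is} the proof, and it requires real work.

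What the paper actually does is test \eqref{eq:Psub} (not the eigenvalue equation) with the function $\varphi_1^p/(u_n+\varepsilon)^{p-1}$, which is admissible in $\W(\Omega)$ by Lemma~\ref{lem:testfunc}. Then Lemma~\ref{lem:picone-weak} bounds the resulting left-hand side by $\intO|\nabla\varphi_1|^p\,dx=\lambda_1(m)\intO m\varphi_1^p\,dx$, which is a \emph{fixed} number — no blow-up of the left-hand side is needed. The right-hand side contains the terms $\lambda_n\intO m\frac{u_n^{p-1}}{(u_n+\varepsilon)^{p-1}}\varphi_1^p$, $\eta_n\intO a\frac{u_n^{q-1}}{(u_n+\varepsilon)^{p-1}}\varphi_1^p$, $\intO f\frac{\varphi_1^p}{(u_n+\varepsilon)^{p-1}}$, and the crucial issue is passing $\varepsilon\searrow 0$: $u_n$ is not bounded away from zero near $\partial\Omega$, so you cannot simply set $\varepsilon=0$. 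This is exactly where the hypotheses $a=0$ a.e.\ in $\Omega_\rho$, $f\ge 0$ a.e.\ in $\Omega_\rho$, and the uniform bound $u_n\ge c>0$ on $\Omega\setminus\Omega_r$ enter: they make the $a$-term supported where $u_n$ is bounded below, and allow dropping the $f$-contribution from $\Omega_r$ while keeping a one-sided bound. Only then does one divide by $\|u_n\|_\infty$ and use the $C^0_{\mathrm{loc}}$-convergence to reach the contradiction $\lambda_n<\lambda_1(m)$. None of this is present in your sketch, so the proof is incomplete where it matters most.
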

\begin{proof}
	Suppose, contrary to our claim, that there exist a compact subset $K \subset \Omega$, sequences $\lambda_n \searrow \lambda_1(m)$ and $\eta_n \searrow 0$ (the case $\eta_n=0$ is permitted), and a sequence $\{u_n\}$ of solutions of {\renewcommand{\Plef}{\mathcal{P};\lambda_n,\eta_n}\eqref{eq:Psub}} such that $\max_K u_n \geq 0$ for all $n$.
	Proposition~\ref{prop:C0-reg} guarantees that $u_n \in C(\Omega)$. In particular, this implies that the maximum is attained.
	We deduce from Lemma~\ref{lem:conver0} that $\|\nabla u_n\|_p \to \infty$, $\|u_n\|_\infty \to \infty$, and, since $\varphi_{1}>0$ in $\Omega$, $\{u_n/\|u_n\|_\infty\}$ converges to $t\varphi_1$ in $\W(\Omega)$ (and hence a.e.\ in $\Omega)$ and in $C^0_{\mathrm{loc}}(\Omega)$ for some $t>0$, up to a subsequence.
	Consequently, for any $r \in (0,\rho)$ there exists a constant $c>0$ such that 
	$u_n \geq c$ in $\Omega \setminus \Omega_{r}$ for any sufficiently large $n$, where $\rho>0$ is given by the assumption of the proposition.
	Moreover, recalling that $\lambda_1(m;\Omega)<\lambda_1(m;\Omega_{r})$ (see Section~\ref{sec:eigen}), we can take $n$ larger to guarantee that $\lambda_n \in (\lambda_1(m;\Omega),\lambda_1(m;\Omega_{r}))$.

	Let us prove that $u_n > 0$ in the whole $\Omega$.
	Suppose first that $(u_n)_-$ is nontrivial.
	Since $(u_n)_- \in \W(\Omega) \setminus \{0\}$ and $\mathrm{supp}\,(u_n)_- \subset \overline{\Omega_{r}}$, we use $-(u_n)_-$ as a test function for {\renewcommand{\Plef}{\mathcal{P};\lambda_n,\eta_n}\eqref{eq:Psub}} and, noting that $a = 0$ a.e.\ in $\Omega_{r}$ and 
	$f \geq 0$ a.e.\ in $\Omega_{r}$, we get
	\begin{align}
		0<\int_{\Omega_r} |\nabla (u_n)_-|^p \,dx
		=
		\lambda_n \int_{\Omega_r} m (u_n)_-^p \,dx
		+
		\eta_n \int_{\Omega_r} a (u_n)_-^q \, dx
		&-
		\int_{\Omega_r} f (u_n)_- \,dx
		\\
		\label{eq:u-m}
		&\leq
		\lambda_n \int_{\Omega_r} m (u_n)_-^p \,dx.
	\end{align}
	Since $(u_n)_- \in \W(\Omega) \cap C(\Omega)$ and $(u_n)_-=0$ on $\partial \Omega_r \cap \Omega$,  \cite[Lemma~5.6]{CuestaFucik} ensures that
	$(u_n)_- \in \W(\Omega_{r})$.
	As a consequence, we conclude from \eqref{eq:u-m} that $(u_n)_-$ is admissible for the definition \eqref{eq:lambda1O} of $\lambda_1(m;\Omega_{r})$, which gives the following contradiction:
	\begin{align*}
	0<\lambda_1(m;\Omega_{r})
	\int_{\Omega_{r}} m (u_n)_-^p \,dx 
	\leq 
	\int_{\Omega_{r}} |\nabla (u_n)_-|^p \,dx
	&\leq
	\lambda_n \int_{\Omega_{r}} m (u_n)_-^p \,dx
	\\
	&<
	\lambda_1(m;\Omega_{r})
	\int_{\Omega_{r}} m (u_n)_-^p \,dx.
	\end{align*}
	Thus, $u_n$ is nonnegative in $\Omega$.	
	In view of the inequality $u_n > 0$ in $\Omega \setminus \Omega_{r}$
	and the assumptions $a = 0$ a.e.\ in $\Omega_{\rho}$ and 
	$f \geq 0$ a.e.\ in $\Omega_{\rho}$, Lemma~\ref{lem:eigenpos} implies that $u_n > 0$ in the whole $\Omega$. 	
	
	Finally, let us obtain a contradiction to the positivity of $u_n$.
	We know from Lemma~\ref{lem:testfunc} that $\varphi_1^p/(u_n+\varepsilon)^{p-1} \in \W(\Omega)$ for any $\varepsilon>0$, i.e., it is a legitimate test function for the problem {\renewcommand{\Plef}{\mathcal{P};\lambda_n,\eta_n}\eqref{eq:Psub}}.
	Thus, applying the Picone inequality given by Lemma~\ref{lem:picone-weak}, we obtain
	\begin{align}
		\notag
		&\lambda_1(m) \intO m 
		\varphi_1^p \, dx  
		=
		\intO |\nabla \varphi_1|^{p} \,dx
		\geq
		\intO |\nabla u_n|^{p-2} \nabla u_n\nabla \left(\frac{\varphi_1^p}{(u_n+\varepsilon)^{p-1}}\right) dx\\
		\label{eq:prop:weak1}
		&=\lambda_n 
		\intO m \,
		\frac{u_n^{p-1}}{(u_n+\varepsilon)^{p-1}}\,
		\varphi_1^p\, dx 
		+ 
		\eta_n \intO a\, \frac{u_n^{q-1}}{(u_n+\varepsilon)^{p-1}} \,
		\varphi_1^p \, dx
		+
		\intO f
		\frac{\varphi_1^{p}}{(u_n+\varepsilon)^{p-1}} \,dx.
	\end{align}
	Now, for a fixed $n$, we let $\varepsilon \searrow 0$.
	Using the dominated convergence theorem, we get
	\begin{align*}
		\intO m \,
		\frac{u_n^{p-1}}{(u_n+\varepsilon)^{p-1}} \,
		\varphi_1^p\, dx
		\to
		\intO m 
		\varphi_1^p\, dx
		> 0
		\quad \text{as}~ \varepsilon \searrow 0.
	\end{align*}
Since $\intO f \varphi_1 \,dx > 0$, we have $\int_{\Omega \setminus \Omega_r} f \varphi_1 \,dx > 0$ for any sufficiently small $r>0$.
Taking any such $r$ and noting that $a = 0$ a.e.\ in $\Omega_{r}$, 
	$f \geq 0$ a.e.\ in $\Omega_{r}$, and $u_n \geq c$ in $\Omega \setminus \Omega_{r}$ for some $c>0$, the dominated convergence theorem also gives
	\begin{align*}
		\intO a\, \frac{u_n^{q-1}}{(u_n+\varepsilon)^{p-1}} \,
		\varphi_1^p \, dx
		=
		\int_{\Omega \setminus \Omega_{r}} a\, \frac{u_n^{q-1}}{(u_n+\varepsilon)^{p-1}} \,
		\varphi_1^p \, dx
		&\to 
		\int_{\Omega \setminus \Omega_{r}} a \, \frac{\varphi_{1}^q}{u_n^{p-q}} \,dx,
		\\
		\intO f
		\frac{\varphi_1^{p}}{(u_n+\varepsilon)^{p-1}} \,dx
		\geq
		\int_{\Omega \setminus \Omega_{r}} f
		\frac{\varphi_1^{p}}{(u_n+\varepsilon)^{p-1}} \,dx
		&\to 
		\int_{\Omega \setminus \Omega_{r}} f \, \frac{\varphi_1^p}{u_n^{p-1}} \,dx
	\end{align*}
 as $\varepsilon \searrow 0$.
	Therefore, passing to the normalized functions $v_n = u_n/\|u_n\|_\infty$, we deduce from \eqref{eq:prop:weak1} that
	\begin{equation}\label{eq:l1-l-1}
		(\lambda_1(m) - \lambda_n)
		\intO m 
		\varphi_1^p \, dx  
		\geq
		\frac{\eta_n}{\|u_n\|_\infty^{p-q}} \int_{\Omega \setminus \Omega_{r}} a\, \frac{\varphi_1^{p-q}}{v_n^{p-q}} \, \varphi_1^q
		\,dx
		+
		\frac{1}{\|u_n\|_\infty^{p-1}}
		\int_{\Omega \setminus \Omega_{r}} f \, \frac{\varphi_1^{p-1}}{v_n^{p-1}} \, \varphi_1\,dx.
	\end{equation}
	Since $v_n \to t\varphi_1$ in $C(\Omega \setminus \Omega_{r})$, we have
	\begin{align*}
		\int_{\Omega \setminus {\Omega_{r}}} a\, \frac{\varphi_1^{p-q}}{v_n^{p-q}} \, \varphi_1^q
		\,dx 
		&\to 
		\frac{1}{t^{p-q}}\int_{\Omega \setminus {\Omega_{r}}} a \varphi_1^q \,dx  
		=
		\frac{1}{t^{p-q}}\intO a \varphi_1^q \,dx
		> 0,\\
		\int_{\Omega \setminus {\Omega_{r}}} f \,\frac{\varphi_1^{p-1}}{v_n^{p-1}} \, \varphi_1\,dx
		&\to 
		\frac{1}{t^{p-1}}\int_{\Omega \setminus {\Omega_{r}}} f \varphi_1\,dx 
		>  0
		\quad \text{as}~n \to \infty, 
	\end{align*}
	thanks to the choice of $r>0$.
	Consequently, recalling that $\eta_n \geq 0$, we deduce from \eqref{eq:l1-l-1} that $\lambda_n < \lambda_1(m)$ for any sufficiently large $n$, which contradicts our assumption $\lambda_n > \lambda_1(m)$.
\end{proof}

For convenience, we separately state the results of Propositions~\ref{prop:thm0-w} and~\ref{prop:thm1-w} in the unperturbed case $\eta=0$.
\begin{corollary}\label{cor:AMP-loc}
	Let
	\ref{WM}, \ref{WF1}, \ref{F2} be satisfied. 
	Let $K \subset \Omega$ be a compact set. 
	Then the following 
	assertions hold:
	\begin{enumerate}[label={\rm(\roman*)}]
	\item\label{cor:AMP-loc:1}
		There exists $\delta>0$ such that any solution $u$ of \eqref{eq:Pfred} satisfies $u>0$ in $K$  provided $\lambda \in (\lambda_1(m)-\delta,\lambda_1(m))$.
	\item\label{cor:AMP-loc:2}
		Assume that $f \geq 0$ a.e.\ in $\Omega_{\rho}$ for some $\rho>0$.
		Then there exists $\delta>0$ such that any solution $u$ of \eqref{eq:Pfred} satisfies $u<0$ in $K$  provided $\lambda \in (\lambda_1(m),\lambda_1(m)+\delta)$.
	\end{enumerate}
\end{corollary}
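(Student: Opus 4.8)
The plan is to obtain both assertions as direct specializations of Propositions~\ref{prop:thm0-w} and~\ref{prop:thm1-w} to the case $\eta=0$. The only point to reconcile is that those propositions are stated under assumption \ref{WA} together with $\intO a\varphi_1^q\,dx>0$ (and, for Proposition~\ref{prop:thm1-w}, also $a=0$ a.e.\ in $\Omega_\rho$ and $f\geq0$ a.e.\ in $\Omega_\rho$), whereas Corollary~\ref{cor:AMP-loc} involves no weight $a$. The key observation is that when $\eta=0$ the term $\eta\,a|u|^{q-2}u$ vanishes identically in \eqref{eq:Psub} regardless of $a$, so that a function $u\in\W(\Omega)$ solves \eqref{eq:Pfred} if and only if it solves \eqref{eq:Psub} with $\eta=0$ for \emph{any} admissible choice of the weight $a$. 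Hence we are free to tailor $a$ so that the hypotheses of the relevant proposition hold, and then invoke that proposition at the single value $\eta=0$, which lies in the ranges $(-\delta,0]$ and $[0,\delta)$ appearing in their conclusions.

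For assertion~\ref{cor:AMP-loc:1}, I would fix the compact set $K\subset\Omega$, pick any open ball $B$ with $\overline{B}\subset\Omega$, and put $a:=\chi_B$. Then $a\in L^\infty(\Omega)\setminus\{0\}$, so \ref{WA} is satisfied, and since $\varphi_1>0$ in $\Omega$ we have $\intO a\varphi_1^q\,dx=\int_B\varphi_1^q\,dx>0$. Thus all hypotheses of Proposition~\ref{prop:thm0-w} hold, and applying it with this $a$ produces $\delta>0$ such that every solution of \eqref{eq:Psub} with $\eta=0$, that is, every solution of \eqref{eq:Pfred}, is positive on $K$ whenever $\lambda\in(\lambda_1(m)-\delta,\lambda_1(m))$.

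For assertion~\ref{cor:AMP-loc:2}, I would fix $K\subset\Omega$ compact and let $\rho>0$ be as in the hypothesis, so that $f\geq0$ a.e.\ in $\Omega_\rho$. Decreasing $\rho$ if necessary (this keeps $f\geq0$ a.e.\ in $\Omega_\rho$), we may assume that the open set $\{x\in\Omega:\mathrm{dist}(x,\partial\Omega)>\rho\}$ is nonempty, and we choose an open ball $B$ whose closure $\overline{B}$ is contained in this set. Setting $a:=\chi_B$, we obtain $a\in L^\infty(\Omega)\setminus\{0\}$, $a=0$ a.e.\ in $\Omega_\rho$ (as $\overline{B}\cap\Omega_\rho=\emptyset$), and $\intO a\varphi_1^q\,dx=\int_B\varphi_1^q\,dx>0$; together with $f\geq0$ a.e.\ in $\Omega_\rho$, all hypotheses of Proposition~\ref{prop:thm1-w} are met, and it yields $\delta>0$ such that every solution of \eqref{eq:Pfred} is negative on $K$ whenever $\lambda\in(\lambda_1(m),\lambda_1(m)+\delta)$.

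This is a genuine corollary with no substantive obstacle; the proof is purely a matter of specialization. The single point requiring a little care is that the auxiliary weight $a$ must be selected \emph{after} $K$ (and, in~\ref{cor:AMP-loc:2}, $\rho$) have been fixed, so that the support of $a$ can be placed inside $\Omega$, away from the collar $\Omega_\rho$; since the $\delta$ furnished by Propositions~\ref{prop:thm0-w} and~\ref{prop:thm1-w} is permitted to depend on $K$ and on the remaining data of the problem, this ordering is harmless.
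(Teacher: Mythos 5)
Your proposal is correct and follows the route the paper intends: the text introduces Corollary~\ref{cor:AMP-loc} precisely as the $\eta=0$ specialization of Propositions~\ref{prop:thm0-w} and~\ref{prop:thm1-w}. The one wrinkle worth noting — that those propositions carry formal hypotheses on a weight $a$ which is absent from the corollary — you resolve cleanly by inserting an auxiliary indicator weight $a=\chi_B$ (with $\overline{B}\subset\Omega$, and in part~\ref{cor:AMP-loc:2} with $\overline{B}$ placed outside $\Omega_\rho$), observing that when $\eta=0$ solutions of \eqref{eq:Psub} coincide with those of \eqref{eq:Pfred} irrespective of $a$, and then invoking the propositions at $\eta=0\in(-\delta,0]$ resp.\ $\eta=0\in[0,\delta)$. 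The alternative, equally valid, would be to reinspect the proofs of the two propositions and note that every term carrying $a$ is multiplied by $\eta_n$, hence vanishes identically when $\eta_n\equiv0$, so no $a$ is needed at all; your black-box specialization via an artificial $a$ spares the reader that re-inspection and is entirely rigorous. No gap.
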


\section{Proofs of qualitative properties of solutions}\label{sec:proofs}

\subsection{MP and AMP}
We start with  the maximum principles given by Theorems~\ref{thm0} and~\ref{thm1-w} \ref{thm1-w:1}.
\begin{proof*}{Theorem~\ref{thm0}}\label{page:thm0:proof}
	Suppose, by contradiction, that there exist sequences  $\lambda_n \nearrow \lambda_1(m)$ and $\eta_n \nearrow 0$ (the case $\eta_n=0$ is permitted) such that each {\renewcommand{\Plef}{\mathcal{P};\lambda_n,\eta_n}\eqref{eq:Psub}} possesses a
	solution $u_n$ violating either $u_n>0$ in $\Omega$ or $\partial u_n/\partial \nu < 0$ on $\partial \Omega$. 
	In view of Lemma~\ref{lem:conver}, we have $u_n < 0$ in $\Omega$ and $\partial u_n/\partial \nu > 0$ on $\partial \Omega$ for all sufficiently large $n$, and 
	$\{u_n/\|u_n\|_\infty\}$ converges to $-\varphi_1$ in $C^1(\overline{\Omega})$, up to a subsequence.
	Taking $-u_n/\|u_n\|_\infty^p$ as a test function for {\renewcommand{\Plef}{\mathcal{P};\lambda_n,\eta_n}\eqref{eq:Psub}} and denoting $v_n = -u_n/\|u_n\|_\infty$, we get
	\begin{equation}\label{eq:xi1}
		\intO |\nabla v_n|^{p} \,dx
		=
		\lambda_n \intO m
		v_n^p \, dx 
		+ 
		\frac{\eta_n}{\|u_n\|_\infty^{p-q}} \intO a v_n^q \,dx
		-
		\frac{1}{\|u_n\|_\infty^{p-1}}\intO f v_n\,dx.
	\end{equation}
	The convergence $v_n \to \varphi_1$ in $C^1(\overline{\Omega})$ yields
	$$
	\intO m 
	v_n^p \, dx
	\to 
	\intO m 
	\varphi_1^p \, dx  > 0,
	\quad
	\intO a v_n^q \,dx
	\to
	\intO a \varphi_1^q \,dx> 0,
	\quad
	\intO f v_n\,dx
	\to 
	\intO f \varphi_1\,dx > 0
	$$
	as $n \to \infty$.
	Recalling that $\eta_n \leq 0$ and using the definition \eqref{eq:lambda1} of $\lambda_1(m)$, we obtain from \eqref{eq:xi1} that
	$$
	0<\lambda_1(m) \intO m v_n^p \, dx \leq 
	\intO |\nabla v_n|^{p} \,dx
	< \lambda_n \intO m 
	v_n^p \, dx 
	$$
	for all sufficiently large $n$, which contradicts our assumption $\lambda_n < \lambda_1(m)$. 	
\end{proof*}

\begin{proof*}{Theorem~\ref{thm1-w} \ref{thm1-w:1}}\label{page:thm-1w:1:proof}
	Let $\rho>0$ be such that $a = 0$ a.e.\ in $\Omega_\rho$ and $f \geq 0$ a.e.\ in $\Omega_\rho$.
	By Proposition~\ref{prop:thm0-w}, fixing any $r \in (0,\rho)$, we can find $\delta>0$ such that any solution of \eqref{eq:Psub} is positive in $\Omega \setminus {\Omega_{r}}$ $(\supset \Omega \setminus {\Omega_{\rho}})$ provided $\lambda \in (\lambda_1(m)-\delta,\lambda_1(m))$ and $\eta \in (-\delta,0]$.
	Let $u$ be any such solution.
	Let us show that $u>0$ in the whole $\Omega$.
	Suppose first, by contradiction, that $u_- \not\equiv 0$ in $\Omega$.
	Since $u>0$ in $\Omega \setminus {\Omega_{r}}$, we have $\mathrm{supp}\,u_- \subset \overline{\Omega_r}$ and hence,  
	using $-u_- \in \W(\Omega)$ as a test function for \eqref{eq:Psub} and noting that $a = 0$ a.e.\ in $\Omega_r$ and $f \geq 0$ a.e.\ in $\Omega_r$, we obtain
	$$
	0<\intO |\nabla u_-|^p \,dx
	=
	\lambda \intO m u_-^p \,dx
	+
	\eta \intO a u_-^q \, dx
	-
	\intO f u_- \,dx
	\leq 
	\lambda \intO m u_-^p \,dx.
	$$
	However, this contradicts the definition~\eqref{eq:lambda1} of $\lambda_1(m)$ since $\lambda<\lambda_1(m)$.
	That is, $u_- = 0$ in $\Omega$. 
	Finally, Lemma~\ref{lem:eigenpos} guarantees that $u>0$ in $\Omega$.
\end{proof*}

Now we prove the antimaximum principles stated in Theorems~\ref{thm1} and \ref{thm1-w}~\ref{thm1-w:2}.

\begin{proof*}{Theorem~\ref{thm1}}\label{page:thm1:proof}
	First, we consider the assumption~\ref{thm1:I}. 
	The arguments are essentially reminiscent of the final part of the proof of Proposition~\ref{prop:thm1-w}, but they are simpler due to the additional regularity assumptions.
	We provide details for the sake of clarity. 
	
	Suppose, contrary to our claim, that there exist sequences  $\lambda_n \searrow \lambda_1(m)$ and $\eta_n \searrow 0$ (the case $\eta_n=0$ is permitted) such that each {\renewcommand{\Plef}{\mathcal{P};\lambda_n,\eta_n}\eqref{eq:Psub}} possesses a
	solution $u_n$ violating either $u_n<0$ in $\Omega$ or $\partial u_n/\partial \nu > 0$ on $\partial \Omega$. 
	Therefore, in view of Lemma~\ref{lem:conver}, we have $u_n > 0$ in $\Omega$ and $\partial u_n/\partial \nu < 0$ on $\partial \Omega$ for all sufficiently large $n$, and 
	$\{u_n/\|u_n\|_\infty\}$ converges to $\varphi_1$ in $C^1(\overline{\Omega})$, up to a subsequence.
	Since $u_n, \varphi_1 \in C^1(\overline{\Omega})$, we have
	$$
	\nabla \left(\frac{\varphi_1^p}{u_n^{p-1}}\right)
	=
	p \,\frac{\varphi_1^{p-1}}{u_n^{p-1}}
	\,\nabla \varphi_1 
	-
	(p-1) \,\frac{\varphi_1^{p}}{u_n^{p}}\, \nabla u_n
	\quad \text{in}~ \Omega.
	$$
	Noting that $\varphi_{1}/u_n \in L^\infty(\Omega$), we deduce that
	$\varphi_1^p/u_n^{p-1} \in \W(\Omega)$. 
	Taking $\varphi_1^p/u_n^{p-1}$ as a test function for {\renewcommand{\Plef}{\mathcal{P};\lambda_n,\eta_n}\eqref{eq:Psub}}
	and applying the Picone inequality \cite[Theorem~1.1]{Alleg}, we get
	\begin{align*}
		0<\lambda_1(m) \intO m 
		\varphi_1^p \, dx  
		&=
		\intO |\nabla \varphi_1|^{p} \,dx
		\geq
		\intO |\nabla u_n|^{p-2} \nabla u_n\nabla \left(\frac{\varphi_1^p}{u_n^{p-1}}\right) dx\\
		&=\lambda_n \intO m
		\varphi_1^p \, dx 
		+ 
		\eta_n \intO a \,\frac{\varphi_1^{p-q}}{u_n^{p-q}}
		\, \varphi_1^q \,dx
		+
		\intO f
		\,\frac{\varphi_1^{p-1}}{u_n^{p-1}}\, \varphi_1\,dx.
	\end{align*}
	Passing to the normalized functions $v_n = u_n/\|u_n\|_\infty$, we obtain
	\begin{equation}\label{eq:l1-l}
		(\lambda_1(m) - \lambda_n)
		\intO m 
		\varphi_1^p \, dx  
		\geq
		\frac{\eta_n}{\|u_n\|_\infty^{p-q}} \intO a \,\frac{\varphi_1^{p-q}}{v_n^{p-q}}\, \varphi_1^q
		\,dx
		+
		\frac{1}{\|u_n\|_\infty^{p-1}}
		\intO f \,\frac{\varphi_1^{p-1}}{v_n^{p-1}}\, \varphi_1\,dx.
	\end{equation}
	The convergence $v_n \to \varphi_1$ in $C^1(\overline{\Omega})$ yields
	$$
	\intO a \,\frac{\varphi_1^{p-q}}{v_n^{p-q}}\, \varphi_1^q
	\,dx 
	\to 
	\intO a \varphi_1^q \,dx  > 0
	\quad \text{and} \quad 
	\intO f \,\frac{\varphi_1^{p-1}}{v_n^{p-1}}\, \varphi_1\,dx
	\to 
	\intO f \varphi_1\,dx > 0
	$$
	as $n \to \infty$.
	Consequently, recalling that $\eta_n \geq 0$, we deduce from \eqref{eq:l1-l} that $\lambda_n < \lambda_1(m)$, which contradicts our assumption $\lambda_n > \lambda_1(m)$.
	
	Now, we consider the assumption~\ref{thm1:II}. 
	When $\intO a \varphi_1^q \,dx = 0$, it is hard to control the sign of the right-hand side of the inequality \eqref{eq:l1-l}. 
	Nevertheless, under the additional assumption \eqref{eq:Picone-0}, we can use a different test function.
	Namely, arguing by contradiction as above, let us take $\varphi_1^q/u_n^{q-1} \in \W(\Omega)$ as a test function for {\renewcommand{\Plef}{\mathcal{P};\lambda_n,\eta_n}\eqref{eq:Psub}}.
	Recalling that $\intO a\varphi_1^q \,dx=0$, we get
	\begin{equation}\label{eq:thmIii0} 
		\intO |\nabla u_n|^{p-2} \nabla u_n\nabla \left(\frac{\varphi_1^q}{u_n^{q-1}}\right) dx 
		=\lambda_n \intO m 
		u_n^{p-q}\varphi_1^q \, dx 
		+
		\intO f \,\frac{\varphi_1^{q-1}}{u_n^{q-1}}\, \varphi_1\,dx.
	\end{equation}
	Using the generalized Picone inequality \cite[Theorem 1.8]{BT_Picone}, we obtain 
	\begin{equation}\label{eq:thmIii1} 
		\intO |\nabla u_n|^{p-2} \nabla u_n\nabla \left(\frac{\varphi_1^q}{u_n^{q-1}}\right) dx 
		\le 
		\intO |\nabla \varphi_1|^{p-2} \nabla \varphi_1 \nabla 
		\left(\varphi_1^{q-p+1}u_n^{p-q}\right) dx.	
	\end{equation}
	In order to take $\varphi_1^{q-p+1}u_n^{p-q}$ as a test function for the eigenvalue problem \eqref{eq:EP} with $u=\varphi_{1}$ and $\lambda = \lambda_1(m)$ and then simplify  the right-hand side of \eqref{eq:thmIii1}, let us justify that $\varphi_1^{q-p+1}u_n^{p-q} \in \W(\Omega)$.
	As a remark, we observe that \eqref{eq:Picone-0} implies $q-p+1>0$, see \cite[Lemma~1.6]{BT_Picone}.
	We have 
	\begin{equation*}
		\nabla \left(\varphi_1^{q-p+1}u_n^{p-q}\right)
		= 
		(q-p+1) \left(\frac{u_n}{\varphi_1}\right)^{p-q}
		\nabla \varphi_p
		+(p-q)\left(\frac{\varphi_1}{u_n}\right)^{q-p+1}
		\nabla u_n
		\quad \text{in}~ \Omega.
	\end{equation*}
	Thus, recalling that both $u_n$ and $\varphi_1$ satisfy the boundary point lemma,
	we derive that ${u_n}/{\varphi_1}$ and ${\varphi_1}/{u_n}$ are bounded in $\Omega$, which yields
	$\varphi_1^{q-p+1}u_n^{p-q} = \varphi_1 (u_n/\varphi_1)^{p-q}\in \W(\Omega) \cap C(\overline{\Omega})$.
	Using this fact, we obtain from \eqref{eq:thmIii1} and \eqref{eq:EP} the following inequality:
	\begin{equation}\label{eq:frompic}
		\intO |\nabla u_n|^{p-2} \nabla u_n\nabla \left(\frac{\varphi_1^q}{u_n^{q-1}}\right) dx 
		\le
		\lambda_1(m) \intO m
		u_n^{p-q}\varphi_1^q \, dx.
	\end{equation}
	Combining \eqref{eq:frompic} with \eqref{eq:thmIii0} and passing to the normalized functions $v_n = u_n/\|u_n\|_\infty$, we arrive at
	\begin{equation}\label{eq:thmIii2}
		(\lambda_1(m)-\lambda_n)\intO m v_n^{p-q}\varphi_1^q \, dx
		\geq
		\frac{1}{\|u_n\|_\infty^{p-1}}\intO f \,\frac{\varphi_1^{q-1}}{v_n^{q-1}}\, \varphi_1\,dx.
	\end{equation}
	Thanks to the convergence $v_n \to \varphi_1$ in $C^1(\overline{\Omega})$, we have
	\begin{align*}
		&\intO mv_n^{p-q}\varphi_1^q \, dx
		\to 
		\intO m\varphi_1^p \, dx > 0 \quad \text{and} \quad
		\intO f \,\frac{\varphi_1^{q-1}}{v_n^{q-1}}\, \varphi_1\,dx
		\to 
		\intO f\varphi_1 \,dx > 0
	\end{align*}
as $n \to \infty$, 
	and hence \eqref{eq:thmIii2} yields $\lambda_1(m) > \lambda_n$ for all sufficiently large $n$, which is impossible.	
	
	To conclude, we have proved that under either the assumption~\ref{thm1:I} or~\ref{thm1:II} there exists $\delta>0$ such that if $\lambda_1(m)<\lambda<\lambda_1(m)+\delta$ and $0 \leq \eta < \delta$, then any solution $u$ of \eqref{eq:Psub} satisfies $u<0$ in $\Omega$ and $\partial u/\partial \nu > 0$ on $\partial \Omega$.
\end{proof*}

\begin{proof*}{Theorem~\ref{thm1-w}  \ref{thm1-w:2}}\label{page:thm-1w:2:proof}
	Let $\rho>0$ be such that $a, f = 0$ a.e.\ in $\Omega_\rho$.
	By Proposition~\ref{prop:thm1-w}, fixing any $r \in (0,\rho)$, we can find $\delta>0$ such that any solution of \eqref{eq:Psub} is negative in $\Omega \setminus {\Omega_{r}}$ $(\supset \Omega \setminus {\Omega_{\rho}})$ provided $\lambda \in (\lambda_1(m),\lambda_1(m)+\delta)$ and $\eta \in [0,\delta)$.
	Let $u$ be any such solution of \eqref{eq:Psub}. 
	Decreasing $\delta>0$ if necessary, we may assume that $\lambda_1(m)+\delta \leq \lambda_1(m;\Omega_{r})$, see Section~\ref{sec:eigen}.
	
	Let us show that $u<0$ in the whole $\Omega$.
	Suppose first, by contradiction, that $u_+ \not\equiv 0$ in $\Omega$.
	Since $u<0$ in $\Omega \setminus {\Omega_{r}}$, we have $\mathrm{supp}\,u_+ \subset \overline{\Omega_r}$.
	Using $u_+ \in \W(\Omega) \setminus \{0\}$ as a test function for \eqref{eq:Psub} and noting that $a,f = 0$ a.e.\ in $\Omega_r$, we get
	\begin{align*}
		0
		&<
		\int_{\Omega_{r}} |\nabla u_+|^p \,dx
		=
		\intO |\nabla u_+|^p \,dx
		\\
		&=
		\lambda \intO m u_+^p \,dx
		+
		\eta \intO a u_+^q \, dx
		+
		\intO f u_+ \,dx
		=
		\lambda \intO m u_+^p \,dx
		=
		\lambda \int_{\Omega_{r}} m u_+^p \,dx.
	\end{align*}
	Since $u_+ \in \W(\Omega) \cap C(\Omega)$ and $u_+=0$ on $\partial \Omega_r \cap \Omega$,  \cite[Lemma~5.6]{CuestaFucik} ensures that
	$u_+ \in \W(\Omega_{r})$.
	However, this gives the following contradiction to the definition \eqref{eq:lambda1O} of $\lambda_1(m;\Omega_{r})$ and the choice of $\lambda \in (\lambda_1(m),\lambda_1(m)+\delta)$, where $\lambda_1(m)+\delta \leq \lambda_1(m;\Omega_{r})$:
	$$
	0<\lambda_1(m;\Omega_{r})
	\int_{\Omega_{r}} m u_+^p \,dx 
	\leq 
	\int_{\Omega_{r}} |\nabla u_+|^p \,dx
	=
	\lambda \int_{\Omega_{r}} m u_+^p \,dx
	<
	\lambda_1(m;\Omega_{r})
	\int_{\Omega_{r}} m u_+^p \,dx.
	$$
	That is, $u_+ = 0$ in $\Omega$. 
	Applying Lemma~\ref{lem:eigenpos} to $-u$, we deduce that $u<0$ in $\Omega$. 
\end{proof*}

Finally, we establish the versions of the MP and AMP given by Theorems~\ref{thm-1} and \ref{thm-1ww}.
\begin{proof*}{Theorem~\ref{thm-1}}\label{page:thm-1:proof}
	We first prove the assertion \ref{thm-1:2} on the AMP. 
	Let us fix $\delta>0$ as in Corollary~\ref{cor:AMP}.
	Taking $\delta$ smaller if necessary,  we may assume that $\lambda_1(m)+\delta \leq \lambda_2(m)$, where $\lambda_2(m)$ is the second eigenvalue of the problem~\eqref{eq:EP}, see the last part of Section~\ref{sec:eigen}.
	Suppose, contrary to our claim, that there exists $\lambda \in(\lambda_1(m),\lambda_1(m)+\delta)$ and a sequence $\{\eta_n\}$ such that $|\eta_n| \to 0$ and each {\renewcommand{\Plef}{\mathcal{P};\lambda,\eta_n}\eqref{eq:Psub}} possesses a solution $u_n$ which does not satisfy either $u_n < 0$ in $\Omega$ or $\partial u_n/\partial \nu>0$ on $\partial \Omega$. 
	Let us show that $\{\|\nabla u_n\|_p\}$ is bounded. 
	Indeed, if we suppose, by contradiction, that $\|\nabla u_n\|_p \to \infty$ along a subsequence, then Lemma~\ref{lem:auxil1} implies that $\lambda$ is an eigenvalue of the problem \eqref{eq:EP}, which is impossible since $\lambda_1(m) < \lambda < \lambda_2(m)$. 
	This justifies the boundedness of $\{\|\nabla u_n\|_p\}$, and hence 
	Proposition~\ref{prop:bdd} guarantees that  $\{\|u_n\|_\infty\}$ is also bounded. 
	Denoting
	$$
	g_n(x) = \lambda \,m(x)|u_n(x)|^{p-2}u_n(x)
	+ 
	\eta_n \,a(x)|u_n(x)|^{q-2}u_n(x) + f(x),
	\quad x \in \Omega,
	$$
	and recalling the assumptions \ref{M}, \ref{A}, \ref{F1}, we see that $\{g_n\}$ is uniformly bounded in $L^\gamma(\Omega)$ by some constant $M>0$, i.e., $\|g_n\|_\gamma \leq M$ for all $n$, where $\gamma>N$.
	Consequently, in view of \ref{O},  we infer from Proposition~\ref{prop:C1-reg} the existence of $\beta \in (0,1)$ and $C>0$ such that  $\|u_n\|_{C^{1,\beta}(\overline{\Omega})} \leq C$ for all $n$.
	By the Arzel\`a-Ascoli theorem, $\{u_n\}$  converges in $C^1(\overline{\Omega})$ to a solution $u$ of \eqref{eq:Pfred}, up to a subsequence.
	We know from Corollary~\ref{cor:AMP}~\ref{cor:AMP:2} that $u<0$ in $\Omega$ and $\partial u/\partial \nu>0$ on $\partial \Omega$. 
	Thus, by the  $C^1(\overline{\Omega})$-convergence, we deduce that the same inequalities must be preserved for $u_n$ whenever $n$ is large enough. 
	This contradiction completes the proof.
	
	\ref{thm-1:1} The assertion on the MP can be proved arguing in much the same way as above.
	We omit details.
\end{proof*}

\begin{proof*}{Theorem~\ref{thm-1ww}}\label{page:thm-1ww:proof}
	As in the proof of Theorem~\ref{thm-1}, we justify only the assertion~\ref{thm-1ww:2}.
	Let $\rho>0$ be such that $a, f = 0$ a.e.\ in $\Omega_\rho$. 
	Fix some $r \in (0,\rho)$.
	We take $\delta>0$ as in Corollary~\ref{cor:AMP-w} and, 
	decreasing it if necessary,  we may assume that $\lambda_1(m)+\delta \leq \lambda_2(m)$ and $\lambda_1(m)+\delta \leq \lambda_1(m;\Omega_{r})$, see Section~\ref{sec:eigen}.
	Suppose, contrary to our claim, that there exist $\lambda \in(\lambda_1(m),\lambda_1(m)+\delta)$ and a sequence $\{\eta_n\}$ such that $|\eta_n| \to  0$ and each {\renewcommand{\Plef}{\mathcal{P};\lambda,\eta_n}\eqref{eq:Psub}} possesses a solution $u_n$ which does not satisfy $u_n < 0$ in $\Omega$. 
	If $\|\nabla u_n\|_p \to \infty$ along a subsequence, then Lemma~\ref{lem:auxil0} implies that $\lambda$ is an eigenvalue of the problem \eqref{eq:EP}, which is impossible since $\lambda_1(m)<\lambda<\lambda_2(m)$. 
	Therefore, $\{\|\nabla u_n\|_p\}$ is bounded and hence, thanks to Lemma~\ref{lem:convsol}, $\{u_n\}$ converges in $\W(\Omega)$ to a solution $u$ of \eqref{eq:Pfred}, up to a subsequence.
	Moreover, arguing as in the proof of Theorem~\ref{thm-1}~\ref{thm-1:2} but applying Proposition~\ref{prop:C0-reg} instead of Proposition~\ref{prop:C1-reg}, we deduce that $u_n \to u$ in $C^0_{\mathrm{loc}}(\Omega)$, up to a subsequence.
	We know from Corollary~\ref{cor:AMP-w}~\ref{cor:AMP-w:2} that $u<0$ in $\Omega$, which yields $u_n<0$ in $\overline{\Omega \setminus \Omega_{r}}$ 
	for any sufficiently large $n$.
	Let us show that $(u_n)_+ = 0$ in $\Omega$ for such $n$.
	If $(u_n)_+$ is not identically zero, then we have $\mathrm{supp}\,(u_n)_+ \subset \overline{\Omega_r}$. 
	Hence, as in the proof of Theorem~\ref{thm1-w}~\ref{thm1-w:2}, 	
	taking $(u_n)_+$ as a test function for {\renewcommand{\Plef}{\mathcal{P};\lambda,\eta_n}\eqref{eq:Psub}}, we get
	$$
	0<
	\int_{\Omega_{r}} |\nabla (u_n)_+|^p \,dx
	=
	\lambda \int_{\Omega_{r}} m (u_n)_+^p \,dx,
	$$
	and noting that $(u_n)_+ \in \W(\Omega_r)$, we obtain a contradiction to the definition \eqref{eq:lambda1O} of $\lambda_1(m;\Omega_r)$ and the choice of $\lambda$, i.e., $\lambda < \lambda_1(m)+\delta \leq \lambda_1(m;\Omega_r)$. 
	Thus, we conclude from Lemma~\ref{lem:eigenpos} that $u_n<0$ in $\Omega$, opposite to our initial contradictory assumption.
	
	\ref{thm-1ww:1}
	The proof is analogous to that from above, so we omit details.
\end{proof*}

\subsection{Nonuniformity of AMP}

In this subsection, we prove the nonuniformity of the AMP with respect to $f$ stated in Theorems~\ref{thm3} and \ref{thm5}.
\begin{proof*}{Theorem~\ref{thm3}}\label{page:thm3:proof}
	Arguing by contradiction, we assume the existence of $\varepsilon>0$ such that for any nonnegative $f \in C_0^\infty(\Omega)\setminus \{0\}$ one can find
	$\lambda\ge \lambda_1(m)+\varepsilon$ and $\eta\ge 0$ for which
	$\eqref{eq:Psub}$ has either a nonnegative solution or a negative solution. 
	Since $\varphi_1 \in \W(\Omega)$, there exists a sequence $\{\phi_n\} \subset C_0^\infty(\Omega)$ such that $\phi_n \to \varphi_1$ in $\W(\Omega)$. 
	In particular, we have $\intO m |\phi_n|^p\,dx>0$ for all sufficiently large $n$.
	For such $n$, we take any nonnegative $f_n \in C_0^\infty(\Omega) \setminus \{0\}$ satisfying $\mathrm{supp}\,\phi_n \cap \mathrm{supp}\,f_n=\emptyset$. 
	
	Let $u_n \in \W(\Omega)$ be either a nonnegative solution or a negative solution of
	{\renewcommand{\Plef}{\mathcal{P};\lambda_n,\eta_n,f_n}\eqref{eq:Psub}} with some $\lambda_n \geq \lambda_1(m)+\varepsilon$ and $\eta_n \geq 0$.
	Recall that $u_n\in C(\Omega)$ by Proposition~\ref{prop:C0-reg}.
	Notice that the regularity assumptions on $m$, $a$, $f$, and the assumption $a,f_n \geq 0$ a.e.\ in $\Omega$ allow us to apply the weak Harnack inequality given by \cite[Theorem~7.1.2 and a subsequent remark]{PS} when $p \leq N$ (see also \cite[Corollary~7.1.3]{PS}) and \cite[Theorem~7.4.1]{PS} when $p > N$, and hence we deduce that any nonnegative solution of {\renewcommand{\Plef}{\mathcal{P};\lambda_n,\eta_n,f_n}\eqref{eq:Psub}} is actually positive. 	
	Denote $v_n=|u_n|$, so $\min_K v_n > 0$ for every compact subset $K \subset \Omega$. 
	Since $\phi_n \in C_0^\infty(\Omega)$, we deduce from Lemma~\ref{lem:testfunc2} that $|\phi_n|^p/v_n^{p-1} \in \W(\Omega)$ and hence we can use either $|\phi_n|^p/v_n^{p-1}$ or $-|\phi_n|^p/v_n^{p-1}$ as a test function for {\renewcommand{\Plef}{\mathcal{P};\lambda_n,\eta_n,f_n}\eqref{eq:Psub}} in the case of $u_n>0$ or $u_n<0$ in $\Omega$, respectively.
	Applying the Picone inequality given by Lemma~\ref{lem:picone-weak} and recalling that 
	$f_n$, $\phi_n$ have disjoint supports, 
	we deduce that
	$$
	\intO |\nabla \phi_n|^p\,dx
	\geq
	\lambda_n \intO m |\phi_n|^p\,dx
	+
	\eta_n \intO a v_n^{q-p} |\phi_n|^p\,dx 
	\geq 
	\lambda_n \intO m |\phi_n|^p\,dx,
	$$
	where the second inequality is satisfied since  $\eta_n \geq 0$ and $a \geq 0$ a.e.\ in $\Omega$. 
	Recalling that $\phi_n \to \varphi_1$ in $\W(\Omega)$, we obtain 
	$$
	\lambda_1(m) = \frac{\intO |\nabla \varphi_1|^p\,dx}{\intO m \varphi_1^p\,dx}
	=
	\lim\limits_{n \to \infty}
	\frac{\intO |\nabla \phi_n|^p\,dx}{\intO m |\phi_n|^p\,dx}
	\ge \liminf \limits_{n \to \infty} \lambda_n\ge \lambda_1(m)+\varepsilon, 
	$$
	which is impossible.
\end{proof*}

\begin{proof*}{Theorem~\ref{thm5}}\label{page:thm5:proof}
	Suppose, by contradiction, that there exists $\lambda>\lambda_1(m)$ 
	such that for any nonnegative $f \in C_0^\infty(\Omega)\setminus \{0\}$ one can find $\eta \geq 0$ for which
	$\eqref{eq:Psub}$ has either a positive solution or a negative solution. 
	
	In view of the assumption \ref{O}, \cite[Theorem~1.1]{Schmidt}  guarantees the existence of a sequence of \textit{smooth} domains $\{O_\rho\}$ such that each $O_\rho$ is compactly contained in $\Omega$ and $\Omega \setminus O_\rho \subset \Omega_{\rho}$. (See, e.g., \cite[Eq~(4), p.~117]{maggi} for the validity of the assumption \cite[(1.4)]{Schmidt}.)
	As in Section~\ref{sec:eigen}, we denote by $\phi_\rho \in \W(O_\rho)$ a positive eigenfunction corresponding to $\lambda_1(m;O_{\rho})$, and we may assume that $\phi_\rho \in \W(\Omega)$ by the zero extension. 
	Since each $O_\rho$ is smooth and $m\geq 0$ a.e.\ in $\Omega$, we have $\phi_\rho \in C^1(\overline{O_{\rho}})$ and $\partial \phi_\rho/\partial \nu < 0$ on $\partial O_{\rho}$, see Section~\ref{sec:intro} and Remark~\ref{rem:reg-m}.
	We deduce from Lemma~\ref{lem:eigenconv} that 
	\begin{equation}\label{eq:thm5-4}
		\lambda_1(m;\Omega) 
		< 
		\lambda_1(m;O_{\rho})
		<
		\lambda
		\quad {\rm and}\quad 
		\int_{O_{\rho}} a\phi_\rho^q\,dx>0
	\end{equation} 
	for any sufficiently small $\rho>0$, 
	where the last inequality in \eqref{eq:thm5-4} is guaranteed by the assumption 
	$\intO a\varphi_1^q\,dx>0$. 
	For any such $\rho>0$ we choose a smooth nonnegative function $f$ such that 
	$\mathrm{supp}\,f\subset \Omega \setminus O_\rho$.
	In particular, we have
	$\mathrm{supp}\,f \cap \mathrm{supp}\,\phi_\rho=\emptyset$. 
	By our contradictory assumption, 
	we can find a solution $u$ of \eqref{eq:Psub} 
	for such $f$ and some $\eta\ge 0$ which is either positive or negative in $\Omega$. 
	Denote $v=|u|$, so $v>0$ in $\Omega$, and we have $v \in C^1(\overline{\Omega})$ by Proposition~\ref{prop:C1-reg}.

	Since $\mathrm{supp}\,\phi_\rho=\overline{O_{\rho}}\subset \Omega$ and $\min_{\overline{O_\rho}} v>0$, 
	we can take $\phi_\rho^q/v^{q-1}$ as a test function for \eqref{eq:Psub} and get
	\begin{align} 
		\int_{O_\rho} |\nabla v|^{p-2}\nabla v\,\nabla 
		\left(\frac{\phi_\rho^q}{v^{q-1}}\right)dx 
		=\lambda \int_{O_\rho} mv^{p-q}\phi_\rho^q\,dx 
		+\eta \int_{O_\rho} a\phi_\rho^q \,dx
		\label{eq:thm5-5}
	\end{align} 
by recalling that $\mathrm{supp}\,f \cap \mathrm{supp}\,\phi_\rho=\emptyset$. 
	On the other hand, in view of the assumption \eqref{eq:Picone-01},
	the generalized Picone inequality  \cite[Theorem 1.8]{BT_Picone} guarantees that
	\begin{equation}\label{eq:thm5-6}
		\int_{O_\rho} |\nabla v|^{p-2}\nabla v\,\nabla 
		\left(\frac{\phi_\rho^q}{v^{q-1}}\right)dx 
		\le 
		\int_{O_\rho} |\nabla \phi_\rho|^{p-2}\nabla \phi_\rho\,\nabla 
		\left(\phi_\rho^{q-p+1}v^{p-q}\right)dx.
	\end{equation}
	In order to simplify the right-hand side of \eqref{eq:thm5-6}, let us show that 	$\phi_\rho^{q-p+1}v^{p-q}$ belongs to $W_0^{1,1}(O_\rho)$. 
	Due to the regularity of $\phi_\rho$ and $v$, we have
	\begin{equation}\label{eq:picone-gen1}
		\nabla \left(\phi_\rho^{q-p+1}v^{p-q}\right)
		=(q-p+1)\left(\frac{v}{\phi_\rho}\right)^{p-q}\nabla \phi_\rho 
		+(p-q)\left(\frac{\phi_\rho}{v}\right)^{q-p+1}\nabla v 
		\quad 
		{\rm in}\ O_\rho. 
	\end{equation}
	Since $\partial \phi_\rho/\partial \nu < 0$ on $\partial O_{\rho}$ and $q-p+1>0$ by \cite[Lemma~1.6]{BT_Picone}, 	
	we see that $\int_{O_\rho}\phi_\rho^{q-p}\,dx<\infty$.
	Combining this fact with $\min_{\overline{O_\rho}} v>0$, we conclude from \eqref{eq:picone-gen1} that $\phi_\rho^{q-p+1}v^{p-q} \in W_0^{1,1}(O_\rho)$.
	Approximating now $\phi_\rho^{q-p+1}v^{p-q}$ by functions from $C_0^\infty(O_\rho)$ in a standard way, we obtain
	\begin{equation}\label{eq:thm5-7}
		\int_{O_\rho} |\nabla \phi_\rho|^{p-2}\nabla \phi_\rho\,\nabla 
		\left(\phi_\rho^{q-p+1}v^{p-q}\right)dx
		=
		\lambda_1(m;O_\rho) \int_{O_\rho} mv^{p-q}\phi_\rho^q\,dx.
	\end{equation}
	Consequently, \eqref{eq:thm5-4}, 
	\eqref{eq:thm5-5}, \eqref{eq:thm5-6}, \eqref{eq:thm5-7} lead to the following contradiction: for any sufficiently small $\rho>0$, we have
	$$
	0>(\lambda_1(m;O_\rho)-\lambda)\int_{O_\rho} mv^{p-q}\phi_\rho^q\,dx
	\ge \eta \int_{O_\rho} a\phi_\rho^q\,dx\ge 0, 
	$$
	where the first inequality holds since
	$v, \phi_\rho>0$ in $O_\rho$ and $m=m_+$ is nonzero in $O_\rho$. 
\end{proof*}

\subsection{Additional properties}

In this final subsection, we prove additional qualitative properties of solutions of the problem \eqref{eq:Psub} stated in Section~\ref{subsec:additional}.
\begin{proof*}{Proposition~\ref{prop:non-exists-01}}\label{page:prop:non-exists-01:proof}
	Suppose, by contradiction, that there exists a solution $u$ of \eqref{eq:Psub} with some $0 \leq \lambda\leq \lambda_1(m)$ and either 
	$-\eta^*_\lambda(-a) < \eta \leq 0$ 
	or 
	$0 \leq \eta < \eta^*_\lambda(a)$ satisfying $u_- \not\equiv 0$ in $\Omega$.
	Taking $-u_- \in \W(\Omega) \setminus \{0\}$ as a test function for \eqref{eq:Psub}, we get 
	$$
	\intO |\nabla u_-|^p \,dx 
	-
	\lambda \intO mu_-^p\,dx 
	- 
	\eta \intO a u_-^q\,dx 		
	+
	\intO fu_-\,dx = 0.
	$$
	If either $\eta \intO a u_-^q\,dx \leq 0$, or $\lambda < \lambda_1(m)$, or $\lambda = \lambda_1(m)$ and 	
	$u_- \neq t \varphi_{1}$ for any $t>0$, then we obtain a contradiction to Lemma~\ref{lem:eta}.
	Therefore, assume that $\eta \intO a u_-^q\,dx > 0$, $\lambda = \lambda_1(m)$, and there exists $t_0>0$ such that $u_- = t_0 \varphi_{1}$.
	We deduce from \eqref{eq:Psub} and \eqref{eq:EP} that 
	$f = \eta a (t_0 \varphi_{1})^{q-1}$.
	Since $f$ is nonnegative and nontrivial, we have either $\eta > 0$ and $a \geq 0$ a.e.\ in $\Omega$, or $\eta < 0$ and $a \leq 0$ a.e.\ in $\Omega$.
	In the first case, we get $0 < \eta < \eta^*_{\lambda_1(m)}(a)$ and $\intO a \varphi_{1}^q \,dx>0$. However, it contradicts the fact that $\eta^*_{\lambda_1(m)}(a)=0$, see Remark~\ref{rem:eta}.
	The same contradiction is obtained in the second case.
\end{proof*}

\begin{proof*}{Proposition~\ref{prop:non-exists-0}}\label{page:prop:non-exists-0:proof}
	First, we suppose, by contradiction, that there exists a nonnegative solution $u$ of \eqref{eq:Psub} for some $\lambda\ge \lambda_1(m)$ and $\eta \geq 0$. 
	In view of the regularity assumptions \ref{WM}, \ref{WA}, \ref{WF1} and the nonnegativity of $a, f$, we can apply the weak Harnack inequality given by \cite[Theorem~7.1.2 and a subsequent remark]{PS} when $p \leq N$ (see also \cite[Corollary~7.1.3]{PS}) and \cite[Theorem~7.4.1]{PS} when $p > N$ to
	deduce that $u>0$ in $\Omega$. 
	Lemma~\ref{lem:testfunc} guarantees that $\varphi_1^p/(u+\varepsilon)^{p-1} \in \W(\Omega)$ for any $\varepsilon>0$, i.e., it can be used as a test function for \eqref{eq:Psub}. 
	Applying the Picone inequality from  Lemma~\ref{lem:picone-weak}, we get
	\begin{align}
		\notag
		\lambda_1(m) \intO m 
		\varphi_1^p \, dx  
		&=
		\intO |\nabla \varphi_1|^{p} \,dx
		\geq
		\intO |\nabla u|^{p-2} \nabla u\nabla \left(\frac{\varphi_1^p}{(u+\varepsilon)^{p-1}}\right) dx\\
		&=
		\lambda \intO m \, \frac{u^{p-1}}{(u+\varepsilon)^{p-1}} \, \varphi_1^p \, dx
		+
		\eta \intO a \, \frac{u^{q-1}}{(u+\varepsilon)^{p-1}} \, \varphi_1^p \, dx
		+
		\intO \frac{f\varphi_1^p}{(u+\varepsilon)^{p-1}}\,dx\\
		\label{eq:prop310}
		&\geq 
		\lambda \intO m \, \frac{u^{p-1}}{(u+\varepsilon)^{p-1}} \,\varphi_1^p \, dx
		+
		\frac{1}{(\|u\|_\infty+\varepsilon)^{p-1}}\intO f\varphi_1^p\,dx,
	\end{align}
thanks to the assumptions $a,f \geq 0$ a.e.\ in $\Omega$ and $\eta \geq 0$.
Recalling that $u>0$ in $\Omega$, we use the dominated convergence theorem to obtain
	\begin{equation*}
		\intO m \,
		\frac{u^{p-1}}{(u+\varepsilon)^{p-1}} \,
		\varphi_1^p\, dx
		\to
		\intO m 
		\varphi_1^p\, dx
		> 0
		\quad \text{as}~ \varepsilon \searrow 0.
	\end{equation*}
	Noting that $\intO f\varphi_1^p\,dx>0$ in view of the assumptions $f \not\equiv 0$ and $f \geq 0$ a.e.\ in $\Omega$, we observe that the last term in \eqref{eq:prop310} is uniformly bounded with respect to $\varepsilon>0$ from below by a positive number.
	Therefore, we pass to the limit as $\varepsilon \searrow 0$ in \eqref{eq:prop310} and derive a contradiction to our assumption $\lambda \geq \lambda_1(m)$.
	
	Let us now cover the case $\hat{\eta}_\lambda \leq  \eta < 0$ provided $\lambda > \lambda_1(m)$.
	Suppose, by contradiction, that there exist $\lambda > \lambda_1(m)$ and a sequence $\eta_n \nearrow 0$ such that {\renewcommand{\Plef}{\mathcal{P};\lambda,\eta_n}\eqref{eq:Psub}} possesses a nonnegative solution $u_n$. 
	Let us show that $\{u_n\}$ converges in $\W(\Omega)$, up to a subsequence, to a nonnegative solution of {\renewcommand{\Plef}{\mathcal{P};\lambda,0}\eqref{eq:Psub}}, i.e., \eqref{eq:Pfred}. 
	Then the first part of the proof will yield a contradiction. 
	For this purpose, we first show that $\{u_n\}$ is bounded in $\W(\Omega)$.
	If we suppose, by contradiction, that $\|\nabla u_n\|_p \to \infty$, up to a subsequence, then Lemma~\ref{lem:auxil0} guarantees that $\lambda$ is an eigenvalue of the problem~\eqref{eq:EP} and $\{u_n/\|\nabla u\|_p\}$ converges in $\W(\Omega)$ to a nonnegative eigenfunction of \eqref{eq:EP}, up to a subsequence.
	However, it is known that $\lambda_1(m)$ is the unique positive eigenvalue of \eqref{eq:EP} which has a corresponding sign-constant eigenfunction, see, e.g., \cite[Theorem~3.2]{Cuesta}.
	This contradicts our assumption $\lambda>\lambda_1(m)$.
	Thanks to the boundedness of $\{u_n\}$ in $\W(\Omega)$,
	we deduce from Lemma~\ref{lem:convsol} that it converges in $\W(\Omega)$ to a nonnegative solution $u$ of \eqref{eq:Pfred}, up to a subsequence.
	But this is impossible in view of the first part of the proof which covers the case $\eta=0$. 
\end{proof*}

\section{Existence of solutions. Proof of Theorem~\ref{thm:existence}}\label{sec:existence}
\label{page:thm:existence:proof}

Recall from Section~\ref{subsec:existence} that solutions of \eqref{eq:Psub} are in one-to-one correspondence with critical points of the energy functional $\E \in C^1(\W(\Omega), \mathbb{R})$ defined as
\begin{equation*}\label{def:E} 
	\E(u) = \frac{1}{p}\, H_\lambda (u)
	-\frac{\eta}{q}\intO a|u|^q\,dx - \intO f u \,dx,
	\quad u \in \W(\Omega),
\end{equation*}
where
$$
H_\lambda(u)=\intO |\nabla u|^p\,dx-\lambda\intO m|u|^p\,dx. 
$$
In order to prove Theorem \ref{thm:existence}, we will show that $\E$ has a linking structure provided $\lambda \neq \lambda_k(m)$, 
$k \in \mathbb{N}$, where the sequence of eigenvalues $\{\lambda_k(m)\}$ of \eqref{eq:EP} is defined in Section~\ref{sec:eigen}.
The arguments are similar to those presented in \cite[Section 3.1]{BobkovTanaka2019}.
For reader's convenience, we give a sketch of the proof. 

Throughout this section, 
we always assume that \ref{WM}, \ref{WA}, \ref{WF1} are satisfied, and 
either of the assumptions \ref{thm:existence:1}, \ref{thm:existence:2}, \ref{thm:existence:3} of Theorem~\ref{thm:existence} holds. 
In this case, Lemma~\ref{lem:PS} guarantees that $\E$ satisfies the Palais--Smale condition. 

If $m_-$ is nontrivial, then $\lambda\in\sigma(-\Delta_p;m)$ 
if and only if $-\lambda\in\sigma(-\Delta_p;-m)$ 
(see Section~\ref{sec:eigen}), and hence it is sufficient to handle only the case $\lambda \geq 0$.
If $m_- = 0$ a.e.\ in $\Omega$, then all the subsequent results remain valid also for $\lambda < 0$.

\subsection{Linking structure} 
Taking any $\lambda \geq 0$, 
we consider the set
\begin{equation}\label{def:Y} 
	Y(\lambda;m)
	:=
	\left\{
	u\in\W(\Omega):\, \|\nabla u\|_p^p \geq \lambda \intO m|u|^p\,dx  
	\right\}.
\end{equation}
Recall from Section~\ref{sec:eigen} that $S^{k}$ stands for the unit sphere in $\mathbb{R}^{k+1}$.
Denoting $S^k_+ = \{x=(x_1,\dots,x_{k+1}) \in S^k:\, x_{k+1} \geq 0\}$, we have $\partial S^k_+ = S^{k-1}$.
\begin{lemma}\label{lem:link} 
	Let $k \in \mathbb{N}$.
	If $h\in C(S^k_+,\W(\Omega))$ and $h\big|_{S^{k-1}}$ is odd, then 
	$h(S^k_+)$ intersects with $Y(\lambda_{k+1}(m);m)$.
\end{lemma}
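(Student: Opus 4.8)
The plan is to argue by contradiction and to turn the given half-sphere map into an admissible competitor for the variational characterization \eqref{lambda_n} of $\lambda_{k+1}(m)$. Suppose that $h(S^k_+)$ does not intersect $Y(\lambda_{k+1}(m);m)$, that is,
\[
\|\nabla h(z)\|_p^p<\lambda_{k+1}(m)\intO m|h(z)|^p\,dx\qquad\text{for every }z\in S^k_+.
\]
Since $\lambda_{k+1}(m)>0$ and $\|\nabla h(z)\|_p^p\ge 0$, this forces $\intO m|h(z)|^p\,dx>0$ for every $z\in S^k_+$, so the normalized map
\[
\widetilde h(z):=\frac{h(z)}{\bigl(\intO m|h(z)|^p\,dx\bigr)^{1/p}},\qquad z\in S^k_+,
\]
is well defined and takes values in $S(m)$. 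It is continuous because $h$ is continuous and the functionals $u\mapsto\|\nabla u\|_p^p$ and $u\mapsto\intO m|u|^p\,dx$ are continuous on $\W(\Omega)$ (the latter thanks to \ref{WM}).

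Next I would extend $\widetilde h$ oddly to the whole sphere. Since $h\big|_{S^{k-1}}$ is odd and the normalizing factor depends only on $|h|$, the restriction $\widetilde h\big|_{S^{k-1}}$ is odd; hence, defining $g(z)=\widetilde h(z)$ for $z\in S^k_+$ and $g(z)=-\widetilde h(-z)$ for $z$ in the complementary closed hemisphere, the two formulas agree on $S^{k-1}=\partial S^k_+$ and yield a continuous odd map $g\colon S^k\to S(m)$, i.e.\ $g\in\mathscr F_{k+1}(m)$. By the definition \eqref{lambda_n}, and since $\|\nabla g(-z)\|_p=\|\nabla g(z)\|_p$ with $g\big|_{S^k_+}=\widetilde h$,
\[
\lambda_{k+1}(m)\le\max_{z\in S^k}\|\nabla g(z)\|_p^p=\max_{z\in S^k_+}\|\nabla\widetilde h(z)\|_p^p.
\]
Finally, by compactness of $S^k_+$ and continuity, the last maximum is attained at some $z_0\in S^k_+$, and the contradiction hypothesis gives
\[
\|\nabla\widetilde h(z_0)\|_p^p=\frac{\|\nabla h(z_0)\|_p^p}{\intO m|h(z_0)|^p\,dx}<\lambda_{k+1}(m),
\]
which is incompatible with the previous display. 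Therefore $h(S^k_+)$ must meet $Y(\lambda_{k+1}(m);m)$.

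This is the standard ``intersection versus linking'' mechanism, and I do not expect a genuine obstacle; the only points needing care are the positivity of $\intO m|h(z)|^p\,dx$ along $h(S^k_+)$ (which legitimizes the normalization) and the bookkeeping ensuring the odd extension is continuous, consistently defined on the overlap $S^{k-1}$, and $S(m)$-valued --- keeping in mind throughout that $S^{k-1}$ is read as $\partial S^k_+$, in line with the convention fixed just before the statement.
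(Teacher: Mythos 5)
Your proof is correct and follows essentially the same route as the paper's: both split on the sign of $\intO m|h(z)|^p\,dx$, normalize to an $S(m)$-valued map, extend oddly to a competitor in $\mathscr F_{k+1}(m)$, and invoke the variational definition \eqref{lambda_n}, using continuity and compactness of $S^k_+$ to conclude. The paper phrases this as a direct construction rather than by contradiction, but the mathematical content is identical.
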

\begin{proof} 
	Let us take any $h$ as required. 
	If there exists $z \in S^k_+$ such that $\intO m |h(z)|^p \,dx \leq 0$, then $h(z) \in Y(\lambda_{k+1}(m);m)$.
	Therefore, assume that $\intO m |h(z)|^p \,dx > 0$ for any $z \in S^k_+$.
	Let us define a normalization $h'$ of $h$ as  $h'(z) = h(z)/ (\intO m |h(z)|^p \,dx)^{1/p}$ for $z \in S^k_+$.
	Thus, we have $h' \in C(S^k_+,S(m))$, where the subset $S(m)$ of $\W(\Omega)$ is defined by \eqref{S_m}.  
	Consider the odd extension $\tilde{h} \in C(S^k,S(m))$ of $h'$ defined as $\tilde{h}(z):=-h'(-z)$ for $z\in S^k\setminus S^k_+$. 
	That is, we have $\tilde{h}\in \mathscr{F}_{k+1}(m)$,
	where the set $\mathscr{F}_{k+1}(m)$ is defined by \eqref{F_n},  and so 
	the definition \eqref{lambda_n} of $\lambda_{k+1}(m)$ gives 
	$$
	\max_{z \in S^k_+} \|\nabla h'(z)\|_p^p
	=
	\max_{z \in S^k} \|\nabla \tilde{h}(z)\|_p^p \ge 
	\lambda_{k+1}(m).
	$$
	Consequently, there exists $z \in S^k_+$ such that $\|\nabla h'(z)\|_p^p \ge 
	\lambda_{k+1}(m)$, 
	which yields $h(z) \in Y(\lambda_{k+1}(m);m)$. 
\end{proof}

\begin{lemma}\label{lem:EY}
	Let $0 \leq \lambda < \xi$ and $\eta \in \mathbb{R}$.
	Then $\E$ is bounded from below and coercive on $Y(\xi;m)$. 
\end{lemma}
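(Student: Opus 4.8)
The plan is to first extract a coercivity estimate for the principal part $H_\lambda$ restricted to $Y(\xi;m)$, and then to absorb the two lower-order terms of $\E$ into it. The crucial observation is that if $u \in Y(\xi;m)$ and $\intO m|u|^p\,dx > 0$, then by definition $\intO m|u|^p\,dx \le \tfrac{1}{\xi}\|\nabla u\|_p^p$, whereas if $\intO m|u|^p\,dx \le 0$ there is nothing to estimate; in either case, since $0 \le \lambda < \xi$,
\[
H_\lambda(u) = \|\nabla u\|_p^p - \lambda \intO m|u|^p\,dx \ge \left(1 - \frac{\lambda}{\xi}\right)\|\nabla u\|_p^p ,
\]
and the coefficient $1-\lambda/\xi$ is strictly positive. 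This is the only step that requires a (mild) case distinction, and both cases lead to the same bound.

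Next I would control the remaining terms by powers of $\|\nabla u\|_p$. By \ref{WA} and the Hölder inequality, $\left|\intO a|u|^q\,dx\right| \le \|a\|_\gamma\,\|u\|_{q\gamma/(\gamma-1)}^q$; since, as noted in the proof of Lemma~\ref{lem:PS0}, the exponent $q\gamma/(\gamma-1)$ lies strictly below $p^*$, the Sobolev embedding $\W(\Omega) \hookrightarrow L^{q\gamma/(\gamma-1)}(\Omega)$ yields $\left|\intO a|u|^q\,dx\right| \le C_1 \|\nabla u\|_p^q$ for some $C_1>0$. Likewise, \ref{WF1} gives $f \in (\W(\Omega))^*$, hence $\left|\intO f u\,dx\right| \le C_2\|\nabla u\|_p$ for some $C_2>0$.

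Combining these estimates, for every $u \in Y(\xi;m)$ one obtains
\[
\E(u) \ge \frac{1}{p}\left(1-\frac{\lambda}{\xi}\right)\|\nabla u\|_p^p - \frac{|\eta|}{q}\,C_1\|\nabla u\|_p^q - C_2\|\nabla u\|_p .
\]
Since $1 < q < p$ and the leading coefficient is positive, the right-hand side, regarded as a function of $t := \|\nabla u\|_p \in [0,\infty)$, is bounded from below and tends to $+\infty$ as $t \to \infty$. This gives at once the boundedness from below and the coercivity of $\E$ on $Y(\xi;m)$. There is no serious obstacle here: the argument is entirely routine once the lower bound on $H_\lambda$ over $Y(\xi;m)$ is in place, the only thing to watch being the uniform treatment of the sign of $\intO m|u|^p\,dx$ in the first step.
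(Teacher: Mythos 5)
Your proof is correct and follows essentially the same route as the paper's: establish the lower bound $H_\lambda(u) \geq (1-\lambda/\xi)\|\nabla u\|_p^p$ on $Y(\xi;m)$ via the same dichotomy on the sign of $\intO m|u|^p\,dx$, then observe that the $L^q$-term and the linear term are of strictly lower order in $\|\nabla u\|_p$. The only difference is cosmetic: the paper writes the constant as $\min\{1,\,1-\lambda/\xi\}$ and leaves the absorption of lower-order terms implicit, whereas you note (correctly) that $1-\lambda/\xi$ already works in both cases and spell out the Hölder/Sobolev estimates.
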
 
\begin{proof} 
	Recalling our default assumption $1<q<p$, it is sufficient to justify the existence of a constant $C>0$ such that $H_\lambda(u)\ge C\|\nabla u\|_p^p$ 
	for all $u \in Y(\xi;m)$. 
	Take any $u\in Y(\xi;m)$. 
	If $\intO m |u|^p\,dx\le 0$, then the desired inequality is obvious with $C=1$. 
	If $\intO m |u|^p\,dx> 0$, then 
	we get 
	$$
	H_\lambda(u)\ge \left(1- \frac{\lambda}{\xi}\right) \|\nabla u\|_p^p. 
	$$
	Thus, taking $C=\min\{1,1-\lambda/\xi\}>0$, we obtain the claim.
\end{proof}

\subsection{Case \texorpdfstring{$\lambda\ge 0$}{lambda>=0} and 
	\texorpdfstring{$\lambda\not\in \{\lambda_k(m):\,k\in\mathbb{N}\}$}{lambda not an eigenvalue}}\label{subsec:nonresonant}

It is clear that if $0\le \lambda<\lambda_1(m)$, 
then $Y(\lambda_{1}(m);m) = \W(\Omega)$, and hence 
there exists a global minimizer of $\E$ which is a solution of \eqref{eq:Psub}. 
(If $m_- = 0$ a.e.\ in $\Omega$, then the same is true also for $\lambda \leq 0$.)
Assume that $\lambda > \lambda_1(m)$ and $\lambda\not\in \{\lambda_k(m):k\in\mathbb{N}\}$. 
(Observe that, hypothetically, $\lambda$ might still be an eigenvalue of \eqref{eq:EP}, in which case we recall that either the assumption \ref{thm:existence:2} or \ref{thm:existence:3} of Theorem~\ref{thm:existence} is satisfied.)
Since $\lambda_k(m)\to \infty$ as $k \to \infty$ (see \cite[Remark~2.1]{Cuesta}), there exists $k \in \mathbb{N}$ such that $\lambda_k(m)<\lambda<\lambda_{k+1}(m)$.
Define 
\begin{align*} 
	\omega 
	&:=
	\inf\left\{\E(u):\, u\in Y(\lambda_{k+1}(m);m)\right\}, 
	\\
	\Lambda 
	&:=
	\left\{h\in C (S_+^k,\W(\Omega)):\, \max_{z\in S^{k-1}}\E(h(z))
	\le \omega-1\ \text{and}\ h\big|_{S^{k-1}} \ \text{is\ odd}\,\right\}, 
	\\
	c 
	&:= 
	\inf_{h\in \Lambda}\max_{z\in S_+^k} \E(h(z)).
\end{align*}
According to Lemma \ref{lem:EY}, $\omega$ is bounded from below. 
If $\Lambda \neq \emptyset$, then Lemma~\ref{lem:link} yields $c\ge \omega$. 
Recalling that $\E$ satisfies the Palais--Smale condition by Lemma~\ref{lem:PS}, 
we use standard arguments based on the deformation lemma to deduce that $c$ is a critical level of $\E$, see, e.g., \cite[pp.~3023-3024]{Tanaka}.
Therefore, it remains to verify that $\Lambda\not=\emptyset$.

Let us take any $0<\varepsilon<(\lambda-\lambda_k(m))/2$. 
Thanks to the definition \eqref{lambda_n} of $\lambda_k(m)$, we can find
$h_0\in \mathscr{F}_k(m)$ such that 
$$
\max_{z\in S^{k-1}}\|\nabla h_0(z)\|_p^p<\lambda_k(m)+\varepsilon,
$$
and hence
\begin{equation}\label{eq:H<0}
	\max_{z\in S^{k-1}} H_\lambda(h_0(z))<
	\lambda_k(m)+\varepsilon-\lambda 
	<-\varepsilon.
\end{equation}
In view of the assumptions $1<q<p$ and \ref{WA}, \ref{WF1}, there exists $T_0>0$ such that
\begin{align}
	\max_{z\in S^{k-1}} \E(T h_0(z))
	<-\frac{T^p \varepsilon}{p}
	&+\frac{T^q}{q} |\eta|\|a\|_\gamma \max_{z\in S^{k-1}}
	\|h_0(z)\|_{q\gamma/(\gamma-1)}^q 
	\nonumber \\ 
	&+ T \|f\|_\gamma  \max_{z\in S^{k-1}}
	\|h_0(z)\|_{\gamma/(\gamma-1)}
	\le \omega -1
	\label{eq:E<g-1} 
\end{align}
for any $T \geq T_0$.
Here, we set $\gamma/(\gamma-1) = \infty$ if $\gamma=1$ (in the case $N<p$).
Using \cite[Theorem~4.1]{dug}, we can extend $h_0$ from $S^{k-1}$ to $S_+^k$. 
Thus, we see from \eqref{eq:E<g-1}  that $T h_0 \in \Lambda$, which then implies that $c$ is a critical value of $\E$.
\qed

\subsection{Case \texorpdfstring{$\lambda=\lambda_k(m)$}{lambda=lambda-k} under assumption \ref{thm:existence:2}}\label{subsec:resonant+}

We may assume that $k \in \mathbb{N}$ is such that $\lambda=\lambda_k(m)<\lambda_{k+1}(m)$.
Take any sequence $\lambda_n \searrow \lambda$ and assume, without loss of generality, that $\lambda_k(m)<\lambda_n<\lambda_{k+1}(m)$ for all $n$.
Similarly to Section~\ref{subsec:nonresonant} above, we define
\begin{align*} 
	\omega_n 
	&:=
	\inf\{\En(u):\, u\in Y(\lambda_{k+1}(m);m)\}, 
	\nonumber \\
	\Lambda_n 
	&:=
	\left\{h\in C (S_+^k,\W(\Omega)):\, \max_{z\in S^{k-1}}\En(h(z))
	\le \omega_n-1\ \text{and}\ h\big|_{S^{k-1}} \ \text{is\ odd}\,\right\}, 
	\nonumber \\
	c_n 
	&:= 
	\inf_{h\in \Lambda_n}\max_{z\in S_+^k} \En(h(z))
	\label{def:cn}
\end{align*}
for every $n$.
Arguing as in Section \ref{subsec:nonresonant}, we get $\Lambda_n \neq \emptyset$, and Lemma~\ref{lem:link} implies that $c_n\ge \omega_n$. 
Since $\{\lambda_n\}$ is decreasing, 
we have 
$H_{\lambda_n}(u)\ge H_{\lambda_1}(u)$ 
and hence $E_{\lambda_n,\eta}(u) \geq E_{\lambda_1,\eta}(u)$
for any
$u\in\W(\Omega)$ 
such that $\intO m |u|^p \,dx \geq 0$. 
On the other hand, for any
$u\in\W(\Omega)$ with $\intO m |u|^p \,dx < 0$ we have 
$E_{\lambda_n,\eta}(u) \geq E_{0,\eta}(u)$.
Therefore, we deduce that
\begin{equation}
	\label{eq:cn>gn}
	c_n \ge \omega_n \ge 
	\min 
	\left\{ 
	\inf_{Y(\lambda_{k+1}(m);m)}E_{\lambda_1,\eta}, 
	\inf_{\W(\Omega)}E_{0,\eta}
	\right\}
	>-\infty
	\quad 
	\text{for any}~ n,
\end{equation}
where the last inequality follows from Lemma \ref{lem:EY} and the coercivity of the functional $E_{0,\eta}$ on $\W(\Omega)$. 
Consequently, $\{c_n\}$ is bounded from below. 

Arguing as in {\cite[Section 3.2]{BobkovTanaka2019}}, 
for each $n$ we can find a function $u_n \in \W(\Omega)$ such that
\begin{equation}
	\label{eq:e<1/n}
	|\En(u_n)-c_n| < \frac{1}{n} 
	\quad \text{and}\quad 
	\|\En^\prime(u_n)\|_*<\frac{1}{n}.
\end{equation}
Let us show that 
$\{u_n\}$ is bounded in $\W(\Omega)$.
Indeed, if $\|\nabla u_n\|_p \to \infty$ along a subsequence, 
then, by standard arguments, the second inequality in \eqref{eq:e<1/n} implies that the normalized functions $v_n = u_n/\|\nabla u_n\|_p$ converge in $\W(\Omega)$ to some $v_0\in ES(\lambda;m)\setminus\{0\}$, up to a subsequence.
Recalling that $\{c_n\}$ is bounded from below and passing to the limit in 
\begin{align*} 
	\frac{p(c_n-1/n)}{\|\nabla u_n\|_p^q} 
	-\frac{1}{n\|\nabla u_n\|_p^{q-1}}
	&\le \frac{1}{\|\nabla u_n\|_p^q}
	\left(\,p\En(u_n) - \left<\En^\prime (u_n),u_n\right> \right) 
	\\
	&=-\left(\frac{p}{q}-1\right)\eta\intO a|v_n|^q\,dx 
	-\frac{p-1}{\|\nabla u_n\|_p^{q-1}}\,\intO f v_n \,dx, 
\end{align*}
we deduce that $\eta\intO a|v_0|^q\,dx\le 0$.
However, this contradicts the imposed assumption~\ref{thm:existence:2}, which implies that $\{u_n\}$ is bounded in $\W(\Omega)$. 
Thanks to \ref{WM}, we have
\begin{align*}
	\|\E^\prime(u_n)\|_*
	\le \|\E^\prime(u_n)-\En^\prime(u_n)\|_*
	+\|\En^\prime(u_n)\|_*
	\leq 
	C (\lambda_n-\lambda)\|\nabla u_n\|_p^{p-1}
	+\frac{1}{n},
\end{align*}
where $C>0$ is independent of $u_n$ (cf.\ \eqref{eq:Enorm} in Lemma~\ref{lem:convsol}).
Therefore, 
we see that $\{u_n\}$ is a bounded Palais--Smale sequence for $\E$.
Applying Lemma~\ref{lem:PS0}, we conclude that $\{u_n\}$ converges in $\W(\Omega)$ to a critical point of $\E$, up to a subsequence.

\subsection{Case \texorpdfstring{$\lambda=\lambda_k(m)$}{lambda=lambda-k} under assumption~\ref{thm:existence:3}}

We may assume that $k \in \mathbb{N}$ is such that $\lambda_k(m)<\lambda=\lambda_{k+1}(m)$.
Consider any sequence $\lambda_n \nearrow \lambda$ and assume, without loss of generality, that $\lambda_k(m)<\lambda_n < \lambda_{k+1}(m)$ for all $n$.
Let us define $\omega_n$, $\Lambda_n$, and $c_n$ as in Section~\ref{subsec:resonant+} above. 

As in Section~\ref{subsec:nonresonant}, we can find $h_0 \in C(S^k_+,\W(\Omega))$ such that $h_0\big|_{S^{k-1}}$ is odd, $\intO m |h_0(z)|^p \,dx = 1$ for all $z \in S^{k-1}$, and $\max_{z \in S^{k-1}} E_{\lambda_1,\eta}(T h_0(z)) \to -\infty$ as $T \to \infty$.
Then, arguing in a similar way as in \cite[Section~3.3]{BobkovTanaka2019}, 
we can prove that $\{c_n\}$ is bounded from above and 
for any $n$ one can find $u_n \in \W(\Omega)$ satisfying 
$$
|\En(u_n)-c_n| < \frac{1}{n} 
\quad \text{and}\quad 
\|\En^\prime(u_n)\|_* < \frac{1}{n}.	
$$
Then, as in Section~\ref{subsec:resonant+}, the inequality 
\begin{align*} 
	\frac{p(c_n+1/n)}{\|\nabla u_n\|_p^q} 
	+\frac{1}{n\|\nabla u_n\|_p^{q-1}}
	&\ge \frac{1}{\|\nabla u_n\|_p^q}
	\left(\,p\En(u_n) - \left<\En^\prime (u_n),u_n\right> \right) 
	\\
	&=-\left(\frac{p}{q}-1\right)\eta\,\intO a|v_n|^q\,dx 
	-\frac{p-1}{\|\nabla u_n\|_p^{q-1}}\,\intO f v_n \,dx,
\end{align*}
where $v_n = u_n/\|\nabla u_n\|_p$, 
in combination with the imposed assumption~\ref{thm:existence:3} implies the boundedness of $\{u_n\}$ in $\W(\Omega)$, which yields the existence of a critical point of $\E$.

\appendix
\section{Regularity}\label{sec:regularity}

We start with an $L^\infty(\Omega)$-bound for solutions of the problem \eqref{eq:Psub}.
In view of the Morrey lemma, it will be sufficient to investigate only the case $N \geq p$. 
The proof uses the classical bootstrap argument and we present it sketchily for completeness.
\begin{proposition}\label{prop:bdd} 
	Let $N \geq p>1$.
	Assume that 
	$m, a, f \in L^\gamma(\Omega)$ for some $\gamma>N/p$, and $M_1 > 0$ is any constant such that $\|m\|_\gamma, \|a\|_\gamma, \|f\|_\gamma \leq M_1$. 
	Let $r$ be such that $p\gamma^\prime < r < p^*$,  
	where $\gamma^\prime = \gamma/(\gamma-1)$ and $p^*$ is defined in \eqref{eq:pstar}. 
	Assume that $|\lambda|, |\eta| \leq M_2$ for some $M_2 > 0$.
	Then there exists 
	$C=C(|\Omega|,M_1,M_2,p,q,\gamma,r)>0$ 
	such that any solution $u$ of \eqref{eq:Psub} 
	satisfies
	\begin{equation}\label{eq:linfty}
		\|u\|_\infty \le C \left(\,1+\|u\|_{r}\right). 
	\end{equation}
\end{proposition}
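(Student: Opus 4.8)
The plan is a Moser-type iteration preceded by a reduction of the right-hand side. Since $1<q<p$, we have $|t|^{q-1}\le 1+|t|^{p-1}$ for all $t\in\mathbb{R}$; hence, setting $g:=\lambda\,m\,|u|^{p-2}u+\eta\,a\,|u|^{q-2}u+f$ and $h:=|m|+|a|+|f|\in L^\gamma(\Omega)$, so that $\|h\|_\gamma\le 3M_1$, the bounds $|\lambda|,|\eta|\le M_2$ give the pointwise estimate $|g(x)|\le C_0\,h(x)\,(1+|u(x)|)^{p-1}$ with $C_0=C_0(M_2)$. Thus it suffices to bound a solution of $-\Delta_p u=g$ whose datum satisfies this inequality. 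Throughout I would write $\Phi:=1+|u|\in W^{1,p}(\Omega)$ (so $\Phi\ge 1$) and $\Phi_L:=\min\{\Phi,L\}$ for $L>1$.

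The core is a single iteration step. Fix $\beta\ge 1$ and test the equation against $\varphi:=u\,\Phi_L^{p(\beta-1)}\in\W(\Omega)$ (admissible since $\Phi_L$ is bounded and $\ge 1$). Expanding $\nabla\varphi$ and using $\nabla\Phi_L=\operatorname{sign}(u)\,\nabla u\,\mathbf{1}_{\{\Phi<L\}}$, the cross term is nonnegative, so, writing $V:=\Phi\,\Phi_L^{\beta-1}$ and using the elementary bound $|\nabla V|^p\le\beta^p\,|\nabla u|^p\,\Phi_L^{p(\beta-1)}$,
\begin{align*}
\frac{1}{\beta^{p}}\,\|\nabla V\|_p^p
&\le
\intO|\nabla u|^{p-2}\nabla u\,\nabla\varphi\,dx
=\intO g\,\varphi\,dx\\
&\le
C_0\intO h\,\Phi^{p-1}\,|u|\,\Phi_L^{p(\beta-1)}\,dx
\le
C_0\intO h\,V^{p}\,dx ,
\end{align*}
the last step using $|u|\le\Phi$. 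By Hölder's inequality the right-hand side is at most $C_0\|h\|_\gamma\|V\|_{p\gamma'}^p\le 3C_0M_1\|V\|_{p\gamma'}^p$, where $\gamma'=\gamma/(\gamma-1)$. On the other hand, $V-1$ is a Lipschitz function of $|u|\in\W(\Omega)$ vanishing at the origin, hence $V-1\in\W(\Omega)$, and the Sobolev inequality applied to its zero extension gives $\|V\|_{p^*}^p\le C\bigl(\|\nabla V\|_p^p+|\Omega|^{p/p^*}\bigr)$. Combining these yields
\[
\|V\|_{p^*}^p\le C_1\,\beta^{p}\,\|V\|_{p\gamma'}^p+C_1 ,
\]
with $C_1$ depending on $M_1,M_2,p,q,\gamma,N,|\Omega|$ but not on $\beta$ or $L$.

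The delicate point, and the only place where $\gamma>N/p$ is used, is the absorption: for $N>p$ the hypothesis $\gamma>N/p$ is exactly equivalent to $p\gamma'<p^*$, while $p<p\gamma'$ is automatic. I would therefore interpolate $\|V\|_{p\gamma'}\le\|V\|_p^{1-\theta}\|V\|_{p^*}^{\theta}$ with the corresponding $\theta\in(0,1)$, substitute into the last display, absorb the $\|V\|_{p^*}^p$-term by Young's inequality, and absorb the additive constant using $V\ge 1$ (so that $\|V\|_p^p\ge|\Omega|$), to reach $\|V\|_{p^*}^p\le C_2\,\beta^{p/(1-\theta)}\|V\|_p^p$. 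Since $\Phi_L\le\Phi$ we have $\|V\|_p^p\le\|\Phi\|_{p\beta}^{p\beta}$, while $\|V\|_{p^*}^p\to\|\Phi\|_{\beta p^*}^{p\beta}$ as $L\to\infty$ by monotone convergence (the limit being finite as soon as $\Phi\in L^{p\beta}(\Omega)$). Hence, for every $\beta\ge 1$ with $\Phi\in L^{p\beta}(\Omega)$,
\[
\|\Phi\|_{\beta p^*}\le\bigl(C_2\,\beta^{p/(1-\theta)}\bigr)^{1/(p\beta)}\,\|\Phi\|_{p\beta} .
\]
(When $N=p$, replace $p^*$ throughout by a fixed finite exponent $p^{\sharp}>\max\{p\gamma',r\}$, using $\W(\Omega)\hookrightarrow L^{p^{\sharp}}(\Omega)$; nothing else changes.)

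It remains to iterate. Put $\chi:=p^*/p>1$, $t_0:=r$, $t_{k+1}:=\chi t_k$, $\beta_k:=t_k/p$; note $\beta_0=r/p>1$ since $r>p\gamma'>p$, and that $u\in\W(\Omega)\subset L^{r}(\Omega)$ together with $r>p\gamma'$ makes the first application of Hölder above legitimate --- this is where the hypothesis $r>p\gamma'$ is needed. The displayed recursion then propagates $\Phi\in L^{t_k}(\Omega)$ to $\Phi\in L^{t_{k+1}}(\Omega)$, and iterating,
\[
\|\Phi\|_{t_{k+1}}\le\Bigl(\prod_{j=0}^{k}\bigl(C_2\,\beta_j^{p/(1-\theta)}\bigr)^{1/t_j}\Bigr)\,\|\Phi\|_{r} .
\]
Because $t_j=\chi^{j}r$, both $\sum_j t_j^{-1}$ and $\sum_j t_j^{-1}\log t_j$ converge, so the product converges to a finite constant $C_\infty=C_\infty(|\Omega|,M_1,M_2,p,q,\gamma,r)$. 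Letting $k\to\infty$ yields $\|\Phi\|_\infty\le C_\infty\|\Phi\|_r$, whence $\|u\|_\infty\le\|\Phi\|_\infty\le C_\infty\bigl(|\Omega|^{1/r}+\|u\|_r\bigr)\le C\,(1+\|u\|_r)$, as claimed. I expect the absorption step to be the only part requiring genuine care; everything else is bookkeeping of the constants along the iteration.
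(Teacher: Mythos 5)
Your proof is correct, and it is a legitimate Moser iteration, but it differs from the paper's argument in several respects worth flagging. You first collapse the three right-hand-side terms into the single datum $h\,(1+|u|)^{p-1}$ via $|t|^{q-1}\le 1+|t|^{p-1}$ and work throughout with $\Phi=1+|u|$, which lets you treat both signs of $u$ at once; the paper instead keeps the three terms apart, works with $v=u_\pm$, and tests against $v_M^{l+1}$, so the two test functions $u\,\Phi_L^{p(\beta-1)}$ and $v_M^{l+1}$ are close in spirit but not identical. The more substantive divergence is in the embedding used to close the recursion: you go through $W_0^{1,p}\hookrightarrow L^{p^*}$ and then need the interpolation $\|V\|_{p\gamma'}\le\|V\|_p^{1-\theta}\|V\|_{p^*}^\theta$ followed by Young's inequality to absorb the $\|V\|_{p^*}$-term, while the paper directly invokes $W_0^{1,p}\hookrightarrow L^r$ with the given intermediate exponent $r\in(p\gamma',p^*)$, which lands exactly on the right-hand-side exponent and makes the iteration $\|v\|_{(p+l)r/p}\le\big(C(p+l)^p\big)^{1/(p+l)}\max\{1,\|v\|_{(p+l)\gamma'}\}$ with gain factor $r/(p\gamma')>1$, with no absorption step at all. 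Your gain factor $p^*/p$ is larger, but at the cost of the extra interpolation/Young machinery (and the $V\ge 1$ trick to absorb the additive constant); the paper's choice of $L^r$ is precisely what avoids these complications. Both routes deliver the geometric iteration and the convergent infinite product of constants, so both yield \eqref{eq:linfty} with the stated dependence. One small remark: your observation that $r>p\gamma'$ is what legitimizes the first Hölder step is accurate, and it mirrors the role of $r>p\gamma'$ in the paper's construction of the starting index $l_0$ via $(p+l_0)\gamma'=r$, where strict inequality is exactly what guarantees $l_0>0$.
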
 
\begin{proof} 
	Let $C_*>0$ be the best constant of the embedding $\W(\Omega) \hookrightarrow L^r(\Omega)$, and set
	$M_0 =\max\{1,|\Omega|\}$. 
	Let $u$ be any solution of \eqref{eq:Psub} and denote, for brevity, $v = u_+$.
	For any $l>0$ and $M>0$, we denote  
	$v_M = \min\{v,M\}$ and take $v_M^{l+1} \in \W(\Omega)$ as a test function for \eqref{eq:Psub}.
	Concerning the left-hand side of \eqref{eq:Psub}, we have 
	\begin{align} 
		\intO |\nabla u|^{p-2}\nabla u\nabla (v_M^{l+1})\,dx 
		&=(l+1)\intO v_M^l|\nabla v_M|^p\,dx
		=(l+1)\left(\frac{p}{p+l}\right)^p\,\intO |\nabla (v_M^{1+l/p})|^p\,dx
		\\
		\label{eq:infty1}
		&\ge (l+1)\left(\frac{p}{C_*(p+l)}\right)^p
		\|v_M^{1+l/p}\|_{r}^p
		\geq
		\frac{1}{C_*^{p}(p+l)^p} \|v_M\|_{(p+l)r/p}^{p+l}. 
	\end{align}
	On the other hand, temporarily
	assuming that $v\in L^{(p+l)\gamma^\prime}(\Omega)$,
	we apply the H\"older inequality to estimate the right-hand side of \eqref{eq:Psub} from above by the following expressions:
	\begin{align} 
		\notag
		&|\lambda|\|m\|_{\gamma}\|v\|_{(p+l)\gamma^\prime}^{p+l} 
		+|\eta|\|a\|_{\gamma}\|v\|_{(q+l)\gamma^\prime}^{q+l} 
		+\|f\|_{\gamma} \|v\|_{(1+l)\gamma^\prime}^{1+l} 
		\\ 
		\label{eq:linf1}
		&\le 
		M_1
		\Big(M_2 
		+ 
		M_2 M_0^\frac{(p-q)}{(p+l)\gamma^\prime}
		+
		M_0^\frac{(p-1)}{(p+l)\gamma^\prime}
		\Big) \max\{1,\|v\|_{(p+l)\gamma^\prime}^{p+l}\}
		\\
		\label{eq:infty2}
		&\leq
		M_1
		\Big(M_2 
		+ 
		M_2 M_0^\frac{(p-q)}{p\gamma^\prime}
		+
		M_0^\frac{(p-1)}{p\gamma^\prime}
		\Big) \max\{1,\|v\|_{(p+l)\gamma^\prime}^{p+l}\}.
	\end{align}
	Consequently, if $v\in L^{(p+l)\gamma^\prime}(\Omega)$ for some $l>0$, 
	then we let $M\to \infty$ and deduce from \eqref{eq:infty1} and \eqref{eq:infty2} that
	$v\in L^{(p+l)r/p}(\Omega)$ and 
	\begin{align}
		\|v\|_{(p+l)r/p} 
		\le 
		\left(C(p+l)^p\right)^{1/(p+l)}\,
		\max\{1,\|v\|_{(p+l)\gamma^\prime}\}, 
		\label{eq:bdd-2}
	\end{align} 
	where $C\ge 1$ is a constant independent of $l>0$ and $v$.
	Now we define a sequence $\{l_m\}$ as follows: 
	$$
	(p+l_0)\gamma^\prime=r\quad {\rm and}\quad 
	l_{m+1}=\frac{p+l_m}{P}-p, \quad {\rm where}\quad 
	P:=\frac{\,\gamma^\prime p\,}{r}<1. 
	$$
	In particular, we do have $v\in L^{(p+l_0)\gamma^\prime}(\Omega)$.
	Denoting 
	$d_m = (C(p+l_m)^p)^{1/(p+l_m)}$, 
	we infer from \eqref{eq:bdd-2} that 
	\begin{equation}\label{eq:bdd-3} 
		\|v\|_{(p+l_{m+1})\gamma^\prime}\le d_m \max\{1,\|v\|_{(p+l_m)\gamma^\prime}\} 
		\le  \max\{1,\|v\|_{(p+l_0)\gamma^\prime}\}\prod_{k=0}^m d_k
	\end{equation} 
	for every $m$. 
	Let us show that $\prod_{k=0}^\infty d_k$ is finite. 
	We have 
	\begin{equation}\label{eq:bdd-4}
		\log \prod_{k=0}^\infty d_k=
		\log C\,\sum_{k=0}^\infty \frac{1}{p+l_k}+
		p\,\sum_{k=0}^\infty \frac{\log (p+l_k)}{p+l_k}. 
	\end{equation}
	Noting that $P=(p+l_k)/(p+l_{k+1})<1$, 
	we deduce that $l_k \to \infty$ and 
	get 
	$$
	\frac{\log (p+l_{k+1})}{p+l_{k+1}}\, 
	\frac{p+l_k}{\log (p+l_k)}
	=P\,\left(1-\frac{\log P}{\log (p+l_k)} \right)
	\to P<1 
	\quad \text{as}~k\to\infty.
	$$
	Hence, 
	the series in \eqref{eq:bdd-4} are convergent, which completes the proof of the boundedness of $v$ by letting $m\to\infty$ in \eqref{eq:bdd-3}. 
	Repeating the same procedure with $v=-u_-$, we obtain the boundedness of $u_-$ and hence of $u$ in the form \eqref{eq:linfty}.
\end{proof}

After we established the boundedness of solutions of \eqref{eq:Psub}, we can consider the whole right-hand side of \eqref{eq:Psub} as a function which maps $\Omega$ to $\mathbb{R}$ and investigate the regularity of solutions of the corresponding Poisson problem in order to get the regularity of solutions of \eqref{eq:Psub}.
Namely, we consider the problem
\begin{equation}\label{eq:P0}
	\left\{
	\begin{aligned}
		-\Delta_p u &= g(x) 
		&&\text{in}\ \Omega, \\
		u&=0 &&\text{on}\ \partial \Omega.
	\end{aligned}
	\right.
\end{equation}
The following result on the local H\"older regularity of the (unique) solution of \eqref{eq:P0} is well known (see, e.g., \cite[Theorem~7.3.1]{PS}) and we omit the proof.
\begin{proposition}\label{prop:C0-reg}
	Let $\|g\|_\gamma \leq M$ for some $M >0$, where $\gamma>N/p$ if $N\ge p$ and $\gamma=1$ if $N<p$. 
	Then the solution $u \in \W(\Omega)$ of \eqref{eq:P0} satisfies $u \in C(\Omega) \cap L^\infty(\Omega)$.
	Moreover, there exists $\beta = \beta(p,N,\gamma) \in (0,1)$ such that 
	for any compact subset $K \subset \Omega$ there exists $C = C(\Omega,K,M,p,\gamma)>0$ such that  $\|u\|_{C^{0,\beta}(K)} \leq C$.
\end{proposition}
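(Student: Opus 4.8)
The plan is to split the statement into the global bound $u\in L^\infty(\Omega)$ with norm controlled by $M$, and the interior Hölder estimate, and to treat the case $N<p$ separately at the outset. When $N<p$, the Morrey embedding $\W(\Omega)\hookrightarrow C^{0,1-N/p}(\overline{\Omega})$ shows that every $u\in\W(\Omega)$ is already globally Hölder continuous; moreover, testing \eqref{eq:P0} with $u$ gives $\|\nabla u\|_p^p=\intO gu\,dx\le\|g\|_1\|u\|_\infty\le C\|g\|_1\|\nabla u\|_p$, hence $\|\nabla u\|_p\le C\,M^{1/(p-1)}$, and the Morrey norm of $u$ is bounded in terms of $M$. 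So from now on I assume $N\ge p$.

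For the boundedness, since $\gamma>N/p$ the conjugate exponent satisfies $\gamma'<p^*$ (with $p^*$ as in \eqref{eq:pstar}), so the energy identity together with the Sobolev inequality gives $\|\nabla u\|_p^p=\intO gu\,dx\le\|g\|_\gamma\|u\|_{\gamma'}\le C\|g\|_\gamma\|\nabla u\|_p$, whence $\|\nabla u\|_p\le C(M)$ and therefore $\|u\|_r\le C(M)$ for the exponent $r$ appearing in Proposition~\ref{prop:bdd}. Applying Proposition~\ref{prop:bdd} with $m=a=0$ and $f=g$ (its hypotheses being precisely $g\in L^\gamma$, $\gamma>N/p$) then yields $\|u\|_\infty\le C\bigl(1+\|u\|_r\bigr)\le C(M)$. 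Uniqueness of the solution of \eqref{eq:P0} (implicit in the statement) follows from the strict monotonicity of $-\Delta_p$, equivalently the $(S_+)$-property already invoked in the proof of Lemma~\ref{lem:PS0}.

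For the interior Hölder bound I would run the classical De Giorgi iteration. Fix $B_{2\rho}=B_{2\rho}(x_0)\Subset\Omega$ and a cutoff $\zeta\in C_0^\infty(B_{2\rho})$ with $\zeta\equiv1$ on $B_\rho$ and $|\nabla\zeta|\le C/\rho$. Testing \eqref{eq:P0} with $\zeta^p(u-k)_+$ for a level $k$, and estimating the source term by $\intO g\,\zeta^p(u-k)_+\,dx\le\|g\|_{L^\gamma(A_{k,2\rho})}\,\|\zeta^p(u-k)_+\|_{L^{\gamma'}}$ and then absorbing $\|(u-k)_+\|_{L^{\gamma'}}$ into the Sobolev norm of $\zeta(u-k)_+$ times a positive power of the measure $|A_{k,\rho}|$ of the super-level set $A_{k,\rho}=\{x\in B_\rho:\,u>k\}$, one obtains a Caccioppoli-type inequality showing that $u$ — with constants depending only on $M,p,N,\gamma,\rho$ — belongs to a De Giorgi class on $B_{2\rho}$, and likewise $-u$ does. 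The standard oscillation-decay lemma for such classes (see, e.g., \cite[Theorem~7.3.1]{PS}) then gives $\mathrm{osc}_{B_{\rho/2}}u\le\theta\,\mathrm{osc}_{B_\rho}u+C\rho^{\beta_0}$ with $\theta\in(0,1)$, which upon iteration yields local Hölder continuity with an exponent $\beta=\beta(N,p,\gamma)\in(0,1)$, the estimate on a compact $K\subset\Omega$ taking the form $\|u\|_{C^{0,\beta}(K)}\le C(\Omega,K,M,p,\gamma)$ with the $K$-dependence entering only through $\mathrm{dist}(K,\partial\Omega)$.

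The main obstacle is the Caccioppoli step and its exponent bookkeeping: one must check that, after Hölder and Sobolev, the source contribution carries a factor $|A_{k,\rho}|^{1+\varepsilon}$ with $\varepsilon>0$ quantifying how far $\gamma$ exceeds $N/p$ (this is exactly where the subcritical threshold is consumed), and that every constant depends on $g$ only through $M=\|g\|_\gamma$ rather than $\|g\|_\infty$ — so that the DiBenedetto–Tolksdorf $C^{1,\alpha}$ theory, which requires bounded data, is not available here and genuinely the De Giorgi machinery is needed. The remaining ingredients — the energy estimate, the reduction to Proposition~\ref{prop:bdd}, and the oscillation lemma — are entirely standard, which is why the authors content themselves with a reference to \cite[Theorem~7.3.1]{PS}.
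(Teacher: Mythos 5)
Your proposal is correct and fills in precisely the argument the paper omits: the authors only cite \cite[Theorem~7.3.1]{PS} and provide no proof, and your De~Giorgi--Caccioppoli iteration is exactly the content of that reference, while the reduction to Proposition~\ref{prop:bdd} via the energy estimate $\|\nabla u\|_p^{p-1}\le C\|g\|_\gamma$ is the standard route to the uniform $L^\infty$ bound. One small bookkeeping point worth making explicit in the $N\ge p$ case is that the strict inequality $\gamma>N/p$ is what guarantees $p\gamma'<p^*$ (not merely $\gamma'<p^*$), so that an admissible exponent $r\in(p\gamma',p^*)$ for Proposition~\ref{prop:bdd} exists and $\|u\|_r\lesssim\|\nabla u\|_p$ by Sobolev; with that noted, the argument closes correctly.
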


Let us discuss a higher regularity of solutions of \eqref{eq:P0} under a higher integrability assumption on the source function $g$. 
The proof of the following result is inspired by \cite[Proposition~2.1]{KZ} (see also \cite[Proposition~2.1]{APO} and compare with \cite[Corollary]{dib}). 
We expand the approach of \cite[Proposition~2.1]{KZ} in order to provide more explicit dependence 
of the regularity of solutions of \eqref{eq:P0} on $g$, which is necessary for the proofs of our main results formulated under the assumptions \ref{O}, \ref{M}, \ref{A}, \ref{F1}.
\begin{proposition}\label{prop:C1-reg}
	Let $\Omega$ satisfy \ref{O}.
	Let $\|g\|_\gamma \leq M$ for some $M >0$ and $\gamma>N$.
	Then there exist $\beta = \beta(M,p,N,\gamma) \in (0,1)$ and $C = C(\Omega,M,p,\gamma)>0$ such that the solution $u \in \W(\Omega)$ of \eqref{eq:P0} satisfies 
	$u \in C^{1,\beta}(\overline{\Omega})$ and
	$\|u\|_{C^{1,\beta}(\overline{\Omega})} 
	\leq C$.
\end{proposition}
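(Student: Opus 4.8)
The plan is to prove the estimate by the by-now classical three-step scheme: a global $L^\infty$-bound for $u$, a global gradient bound, and then a Campanato-type excess-decay argument carried out both in the interior and up to the boundary by comparison with $p$-harmonic (respectively, frozen-coefficient) replacements. The role of the hypotheses $\gamma>N$ and \ref{O} is, respectively, to place $g$ in the correct Morrey-type scale and to allow a $C^{1,1}$-flattening of $\partial\Omega$ that preserves the structure of the equation.

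First I would record that $\|u\|_{L^\infty(\Omega)}\le C_0=C_0(\Omega,M,p,\gamma)$: testing \eqref{eq:P0} with $u$ and using the Sobolev embedding $\W(\Omega)\hookrightarrow L^{\gamma'}(\Omega)$ (legitimate since $\gamma>N\ge N/p$ forces $\gamma'\le p^*$, with $p^*$ as in \eqref{eq:pstar}) gives $\|\nabla u\|_p\le C M^{1/(p-1)}$, and plugging this into Proposition~\ref{prop:bdd} applied with $\lambda=\eta=0$ and $f=g$ yields the $L^\infty$-bound. Next I would establish a global gradient bound $\|\nabla u\|_{L^\infty(\Omega)}\le C_1=C_1(\Omega,M,p,\gamma)$. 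Interior gradient bounds for $-\Delta_p u=g$ with $g\in L^\gamma_{\mathrm{loc}}$, $\gamma>N$, are classical (DiBenedetto, Tolksdorf). Near $\partial\Omega$ one uses a $C^{1,1}$-diffeomorphism, granted by \ref{O}, that flattens a boundary neighbourhood and turns \eqref{eq:P0} into $-\operatorname{div}\mathcal A(y,\nabla w)=\tilde g$ on a half-ball with $w=0$ on the flat part, where $\mathcal A$ inherits Lipschitz dependence on $y$ and the standard $p$-growth and ellipticity structure; one then invokes Lieberman-type boundary gradient estimates. The hypothesis $\gamma>N$ is used essentially here.

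With $\|\nabla u\|_{L^\infty(\Omega)}\le C_1$ available, I would run the comparison argument. In an interior ball $B_R(x_0)\Subset\Omega$ let $v\in u+\W(B_R)$ solve $-\Delta_p v=0$ in $B_R$. The Uhlenbeck--Tolksdorf--DiBenedetto theory gives $v\in C^{1,\alpha_0}$ with scale-invariant estimates; in particular $\|\nabla v\|_{L^\infty(B_{R/2})}\le c\,\|\nabla u\|_{L^\infty(B_R)}$ and, writing $(\nabla v)_{B_\rho}$ for the average,
\[
|B_\rho|^{-1}\int_{B_\rho}\bigl|\nabla v-(\nabla v)_{B_\rho}\bigr|^2\,dx
\le
c\,(\rho/R)^{2\alpha_0}\,|B_R|^{-1}\int_{B_R}\bigl|\nabla v-(\nabla v)_{B_R}\bigr|^2\,dx
\qquad(\rho\le R),
\]
with $\alpha_0$ depending only on $p$, $N$ and $C_1$. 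Testing the equations for $u$ and $v$ against $u-v\in\W(B_R)$, using the monotonicity of $z\mapsto|z|^{p-2}z$ (splitting into the cases $p\ge2$ and $1<p<2$ and using the appropriate form of it), and bounding the right-hand side via H\"older and the Sobolev--Poincar\'e inequality, I would obtain
\[
\int_{B_R}|\nabla(u-v)|^p\,dx\le C\,M^{p/(p-1)}\,R^{N+\varepsilon_0}
\]
with $\varepsilon_0=\varepsilon_0(p,N,\gamma)>0$, the positivity of $\varepsilon_0$ being precisely a consequence of $\gamma>N$. Feeding these two facts into the standard iteration lemma for quantities of the form $\rho\mapsto\int_{B_\rho}|\nabla u-(\nabla u)_{B_\rho}|^2$ gives, uniformly in $x_0$, $\int_{B_\rho(x_0)}|\nabla u-(\nabla u)_{B_\rho(x_0)}|^2\le C\rho^{N+2\beta}$ for some $\beta\in(0,1)$, hence $\nabla u\in C^{0,\beta}_{\mathrm{loc}}(\Omega)$ with explicit bounds.

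The final step, which I expect to be the main obstacle, is to push the same argument up to $\partial\Omega$. Working in the flattened coordinates near a boundary point, I would compare $w$ on a half-ball $B_R^+$ with the solution $v$ of the frozen homogeneous problem $-\operatorname{div}\mathcal A(y_0,\nabla v)=0$ in $B_R^+$ with $v=0$ on the flat part and $v=w$ on the spherical part; Lieberman's boundary $C^{1,\alpha_0}$-regularity supplies the corresponding excess-decay up to $\{y_N=0\}$, while freezing the coefficients at $y_0$ contributes an additional $R^{N+\varepsilon}$-term controlled by the Lipschitz modulus of $\mathcal A$ (hence by $\Omega$). The same iteration then yields $\nabla w\in C^{0,\beta}$ up to the flat boundary; transforming back, $\nabla u\in C^{0,\beta}(\overline\Omega)$, that is $u\in C^{1,\beta}(\overline\Omega)$ with $\|u\|_{C^{1,\beta}(\overline\Omega)}\le C(\Omega,M,p,\gamma)$ and $\beta=\beta(M,p,N,\gamma)$, the dependence on $M$ entering through the ellipticity ratio in the gradient bound. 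The delicate points are checking that the flattened operator genuinely falls under the hypotheses of the boundary comparison theory and the bookkeeping of constants to extract the stated dependence; the interior part is essentially routine once the gradient bound is in hand.
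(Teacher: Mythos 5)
Your proposal takes a genuinely different route from the paper's, and it contains a gap that is, in a sense, exactly the difficulty the paper's trick is designed to avoid.

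The paper does not run a Campanato-type excess-decay argument at all. It introduces the auxiliary \emph{linear} Poisson problem $-\Delta v = g$, $v|_{\partial\Omega}=0$, and exploits the $W^{2,\gamma}$-theory plus the embedding $W^{2,\gamma}(\Omega)\hookrightarrow C^{1,\kappa}(\overline\Omega)$ (valid since $\gamma>N$ and $\Omega$ is $C^{1,1}$) to produce a vector field $V=\nabla v\in C^{0,\kappa}(\overline\Omega;\mathbb R^N)$. Subtracting, $u$ solves the \emph{homogeneous} equation $-\operatorname{div}\bigl(|\nabla u|^{p-2}\nabla u - V(x)\bigr)=0$, i.e.\ a divergence-form quasilinear equation whose $x$-dependence is H\"older continuous and whose $z$-dependence is exactly the $p$-Laplacian structure. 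This places $u$ squarely under the hypotheses of Lieberman's Theorem~1, and the $C^{1,\beta}(\overline\Omega)$ bound follows in one step, with the uniform dependence on $M$ coming from \eqref{eq:c1g<W} and the $L^\infty$-bound of Proposition~\ref{prop:bdd}. The whole point of this reduction is to avoid ever feeding an $L^\gamma$ source term directly into Lieberman-type machinery.

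The gap in your proposal is precisely at the global gradient bound: you propose to flatten the boundary and ``invoke Lieberman-type boundary gradient estimates'' for $-\operatorname{div}\mathcal A(y,\nabla w)=\tilde g$ with $\tilde g\in L^\gamma$. But Lieberman's structure conditions require the right-hand side to be pointwise controlled (an $L^\infty$-type bound of the form $|B(x,z,\eta)|\le\Lambda\Lambda_1(1+|\eta|)^p$); they do not accommodate a source term in $L^\gamma\setminus L^\infty$. This is exactly why, as the paper observes in Section~\ref{sec:eta=0}, all previous AMP results for the $p$-Laplacian assume $m,f\in L^\infty(\Omega)$, and Proposition~\ref{prop:C1-reg} is precisely the extension to $\gamma>N$. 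So you cannot obtain your intermediate gradient bound by citing Lieberman; you would have to cite a genuinely different source (e.g.\ Cianchi--Maz'ya-type global Lipschitz bounds for the $p$-Laplacian with $L^\gamma$ datum, or nonlinear potential estimates in the style of Kuusi--Mingione), and once such a result is invoked one is already using machinery at the level of the conclusion one is trying to prove, or even strictly stronger. The subsequent interior/boundary excess-decay iteration you describe is, in itself, sound (your tracking of $\varepsilon_0=p'(1-N/\gamma)>0$ is the right observation about where $\gamma>N$ enters), but it is a much heavier route, and as written it rests on a gradient bound that the cited tool does not give you. The paper's subtraction-against-a-linear-comparison-function is a short, clean way to sidestep this entirely.
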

\begin{proof}
	Throughout the proof, we denote by $C>0$ a universal constant, for convenience.
	First, we consider the following problem for the linear Laplace operator:
	\begin{equation}\label{eq:P02}
		\left\{
		\begin{aligned}
			-\Delta v &= g(x) 
			&&\text{in}\ \Omega, \\
			v&=0 &&\text{on}\ \partial \Omega,
		\end{aligned}
		\right.
	\end{equation}
	where the function $g$ is the same as in \eqref{eq:P0}.
	Since $\Omega$ is of class $C^{1,1}$
	and $g \in L^\gamma(\Omega)$ with $\gamma>N$, the problem \eqref{eq:P02} has a unique solution $v \in W^{2,\gamma}(\Omega)$, see, e.g., \cite[Theorem~9.15]{GT} (in fact, here $\gamma>1$ is enough).
	Moreover, this solution has the following property, see, e.g., \cite[Lemma~9.17]{GT}:
	$$
	\|v\|_{W^{2,\gamma}(\Omega)} \leq C \|g\|_\gamma,
	$$
	where $C$ does not depend on $v$ and $g$. 
	Thanks to the regularity of $\Omega$  and the assumption $\gamma>N$, the embedding $W^{2,\gamma}(\Omega) \hookrightarrow C^{1,\kappa}(\overline{\Omega})$ is continuous with $\kappa = 1-\frac{N}{\gamma} \in (0,1)$, see, e.g., \cite[Theorem~7.26]{GT} (in fact, here $C^{0,1}$-regularity of $\Omega$ is enough).
	Consequently,
	\begin{equation}\label{eq:c1g<W}
		\|v\|_{C^{1,\kappa}(\overline{\Omega})}
		\leq
		C 
		\|v\|_{W^{2,\gamma}(\Omega)}
		\leq 
		C \|g\|_\gamma \leq C M,
	\end{equation}
	where $C$ is independent of $v$.
	Recall, for convenience, that
	$$
	\|v\|_{C^{1,\kappa}(\overline{\Omega})}
	:=
	\sup_{x \in \Omega} |v(x)|
	+
	\max_{i=1,\dots,N}\sup_{x \in \Omega} |v'_{x_i}(x)|
	+
	\max_{i=1,\dots,N}\sup_{x,y \in \Omega,~x \neq y} \frac{|v'_{x_i}(x)-v'_{x_i}(y)|}{|x-y|^\kappa}.
	$$
	Denoting $V(x) = \nabla v(x)$, we have 
	\begin{equation}\label{eq:vreg}
	V \in C^{0,\kappa}(\overline{\Omega}; \mathbb{R}^N).
	\end{equation}
	Subtracting \eqref{eq:P02} from \eqref{eq:P0}, we see that the solution $u$ of \eqref{eq:P0} weakly solves the problem
	\begin{equation}\label{eq:Pdiff}
		\left\{
		\begin{aligned}
			-\text{div}\left(|\nabla u|^{p-2} \nabla u - V(x)\right) &= 0
			&&\text{in}\ \Omega, \\
			u&=0 &&\text{on}\ \partial \Omega.
		\end{aligned}
		\right.
	\end{equation}
Let us show that the regularity result \cite[Theorem~1]{Lieberman} is applicable to \eqref{eq:Pdiff}.
	Denote $A(x,z) = |z|^{p-2} z - V(x)$ and $a^{ij}(z) = \frac{\partial A^i(x,z)}{\partial z_j}$, $z \in \mathbb{R}^N$.
	The matrix $(a^{ij}(z))$ is a symmetric $N \times N$-matrix corresponding to the linearization of the $p$-Laplacian and we have
	\begin{equation}\label{eq:martxA}
		(a^{ij}(z)) = |z|^{p-2} \left(I + (p-2) \frac{z \otimes z}{|z|^2}\right),
		\quad z \in \mathbb{R}^N \setminus \{0\},
	\end{equation}
	where $z \otimes z := (z_i z_j)$ is a matrix.
	We set $(a^{ij}(0))$ to be a zero matrix.
	It is not hard to see that
	\begin{equation}\label{eq:abb}
		\min\{1,p-1\}|z|^{p-2} |\xi|^2 \leqslant 
		\sum_{i,j=1}^N a^{ij}(z)\xi_i \xi_j \leqslant 
		\max\{1,p-1\} |z|^{p-2} |\xi|^2
	\end{equation}
	for any $z,\xi \in \mathbb{R}^N$, see, e.g., \cite[Section~5.1]{takac-lec2}.
	Thanks to \eqref{eq:vreg}, we have the following estimate for all $x,y \in \overline{\Omega}$ and $z \in \mathbb{R}^N$:
	$$
	|A(x,z)-A(y,z)|
	=
	|V(x)-V(y)|
	\leq
	C |x-y|^{\kappa},
	$$
	where $C$ depends on $M$ but does not depend on $v$, $x$, $y$, $z$.
	We also mention that since $\Omega$ satisfies \ref{O}, $\Omega$ automatically belongs to the class  $C^{1,\kappa}$.
	
	Finally, we recall that the solution $u$ of \eqref{eq:P0} is bounded. 
	More precisely, in the case $N \geq p$, Proposition \ref{prop:bdd} gives the bound
	\begin{equation}\label{eq:linfty1}
		\|u\|_\infty \le C\,(\,1+\|u\|_{r}), 
	\end{equation}
	where $p\gamma^\prime < r < p^*$ and $C$ does not depend on $u$, and a similar bound holds in the case $N<p$ due to the Morrey lemma. 
	Notice that since $\gamma>N$, we have $\gamma' < p^*$.
	Since $u$ satisfies $\intO |\nabla u|^p \, dx = \intO g u \,dx$, we use the H\"older inequality and the continuity of the embedding $\W(\Omega) \hookrightarrow L^{\gamma'}(\Omega)$ to deduce that
	$$
	\|\nabla u\|_p^p \leq \|g\|_\gamma \|u\|_{\gamma'} \leq C \|g\|_\gamma \|\nabla u\|_{p},
	$$
	which yields 
	$$
	\|u\|_{r} \leq C \|\nabla u\|_p \leq C \|g\|_\gamma^\frac{1}{p-1} \leq C M^\frac{1}{p-1},
	$$
	where $C$ does not depend on $M$ and $u$.
	Combining this estimate with \eqref{eq:linfty1}, we finally arrive at the bound
	$\|u\|_\infty \leq C$, where $C$ is independent of $u$.
	
	Thus, all the requirements of \cite[Theorem~1]{Lieberman} are satisfied, which guarantees that $u \in C^{1,\beta}(\overline{\Omega})$, where $\beta = \beta(M,p,N,\gamma) \in (0,1)$ and
	$\|u\|_{C^{1,\beta}(\overline{\Omega})} \leq C(\Omega,M,p,\gamma)$.
\end{proof}

\section{Weak form of the Picone inequality}\label{sec:picone}

In the proofs of Theorem~\ref{thm3} and Proposition~\ref{prop:thm1-w} (and hence of Theorem~\ref{thm1-w}~\ref{thm1-w:2}), we need to employ a version of the standard Picone inequality \cite[Theorem~1.1]{Alleg} applicable to purely Sobolev functions, i.e., when no a priori information on the a.e.-differentiability is available.
We start with the following auxiliary results.
\begin{lemma}\label{lem:testfunc}
	Let $\varphi \in W^{1,p}(\Omega) \cap L^\infty(\Omega)$, $u \in W^{1,p}(\Omega)$, and $\varepsilon>0$.
	Then $|\varphi|^p/(|u|+\varepsilon)^{p-1} \in W^{1,p}(\Omega) \cap L^\infty(\Omega)$ 
	and its weak gradient is expressed as follows:
	\begin{equation}\label{eq:piconeeq}
		\nabla \left(\frac{|\varphi|^p}{(|u|+\varepsilon)^{p-1}}\right)
		=
		p \frac{|\varphi|^{p-2}\varphi}{(|u|+\varepsilon)^{p-1}} \nabla \varphi
		-
		(p-1) \frac{|\varphi|^{p}}{(|u|+\varepsilon)^{p}} \left(\nabla u_+ + \nabla u_- \right).
	\end{equation}
	If, in addition, $\varphi \in W^{1,p}_0(\Omega)$, then $|\varphi|^p/(|u|+\varepsilon)^{p-1} \in W^{1,p}_0(\Omega)$.
\end{lemma}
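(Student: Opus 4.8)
The plan is to reduce the statement to the standard chain and product rules for Sobolev functions, the whole point being that $|u|+\varepsilon$ is uniformly bounded away from $0$ and that $\varphi$ is bounded.

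Set $w := |\varphi|^p/(|u|+\varepsilon)^{p-1}$. First I would record the pointwise bound $0 \le w \le \varepsilon^{-(p-1)}\|\varphi\|_\infty^p$ a.e.\ in $\Omega$, which already gives $w \in L^\infty(\Omega)$ and makes transparent why the hypotheses $\varphi \in L^\infty(\Omega)$ and $\varepsilon>0$ are essential. Next I would handle the two factors of $w$ separately. Since $\varphi,u \in W^{1,p}(\Omega)$, the functions $|\varphi|$ and $|u|$ lie in $W^{1,p}(\Omega)$, with $\nabla|u| = \nabla u_+ + \nabla u_-$ a.e.\ in $\Omega$. Because $t\mapsto |t|^p$ is $C^1$ on $\mathbb{R}$ (here $p>1$ is used) and $|\varphi|^{p-1}|\nabla\varphi| \le \|\varphi\|_\infty^{p-1}|\nabla\varphi| \in L^p(\Omega)$, the chain rule for $W^{1,p}(\Omega)\cap L^\infty(\Omega)$ functions gives $|\varphi|^p \in W^{1,p}(\Omega)\cap L^\infty(\Omega)$ with $\nabla(|\varphi|^p) = p\,|\varphi|^{p-2}\varphi\,\nabla\varphi$ a.e.\ in $\Omega$. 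Likewise, $t\mapsto (t+\varepsilon)^{-(p-1)}$ is $C^1$ and, on $[0,\infty)$, bounded by $\varepsilon^{-(p-1)}$ with derivative bounded by $(p-1)\varepsilon^{-p}$; composing it with $|u| \in W^{1,p}(\Omega)$ and using that $\Omega$ is bounded, we get $(|u|+\varepsilon)^{-(p-1)} \in W^{1,p}(\Omega)\cap L^\infty(\Omega)$ with gradient $-(p-1)(|u|+\varepsilon)^{-p}(\nabla u_+ + \nabla u_-)$ a.e.\ in $\Omega$. Finally, $w$ is the product of two functions in $W^{1,p}(\Omega)\cap L^\infty(\Omega)$, so the Leibniz rule applies — every product on the resulting right-hand side lies in $L^p(\Omega)$, one factor being in $L^p(\Omega)$ and the complementary one in $L^\infty(\Omega)$ — and yields $w \in W^{1,p}(\Omega)\cap L^\infty(\Omega)$ together with, after collecting terms, precisely formula \eqref{eq:piconeeq}.

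For the boundary behaviour, assume $\varphi \in W^{1,p}_0(\Omega)$. I would take $\psi_n \in C_c^\infty(\Omega)$ with $\psi_n \to \varphi$ in $W^{1,p}(\Omega)$ and truncate, $\varphi_n := \min\{\|\varphi\|_\infty,\max\{-\|\varphi\|_\infty,\psi_n\}\}$, so that $\varphi_n \in W^{1,p}_0(\Omega)$ has compact support contained in $\mathrm{supp}\,\psi_n$, satisfies $\|\varphi_n\|_\infty \le \|\varphi\|_\infty$, and $\varphi_n \to \varphi$ in $W^{1,p}(\Omega)$ by continuity of truncation. By the part already established, $w_n := |\varphi_n|^p/(|u|+\varepsilon)^{p-1} \in W^{1,p}(\Omega)$, and since $w_n$ has compact support in $\Omega$ it lies in $W^{1,p}_0(\Omega)$ (mollify). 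It then remains to check $w_n \to w$ in $W^{1,p}(\Omega)$ along a subsequence on which $\varphi_n \to \varphi$ a.e.\ in $\Omega$: one writes $\nabla w_n - \nabla w$ using \eqref{eq:piconeeq}, splits the differences $\frac{|\varphi_n|^{p-2}\varphi_n}{(|u|+\varepsilon)^{p-1}}\nabla\varphi_n - \frac{|\varphi|^{p-2}\varphi}{(|u|+\varepsilon)^{p-1}}\nabla\varphi$ and $\frac{(|\varphi_n|^p-|\varphi|^p)(\nabla u_+ + \nabla u_-)}{(|u|+\varepsilon)^p}$, and then passes to the limit by combining the dominated convergence theorem — invoking $\|\varphi_n\|_\infty \le \|\varphi\|_\infty$, $|u|+\varepsilon\ge\varepsilon$, and $|\nabla u_+ + \nabla u_-| \le |\nabla u| \in L^p(\Omega)$ — with the strong convergence $\nabla\varphi_n \to \nabla\varphi$ in $L^p(\Omega)$. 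Since $W^{1,p}_0(\Omega)$ is closed in $W^{1,p}(\Omega)$, the limit $w$ belongs to $W^{1,p}_0(\Omega)$.

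I do not expect a genuinely hard step here: the only thing really requiring care is the verification that every product appearing in $\nabla w$, and in the approximation step, lies in $L^p(\Omega)$ — which is exactly what the boundedness of $\varphi$ and the uniform positivity of $|u|+\varepsilon$ secure — together with the routine bookkeeping in the final convergence argument. (An alternative to truncation would be to note $0\le w \le \varepsilon^{-(p-1)}|\varphi|^p$ with $|\varphi|^p\in W^{1,p}_0(\Omega)$ and invoke the domination property of $W^{1,p}_0(\Omega)$, but the approximation route is entirely self-contained.)
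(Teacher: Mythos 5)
Your proof is correct, and for the main assertion it follows essentially the same route as the paper: write the function as the product $|\varphi|^p \cdot (|u|+\varepsilon)^{-(p-1)}$, get each factor into $W^{1,p}\cap L^\infty$ by the Sobolev chain rule for a $C^1$ function that is globally Lipschitz on the relevant range (the paper cites \cite[Theorem~1.18]{HKM} for precisely this step, modifying $|s|^p$ and $s^{-(p-1)}$ off the ranges $[-\|\varphi\|_\infty,\|\varphi\|_\infty]$ and $[\varepsilon,\infty)$ respectively), and then apply the product rule for $W^{1,p}\cap L^\infty$ functions (the paper cites \cite[Theorem~1.24~(i)]{HKM}). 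Your pointwise bounds making every term land in $L^p$ are exactly the content the citations are encapsulating.

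Where you genuinely diverge is the $W^{1,p}_0$ part. The paper gets $|\varphi|^p\in\W(\Omega)$ directly by an amendment of the cited chain rule and then invokes \cite[Theorem~1.24~(ii)]{HKM}, which is a domination-type statement; you instead build a compactly supported approximating sequence (mollify, truncate at level $\|\varphi\|_\infty$ to preserve the uniform bound and the support, then pass to the limit in $W^{1,p}$ using \eqref{eq:piconeeq}, dominated convergence, and closedness of $\W(\Omega)$). This is self-contained and correct. Two small points are worth making explicit if you write it up: (i) the "continuity of truncation" step silently uses that $\nabla\varphi=0$ a.e.\ on $\{|\varphi|=\|\varphi\|_\infty\}$ so the second term $(T'(\psi_n)-T'(\varphi))\nabla\varphi$ vanishes there; (ii) your closing parenthetical — dominate $w$ by $\varepsilon^{-(p-1)}|\varphi|^p\in\W(\Omega)$ and use the domination property of $\W(\Omega)$ — is in fact the shorter route and is essentially what the paper's reference to \cite[Theorem~1.24~(ii)]{HKM} amounts to, so the two proofs converge there too.
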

\begin{proof}
	The proof is based on classical arguments, so we will be sketchy.
	First, we observe that $|\varphi|^p \in W^{1,p}(\Omega) \cap L^\infty(\Omega)$.
	Indeed, since $\varphi \in L^\infty(\Omega)$, we can find a function $G \in C^1(\mathbb{R})$ such that $G(s)=|s|^p$ for $s \in [-\|\varphi\|_\infty,\|\varphi\|_\infty]$ and $|G'(s)| \leq M$ for all $s \in \mathbb{R}$ and some uniform constant $M>0$. 
	Then \cite[Theorem~1.18]{HKM} ensures that $G(\varphi) \equiv |\varphi|^p \in W^{1,p}(\Omega)$ and its weak gradient is calculated according to the classical rules.
	Clearly, we also have $|\varphi|^p \in L^\infty(\Omega)$.
	
	It can be shown in a similar way that $1/(|u|+\varepsilon)^{p-1} \in W^{1,p}(\Omega) \cap L^\infty(\Omega)$. 
	Indeed, we can find a function $H \in C^1(\mathbb{R})$ such that $H(s) = 1/s^{p-1}$ for $s \in [\varepsilon,\infty)$ and $|H'(s)| \leq M$ for all $s \in \mathbb{R}$ and some uniform constant $M>0$. 
	Since $\Omega$ is bounded, we have $|u|+\varepsilon \in W^{1,p}(\Omega)$. 
	Hence, we deduce from \cite[Theorem~1.18]{HKM} that 
	$H(|u|+\varepsilon) \equiv 1/(|u|+\varepsilon)^{p-1} \in W^{1,p}(\Omega)$, and its weak gradient can be expanded by the classical rules. 
	Since $1/(|u|+\varepsilon)^{p-1} \leq 1/\varepsilon^{p-1}$, we conclude that $1/(|u|+\varepsilon)^{p-1} \in L^\infty(\Omega)$. 
	
	Applying now \cite[Theorem~1.24~(i)]{HKM}, we see that $|\varphi|^p/(|u|+\varepsilon)^{p-1} \in W^{1,p}(\Omega) \cap L^\infty(\Omega)$ and its	weak gradient is given by the expression \eqref{eq:piconeeq}.
	If we additionally assume that $\varphi \in W^{1,p}_0(\Omega)$, then, by a simple amendment of the proof of \cite[Theorem~1.18]{HKM}, we have $|\varphi|^p \in \W(\Omega)$, and hence $|\varphi|^p/(|u|+\varepsilon)^{p-1} \in \W(\Omega)$ by \cite[Theorem~1.24~(ii)]{HKM}.
\end{proof}

Under stronger requirements on the functions $\varphi$ and $u$, we can omit $\varepsilon$ in Lemma~\ref{lem:testfunc}. 
\begin{lemma}\label{lem:testfunc2}
	Let $\varphi \in W^{1,p}(\Omega) \cap L^\infty(\Omega)$ and $u \in W^{1,p}(\Omega)$
	be such that $K:=\mathrm{supp}\, \varphi \subset \Omega$ and $\mathrm{ess\,inf}_{K}\,|u| > 0$.
	Then $|\varphi|^p/|u|^{p-1} \in \W(\Omega) \cap L^\infty(\Omega)$ 
	and its weak gradient is expressed as in \eqref{eq:piconeeq} with $\varepsilon=0$.
\end{lemma}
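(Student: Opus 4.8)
The plan is to reduce the statement to Lemma~\ref{lem:testfunc} by replacing $|u|$ with a function that is bounded away from zero on all of $\Omega$. Set $c := \mathrm{ess\,inf}_{K}\,|u| > 0$, where $K = \mathrm{supp}\,\varphi$ is compact and contained in $\Omega$, and define $w := \max\{|u|, c\} \in W^{1,p}(\Omega)$. Then $w \geq c$ everywhere, while $w = |u|$ a.e.\ on $K$. Since $\varphi$ vanishes a.e.\ outside $K$, the function $|\varphi|^p/|u|^{p-1}$ is well defined a.e.\ and coincides a.e.\ with $|\varphi|^p/w^{p-1}$; hence it suffices to prove the assertion for the latter.

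First I would check that each factor is a bounded Sobolev function. Arguing exactly as in the proof of Lemma~\ref{lem:testfunc} (via a $C^1$ truncation of $s \mapsto |s|^p$ and \cite[Theorem~1.18]{HKM}), one gets $|\varphi|^p \in W^{1,p}(\Omega) \cap L^\infty(\Omega)$ with $\nabla |\varphi|^p = p|\varphi|^{p-2}\varphi\,\nabla\varphi$; moreover, since $|\varphi|^p$ has compact support inside $\Omega$, it lies in $\W(\Omega)$ (e.g.\ by mollification). Similarly, choosing $H \in C^1(\mathbb{R})$ with $H(s) = s^{1-p}$ for $s \geq c$ and $|H'|$ bounded, \cite[Theorem~1.18]{HKM} gives $1/w^{p-1} = H(w) \in W^{1,p}(\Omega) \cap L^\infty(\Omega)$, with $0 < 1/w^{p-1} \leq c^{1-p}$ and $\nabla(1/w^{p-1}) = -(p-1)\,w^{-p}\,\nabla w$.

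Next I would multiply the two factors. By \cite[Theorem~1.24~(ii)]{HKM} (the same tool used in Lemma~\ref{lem:testfunc}, applicable since $|\varphi|^p \in \W(\Omega) \cap L^\infty(\Omega)$ and $1/w^{p-1} \in W^{1,p}(\Omega) \cap L^\infty(\Omega)$), the product $|\varphi|^p/w^{p-1}$ belongs to $\W(\Omega) \cap L^\infty(\Omega)$ and its weak gradient equals
\[
\nabla\!\left(\frac{|\varphi|^p}{w^{p-1}}\right)
= p\,\frac{|\varphi|^{p-2}\varphi}{w^{p-1}}\,\nabla\varphi
- (p-1)\,\frac{|\varphi|^p}{w^p}\,\nabla w .
\]
Finally I would rewrite this in the claimed form: on $K$ one has $w = |u|$ and $\nabla w = \nabla|u| = \nabla u_+ + \nabla u_-$ a.e., while off $K$ one has $\varphi = 0$ and hence $\nabla\varphi = 0$ a.e., so both sides vanish there; thus the displayed gradient agrees a.e.\ with the right-hand side of \eqref{eq:piconeeq} with $\varepsilon = 0$. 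Since $|\varphi|^p/|u|^{p-1}$ equals $|\varphi|^p/w^{p-1}$ a.e., it inherits membership in $\W(\Omega) \cap L^\infty(\Omega)$ together with the gradient formula.

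The step I expect to require the most care is the passage from $w$ back to $|u|$ in the gradient formula — in particular, controlling the contribution of the level set $\{|u| = c\} \cap K$, where one uses the standard fact that the weak gradient of a Sobolev function vanishes a.e.\ on any of its level sets, so that $\nabla w = \nabla|u|$ a.e.\ on $K$. Everything else is a routine application of the product and chain rules for Sobolev functions, following verbatim the pattern of Lemma~\ref{lem:testfunc}.
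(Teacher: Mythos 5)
Your proof is correct and fills in exactly the details the paper omits, following the same chain/product-rule pattern as Lemma~\ref{lem:testfunc}: you regularize the singular denominator by replacing $|u|$ with $\max\{|u|,c\}$, apply the \cite[Theorems~1.18 and~1.24]{HKM} machinery, and then use $\mathrm{supp}\,\varphi=K$ together with the level-set property of weak gradients to pass back to $|u|$. The paper's proof consists only of the remark that the argument is similar to that of Lemma~\ref{lem:testfunc}, so your write-up is a faithful and complete realization of what is intended.
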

\begin{proof}
	Arguments are similar to those from the proof of Lemma~\ref{lem:testfunc} and hence we omit details.
\end{proof}

In view of the expression \eqref{eq:piconeeq}, one can argue exactly as in the proof of \cite[Theorem~1.1]{Alleg} to obtain the following weak version of the Picone inequality, see also \cite[Section~2]{AP} and \cite[Section~3.2]{takac-lec2} for related results.
\begin{lemma}\label{lem:picone-weak}
	Let $\varphi \in W^{1,p}(\Omega) \cap L^\infty(\Omega)$ and $u \in W^{1,p}(\Omega)$
	be such that $u \geq 0$ a.e.\ in $\Omega$.
	Let $\varepsilon>0$.
	Then the following inequality holds:
	\begin{equation}\label{eq:weak-picone}
		\intO |\nabla u|^{p-2} \nabla u\nabla \left(\frac{|\varphi|^p}{(u+\varepsilon)^{p-1}}\right) dx
		\leq
		\intO |\nabla \varphi|^{p} \,dx.
	\end{equation}
	If, in addition, $K:=\mathrm{supp}\, \varphi \subset \Omega$ and $\mathrm{ess\,inf}_{K}\,u > 0$, then \eqref{eq:weak-picone} holds with $\varepsilon=0$.
\end{lemma}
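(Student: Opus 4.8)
The plan is to deduce \eqref{eq:weak-picone} from the classical pointwise Picone inequality used in the proof of \cite[Theorem~1.1]{Alleg}; the only new ingredient is that Lemmas~\ref{lem:testfunc} and~\ref{lem:testfunc2} provide an explicit formula for the weak gradient of the test function $|\varphi|^p/(u+\varepsilon)^{p-1}$, so that the argument goes through for merely Sobolev $\varphi$ and $u$, without invoking any pointwise a.e.-differentiability beyond the Sobolev chain rule.

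First I would fix $\varepsilon>0$ and set $w:=u+\varepsilon$, so that $w\in W^{1,p}(\Omega)$, $\nabla w=\nabla u$ a.e., and $w\ge\varepsilon>0$ a.e.\ in $\Omega$. By Lemma~\ref{lem:testfunc} the function $\psi:=|\varphi|^p/w^{p-1}$ lies in $W^{1,p}(\Omega)\cap L^\infty(\Omega)$, and since $u\ge0$ forces $u_-=0$ and $\nabla u_++\nabla u_-=\nabla u$ a.e., formula \eqref{eq:piconeeq} reads
\[
\nabla\psi
=
p\,\frac{|\varphi|^{p-2}\varphi}{w^{p-1}}\,\nabla\varphi
-(p-1)\,\frac{|\varphi|^{p}}{w^{p}}\,\nabla u
\qquad\text{a.e. in }\Omega .
\]
In particular $\nabla\psi\in L^p(\Omega;\mathbb{R}^N)$, so $|\nabla u|^{p-2}\nabla u\cdot\nabla\psi\in L^1(\Omega)$ (the product of $|\nabla u|^{p-1}\in L^{p/(p-1)}$ with $\nabla\psi\in L^p$), and the left-hand side of \eqref{eq:weak-picone} is finite.

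Next I would prove the integrand inequality pointwise a.e. By the Sobolev chain rule, a.e.\ in $\Omega$ one has $\nabla|\varphi|=\operatorname{sgn}(\varphi)\,\nabla\varphi$ and $\nabla\varphi=0$ a.e.\ on $\{\varphi=0\}$; writing $v:=|\varphi|\ge0$ this gives $|\nabla\varphi|=|\nabla v|$ and $|\varphi|^{p-2}\varphi\,\nabla\varphi=v^{p-1}\nabla v$ a.e. Using also $\nabla w=\nabla u$, the target inequality becomes
\[
|\nabla v|^{p}
+(p-1)\Big(\tfrac{v}{w}\Big)^{p}|\nabla w|^{p}
-p\,\Big(\tfrac{v}{w}\Big)^{p-1}|\nabla w|^{p-2}\nabla w\cdot\nabla v
\ \ge\ 0
\qquad\text{a.e. in }\Omega ,
\]
which is the elementary Picone inequality: by Cauchy--Schwarz and Young's inequality with exponents $p$ and $p/(p-1)$,
\[
p\,\Big(\tfrac{v}{w}\Big)^{p-1}|\nabla w|^{p-2}\nabla w\cdot\nabla v
\le
p\,\Big(\tfrac{v}{w}\Big)^{p-1}|\nabla w|^{p-1}|\nabla v|
\le
|\nabla v|^{p}+(p-1)\Big(\tfrac{v}{w}\Big)^{p}|\nabla w|^{p}.
\]
Integrating the a.e.\ inequality $|\nabla\varphi|^p-|\nabla u|^{p-2}\nabla u\cdot\nabla\psi\ge0$ over $\Omega$ — legitimate since both terms are in $L^1(\Omega)$ — yields \eqref{eq:weak-picone}.

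For the case $\varepsilon=0$ under the extra hypotheses I would repeat the argument with $w:=u$, invoking Lemma~\ref{lem:testfunc2} in place of Lemma~\ref{lem:testfunc} to obtain $|\varphi|^p/u^{p-1}\in W^{1,p}(\Omega)\cap L^\infty(\Omega)$ with the analogous gradient. The only additional point is that the pointwise bound holds a.e.\ where it is used: on $\{\varphi=0\}$ both sides of the integrand inequality vanish a.e.\ (each term carries a factor $|\varphi|$, and $\nabla\varphi=0$ a.e.\ there), while on $\{\varphi\ne0\}\subset K$ one has $w=u\ge\mathrm{ess\,inf}_{K}\,u>0$ a.e., so Young's inequality applies as before. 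I do not foresee a genuine obstacle here: the whole content is the combination of the already-established Lemmas~\ref{lem:testfunc} and~\ref{lem:testfunc2} with Allegretto's elementary computation, and the only mild care needed is the $L^1$-justification for integrating the pointwise bound and the bookkeeping on $\{\varphi=0\}$ — which is precisely why the $\varepsilon>0$ regularization is convenient, $w$ then being bounded away from $0$ on all of $\Omega$.
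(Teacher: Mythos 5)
Your proof is correct and follows exactly the route the paper indicates: the paper states that, given the weak gradient formula \eqref{eq:piconeeq} from Lemma~\ref{lem:testfunc} (resp.\ Lemma~\ref{lem:testfunc2}), one argues as in the proof of \cite[Theorem~1.1]{Alleg}, which is precisely the pointwise Young/Cauchy--Schwarz computation you carry out, together with the $L^1$-integrability check that justifies integrating the a.e.\ inequality. The handling of $\{\varphi=0\}$ and the observation that $u\ge 0$ collapses $\nabla u_++\nabla u_-$ to $\nabla u$ are exactly the bookkeeping points one needs, so nothing is missing.
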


\bigskip
\textbf{Acknowledgments.}
The authors are thankful to Prof.~\textsc{Ky Ho} for acquainting them with the works~\cite{APO,KZ} and for stimulating discussions.
V.~Bobkov was supported by RSF Grant Number 22-21-00580, \url{https://rscf.ru/en/project/22-21-00580/}.
M.~Tanaka was supported by JSPS KAKENHI Grant Number JP 19K03591. 

\addcontentsline{toc}{section}{\refname}
\small

\end{document}